\DeclareMathAlphabet{\mathBM}{T1}{cmr}{bx}{it}
\definecolor{shadecolor}{gray}{0.875}
\DeclareSymbolFont{timesoperators}{T1}{ptm}{m}{n}
\renewcommand{\operator@font}{\mathgroup\symtimesoperators}
\colorlet{symbolsgrey}{blue!30!black!50}
\colorlet{testcolor}{green!60!black}
\definecolor{purple}{rgb}{0.55,0.05,0.8}
\definecolor{symbols}{rgb}{0.55,0.05,0.8}
\newtheorem{assumption}[lemma]{Assumption}
\let\oldskull\skull
\def\skull{\mathord{\oldskull}}
\DeclareMathAlphabet{\mathbbm}{U}{bbm}{m}{n}
\DeclareFontFamily{U}{BOONDOX-calo}{\skewchar\font=45 }
\DeclareFontShape{U}{BOONDOX-calo}{m}{n}{
  <-> s*[1.05] BOONDOX-r-calo}{}
\DeclareFontShape{U}{BOONDOX-calo}{b}{n}{
  <-> s*[1.05] BOONDOX-b-calo}{}
\DeclareMathAlphabet{\mcb}{U}{BOONDOX-calo}{m}{n}
\SetMathAlphabet{\mcb}{bold}{U}{BOONDOX-calo}{b}{n}
\setlist{noitemsep,topsep=4pt}
\newcommand{\nnorm}[1]{{\vert\kern-0.25ex\vert\kern-0.25ex\vert #1 
    \vert\kern-0.25ex\vert\kern-0.25ex\vert}}
\newcommand{\norm}[1]{{\| #1 
    \|}}
\newcommand*{\bigcdot}{}
\DeclareRobustCommand*{\bigcdot}{%
  \mathbin{\mathpalette\bigcdot@{}}%
}
\newcommand*{\bigcdot@scalefactor}{.5}
\newcommand*{\bigcdot@widthfactor}{1.15}
\newcommand*{\bigcdot@}[2]{%
  \sbox0{$#1\vcenter{}$}
  \sbox2{$#1\cdot\m@th$}%
  \hbox to \bigcdot@widthfactor\wd2{%
    \hfil
    \raise\ht0\hbox{%
      \scalebox{\bigcdot@scalefactor}{%
        \lower\ht0\hbox{$#1\bullet\m@th$}%
      }%
    }%
    \hfil
  }%
}
\def\t{\mathbf{t}}
\def\dash{\leavevmode\unskip\kern0.18em--\penalty\exhyphenpenalty\kern0.18em}
\def\slash{\leavevmode\unskip\kern0.15em/\penalty\exhyphenpenalty\kern0.15em}
\newcommand{\Norm}[1]{{\talloblong #1 \talloblong}}
\colorlet{darkblue}{blue!90!black}
\colorlet{darkgreen}{green!82!black}
\colorlet{darkyellow}{yellow!65!red}
\colorlet{darkred}{red!80!black}
\newcommand{\treeOne}{\raisebox{-1ex}{\includegraphics[scale=1]{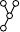}}}
\newcommand{\treeTwo}{\raisebox{-1ex}{\includegraphics[scale=1]{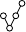}}}
\newcommand{\mfI}{\mathfrak{I}}
\newcommand{\mfn}{\mathfrak{n}}
\newcommand{\mfo}{\mathfrak{o}}
\newcommand{\mfe}{\mathfrak{e}}
\newcommand{\mfd}{\mathfrak{d}}
\newcommand{\mfc}{\mathfrak{c}}
\newcommand{\mfm}{\mathfrak{m}}
\newcommand{\mfM}{\mathfrak{M}}
\newcommand{\mfS}{\mathfrak{S}}
\newcommand{\mfl}{\mathfrak{l}}
\newcommand{\mfh}{\mathfrak{h}}
\newcommand{\mfb}{\mathfrak{b}}
\newcommand{\mff}{\mathfrak{f}}
\newcommand{\mfg}{\mathfrak{g}}
\newcommand{\mfk}{\mathfrak{k}}
\newcommand{\mcM}{\mathcal{M}}
\newcommand{\mcE}{\mathcal{E}}
\newcommand{\mcH}{\mathcal{H}}
\newcommand{\mcR}{\mathcal{R}}
\newcommand{\mcC}{\mathcal{C}}
\newcommand{\mcS}{\mathcal{S}}
\newcommand{\mcL}{\mathcal{L}}
\newcommand{\mcN}{\mathcal{N}}
\newcommand{\mcD}{\mathcal{D}}
\newcommand{\mcW}{\mathcal{W}}
\newcommand{\mcP}{\mathcal{P}}
\newcommand{\mcG}{\mathcal{G}}
\newcommand{\mcQ}{\mathcal{Q}}
\newcommand{\bfd}{\mathbf{d}}
\newcommand{\bfb}{\mathbf{b}}
\newcommand{\bfe}{\mathbf{e}}
\newcommand{\bff}{\mathbf{f}}
\newcommand{\bfg}{\mathbf{g}}
\newcommand{\bfh}{\mathbf{h}}
\newcommand{\bfc}{\mathbf{c}}
\newcommand{\bfL}{\mathbf{L}}
\newcommand{\bfI}{\mathbf{I}}
\newcommand{\tH}{\Tilde{H}}
\newcommand{\mfs}{\mathfrak{s}}
\newcommand{\D}{\mathrm{D}}
\newcommand{\Id}{\mathrm{Id}}
\newcommand{\vphi}{\varphi}
\newcommand{\bma}{{\mathBM a}}
\newcommand{\bmb}{{\mathBM b}}
\newcommand{\bmc}{{\mathBM c}}
\newcommand{\bmp}{{\mathBM p}}
\newcommand{\ttx}{{\tt x}}
\newcommand{\ttR}{{\tt R}}
\newcommand{\epmu}{{\eps,\mu}}
\newcommand{\epnu}{{\eps,\nu}}
\newcommand{\B}{\mathrm{B}}
\newcommand{\A}{\mathrm{A}}
\newcommand{\bfX}{\mathbf{X}}
\newcommand{\Pol}{\mathrm{Pol}}
\newcommand{\Cov}{\mathrm{Cov}}
\newcommand{\rel}{\mathrm{rel}}
\def\symbol#1{\textcolor{black}{#1}}
\def\1{\mathbf{\symbol{1}}}
\newcommand{\rmd}{\mathrm{d}}
\newcommand{\tF}{\tilde{F}}
\newcommand{\Kappa}{\text{{\Large $\kappa$}}}
\newcommand{\Partial}{\boldsymbol{\partial}}
\DeclareFontFamily{U}{mathx}{}
\DeclareFontShape{U}{mathx}{m}{n}{<-> mathx10}{}
\DeclareSymbolFont{mathx}{U}{mathx}{m}{n}
\DeclareMathAccent{\widecheck}{0}{mathx}{"71}
\DeclareRobustCommand{\TitleEquation}[2]{\texorpdfstring{\StrLeft{\f@series}{1}[\@firstchar]$\if%
b\@firstchar\boldsymbol{#1}\else#1\fi$}{#2}}
\begin{document}

\title{A flow approach to the generalized KPZ equation}

\author{Ajay Chandra$^{1}$, L\'{e}onard Ferdinand$^{2}$}

\institute{Imperial College London, UK 
\and Universit\'{e} Paris-Saclay, France\\[1em]
\email{a.chandra@ic.ac.uk, lferdinand@ijclab.in2p3.fr}}

\maketitle

\begin{abstract}
We show that the flow approach of Duch \cite{Duch21} can be adapted to prove local well-posedness for the generalized Kardar-Parisi-Zhang equation. 
The key step is to extend the flow approach so that it can accommodate semi-linear equations involving smooth, non-polynomial, functions of the solution - this is accomplished by introducing coordinates for the flow built out of elementary differentials. 
\end{abstract}

\setcounter{tocdepth}{2}
\tableofcontents

\section{Introduction}
We consider the following generalized Kardar-Parisi-Zhang (gKPZ) equation 
\begin{equs}\label{eq:eqFormal}
    \big(\partial_{x_0} -\Delta_{{\tt x}}\big)\psi(x)&=S[\psi](x)\,,\\
\psi(0,\ttx)&=\varpi(\ttx)  \,,
\end{equs}
where
\begin{equs}\label{eq:Formalaction}
    S[\psi](x)&=b\big(\psi(x)\big)+\sum_{i\in[n]}d_i\big(\psi(x)\big)\partial_{\ttx_i}\psi(x)   +
    \sum_{i,j \in [n]}
    g_{ij}\big(\psi(x)\big)\partial_{\ttx_i}\psi(x)\partial_{\ttx_j}\psi(x)
    +h\big(\psi(x)\big)\xi(x)\,, \qquad \quad 
\end{equs}
and we are solving for the unknown $\psi: [0,1] \times \T^n \rightarrow \R$. 
Here and in what follows we write  $x=(x_0,{\tt x})\in[0,1]\times\T^n$ with $\ttx=(\ttx_1,\dots,\ttx_n)$, in particular throughout the paper $n$ always denotes our dimension of space. 
Above $\xi$ is rough Gaussian noise, which we now make precise. 
\begin{assumption}
We take $\xi$ to be a Gaussian noise over $\R \times \T^n$ with covariance $$\Cov(x)\equiv\E[\xi(x)\xi(0)]\eqdef\delta_{\R}(x_0)\,(1-\Delta_{\ttx})^{1-n/2-\alpha}(\ttx)\,,$$ 
where, for $n \geqslant 2$, we require $\alpha \in (0,1]$ while, for $n = 1$, we require $\alpha \in (1/4,1]$. 
Above, $\delta_{\R}$ denotes the Dirac delta distribution on $\R$. 

Note that when $(n,\alpha)=(1,1/2)$, we take $\xi$ to be a space-time white noise, that is $\Cov=\delta_{\R \times \T^{n}}$.
\end{assumption}\label{assump:noise}
Throughout the paper, given $\beta \in \R \setminus \Z$, $\CC^{\beta}$ will denote either a space or parabolic space-time H\"{o}lder-Besov space with regularity exponent $\beta$ (see Section~\ref{subsec:notation}).
We will then have $\xi \in \CC^{\alpha -\eta }(\R \times \T^n)$ for every $\eta>0$. 
The $b$, $d_i$, $g_{ij}$ and $h$ will be taken to be sufficiently regular functions on $\R$, see Theorem~\ref{thm:main} for details. 

The equation \eqref{eq:eqFormal} is singular (i.e. classically ill-posed) for $\alpha \leqslant 1$ since, in this case, $\psi$ is not expected to have sufficient regularity to allow any of the products on the RHS of \eqref{eq:Formalaction} to be canonically defined. 
The class of gKPZ equations has been an important test case for singular SPDE methods, gKPZ was a motivating example driving the development of a full local well-posedness theory within the theory of regularity structures \cite{Hai14,CH16,BHZ19,BCCH21}. 
The gKPZ equation also naturally appears in the local description of the stochastic evolution of loops on manifolds and studied in a geometric context in the landmark paper \cite{StringManifold}. 

Beyond regularity structures, gKPZ has also been studied in \cite{KM17}\footnote{For a less general equation, the nonlinear dependence on $\psi$ was required to be polynomial in this article.} via discrete Wilsonian renormalization group (RG) and \cite{BB19} via paracontrolled calculus. 
There has also been work on the convergence of discretized gKPZ in \cite{BN22} and quasilinear gKPZ in \cite{BGN24,bailleul2023regularity}. 

\begin{remark}
The equation \eqref{eq:eqFormal} is ``subcritical'' or ``super-renormalizable'' when
$\alpha > 0$ -- this is a crucial assumption required for frameworks that provide probabilistically strong local well-posedness for such singular SPDE, such as regularity structures \cite{Hai14}, paracontrolled calculus \cite{GIP15}, path-wise renormalization group methods \cite{Kupiainen2016,Duch21}, and spectral gap methods \cite{OSSW21,LOTT21}. 

The additional constraint $\alpha>1/4$ for $n=1$ is not tied to subcriticality, but instead rules out diverging variances for stochastic objects appearing in the description of the solution, this phenomena means there is little hope for a probabilistically strong solution theory - see \cite{Hairer24}.
\end{remark}

\begin{remark}
Another family of subcritical gKPZ equations can be obtained by fixing $n=2$, $\xi$ to be space-time white noise and, analogously to \cite{Duch21}, tuning the roughness of the equation by using fractional Laplacians in the linear part of \eqref{eq:eqFormal}. 
Replacing $(\partial_{x_0}  - \Delta_{\ttx})$ with $(\partial_{x_0} + (- \Delta_{\ttx})^s)$, the full subcritical regime for this equation would correspond to $s > 1$. 

While switching to this setting would create many changes in power-counting calculations throughout this entire article, in principle arguments along the same lines as ours should still allow one to prove local well-posedness for all $s > 1$. 

We chose not to work in this setting since the present one more easily includes the standard gKPZ equation where one takes $n=1$ and $\alpha = 1/2$. 
\end{remark}

\begin{remark}
Assumption~\ref{assump:noise} is quite rigid, but our proof would easily apply to any stationary space-time Gaussian noise $\xi$ with covariance
kernel $\Cov(x)$ bounded by $(|x_0|^{1/2}+|\ttx|)^{-(4-2\alpha)}$ for small $(x_0,\ttx)$, along with analogous bounds on
space-time derivatives of $\Cov$ (see Remark~\ref{rem:Rem4_16}).
\end{remark}

\subsection{Main results}
The local well-posedness theory for singular SPDE like \eqref{eq:eqFormal} is often formulated in terms of a regularization and renormalization procedure.
For $\eps \in (0,1]$ we consider regularized equations
\begin{equs}\label{eq:eqDef}
    \big(\partial_{x_0}-\Delta_{{\tt x}}\big)\psi_\eps(x)&=S_\eps[\psi_\eps](x)\,,\\    \psi_\eps(0,\ttx)&=\varpi_\eps(\ttx)\,,
\end{equs}
where $\varpi_\eps$ is a smooth function and
\begin{equs}  \label{eq:Feps}  S_\eps[\psi_\eps](x)&\eqdef\; 
b(\psi_\eps(x))+\sum_{i\in[n]}d_i(\psi_\eps(x))\partial_{\ttx_i}\psi_\eps(x)   +
    \sum_{i,j \in [n]}
    g_{ij}(\psi_\eps(x))\partial_{\ttx_i}\psi_\eps(x)\partial_{\ttx_j}\psi_\eps(x)\textcolor{white}{lal}\\&\quad\;+h(\psi_\eps(x))\xi_\eps(x)+\mfc_\eps(\psi_\eps(x),\partial_{\ttx}\psi_\eps(x))\,.
\end{equs}
Here $\xi_\eps\eqdef\rho_\eps*\xi$ is a mollification of the noise $\xi$, where $\rho_{\eps}(x) = \mcS_\eps\rho(x)\eqdef\eps^{-(n+2)}\rho(x_0/\epsilon^2,{\ttx}/\eps)$ and $\rho \in C^{\infty}(\R \times \T^{n})$ is even in space, compactly supported, and satisfies both $0 \leqslant \rho \leqslant 1$ and $\int_{\R \times \T^{n}} \rho = 1$.    
The $\mfc_\eps$ appearing  above is a renormalization counterterm, it is a linear combination of explicit functions of $\psi$ and $\partial_{\ttx} \psi$ with prefactors that are allowed to diverge as $\eps \downarrow 0$ - see \eqref{eq:counterterm}. 

\begin{notation}
In order to give our assumptions on the initial condition, we need to introduce the following three parameters depending only on $\alpha$. We set
\begin{equs}
    \Gamma\eqdef\lfloor 4/\alpha\rfloor\,,\quad\delta\eqdef-2+\alpha+\frac{\alpha}{2}(\Gamma+1)\,,\quad\text{and}\quad\kappa_0\eqdef(\alpha/2)\wedge\big(\delta/(2\Gamma+2)\big)\,.
\end{equs}
$\Gamma$ is a large integer, $\delta$ is strictly larger than $\alpha$, and $\kappa_0$ is a small parameter.
\end{notation}

We formulate two different assumptions our our initial data: one in the general case of the gKPZ equation, and one less restrictive in the case of the generalised parabolic Anderson model (gPAM) that is to say when $d_i=0$ and $g_{ij}=0$ for every $i,j\in[n]$.
\begin{assumption}\label{assump:IC}
In the general case, we assume that there exists $\kappa\in(0,\kappa_0]$ such that $\varpi_\eps$ converges to $\varpi$ in $\mcC^{\alpha-\kappa}(\T^n)$ as $\eps\downarrow0$.
\end{assumption}
\begin{assumption}\label{assump:IC1}
If $d_i=0$ and $g_{ij}=0$ for every $i,j\in[n]$, we make a slightly more general choice of initial data and only assume that $\varpi_\eps$ converges to $\varpi$ in $L^\infty(\T^n)$ as $\eps\downarrow0$.
\end{assumption}

For fixed $\eps > 0$, the equation \eqref{eq:eqDef} is always locally well-posed  by classical arguments since everything is smooth, our local well-posedness result is that there is a choice of local counterterms $\big(\mfc_\eps: \eps \in (0,1] \big)$, given in \eqref{eq:counterterm}, depending on $\psi_\eps$ and $\partial_{\ttx}\psi_\eps$, such that the corresponding solutions $\psi_{\eps}$ converge as $\eps \downarrow 0$ to a non-trivial limit. 
Below, and throughout the paper, we denote by $H\varpi_\eps(x)\eqdef e^{x_0\Delta}\varpi_\eps$ the heat flow of the initial condition $\varpi_\eps$, and we write $C^{k}(\R)$ for the standard Banach space of $k$-fold continuously differentiable functions on $\R$. 

We first state our main result in the general case.
\begin{theorem}\label{thm:main}
Fix $\alpha\in \big( 0\vee(1/2-n/4),1 \big]$ and $\kappa\in(0,\kappa_0]$. 
Suppose that for $1 \leqslant i \leqslant j \leqslant n$, we have $b, d_i, g_{ij}, h \in C^{1+\Gamma+3N_1^{3\Gamma+1}}(\R)$, and that $(\varpi_\eps)_{\eps\in[0,1]}$ is as per Assumption~\ref{assump:IC}.

Then, for the counter-terms $(\mfc_{\eps})_{\eps\in(0,1]}$ as given in \eqref{eq:counterterm}, there exists a random variable $0 < T \leqslant 1$ as follows: for any deterministic $\tilde{T} \in (0,1]$, on the event $\{\tilde{T} \leqslant T \}$, \eqref{eq:eqDef} is well posed on $\CC^{\alpha -\kappa }([0,\tilde{T}] \times \T^n)$  with solution $\psi_{\eps}$ which converges to a limit $\psi$ in $\CC^{\alpha -\kappa}( [0,\tilde{T}] \times \T^n)$ as $\eps \downarrow 0$.
\end{theorem}
In the case of gPAM we can start with rougher initial data, and our main result is as follows.
\begin{theorem}\label{thm:maingPAM}
Fix $\alpha\in \big( 0\vee(1/2-n/4),1 \big]$. 
Suppose that for $1 \leqslant i \leqslant j \leqslant n$, we have $d_i=0$, $g_{ij}=0$ and $b, h \in C^{1+\Gamma+3N_1^{3\Gamma+1}}(\R)$, and that $(\varpi_\eps)_{\eps\in[0,1]}$ is as per Assumption~\ref{assump:IC1}.

Then, for the counter-terms $(\mfc_{\eps})_{\eps\in(0,1]}$ as given in \eqref{eq:counterterm}, there exists a random variable $0 < T \leqslant 1$ as follows: for any deterministic $\tilde{T} \in (0,1]$, on the event $\{\tilde{T} \leqslant T \}$, \eqref{eq:eqDef} is well posed on $L^\infty([0,\tilde{T}] \times \T^n)$. Furthermore, for any $\eta>0$, $\psi_{\eps}-H\varpi_\eps$ converges to a limit $\psi-H\varpi$ in $\CC^{\alpha -\eta}( [0,\tilde{T}] \times \T^n)$ as $\eps \downarrow 0$. In particular, $x_0^{(\alpha-\eta)/2}\psi(x_0,\bigcdot)$ lies in $\mcC^{\alpha-\eta}(\T^n)$ uniformly in $x_0>0$.
\end{theorem}
\begin{proof}[of the main Theorems]
The fact that the regularized solution $\psi_\eps$ belongs to $\mcC^{\alpha-\kappa}([0,\tilde T]\times\T^n)$ uniformly in $\eps>0$ is the statement of Lemma~\ref{lem:solutionconstruction}. We discuss the convergence of $\psi_\eps$ as $\eps\downarrow0$ in Remark~\ref{rem:convergenceSol}. 
The bounds in Corollary~\ref{coro:1} and the definition of $\mcG$ in Definition~\ref{def:mcG} show that $b,d_{i},g_{ij},h \in C^{1+\Gamma+3N_1^{3\Gamma+1}}(\R)$ suffices.

The fact that in the case of gPAM Assumption~\ref{assump:IC} can be improved to Assumption~\ref{assump:IC1} stems from the following: the regularity of the initial condition in only used in Lemma~\ref{lem:harmoex} below, that contains two different kinds of bounds. On the one hand, \eqref{eq:harmoniccompletion2} and \eqref{eq:harmoniccompletion} only assume that the initial condition is bounded, while on the other hand \eqref{eq:harmoniccompletionwithderivative} and \eqref{eq:harmoniccompletion3}  require more regularity. However, it turns out that these last two bounds, that allow to bound the space derivatives of the quantities controlled in \eqref{eq:harmoniccompletion2} and \eqref{eq:harmoniccompletion} are only necessary when we need to control the gradient of the argument of the effective force, that is to say when our ansatz for the effective force contains some derivatives. Since in the case of gPAM the ansatz made in \ref{eq:ansatzF} and \eqref{eq:ansatzFa} does not contain any derivative, we then do not need to rely on \eqref{eq:harmoniccompletionwithderivative} and \eqref{eq:harmoniccompletion3}.
\end{proof}

Our main statements allow us to restart the equation from the solution. Therefore, going from local in time existence to maximal in time solutions using our analysis is straightforward. 

The fact that the limiting solution $\psi$ is non-trivial (i.e. not simply a constant or just the solution to the linear equation) is not hard to infer from the detailed local descriptions of $\psi_{\eps}$ and $\psi$ obtained from our local solution theory, we do not include these arguments here.

\subsection{The flow approach}\label{sec_intro_flowapproach}
For clarity we neglect the initial data in this subsection, that is we set $\varpi_\eps= \varpi=0$.

The pathwise RG approach to singular SPDE involves introducing an ``effective scale'' $\mu \in [0,1]$ in addition to the regularization scale $\eps$, and an associated family of regularized Green's functions $\big( G_{\mu}: \mu \in [0,1] \big)$ with $G_{0} = (\partial_{x_0}  - \Delta_{\ttx})^{-1}$ and $G_{1} = 0$.
The Green's function $G_{\mu}$ suppresses space (resp. time) scales $\lesssim$ $\mu $ (resp. $\mu^2$). 
The space-time field $\psi_{\eps,\mu}$ defined by 
\begin{equs}\label{eq:eqDefreg}
\psi_{\eps,\mu} (x)&= G_{\mu} S_\eps[\psi_\eps](x)
\end{equs}
is then a regularization of $\psi_{\eps}$ suppressing parabolic scales $\lesssim$ $\mu $. 

Control over the $\eps \downarrow 0$ limit of $\psi_{\eps}$ on small space-time scales (i.e. local well-posedness) 
can be obtained by establishing, for some\footnote{We will need to impose $\mu$ is very small, which will lead to existences for small times} $\mu > 0$, \textit{uniform in} $\eps$ control over $\psi_{\eps,\mu}$. 
The philosophy of RG is to obtain this type of control by first deriving an ``effective'' equation for $\psi_{\eps,\mu}$ and then obtaining good analytic control on how the effective equation evolves when $\mu$ is increased. 
The ``effective equation'' here, introduced in the context of singular SPDE by \cite{Kupiainen2016}, is given by
\begin{equs}\label{eq:eqDefreg2}
\psi_{\eps,\mu} (x)&= G_{\mu} S_{\eps,\mu}[\psi_{\eps,\mu}](x)\;,\\
\end{equs}
where we enforce that 
\begin{equ}\label{eq:force_identity}
S_{\eps,\mu}[\psi_{\eps,\mu}]
=
S_{\eps}[\psi_{\eps}]\;.
\end{equ}
The objects such as $S_{\eps,\mu}$ or $S_{\eps}$ are called forces, they are mappings on space-time fields that play the role of the RHS of an equation such as \eqref{eq:eqDefreg2}

The work \cite{Kupiainen2016} analyzed a discretized dynamic $S_{\eps,\mu_{n}} \mapsto S_{\eps,\mu_{n+1}}$ that preserved \eqref{eq:force_identity} in the context of a specific singular SPDE - the stochastic $\Phi^4_3$ equation given by
\[
(\partial_{x_0} - \Delta_{\ttx}) \psi = - \lambda \psi^3 + \xi\;,
\]
where formally $\psi:[0,\infty) \times \T^3 \rightarrow \R$ and $\xi$ is space-time noise on $[0,\infty) \times \T^3$. 

However, much of the analysis of \cite{Kupiainen2016} had to be done by hand, which made treating more irregular equations impractical. 
A major breakthrough came in \cite{Duch21}, the starting point of which is to take derivatives in $\mu$ of \eqref{eq:force_identity} to get the following continuous dynamic in the space of (random) forces:
\begin{equs}\label{eq:polch}
\partial_{\mu} S_{\eps,\mu}[\bigcdot]
&=
- \scal{\D S_{\eps,\mu}[\bigcdot], \dot{G}_{\mu} S_{\eps,\mu}[\bigcdot]}\\
S_{\eps,0}[\bigcdot] &= S_{\eps}[\bigcdot]\;.
\end{equs}
The idea of using a continuous change in scale in RG analysis first appeared in \cite{Pol84} in Quantum Field Theory, and the dynamic \eqref{eq:polch} is called a ``Polchinski flow''.
The major contribution of \cite{Duch21} was in implementing this idea in the context of solving a singular SPDE and showing that much of the analysis done by hand in \cite{Kupiainen2016} could be systematized in this continuous RG approach. 
We mention that continuous Polchinski flows have also been applied to some closely related problems involving renormalization and stochastic analysis - see \cite{BBBsurvey,BCGNote}. 

A crucial step in obtaining detailed control of the infinite dimensional dynamic \eqref{eq:polch} is choosing a suitable set of ``coordinates''.
The coordinates allow us to parametrize the space of forces and show that, as $\mu$ increases, the evolution can diverge in a (only) finite subset of components (called relevant components) and is contracting in all other components (called irrelevant components).
The fact that we can find coordinates so there are only finitely many relevant components is closely related to the assumption of the local-subcriticality.
Control over the flow is obtained by choosing initial data $S_{\eps}[\bigcdot]$ which diverges in relevant coordinates as $\eps \downarrow 0$ to compensate for the behaviour of the flow in these components -- this tuning of initial data corresponds to renormalization. 

The decomposition/coordinates used by \cite{Kupiainen2016},\cite{Duch21} are of ``polynomial type'' - they correspond to Taylor expanding $S[\psi]$ in $\psi$ (and its space-time derivatives) about the space-time field $\psi(\cdot) = 0$ and also expanding in $\lambda$ about $\lambda = 0$. 

However, all past work using RG methods for singular SPDE has treated equations where the force appearing in the singular SPDE depends on the solution and its derivatives only polynomially. 
Such polynomial coordinates seem to be very ill-suited to equations like \eqref{eq:eqFormal} where the right hand side is non-polynomial.
Even when restricting to smooth $g_{ij},h,p_{i}$ and $b$, trying to reduce to polynomial coordinates for the flow by performing a Taylor series in $\psi$ gives an initial force which appears to be intractable for controlling the flow \eqref{eq:polch}. 

The main contribution of the present article is to show that this obstruction can be bypassed if, instead of using polynomial coordinates, we use as coordinates an analog of \textit{elementary differentials}. 

Elementary differentials first appeared in the work of Cayley \cite{Cay1857}, and more recently attracted attention in the context of the approximation of solutions to ODEs by $B$-series \cite{Butcher72,hairer74}, the analysis of rough ODEs \cite{Trees} by branched rough paths, and related path-wise methods for singular SPDE \cite{BCCH21,BrunedBSeries}.
As an illustrative example, suppose one has a driven ODE of the type $\rmd y_{t} = f(y_t)\rmd X_t$, then combining Picard iteration with Taylor expanding $f$ leads one to expect that  for $s,t \in \R$ close together, one has a good local expansion 
\[
y_{t} - y_{s} \approx 
\sum_{a \in \N^\N, |a| < \infty }
\Upsilon^{a}(f)(y_s) 
\mathbf{X}^{a}_{t,s}\;.
\]
Above, for each multi-index $a$, $\mathbf{X}^{a}_{t,s}$ is an ``increment'' of a linear combination of iterated integrals of $X$. 
All the dependence on $y$ on the right hand side is encoded via the elementary differentials $\Upsilon^{a}(f)(y_s)$ defined as follows. 
For $a = (a_0,a_1,\dots)$ we set  
\[
\Upsilon^{a}(f)(\cdot)
=
\prod_{i \in \N}
\big(
f^{(i)}(\cdot) \big)^{a_i}\;.
\]
Elementary differentials are often indexed by trees rather than multi-indices, but much of this structure carried over to multi-indices when working with equations with scalar valued equations -- see \cite{LOTAlgebra,JZ23}. 
Since \eqref{eq:eqFormal} is a scalar equation, and since the non-linear term in the Polchinski flow equation is easier using multi-indices, we also adopt this approach here.
In the work \cite{CFODEs} we use the Polchinski flow approach to study a vector valued stochastic differential equation and index the flow coordinates by trees.  

We also note that Polchinski flow is non-local. 
For this reason we use a non-local version of the elementary differentials associated to \eqref{eq:eqFormal} as coordinates, see \eqref{eq:elem_differential}.  

We also follow an idea of \cite{GR23} (also developed in \cite{Duch23}) and study an approximate form of \eqref{eq:polch} with an associated remainder problem.
Instead of constructing a trajectory of forces $\big(S_{\eps,\mu}[\bigcdot] \big)_{ \mu \in (0,1]}$ satisfying \eqref{eq:force_identity}, we allow ourselves to postulate some trajectory of forces $\big(F_{\eps,\mu}[\bigcdot] \big)_{\mu \in (0,1]}$ and then derive a dynamic in the scale $\mu$ for a remainder space-time field $R_{\eps,\mu}$ by enforcing
\begin{equ}\label{eq:effectiveS}
F_{\eps,\mu}[\psi_{\eps,\mu}] + R_{\eps,\mu}
=
S_{\eps}[\psi_{\eps}] \;.
\end{equ}
In practice we choose $F_{\eps,\mu}$ to satisfy a truncated, finite dimensional, form of \eqref{eq:polch} chosen to give give an equation for $R_{\eps,\mu}$ that we can control without any further expansion into coordinates. 
The dynamic for $F_{\eps,\mu}$ contains all the relevant parts of the flow and it is also where all the renormalization is carried out. 

In \cite{GR23} the use of an approximate flow was crucial to prove global in space and time well-posedness. In our case, we need it in order to take the effect of the initial condition into account, see Remark~\ref{rem:newidea} below.

A related difference with \cite{GR23} is the following. While we will be able to construct forces $F_{\eps,\mu}$ for the full range $\mu \in (0,1]$, we will only be able to construct the remainder $R_{\eps,\mu}$ for $\mu \in (0,\mu_{T}]$ where $\mu_{T} \in (0,1]$ is a random scale. 
This is in analogy to how, in regularity structures, the ``model'' can be constructed globally over space and time but the distinctions between the problems of global versus local-posedness are encountered when solving a remainder equation. 

\subsection*{Acknowledgements}

{\small
LF thanks Sarah-Jean Meyer for a very helpful discussion. 
AC gratefully acknowledges partial support by the EPSRC through the Standard Grant “Multi-Scale Stochastic Dynamics with Fractional Noise” EP/V026100/1 and also the “Mathematics of Random Systems” CDT EP/S023925/1. 
The authors thank Pawe\l{} Duch for pointing out some errors in an earlier version of the article. 
}

\subsection{Notations}\label{subsec:notation}
We will write $\mcH$ for the Hilbert space $\R^n$ endowed with its usual inner product.

We let $\T^n\eqdef(\R/\Z)^n$ denote the $n$ dimensional torus. For $a<b\in\R$, we write $\Lambda_{a;b}\eqdef[a,b]\times\T^n$, and also write $\Lambda\eqdef(-\infty,1]\times\T^n$.

We often consider functions $\psi$ taking values in a finite dimensional vector space $\mcE$ (typically a Hilbert space) coming with a canonical basis $(e_i)_{i\in[\mathrm{dim}(\mcE)]}$. 
We write $\psi=\sum_{i\in[\mathrm{dim}(\mcE)]}\psi^i e_i$. For $p,q\in[1,\infty]$, and endow $\mcD(\Lambda_{a;b},\mcE)$ with the norms
\begin{equs}    \norm{\psi}_{L^p_{a;b}}\eqdef\max_{i\in[\mathrm{dim}(\mcE)]}\bigg(\int_{\Lambda_{a;b}}|\psi^i(x)|^p \rmd x\bigg)^{1/p}\,,\;\norm{\psi}_{L^{p,q}_{a;b}}\eqdef\max_{i\in[\mathrm{dim}(\mcE)]}\bigg(\int_{\T^n}\Big(\int_{[a,b]}|\psi^i(x)|^q\rmd x_0\Big)^{p/q}\rmd\ttx\bigg)^{1/p}\,,
\end{equs}
with the usual understanding when $p$ or $q$ is equal to $\infty$. 
We again drop the subscript $a;b$ when $\Lambda_{a;b}$ is replaced with $\Lambda$.

Throughout the paper, we repeatedly identify distributions $K\in\mcD'(\Lambda)$ with operators
    \begin{equs}
     K:\mcD(\Lambda)\ni F\mapsto   K(F)\equiv KF\eqdef K*F=\int_{\Lambda}K(\bigcdot-z)F(z)\rmd z\,.
    \end{equs}
We endow such operators $K$ with the norms
\begin{equs}    \norm{K}_{\mcL^{p,\infty}_{a;b}}\equiv\norm{K}_{\mcL(L^p_{a;b},L^\infty_{a;b})}\,,\;\norm{K}_{\mcL^{(p,q),\infty}_{a;b}}\equiv\norm{K}_{\mcL(L^{p,q}_{a;b},L^\infty_{a;b})}\,.
\end{equs}
Note that whenever the kernel of $K$ is sufficiently integrable, then the $\mcL_{a;b}^{p,\infty}$ norm of the operator $K$ corresponds to the $L^{p/(p-1)}_{a;b}$ norm of the kernel $K$.

We call an element $\mfl=(\mfl_0,\dots,\mfl_n)$ of $\N^{n+1}$ an (space-time) index\footnote{We will not call such an element a ``multi-index'' since we use this term for different objects,}, and we identify  $\N^{n} \hookrightarrow \N^{n+1}$ by mapping $(\mfm_1,\dots,\mfm_n) \mapsto (0,\mfm_1,\dots,\mfm_n)$. 
We use the standard factorial notation $\mfl!\eqdef\prod_{i=0}^n\mfl_i!$ and also endow indices $\mfl$ with a (parabolic) size $|\mfl|\eqdef 2\mfl_0+\sum_{i\in[n]}\mfl_i$. 
We extend these notions to collections of indices $\tilde{\mfl} = (\mfl_j: j \in J)$ with $\mfl_j \in \N^{n+1}$ by setting $\tilde{\mfl}! \eqdef \prod_{j \in J} \mfl_{j}!$ and $|\tilde{\mfl}| = \sum_{j \in J} |\mfl_{j}|$.

We associate indices with differential operators on space-time in the standard way, writing $\partial^{\mfl}\eqdef\partial_{x_0}^{\mfl_0}\prod_{i=1}^n\partial_{\ttx_i}^{\mfl_i}$. 
For $\mfl\in\N^n$, we write $\partial_{\ttx}^\mfl\eqdef\d^\mfl$ to recall the fact that the derivatives are in space only. 
We view $\partial_{\ttx}\equiv\partial_{\ttx}^1=(\partial_{\ttx_1},\dots,\partial_{\ttx_n})$ acting on functions as producing something $\mcH$-valued. 
In particular we write $\norm{\partial_{\ttx}\psi}_{L^\infty}$ for $\max_{i\in[n]}\norm{\partial_{\ttx_i}\psi}_{L^\infty}$.

Given any time weight $u\in C^\infty(\R)$ and any function $\psi\in\mcD(\Lambda)$, we write $u\psi(x)\eqdef u(x_0)\psi(x)$. 
For a linear time weight we introduce the notation $\t(x_0)=x_0$ and write, for $k \in \N$,  $\t^{k}\psi(x)\eqdef (x_0)^{k} \psi(x)$.

Below, and from now on, we often just write $\Delta_{\ttx}$ instead of $\Delta$ for the spatial Laplacian. We define, for $x \in \Lambda$, the kernel 
$$G(x)\eqdef\1_{(0,\infty)}(x_0) e^{-x_0(-\Delta)}({\tt x})$$ of the fundamental solution $G$ associated to the differential operator $ (\partial_{x_0}  - \Delta)$. 

We recall the definition of the kernel $K_{N,\mu}$, following the Section 4 of \cite{Duch21}.
\begin{definition}
Fix $\mu\in(0,1]$. For $x\in\Lambda$, we define
    \begin{equs}
        Q_\mu(x)\eqdef \mu^{-2}e^{-x_0/\mu^{2}}  \1_{(0,\infty)}(x_0)\big(1-\mu^{2}\Delta\big)^{-1}({\tt x})
        \;.
    \end{equs}
Observe that $Q_\mu$ is the fundamental solution for the differential operator $\mcP_\mu\eqdef \big(1+\mu^{2}\partial_{x_0}\big)\otimes\big(1-\mu^{2}\Delta\big)   $ on $\Lambda$, with null initial condition at $x_0=-\infty$.

We point out that $\mcP_{\mu}$ contains a first derivative in time, $\mcP_\mu$ is not self adjoint.
Since we will often use integration by parts with $\mcP_{\mu}$, we also define $\mcP_\mu^\dagger  \eqdef \big(1 - \mu^{2}\partial_{x_0}\big)\otimes\big(1-\mu^{2}\Delta\big) $.

We sometimes write $Q_\mu$ for the operator given by convolution with the kernel $Q_\mu(x)$ on $\mcD(\Lambda)$ with the convention that $Q_0=\delta$.\\
We now introduce the kernels we use to test scale-dependent quantities. 
Given $N\in\N_{\geqslant1}$ we define the operator $K_{N,\mu}$ by
\begin{equs}\label{eq:defKmu}
    K_{N,\mu}\eqdef Q_\mu^{*N}\,.
\end{equs}
Note that $K_{N,\mu}$ is the fundamental solution for $\mcP^N_\mu$.
With $N_1$ given as in Theorem~\ref{coro:2}, we will use the shorthand notations 
\begin{equ}
K_\mu\eqdef K_{N_1^{3\Gamma+1},\mu}\;,
\enskip \mcR_\mu\eqdef\mcP_\mu^{N_1^{3\Gamma+1}}\,,
\text{  and  }
\mcR_\mu^{\dagger} \eqdef \big(\mcP_\mu^{\dagger} \big)^{N_1^{3\Gamma+1}}\;,
\end{equ}
along with 
\begin{equ}
K_{+,\mu}\eqdef K_{2+N_1^{3\Gamma+1},\mu}=K_{2,\mu}K_\mu
\text{  and  }\mcR_{+,\mu}\eqdef\mcP_\mu^{2+N_1^{3\Gamma+1}}\,,
\end{equ}
Lastly, for $\lambda\geqslant\tau$, we write $\tilde K_{\lambda,\tau}\eqdef\mcR_\tau K_\lambda$. 
\end{definition}

Given $\beta<0$, we define the parabolic Hölder-Besov space $\mcC^{\beta}\big((-\infty,b]\times\T^n\big))$ 
as the completion of compactly supported smooth functions on $(-\infty,b]\times\T^n$ under the norm
\begin{equs}    \norm{\bigcdot}_{\mcC^\beta((-\infty,b]\times\T^n)}\eqdef\sup_{\mu\in(0,1]}\mu^{-\beta}\norm{K_{\lceil-\beta \rceil,\mu}\bigcdot}_{L^\infty((-\infty,b]\times\T^n)}\,.
\end{equs}
For $\beta\in(0,1)$ and $b>0$, we define $\mcC^{\beta}([0,b]\times\T^n)$ as the completion of smooth functions on $(-\infty,b]\times\T^n$ supported on $[0,b]\times\T^n$ under the norm
\begin{equs}    \norm{\bigcdot}_{\mcC^\beta([0,b]\times\T^n)}\eqdef\sup_{\mu\in(0,1]}\mu^{-\beta}\norm{(Q_\mu-\Id)\bigcdot}_{L^\infty([0,b]\times\T^n)}\,.
\end{equs}
We also define corresponding H\"older-Besov spaces over $\T^n$ where $K_\mu$ is replaced by $(1-\mu^2\Delta)^{-1}$.

Given $k \in \N$ and an interval $I \subset \R$ or $I = \T^{n}$, we write $C^k(I)$ for the usual Banach space of $k$-times continuously differentiable functions from $I$ to $\R$ For $t \geqslant 0$, we write $B_0(t)\eqdef[-t,t] \subset \R$. 

Finally, we will estimates from \cite[Lemma~10.52]{Duch21} on time localisation when proving Proposition~\ref{eq:prop1} and Theorem~\ref{thm:sto} so we restate it using our notation. 
\begin{lemma}
For any $N\in\N$, $p\in[1,\infty]$ and $\mu\in(0,1]$, we denote by $\mcW^p_{N,\mu}$ the set of all time weights $u_\mu\in \mcD(\R)$ verifying
\begin{equs}
  \max_{i\leqslant N} 
    \mu^{-2/p+2i}
    \norm{\d^i_t u_\mu}_{L^p}< \infty\,.
\end{equs}
Then, for every $\psi\in \mcD(\Lambda)$, $M\geqslant N$ and $q\in[1,\infty]$, uniform in $\mu\in(0,1]$ we have
    \begin{equs}\label{eq:1052a}      \norm{K_{M,\mu}\big(u_\mu\psi)}_{L^{\infty,q}}&\lesssim \norm{K_{M,\mu}\psi}_{L^{\infty,q}}\,,\;\text{provided} \; u_\mu \in\mcW^\infty_{N,\mu}\,,\\
  \label{eq:1052b}        \norm{K_{M,\mu}\big(u_\mu\psi)}_{L^{\infty,p}}&\lesssim\mu^{2/p} \norm{K_{M,\mu}\psi}_{L^\infty}\,,\;\text{provided} \; u_\mu \in\mcW^p_{N,\mu}\,.
    \end{equs} 
\end{lemma}

\section{Setting up the flow approach}\label{sec:flow_setup}
We start with some pre-processing to handle the initial data $\varpi_\eps$, writing
\begin{equs}
    \phi_\eps(x)\eqdef \delta(x_0)\varpi_\eps  (\ttx)\,.
        \end{equs}
We can then rewrite \eqref{eq:eqDef} as
\begin{equs}\label{eq:eqDef1}    \psi_\eps(x)&=G\big(\1^0_{(0,\infty)}S_\eps[\psi_\eps]+\phi_\eps\big)(x)\,,
\end{equs}
where here and in the sequel, for any $\psi:\R\times\T^n\rightarrow\R$ and Borel subset $A \subset \R$, we write $\1^0_{A}\psi(x)\eqdef\1_{A}(x_0)\psi(x)$.

Writing 
\begin{equ}
   F_\eps[\bigcdot]\eqdef \1^0_{(0,\infty)}S_\eps[\bigcdot]
\end{equ}
we then have 
\begin{equ}
    \psi_\eps(x)=G \big( F_\eps[\psi_\eps]+\phi_\eps\big)(x)\,,
\end{equ}
Note the functional $F_\eps[\bigcdot](t,\bigcdot)$ vanishes for $t \leqslant 0$.

\subsection{Introducing the effective scale and remainder flow}\label{sec:flow}
We now introduce the cut-off function $\chi$ we use to implement our effective scale-cutoff described in Section~\ref{sec_intro_flowapproach} along with the associated cut-off Green's functions $(G_{\mu})_{\mu \in (0,1]}$. 
\begin{definition}
Fix a smooth and increasing function $\chi:\R_{\geqslant0}\rightarrow[0,1]$ 
such that $\chi|_{[0,1]}=0$ and $\chi|_{[2,\infty)}=1$.
For any $\mu\in[0,1]$ and $t\geqslant0$, 
define $\chi_\mu(t)\eqdef\chi(t/\mu^2)$, with the convention that $\chi_0(t)=1$. 
    
For $\mu \in [0,1]$ we define the heat kernel $G_\mu$ by setting, for $x\in\Lambda$, 
   \begin{equs}\label{eq:DefGmu}
       G_\mu(x)\eqdef\chi_\mu(x_0)\1_{(0,\infty)}(x_0)e^{-x_0(-\Delta)}({\tt x}) \,. \end{equs} 
Note that $G_0=G$ and $G_1=0$. Finally, we write $\dot G_\mu(x)\eqdef\d_\mu G_\mu(x)$.

We also denote by $\bfX_0\dot G_\mu$ the operator with kernel $\bfX_0\dot G_\mu(x)\eqdef x_0\dot G_\mu(x)$, and for $k\in\{0,1\}$ we denote by $\bfX_0^k \dot G_\mu$ the operator equal to $\dot G_\mu$ when $k=0$ and to $\bfX_0\dot G_\mu$ when $k=1$.
\end{definition}
\begin{remark}
Several key estimates on the operator norms of the cut-off Green's are stated in Lemma~\ref{lem:dotG}. However, here we make a special note of important support properties of $G_\mu$ and $\dot G_\mu$:
    \begin{equs}\label{eq:suppGmu}        \text{supp}\,G_\mu\subset[\mu^2,1]\times\T^n\,,\;\text{and}\;\text{supp}\,\dot G_\mu \subset[\mu^2,2\mu^2]\times\T^n\,.
    \end{equs}
\end{remark}

For fixed $\eps \in (0,1]$ and a trajectory of ``effective forces'' $\big(F_{\eps,\mu}[\bigcdot] : \mu\in(0,1] \big)$ we define a trajectory of remainder random space-time fields $\big(R_{\eps,\mu} : \mu \in (0,1]\big)$ by requiring that for every $\mu \in (0,1]$ and $x \in \Lambda_{0;1}$,  
\begin{equs}\label{eq:FmuRmu}
  F_\eps[\psi_\eps](x)= F_{\eps,\mu}[\psi_{\eps,\mu}](x)+R_{\eps,\mu}(x)\,,
\end{equs}
where $\psi_\epmu$ is defined as
\begin{equs}   \psi_{\eps,\mu}(x)&\eqdef G_\mu \big(F_\eps[\psi_\eps]+\phi_\eps)(x)\label{eq:psimu}\,.
\end{equs}
Note that \eqref{eq:FmuRmu} and \eqref{eq:psimu} allow us to write 
\begin{equs}\label{eq:phi_mu}
    \psi_\epmu(x)=G_\mu\big( F_\epmu[\psi_\epmu]+R_\epmu+\phi_\eps\big)(x)\,.
\end{equs}
As described in Section~\ref{sec_intro_flowapproach}, we will in practice choose $F_{\eps,\mu}$ to be the solution to an approximate flow equation which is written below as \eqref{eq:Pol} with initial condition 
\begin{equs}\label{eq:F_0}
    F_{\eps,0}[\bigcdot]=F_\eps[\bigcdot]=\1^0_{(0,\infty)}S_\eps[\bigcdot]\;.
\end{equs}
$F_{\eps,\mu}[\bigcdot]$, as a function of the noise $\xi$, should be thought of as a fairly explicit polynomial enhancement of the noise. 
For fixed realizations of the noise $\xi$ and space-time, $F_{\eps,\mu}[\bigcdot]$ argument will take values in spaces of space-time fields that are as rough as the noise. 
On the other hand, $R_{\eps,\mu}$ will be an inexplicit random \textit{remainder}, obtained by closing a fixed point problem. We can think of the fixed point problem for $R_{\eps,\mu}$ as being solved in $L^{(\infty,1)}(\Lambda_{0;T})$, where $T$ is a small random time. 
This analysis for both $F_{\eps,\mu}[\bigcdot]$ and $R_{\eps,\mu}$ will be stable in $\eps$ as $\eps \downarrow 0$. 
\begin{remark}
Note that since $F_{\eps,0}=F_\eps$, we necessarily have $R_{\eps,0}=0$. Moreover, since $F_\eps[\bigcdot]$ is supported on positive times, so are $F_\epmu[\bigcdot]$ and $R_\epmu$.
\end{remark}
\begin{remark}
  Plugging the definition \eqref{eq:F_0} of $F_\eps$ into the definition \eqref{eq:psimu} of $\varphi_\epmu$ and using the support property \eqref{eq:suppGmu} of $G_\mu$ shows that for every $\mu\geqslant0$, $\varphi_\epmu$ is supported after time $\mu^2$. In particular, this implies that for any $T\in(0,1]$, writing $\mu_T\eqdef\sqrt{T}$, one has for every $x\in[0,T]\times\T^n$
  \begin{equs}
      \psi_\epmu(x)=G_\mu\phi_\eps(x)+(G_\mu-G_{\mu_T})F_\eps[\psi_\eps](x)=G_\mu\phi_\eps(x)-\int_\mu^{\mu_T}\dot G_\nu\big(F_\epnu[\psi_\epnu]+R_\epnu\big)(x)\rmd\nu\,.
  \end{equs}
\end{remark}
With all of this in place, we can formulate a dynamic in $\mu$ for $R_{\eps,\mu}$ that preserves the relationship \eqref{eq:FmuRmu} and allows us to solve for it scale by scale in $\mu$.  
Observing that
\begin{equ}
   0= \frac{\rmd}{\rmd\mu} F_\eps[\psi_\eps]=  \frac{\rmd}{\rmd\mu}\Big(F_{\eps,\mu}[\psi_{\eps,\mu}]+R_{\eps,\mu}\Big)=\d_\mu F_{\eps,\mu}[\psi_{\eps,\mu}]+\D F_{\eps,\mu}[\psi_{\eps,\mu}]\d_\mu\psi_{\eps,\mu}+\d_\mu R_{\eps,\mu}\,,
\end{equ}
and combining the latter with $\d_\mu\psi_{\eps,\mu}=\dot G_\mu\big(F_\epmu[\psi_\epmu]+R_{\eps,\mu}+\phi_\eps\big)$, which is just the  application of $\partial_{\mu}$ to  \eqref{eq:phi_mu}, we obtain on $[0,T]\times\T^n$ the system of equations
\begin{subequations}\label{eq:sys1}
  \begin{empheq}[left=\empheqlbrace]{alignat=2}
  \label{eq:flow}  
\d_\mu R_{\eps,\mu}&=-\D F_{\eps,\mu}[\psi_{\eps,\mu}]\dot G_\mu\big( R_\epmu+\phi_\eps\big)-\big(\d_\mu+\D F_{\eps,\mu}[\psi_{\eps,\mu}]\dot G_\mu\big) F_{\eps,\mu}[\psi_{\eps,\mu}]\,,\\\label{eq:phimu}
        \psi_\epmu&=G_\mu\phi_\eps -\int_\mu^{\mu_T}\dot G_\nu\big(F_\epnu[\psi_\epnu]+R_\epnu\big)\rmd\nu\,,
  \end{empheq}
\end{subequations}
for any $T\in(0,1]$.
The second term of the RHS of \eqref{eq:flow}, which is quadratic in  $F_{\eps,\mu}$, plays the role of a rough forcing term in the dynamic for $R_{\eps,\mu}$. 
However we will make sure it is not too rough by choosing $F_{\eps,\mu}$ to satisfy an approximate flow equation. 
We will furthermore tune the initial data $F_{\eps}$ (i.e., renormalize) so the cumulants of $F_{\eps,\mu}$ satisfies good running bounds in $\mu$, we will be able to use stochastic arguments to control the quadratic terms contributing to \eqref{eq:flow}.  
With this in hand we will solve~\eqref{eq:sys1} for $R$ using path-wise / deterministic analysis, but we will only be able to do so locally - for $\mu \in (0,\mu_{T}] \subset (0,1]$ for some random scale $\mu_{T} > 0$ --  this is enough to solve \eqref{eq:eqDef1} \textit{locally in time}.
\begin{remark} \label{rem:newidea}
We pause to discuss the strategy used in Section~\ref{Sec:Sec3} to solve the system \eqref{eq:sys1} \textit{with} the initial condition into account. 
In the parabolic case, in \cite{Duch21} the author places the initial condition inside the stochastic objects, but we believe that this is impossible when the regularity of the solution is positive (see Remark~\ref{rem:ICdiscussion} below for more comments about this possibility). We thus choose to integrate the initial condition to the remainder, which is why the system \eqref{eq:sys1} contains the terms $G_\mu\phi_\eps$, $\dot G_\mu\phi_\eps$ corresponding to the heat flow of the initial condition regularised at scale $\mu$. These terms are estimated in Lemma~\ref{lem:harmoex} below.

The aim is to solve \eqref{eq:sys2} by performing a fixed point argument. Moreover, since $R_{\eps,0}=0$, we must work in a topology in which $R_{\eps,\mu}$ vanishes as $\mu\downarrow0$. In \cite{GR23,Duch23}, the authors study a slightly different system in which $G_\mu\phi_\eps$, $\dot G_\mu\phi_\eps$ do not appear, and they are able to show that the quantity $\|K_{N,\mu} R_\epmu\|_{L^\infty}$ vanishes for small $\mu$. In our case, the additional term 
\begin{equs}
    -\D F_{\eps,\mu}[\psi_{\eps,\mu}]\dot G_\mu\phi_\eps
\end{equs}
present on the RHS of the equation for $\d_\mu R_\epmu$ makes it unlikely that $R_\epmu$ can be controlled in the $L^\infty$ norm. Indeed, because the force is an enhancement of the noise, we do not expect $\D F_{\eps,\mu}[\psi_{\eps,\mu}]$ to behave better than the noise, that is to say that it should blow up like $\mu^{-2+\alpha}$ as $\mu\downarrow0$. On the other hand, \eqref{eq:harmoniccompletion} (with $k=0$) shows that $\dot G_\mu\phi_\eps$ gives another factor $\mu^{-1}$: in total, the RHS of the equation for $\d_\mu R_\epmu$ thus blows up like $\mu^{-3+\alpha}$, which is not integrable! 

However, \eqref{eq:harmoniccompletion} with $k=1$ shows that the behaviour of this term improves drastically when it comes with a linear time weight. 
This suggests trying to control the quantity $\|K_{N,\mu}(\t R_\epmu)\|_{L^\infty}$, where we recall that $\t$ denotes a linear time weight. The problem is that $\|K_{N+2,\mu} R_\epmu\|_{L^\infty}$ still appears in the fixed point problem for $R_\epmu$. By Sobolev embedding, we know that this quantity is bounded by $\mu^{-2}\|K_{N,\mu} R_\epmu\|_{L^{(\infty,1)}}$. This is interesting because when the kernels $K_{N,\mu}$ are not here, for any $\eta>0$ it holds true that for smooth $f:[0,1]\rightarrow\R$,
\begin{equs}
    \|f(t)\|_{L^1_t}\lesssim \|t^{1-\eta}f(t)\|_{L_t^\infty}\,.
\end{equs}
Sadly, this property is not preserved when the kernels $K_{N,\mu}$ are present, because commuting $K_{N,\mu}$ and the time weight $t^{-1+\eta}$ creates lower powers of $t$ that are no longer integrable. This is why, in order to construct the remainder, we perform a fixed point using both $\|K_{N,\mu} R_\epmu\|_{L^{(\infty,1)}}$ and $\|K_{N,\mu}(\t R_\epmu)\|_{L^\infty}$ (see Definition~\ref{def:Rnorm} and Proposition~\ref{eq:prop1}).
\end{remark}
\begin{lemma}\label{lem:harmoex}
For every $\kappa\in(0,\kappa_0]$, $k\in\{0,1\}$ and $\mfl\in\N^n$ different from 0, uniform in $\mu\in(0,1]$ we have
    \begin{equs}
    \label{eq:harmoniccompletion2}
        \| \mcR_\mu^\dagger G_\mu \phi_\eps\|_{L^\infty_{0;1}}&\lesssim \|\varpi_\eps\|_{L^\infty_\ttx}\,,\\        
         \label{eq:harmoniccompletionwithderivative}
        \| \partial_\ttx^{\mfl}\mcR_\mu^\dagger G_\mu \phi_\eps\|_{L^\infty_{0;1}}&\lesssim \mu^{-|\mfl|+\alpha-\kappa}\|\varpi_\eps\|_{\mcC^{\alpha-\kappa}_\ttx}\,,        
        \\
    \label{eq:harmoniccompletion}
        \| \mcR_\mu^\dagger(\t^k\dot G_\mu \phi_\eps)\|_{L^\infty_{0;1}}&\lesssim \mu^{-1+2k}\|\varpi_\eps\|_{L_\ttx^\infty}\,,
        \\
            \label{eq:harmoniccompletion3}
        \| \partial_\ttx^\mfl\mcR_\mu^\dagger(\t^k\dot G_\mu \phi_\eps)\|_{L^\infty_{0;1}}&\lesssim \mu^{-1-|\mfl|+\alpha+2k-\kappa}\|\varpi_\eps\|_{\mcC_\ttx^{\alpha-\kappa}}\,.          \end{equs}
\end{lemma}
\begin{proof}
We first prove \eqref{eq:harmoniccompletion}. Observe that setting $\tilde\chi_k\eqdef -2\t^{1+k}\chi'$ we have
    \begin{equs}
        \t^k\dot G_\mu \phi_\eps(x)=\mu^{-1+2k}  \tilde\chi_k(x_0/\mu^2) e^{x_0\Delta}\varpi_\eps(\ttx)
    \end{equs}
Because $ \t^k\dot G_\mu \phi_\eps(x)$ only depends on $\ttx$ through the heat flow of $\varpi_\eps$, the effect of polynomials in $-\mu^2\Delta$ and $\mu^2\partial_{x_0}$ is the same, which is why we only discuss the action of the latter. Because $\tilde\chi_k$ is smooth and compactly supported, the action of $\mu^2\partial_{x_0}$ on $\tilde\chi_k(x_0/\mu^2)$ simply turns it into another smooth function of $x_0/\mu^2$ with same support. The conclusion is that there exist smooth functions $\big(\chi^{(i)}:i\in[4N_1^{3\Gamma+1}]\big)$ supported on $[1,2]$ such that
\begin{equs}
     \mcR_\mu^\dagger(   \t^k\dot G_\mu \phi_\eps)(x)=\mu^{-1+2k}  \sum_{i=0}^{4N_1^{3\Gamma+1}}\chi^{(i)}(x_0/\mu^2) \mu^{2i}(-\Delta)^ie^{x_0\Delta}\varpi_\eps(\ttx)\,.
\end{equs}
Therefore, by classical heat kernel estimates, \begin{equs}
      \| \mcR_\mu^\dagger(\t^k\dot G_\mu \phi_\eps)\|_{L^\infty_{0;1}}&\lesssim 
\mu^{-1+2k}  \max_{i\in[4N_1^{3\Gamma+1}]} \mu^{2i}\|e^{x_0\Delta}\varpi_\eps\|_{L^\infty_{\mu^2:1}\mcC^{2i}}      \lesssim\mu^{-1+2k}  \max_{i\in[4N_1^{3\Gamma+1}]} \mu^{2i}\|e^{\mu^2\Delta}\varpi_\eps\|_{\mcC_{\ttx}^{2i}}\\
&\lesssim\mu^{-1+2k}  \max_{i\in[4N_1^{3\Gamma+1}]} \mu^{2i}  \mu^{-2i}\|\varpi_\eps\|_{L_\ttx^\infty}\,.
\end{equs}
This concludes the proof of \eqref{eq:harmoniccompletion}.

To prove \eqref{eq:harmoniccompletion3}, note that if there are more derivatives, then we obtain 
\begin{equs}
      \|\d_\ttx^\mfl \mcR_\mu^\dagger(\t^k\dot G_\mu \phi_\eps)\|_{L^\infty_{0;1}}& \lesssim\mu^{-1+2k}  \max_{i\in[4N_1^{3\Gamma+1}]} \mu^{2i}\|e^{\mu^2\Delta}\varpi_\eps\|_{\mcC_{\ttx}^{2i+|\mfl|}}\lesssim\mu^{-1+2k}  \max_{i\in[4N_1^{3\Gamma+1}]} \mu^{2i}  \mu^{-2i-|\mfl|+\alpha-\kappa}\|\varpi_\eps\|_{\mcC_\ttx^{\alpha-\kappa}}\,.
\end{equs}

\eqref{eq:harmoniccompletion2} and \eqref{eq:harmoniccompletionwithderivative} follow in the same way, since 
\begin{equs}    G_\mu\phi_\eps(x)=\chi(x_0/\mu^2)e^{x_0\Delta}\varpi_\eps(\ttx)\,,
\end{equs}
so that the time in the heat kernel can always be traded for $\mu^2$. If $\mfl=0$, then proceeding as above one thus obtains
\begin{equs}
      \| \mcR_\mu^\dagger G_\mu \phi_\eps\|_{L^\infty_{0;1}}&\lesssim 
\max_{i\in[4N_1^{3\Gamma+1}]} \mu^{2i}\|e^{\mu^2\Delta}\varpi_\eps\|_{\mcC_{\ttx}^{2i}}\lesssim \|\varpi_\eps\|_{L^\infty_\ttx}\,.
\end{equs}
If $\mfl\neq0$ however, one ends up with
\begin{equs}
      \|\partial_\ttx^\mfl \mcR_\mu^\dagger G_\mu \phi_\eps\|_{L^\infty_{0;1}}&\lesssim 
\max_{i\in[4N_1^{3\Gamma+1}]} \mu^{2i}\|e^{\mu^2\Delta}\varpi_\eps\|_{\mcC_{\ttx}^{2i+|\mfl|}}\lesssim \|e^{\mu^2\Delta}\varpi_\eps\|_{\mcC^{|\mfl|}_\ttx}
\lesssim\mu^{-|\mfl|+\alpha-\kappa}\|\varpi_\eps\|_{\mcC^{\alpha-\kappa}_\ttx}
\,.
\end{equs}
\end{proof}
\subsection{Coordinates and the approximate flow}\label{sec:ansatz}
In this section we define our choice of $F_{\epmu}$ appearing in \eqref{eq:sys1}. 
As mentioned earlier, we will arrive at our definition of $F_{\epmu}$ by solving an approximate flow $\Pol^{\leqslant\Gamma}_{\mu}$, see \eqref{eq:defPol} and \eqref{eq:Pol}. 
The structure of the approximate flow facilitates the probabilistic analysis of Section~\ref{sec:Sec4} which is the key ingredient in proving the convergence of $F_{\epmu}$ as $\eps \downarrow 0$ and controlling its contribution to \eqref{eq:sys1}. 
Our definition of the approximate flow $\Pol^{\leqslant\Gamma}_{\mu}$  is done via truncation in terms of the coordinates we introduce in Section~\ref{subsubsec:coord}. 

We take a moment to point out some additional steps for using $\Pol^{\leqslant\Gamma}_{\mu}$ for the construction and analysis of $F_{\epmu}$ that are specific to studying an initial value problem -- they also appear in \cite{Duch21} but they do not appear in \cite{Duch22,Duch23} or \cite{GR23}\footnote{Here the authors do indeed study a parabolic dynamic but their focus is obtaining energy estimates at stationarity, there is no initial value problem.}.

We will obtain $F_{\epmu}$ by solving $\Pol^{\leqslant\Gamma}_{\mu}$ with initial data 
\[
F_{\eps,0}[\bigcdot] = F_\eps[\bigcdot]= \1^0_{(0,\infty)}  S_\eps[\bigcdot ] \;.
\]
The construction of $F_{\eps,\mu}$ is done in two steps:  
\begin{enumerate}
\item We first construct a \textit{stationary effective force} trajectory $S_{\eps,\mu}$ which solves a slightly different approximate flow $\Pol^{\leqslant\Gamma}_\mu \big( S_{\eps,\mu} \big) = 0$ with initial data 
\begin{equs}
    S_{\eps,0}[\bigcdot]=
S_{\eps}[\bigcdot]\;.
\end{equs}
The initial force $S_{\eps}$ is stationary in distribution over $\R \times \T^n$ and does not involve the initial data. 
This stationary effective force will be the focus of the probabilistic analysis of Section~\ref{sec:Sec4}, which also determines our renormalization -- working with the space-time stationary force here guarantees our counterterms do not depend on time or the initial data.

\item Then, we incorporate the multiplication of the indicator function $\1^0_{(0,\infty)}$ in our initial force by showing that losing stationarity in time does not produce any new divergences in our probabilistic analysis -- this is done in Section~\ref{Sec:Sec5}. 
\end{enumerate}

\begin{remark}\label{rem:ICdiscussion}
As discussed in remark \ref{rem:newidea}, we take the
initial condition into account by solving a system of equation for the remainder that takes the heat flow of the initial solution into account. In other words, we have integrated the initial solution to the remainder. 

    Here, we diverge from \cite{Duch21}, where the author rather integrates the initial condition to the stochastic objects, working with $S_\epmu[\bigcdot+G_\mu\phi_\eps]-\phi_\eps$, which also solves the flow equation. When the equation is polynomial, this allows him to obtain an optimal regularity for the initial condition.
    
    This relies on the fact that when the solution is of negative regularity say $\varsigma<0$, $\dot G_\mu\phi_\eps$ behaves like the propagation of the noise $\dot G_\mu\xi_\eps$ in the flow equation. 
    By this we mean that, uniform in $\mu$, 
    \begin{equs}\label{eq:negIC}
        \| \dot G_{\mu}\phi_\eps\|_{L^\infty_{0;1}}\lesssim\mu^{-1+\varsigma}\|\varpi_\eps\|_{\mcC^{\varsigma}(\T^n)}\,.
    \end{equs}
    The above estimate stems from the fact that $\dot G_\mu\phi_\eps(x)=\frac{\rmd}{\rmd\mu}\chi_\mu(x_0)e^{-x_0(-\Delta)}\varpi_\eps(\ttx)$ and that the support of $\frac{\rmd}{\rmd\mu}\chi_\mu$ in localized around $\mu^2$ which implies that we have
    \begin{equs}
          \| \dot G_{\mu}\phi_\eps\|_{L^\infty_{0;1}}\lesssim\mu^{-1}\|e^{\mu^2\Delta}\varpi_\eps\|_{L^\infty(\T^n)}\,,
    \end{equs}
so that \eqref{eq:negIC} follows by classical heat flow estimates.

As shown in the proof of Lemma~\ref{lem:harmoex}, the situation in the setting of positive regularity $\alpha>0$ solutions is quite different, as one has
 \begin{equs}
          \| \dot G_{\mu}\phi_\eps\|_{L^\infty_{0;1}}\lesssim\mu^{-1}\|e^{\mu^2\Delta}\varpi_\eps\|_{L^\infty(\T^n)}\lesssim\mu^{-1}\|\varpi_\eps\|_{\mcC^{\alpha}(\T^n)}\,,
    \end{equs}
and here we do not see any gain from assuming the initial condition is of regularity $\alpha > 0$. 
A standard approach would be to use time-weighted spaces, for instance we  have
\begin{equs}
          \| x_0^{\alpha/2}\dot G_{\mu}\phi_\eps\|_{L^\infty_{0;1}}\lesssim\mu^{-1+\alpha}\|\varpi_\eps\|_{L^\infty(\T^n)}\,.
    \end{equs}
However, we have not been able to close the flow equation argument in weighted spaces since this weight does not behave well with the action of $K_{\mu}$ which appears in our running bounds for the flow equation. This is why, rather than integrating it to the effective force, we have chosen to place the initial condition inside the remainder.
\end{remark}

\subsubsection{Multi-indices and elementary differentials}\label{subsubsec:coord}
In this section we introduce the coordinates for our approximate flow. 
 \begin{definition}
Given a finite dimensional vector space $V$, a $V$-valued \textit{local functional} $F:\mcD(\Lambda)\rightarrow\mcD(\Lambda,V)$ is a functional whose value $F[\psi](x)$ at $\psi\in\mcD(\Lambda)$ and $x\in\Lambda$ only depends on finitely many components of $\big( \partial_{\ttx}^{n} \psi(x) \big)_{n \in \N}$. 
When $V=\R$, we often just call $F$ a local functional. 

To lighten notation, we lift any function $f:\R\rightarrow V$ to a $V$-valued local functional defined by $f[\psi](x)\eqdef f(\psi(x))$.

For $i\in[n]$, we introduce the local functional ${\Partial}_i[\psi](x)\eqdef\d_i\psi(x)$, and view $\Partial=(\Partial_1,\dots,\Partial_n)$ as an $\mcH$-valued local functional.

\end{definition}
 \begin{definition}   
Let $F:\mcD(\Lambda)\rightarrow\mcD(\Lambda,V)$ be a Fréchet differentiable functional. 
Given $\psi,\vphi\in\mcD(\Lambda)$, we use the shorthand notation
$\D F[\psi]\vphi\eqdef\langle\D F[\psi],\vphi\rangle_{L^2(\Lambda)}$ to denote the Fréchet derivative $\D F$ of $F$ evaluated at $\psi$ and tested against $\vphi$. 
Observe that 
\begin{equs}
\D\Partial_i[\psi]\vphi=\Partial_i[\vphi]\,.
\end{equs}
\end{definition}

\begin{remark}
For convenience, when introducing our coordinates we will assume that all the functions $b$, $d_i$, $g_{ij}$, and $h$ appearing in \eqref{eq:Formalaction} are in fact smooth 
It will be clear that this smoothness assumption can be relaxed as described in our main theorem since only finitely many multi-indices/coordinates will appear in our analysis -- see Remark~\ref{rem:notsmooth}. 

We also view the families of functions $d=(d_i)_{i\in[n]}$ and  $g=(g_{ij})_{i,j\in[n]}$ as elements $d\in C^\infty(\R,\mcH)$ and $g\in C^\infty(\R,\mcH\otimes_s \mcH)$ -- here and in what follows $\otimes_{s}$ denotes symmetric tensor product. 
\end{remark}

\begin{definition}\label{def:usefullocalfunctionals}  
There is a natural bilinear map $(\mcH \otimes_{s} \mcH) \times \mcH \rightarrow \mcH$, and we use this map to define the $\mcH$-valued local functional
\begin{equs}
  f[\psi]\equiv  (g\Partial)[\psi]\eqdef g[\psi]\big(\Partial[\psi]\big)\,.
\end{equs}
Similarly, using the inner products on $\mcH$ and $\mcH \otimes_{s} \mcH$, we define the ($\R$-valued) local functionals 
\begin{equs}
   c[\psi]\equiv   (d\Partial)[\psi]\eqdef d[\psi]\big(\Partial[\psi]\big)\,,
   \qquad 
   e[\psi]\equiv   (g\Partial^2)[\psi]\eqdef g[\psi]\big(\Partial[\psi]\otimes\Partial[\psi]\big)\,,
\end{equs}
where the inner product is implied on both RHS appearing above. 

We can then rewrite \eqref{eq:Feps} as 
\begin{equs}
    S_{\eps}[\psi_\eps]=b[\psi_\eps]+c[\psi_\eps]+e[\psi_\eps]+h[\psi_\eps]\xi_\eps+\mfc_\eps[\psi_\eps]\,,
\end{equs}
for some local functional $\mfc_{\eps}$. 
\end{definition}
\begin{definition}
We let 
\begin{equs}
 \mcN_\Z\eqdef 
    \big\{ q = (q_{i})_{i=0}^{\infty} \in\Z^\N:\exists I\in\N,\forall i\geqslant I,q_i=0
    \big\}\,,\;\text{and}\;\mcN\eqdef\{q\in\mcN_\Z: q_i\in\N\;\forall i\geqslant0\}\,.
\end{equs}
For a sequence $q\in \mcN$, we define \textit{order} $\mfo(q)$ of $q$ and the \textit{size} $\mfs(q)$ of $q$ as
\begin{equ}
\mfo(q)\eqdef \sum_{i\geqslant0}i q_i\,,\;\text{and}\;\mfs(q)\eqdef\sum_{i\geqslant0}q_i\,.
\end{equ}
We write $\supp(q)\eqdef\{i\in\N:q_i\neq0\}$ for the \textit{support} of $q$, and call $\mfl(q)\eqdef\max \big( \mathrm{supp}(q) \big)$ the \textit{length} of $q$. 
We also write $[q]\eqdef\{(i,j)\in\N^2:i\in\text{supp}(q),j\in[q_i]\}$.
For $\tilde{\mcH} \in \big\{ \mcH,\mcH\otimes_s\mcH \big\}$ and $q \in \mcN$ we define the Hilbert space 
\begin{equs}
    \tilde{\mcH}^q
    &\eqdef \bigotimes_{(i,j)\in[q]}   
    \tilde{\mcH}\,.
\end{equs}
\end{definition}
\begin{definition}
Fix $q\in\mcN$, and a collection $y^q=(y^q_{ij})_{ij\in[q]}\in\Lambda^{[q]}$ of elements of $\Lambda$ indexed by $[q]$. 
Given $k\in C^{\mfl(q)}(\R)$, we introduce a multi-index notation for functionals made of products of the local functionals associated with the derivatives of $k$, defining the functional
\begin{equ}
    \mathbf k^{q}[\psi](y^q)\eqdef\prod_{(i,j)\in[q]}k^{(i)}[\psi](y^q_{ij})\,.
\end{equ}
In the sequel, we will only be interested in the cases $k=b,h$ (where $b,h$ are introduced in \eqref{eq:Formalaction}) and define $\bfb^q$, $\bfh^q$ accordingly.
  \end{definition} 
\begin{remark}\label{rem:notsmooth} For $q\in\mcN$, $\mathbf k^{q}$ is bounded as
\begin{equs}\label{eq:bound1jet}
    \norm{\mathbf k^{q}[\psi]}_{L^\infty}\leqslant\norm{k}^{\mfs(q)}_{C^{\mfl(q)}(B_0(\norm{\psi}_{L^\infty}))}\,.
\end{equs}   
\end{remark}
\begin{definition}
Recalling that $d=(d_i)_{i\in[n]} \in C^\infty(\R,\mcH)$ and $g=(g_{ij})_{i,j\in[n]} \in C^\infty(\R,\mcH\otimes_s \mcH)$ (where $d,g$ are introduced in \eqref{eq:Formalaction}), we define, for each $q \in \mcN$ and $y^q\in\Lambda^{[q]}$, the  $ \mcH^q$-valued functional
   \begin{equs}       \bfd^{q}[\psi](y^q)\eqdef\prod_{(i,j)\in[q]}d^{(i)}[\psi](y^q_{ij})\,.  
   \end{equs}    
   and the $\big(\mcH\otimes_s\mcH\big)^{q}$-valued functional 
   \begin{equs}       \bfg^{q}[\psi](y^q)\eqdef\prod_{(i,j)\in[q]}g^{(i)}[\psi](y^q_{ij})\,.  
   \end{equs} 
We also define $\mcH^q$-valued functional
\begin{equs}    \Partial^q[\psi](y^q)\eqdef\prod_{(i,j)\in[q]}\Partial[\psi](y^q_{ij})\,,
\end{equs}
the functional
\begin{equs}
 \bfc^q[\psi](y^q)\equiv   \big(\bfd\Partial\big)^q[\psi](y^q)\eqdef \bfd^{q}[\psi]\big(\Partial^q[\psi]\big)(y^q)\,,
\end{equs}
the $\mcH^q$-valued functional
\begin{equs}
\bff^{\,q}[\psi](y^q)\equiv    \big(\bfg\Partial\big)^q[\psi](y^q)\eqdef \bfg^{q}[\psi]\big(\Partial^q[\psi]\big)(y^q)\,,
\end{equs}
and the functional
\begin{equs}
 \bfe^q[\psi](y^q)\equiv   \big(\bfg\Partial^2\big)^q[\psi](y^q)\eqdef \bfg^{q}[\psi]\big(\Partial^q[\psi]\otimes_s\Partial^q[\psi]\big)(y^q)\,.
\end{equs}
Note in the definitions of $\bfc^q$, $\bff^{\,q}$, and $\bfe^q$ we are using on the RHS the tensor product mappings and dualities used in the definitions for $c$, $e$, and $f$ in Definition~\ref{def:usefullocalfunctionals}. 
\end{definition}
\begin{definition}
    We let $\mathfrak I\eqdef\{\mfb,\mfc,\mfd,\mfe,\mff,\mfg,\mfh\}$ be the \textit{set of labels}.
\end{definition}
\begin{definition}
We define a set of all septuples of elements of $\mcN$ indexed by ${\mathfrak I}$,  
\begin{equs}
    \mathring\mcM\eqdef \{a=(a^\mfk)_{\mfk\in{\mathfrak I}} 
    =
    (a^{\mfb},a^{\mfc},a^\mfd,a^{\mfe},a^{\mff},a^\mfg,a^\mfh) \in \mcN^7\}\,,
\end{equs}
and define $\mathring\mcM_\Z$ accordingly, with $\mcN$ replaced by $\mcN_\Z$.

We call elements of $\mathring{\mcM}$ \textit{pre-multi-indices}. 
We extend the notations of order and size to any pre-multi-index $a=(a^\mfb,a^{\mfc},a^\mfd,a^\mfe,a^{\mff},a^\mfg,a^\mfh)\in\mathring\mcM$, denoting by
\begin{equs}
   \mfo (a)&\eqdef
   \sum_{\mfk\in\mfI}\mfo(a^\mfk)   +\mfs(a^\mfd)+\mfs(a^{\mff})+2\mfs(a^\mfg)\\&=\mfo(a^\mfb)+\mfo(a^\mfc)+\mfo(a^\mfd)+\mfs(a^\mfd)+\mfo(a^\mfe)+\mfo(a^\mff)+\mfs(a^{\mff})+\mfo(a^\mfg)+2\mfs(a^\mfg)+\mfo(a^{\mfh})
\end{equs}
the \textit{order} of $a$, and by 
\begin{equs}
        \mfs(a)\eqdef\sum_{\mfk\in\mfI}\mfs(a^\mfk)=
        \mfs(a^\mfb)+\mfs(a^\mfc)
        +\mfs(a^\mfd)+\mfs(a^\mfe)+\mfs(a^\mff)+\mfs(a^{\mfg})+\mfs(a^\mfh)
\end{equs}
the \textit{size} of $a$. Moreover, we define the \textit{scaling} $|a|$ of $a$ as
\begin{equs}
    |a|\eqdef -(2-\alpha)\mfs(a)+2\mfo(a)+(2-\alpha)\big(\mfs(a^\mfb)+\mfs(a^\mfc)+\mfs(a^\mfe)\big)+(1-\alpha)\big(\mfs(a^{\mfd})+\mfs(a^{\mff})\big)-\alpha\mfs(a^{\mfg})\,,
\end{equs}
and we let 
\begin{equs}
    \mfl(a)\eqdef \max_{\mfk\in\mfI}\mfl(a^\mfk)=
    \mfl(a^\mfb)\vee\mfl(a^\mfc)\vee\mfl(a^\mfd)
    \vee    \mfl(a^\mfe)\vee\mfl(a^\mff)\vee\mfl(a^\mfg)\vee\mfl(a^\mfh)\,.
\end{equs}
stand for the \textit{length} of $a$. 

We also define the \textit{support} of $a$ as $\mathrm{supp}(a)\eqdef\{(\mfk,i)\in{\mathfrak I}\times\N:i\in\mathrm{supp}(a^\mfk)\}$, and we let $[a]\eqdef\{(\mfk,i,j)\in{\mathfrak I}\times\N^2:(i,j)\in[a^\mfk]\}$. Finally, we associate $a$ with a Hilbert space $\mcH^a$ given by
\begin{equ}
    \mcH^a\eqdef \mcH^{a^\mfd}\otimes\mcH^{a^\mff}\otimes\big(\mcH\otimes_s\mcH\big)^{a^\mfg}\,,
\end{equ} 
and we define for $y^a=(y^a_{\mfk ij})_{\mfk ij\in[a]}\in\Lambda^{[a]}$ the $\mcH^a$-valued functional
\begin{equs}\label{eq:elem_differential}
 \Upsilon^{a}&[\psi](y^a) \\&\eqdef 
  \bfb^{a^\mfb}[\psi](y^{a^\mfb})
 \bfc^{\,a^\mfc}[\psi](y^{a^\mfc})
 \bfd^{\,a^\mfd}[\psi](y^{a^\mfd})
 \bfe^{a^\mfe}[\psi](y^{a^\mfe})
 \bff^{\,a^\mff}[\psi](y^{a^\mff})
 \bfg^{\,a^\mfg}[\psi](y^{a^\mfg})
\bfh^{a^\mfh}[\psi](y^{a^\mfh}) \,,
\end{equs}
where, for $\mfk\in\mfI$, we use the shorthand notation $y^{a^\mfk}=(y^{a^\mfk}_{ij})_{ij\in[a^\mfk]}\eqdef (y^a_{\mfk ij})_{ij\in[a^\mfk]}$.
\end{definition}
\begin{definition}
    Fix a collection $(k_i)_{i\in I}$ of functions $k_i:\R\rightarrow\R$ indexed by a finite set $I$. For any $n\in\N$ and any interval $B$ of $\R$, we write
    \begin{equs}
        \norm{(k_i)_{i\in I}}_{C^n(B)}\eqdef\max_{i\in I}\norm{k_i}_{C^n(B)}\,.
    \end{equs}
\end{definition}
\begin{lemma}
We have the following estimate on $\Upsilon^{a}[\psi]$:
\begin{equs}    \label{eq:boundIa}\norm{\Upsilon^a[\psi]}_{L^\infty}\lesssim \Big(\norm{(b,d,g,h)}_{C^{\mfl(a)}(B_0(\norm{\psi}_{L^\infty}))}\Big)^{\mfs(a)}    \norm{\partial_{\ttx}\psi}_{L^\infty}^{\mfs(a^\mfc)+2\mfs(a^\mfe)+\mfs(a^\mff)}\,.
\end{equs}    
\end{lemma}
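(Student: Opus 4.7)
The estimate is a direct consequence of the product structure of $\Upsilon^a[\psi]$ combined with the single-factor bound \eqref{eq:bound1jet}. The plan is to bound each of the seven factors in \eqref{eq:elem_differential} in $L^\infty$ separately and then multiply the results.

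For the scalar labels $\mfb,\mfh$, \eqref{eq:bound1jet} applied with $k=b$ (resp.\ $k=h$) immediately gives $\|\bfb^{a^\mfb}[\psi]\|_{L^\infty}\leqslant\|b\|_{C^{\mfl(a^\mfb)}(B_0(\|\psi\|_{L^\infty}))}^{\mfs(a^\mfb)}$ and analogously for $\bfh^{a^\mfh}$. For the vector/tensor-valued labels $\mfd,\mfg$, an identical argument applied componentwise in the canonical bases of $\mcH$ and $\mcH\otimes_s\mcH$, at the cost of an absorbable dimensional constant, yields $\|\bfd^{a^\mfd}[\psi]\|_{L^\infty}\lesssim\|d\|_{C^{\mfl(a^\mfd)}}^{\mfs(a^\mfd)}$ and similarly $\|\bfg^{a^\mfg}[\psi]\|_{L^\infty}\lesssim\|g\|_{C^{\mfl(a^\mfg)}}^{\mfs(a^\mfg)}$.

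For the composite labels $\mfc,\mfe,\mff$ one unpacks the definitions. By construction $\bfc^{a^\mfc}[\psi]=\bfd^{a^\mfc}[\psi]\big(\Partial^{a^\mfc}[\psi]\big)$ consists of $\mfs(a^\mfc)$ copies of $\Partial[\psi]=\partial_{\ttx}\psi$ contracted against $\bfd^{a^\mfc}[\psi]$, so
\begin{equs}
\|\bfc^{a^\mfc}[\psi]\|_{L^\infty}\lesssim \|d\|_{C^{\mfl(a^\mfc)}(B_0(\|\psi\|_{L^\infty}))}^{\mfs(a^\mfc)}\,\|\partial_{\ttx}\psi\|_{L^\infty}^{\mfs(a^\mfc)}\,.
\end{equs}
The same reasoning, applied to $\bff^{a^\mff}=\bfg^{a^\mff}[\psi](\Partial^{a^\mff}[\psi])$ and $\bfe^{a^\mfe}=\bfg^{a^\mfe}[\psi](\Partial^{a^\mfe}[\psi]\otimes_s\Partial^{a^\mfe}[\psi])$, produces bounds of order $\|g\|^{\mfs(a^\mff)}\|\partial_{\ttx}\psi\|^{\mfs(a^\mff)}$ and $\|g\|^{\mfs(a^\mfe)}\|\partial_{\ttx}\psi\|^{2\mfs(a^\mfe)}$ respectively; in the last case the factor $2$ in the exponent arises because the contraction involves two copies of $\Partial^{a^\mfe}$.

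Multiplying the seven estimates, replacing each $\mfl(a^\mfk)$ by the uniform upper bound $\mfl(a)$ (allowed by monotonicity of $C^n$ norms in $n$), and bundling the individual norms of $b,d,g,h$ into the joint norm $\|(b,d,g,h)\|_{C^{\mfl(a)}(B_0(\|\psi\|_{L^\infty}))}$, produces precisely \eqref{eq:boundIa}: the combined exponent on this joint norm sums to $\sum_{\mfk\in\mfI}\mfs(a^\mfk)=\mfs(a)$, while the exponent on $\|\partial_{\ttx}\psi\|_{L^\infty}$ sums to $\mfs(a^\mfc)+2\mfs(a^\mfe)+\mfs(a^\mff)$. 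There is no real difficulty; the only careful bookkeeping step concerns the contribution of $\bfe^{a^\mfe}$, where the symmetric tensor product $\Partial\otimes_s\Partial$ is responsible for the factor $2$ in the final exponent.
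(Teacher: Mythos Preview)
Your proof is correct and complete. The paper states this lemma without proof, treating it as an immediate consequence of the product definition \eqref{eq:elem_differential} and the single-factor bound \eqref{eq:bound1jet}; your argument spells out exactly this reasoning, with the correct bookkeeping of the $\partial_{\ttx}\psi$ exponents coming from the labels $\mfc,\mfe,\mff$.
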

\begin{definition}
We define, for $\mfk\in\mfI$ and $k\in\N$, a pre-multi-index $\1^\mfk_k\in\mathring\mcM$ by
    \begin{equs}
        \big({\text{$\1$}}^{\text{\scriptsize{$\mfk$}}}_k\big)^{\mathfrak r}_{i}\eqdef \1\{(\mathfrak r,i)=(\mfk,k)\}\,.
    \end{equs}
\end{definition}
The set of pre-multi-indices  $\mathring\mcM$ is the natural one to use to index all of the local functionals that can be built out of products and derivatives of the terms appearing on the RHS of \eqref{eq:Formalaction}. 
However, this set will be too large to be useful. 

We are interested in using pre-multi-indices to expand solutions and remainders to a truncated analog of the flow \eqref{eq:polch}.  
Our initial data for this flow is described by the pre-multi-indices $\{ \1^\mfk_0,\; \mfk\in\{\mfb,\mfc,\mfe,\mfh\}\}$ and new pre-multi-indices are then populated through recursive ``insertions'' of multi-indices into each other due to the bilinear term on the RHS of \eqref{eq:polch}. 

Such an ``insertion recipe'' can be captured by trying to associate at least one rooted tree with typed vertices to a pre-multi-index $a\in\mathring\mcM$.
This is done by viewing $a$ as a multi-set of vertices which carry a type, indexed by $\mfk \in \mfI$, and have a prescribed number of descendants. 
Then $a^{\mfk}_{i} \in \N$ is the number of vertices in $a$ of type $\mfk$ with precisely $i+\1\{\mfk\in\{\mfd,\mff\}\}+2\1\{\mfk=\mfg\}$ descendants. 

Some pre-multi-indices are compatible with multiple such rooted trees. 
As an example, take $a=(0,\dots,0,a^\mfh)$ with $a^\mfh=(2,1,1,0,0,\dots)$  -- then $a$ is compatible with the following two different rooted trees, $\treeOne$ and $\treeTwo$, which each contain four vertices of type $\mfh$.

However there are also elements of $\mathring\mcM$ that do not correspond to any such tree, for instance take $a$ as above with $a^{\mfh} = (2,1,0,0,
\dots)$. 
Then the corresponding tree must have three vertices, so the root must have a child -- but then it is only allowed to have one child. 
We will see that pre-multi-indices that do not correspond to any tree will never be populated through our flow. 

This motivates the following definition, which gives a necessary and sufficient condition for an element of $\mathring{\mcM}$ to correspond to a tree.

\begin{definition}\label{def:pop}
We say $a \in \mathring \mcM$ is \textit{populated} if it satisfies the constraint
\begin{equ}\label{eq:pop_constraint}
\mfs(a)=\mfo(a)+1\,.
\end{equ} 
\end{definition}
\begin{remark}\label{rem:scaling relations}
An important consequence of the population constraint is that it allows us to rewrite scaling relations. 
Suppose that $a \in \mathring{\mcM}$ is populated, then observe we must have
\begin{equs}
    |a|&=-2+\alpha\big(\mfs(a)-\mfs(a^{\mfg})\big)+(2-\alpha)\big(\mfs(a^\mfb)+\mfs(a^\mfc)+\mfs(a^\mfe)\big)+(1-\alpha)\big(\mfs(a^\mfd)+\mfs(a^{\mff})\big)\\    &=-2+\alpha\mfs(a^{\mfh})+\big(\mfs(a^\mfd)+\mfs(a^\mff)\big)+2\big(\mfs(a^\mfb)+\mfs(a^{\mfc})+\mfs(a^{\mfe})\big)\label{eq:scalingGood}
    \,.\textcolor{white}{lalalala}
\end{equs}
Moreover, we also have
\begin{equs}
    \mfl(a)\leqslant\mfo(a) 
    \qquad
    \text{ and }
    \qquad 
        \mfs(a^\mfg)\leqslant\frac{\mfs(a)-1}{2}\,.
\end{equs}
For the second inequality above, observe that vertices of type $\mfg$ have at least 2 descendants so that the trees maximizing $\mfs(a^\mfg)$ are binary with internal nodes labeled $\mfg$ and leafs labeled $\mfb$, $\mfc$, $\mfe$ or $\mfh$. 
This last inequality implies that
\begin{equs}\label{eq:LowerBoundScaling}
    |a|&\geqslant-2+\alpha+\frac{\alpha}{2}\mfo(a)+(2-\alpha)\big(\mfs(a^\mfb)+\mfs(a^\mfc)+\mfs(a^\mfe)\big)+(1-\alpha)\big(\mfs(a^\mfd)+\mfs(a^{\mff})\big)\,.\textcolor{white}{lal}
\end{equs}
\end{remark}

Recall that will we actually solve a truncated form of \eqref{eq:polch} with a remainder satisfying \eqref{eq:flow}. 
Our truncation corresponds to dropping pre-multi-indices whose order is too large for their scaling to be negative. In order to keep a bit of room, we actually only drop pre-multi-indices with scaling strictly larger than $\alpha$. We then introduce $\Gamma=\lfloor 4/\alpha\rfloor<\infty$, the smallest integer such that 
\begin{equs}\label{eq:def_delta}
    \delta=-2+\alpha+\frac{\alpha}{2}(\Gamma+1)>\alpha\,.
\end{equs}
It follows that, since $\alpha\leqslant1$, for any populated pre-multi-index $a  \in \mathring \mcM$,  $\mfo(a)\geqslant\Gamma+1$ implies
\begin{equs}\label{eq:lastboundsizea}
    |a|\geqslant\delta+\alpha+(2-\alpha)\big(\mfs(a^\mfb)+\mfs(a^\mfc)+\mfs(a^\mfe)\big)+(1-\alpha)\big(\mfs(a^\mfd)+\mfs(a^{\mff})\big)
\end{equs}
thanks to \eqref{eq:LowerBoundScaling}. 

With all this in mind, we introduce the following sets of pre-multi-indices
\begin{definition} \label{def:multi-indices}
For $k\in\N$, we define $\mcM_{\leqslant k} \subset \mathring{\mcM}$ to be the set of all pre-multi-indices $b \in \mathring{\mcM}$ which are populated and also satisfy $\mfo(b) \leqslant k$. 
We write $\mcM_*\eqdef\mcM_{\leqslant2\Gamma+1}$ and $\mcM_{>k}\eqdef\mcM_*\setminus\mcM_{\leqslant k}$. 
Finally, we write $\mcM\eqdef\mcM_{\leqslant\Gamma}$ and call an element of $\mcM$ a \textit{multi-index}. 
\end{definition}

With all the previous notations in hand, we are now able to introduce our ansatz for $S_{\eps,\mu}$, the stationary force solving $\Pol^{\leqslant\Gamma}_\mu(S_\epmu)=0$ with initial condition $S_{\eps,0}=S_\eps$. We make the assumption that for all $\eps,\mu\in[0,1]$, $S_{\eps,\mu}$ is given for $x\in\Lambda$ and $\psi\in\mcD(\Lambda)$ by
\begin{equ}\label{eq:ansatzF}
    S_\epmu[\psi](x)=\sum_{a\in\mcM} S^a_\epmu[\psi](x)\,,
\end{equ}
where 
\begin{equs}\label{eq:ansatzFa}
     S^a_\epmu[\psi](x)=\int_{\Lambda^{[a]}} \langle \xi^a_\epmu(x,y^a) ,\Upsilon^a[\psi](y^a)\rangle_{\mcH^a}\rmd y^a\,,
\end{equs}
and the integration is over variables $y^a=(y^a_{\mfk ij})_{\mfk ij\in[a]}$ indexed by the set $[a]$ such that $y^a_{\mfk ij}\in\Lambda$, hence the notation $y^a\in\Lambda^{[a]}$. To lighten to notation, we also write that $(x,y^a)\in\Lambda^{[a]+1}\eqdef\Lambda\times\Lambda^{[a]}$. The kernels $\big(\xi^a_\epmu\big)_{\epmu\in(0,1]}^{a\in\mcM}$ of the $  (S^a_\epmu[\psi])_{a\in\mcM}$ belong to $\mcD(\Lambda^{[a]+1},\mcH^a)$ for $\eps>0$, and are called the \textit{stationary force coefficients}.

Now that we have the coordinates for the flow, and the ansatz for the stationary effective force, let us discuss the truncation of the flow.
\begin{definition}\label{def:proj} 
Fix $k\leqslant2\Gamma+1$. We define the projector $\Pi^{\leqslant k}$ acting on functionals $L:\mcD(\Lambda)\rightarrow\mcD(\Lambda)$ of the form for $\psi\in\mcD(\Lambda)$ and $x\in\Lambda$ 
\begin{equs}\label{eq:formL}
    L[\psi](x)=\sum_{a\in\mcM_*}\int_{\Lambda^{[a]}}\langle\lambda^a(x,y^a),\Upsilon^a[\psi](y^a)\rangle_{\mcH^a}\rmd y^a
\end{equs}
 by projection onto the space spanned by pre-multi-indices of order at most $k$. More precisely, we have
\begin{equs}
    \Pi^{\leqslant k}L[\psi](x)\eqdef\sum_{a\in\mcM_{\leqslant k}}\int_{\Lambda^{[a]}}\langle\lambda^a(x,y^a),\Upsilon^a[\psi](y^a)\rangle_{\mcH^a}\rmd y^a\,.
\end{equs}

Finally, we define an operator $\Pol^{\leqslant\Gamma}_{\mu}$ acting on families $(L_\mu)_{\mu\in(0,1]}$ of functionals of the form \eqref{eq:formL} by
\begin{equs}\label{eq:defPol}
    \Pol^{\leqslant\Gamma}_{\mu}(L_\mu)[\bigcdot]\eqdef\d_\mu L_\mu[\bigcdot]+\Pi^{\leqslant\Gamma}\big( \D L_\mu[\bigcdot]\dot G_\mu L_\mu[\bigcdot]\big)\,.
\end{equs}
$\Pol^{\leqslant\Gamma}_{\mu}$ is a truncated version of the Polchinski flow equation. 
\end{definition}
Our aim is first to construct $S_\epmu$ such that it solves the equation
\begin{equs}
   \Pol^{\leqslant\Gamma}_{\mu}(S_\epmu)=0  
\end{equs}
with initial condition $S_{\eps,0}=S_\eps$, and then to ultimately construct $F_\epmu$ such that it solves the equation 
\begin{equs}
   \Pol^{\leqslant\Gamma}_{\mu}(F_\epmu)=0 \label{eq:Pol} 
\end{equs}
 with initial condition $F_{\eps,0}=F_\eps=\1^0_{(0,\infty)} S_\eps[\bigcdot ] $. Provided this condition is satisfied, the system~\eqref{eq:sys1}
 rewrites as  \begin{subequations}\label{eq:sys2}
  \begin{empheq}[left=\empheqlbrace]{alignat=2}
  \label{eq:eqRwelldefined}  
\d_\mu R_{\eps,\mu}&=-\D F_{\eps,\mu}[\psi_{\eps,\mu}]\dot G_\mu\big( R_\epmu+\phi_\eps\big)-H_\epmu[\psi_\epmu]\,,\\
        \psi_\epmu&=G_\mu\phi_\eps -\int_\mu^{\mu_T}\dot G_\nu\big(F_\epnu[\psi_\epnu]+R_\epnu\big)\rmd\nu\,,\label{eq:vphimu}
  \end{empheq}
\end{subequations}
where we introduced
\begin{equs}\label{eq:expL}
    H_\epmu[\bigcdot]\eqdef(1-\Pi^{\leqslant\Gamma})\big( \D F_\epmu[\bigcdot]\dot G_\mu F_\epmu[\bigcdot]\big)\,.
\end{equs}
We will show that this last rewriting of \eqref{eq:eqDef1} holds uniformly in $\eps$, and that the equation is therefore formally well-posed.

\begin{remark}
$F_\epmu$ solving the truncated Polchinski flow equation with initial condition given by $F_\eps[\bigcdot]= \1^0_{(0,\infty)}S_\eps[\bigcdot]$, it will depend a collection of non stationary in time stochastic objects. Nevertheless, we will show in Section~\ref{Sec:Sec5} that it can be constructed starting from the stationary solution to $\Pol_\mu^{\leqslant\Gamma}$, $S_\epmu$, without any additional renormalisation, so that the renormalisation that $F_\epmu$ requires is thus constant in time.
\end{remark}

\subsubsection{The flow equation for the stationary force coefficients}
\begin{remark}
Fix $a\in\mcM$ and $(x,y^a)\in\Lambda^{[a]+1}$. By definition, $\xi^a_\epmu(x,y^a)$ is invariant under the action of 
\begin{equs}
  \mathfrak S^a\eqdef \prod_{(\mfk,i)\in\mathrm{supp}(a)}\mathfrak S_{a^\mfk_i}\,,  
\end{equs}
where for $\sigma=(\sigma^\mfk_i)_{(\mfk,i)\in\mathrm{supp}(a)} \in  \mathfrak S^a$, each $\sigma^\mfk_i \in \mathfrak S_{a^\mfk_i}$ acts by permuting the $(y^a_{\mfk ij})_{j\in[a^\mfk_i]}$. Moreover, we use multi-index notations writing 
\begin{equs}
    a!\eqdef\prod_{(\mfk,i)\in\mathrm{supp}(a)}a^\mfk_i!\,.
\end{equs}
\end{remark}
\begin{definition}
    We define the set $\mcD$ of all \textit{derivators} $\bfd$ $  =\big(\mfk_0(\bfd),\mfk_1(\bfd),k_0(\bfd),k_1(\bfd)\big)$ by
\begin{equs}
  \mcD\eqdef  \bigcup_{(\mfk,k)\in{\mathfrak I}\times\N}\{(\mfk,\mfk,k,k+1)\}
  \cup  \bigcup_{k\in\N}\{(\mfc,\mfd,k,k),\; (\mfe,\mff,k,k),\; (\mff,\mfg,k,k)\}  
  \subset{\mathfrak I}\times{\mathfrak I}\times\N\times\N \,.
\end{equs}
To lighten the notations, we often drop the dependence on $\bfd$ of its elements, simply writing $\bfd=(\mfk_0,\mfk_1,k_0,k_1)$. Moreover, we associate to each derivator $\bfd\in\mcD$ a septuplet of sequences in $\mcN_\Z$ given by
$$d(\bfd)\eqdef\1^{\mfk_1(\bfd)}_{k_1(\bfd)}-\1^{\mfk_0(\bfd)}_{k_0(\bfd)}\in\mathring\mcM_\Z\,. $$  

\end{definition}
\begin{lemma}
    Pick $b\in\mcM$, $\bfd\in\mcD$, and for $y^b\in\Lambda^{[b]}$ define $y^{b+d(\bfd)}\in\Lambda^{[b+d(\bfd)]}$ by
\begin{equs}     
y^{b+d(\text{\scriptsize{$\bfd$}})}_{\mfk ij}\eqdef \begin{cases}  
      y^b_{\mfk_0 k_0b^{\mfk_0}_{k_0}} & \text{if} \;(\mfk,i,j)= (\mfk_1, k_1,b^{\mfk_1}_{k_1}+1)\,, \\
        y^{b}_{\mfk ij} & \text{otherwise} \,. \\
\end{cases}   \end{equs}
    Then, it holds
    \begin{equs}\label{eq:exprDI}
     \int_{\Lambda^{[b]}}\langle\xi^b_\epmu &(x,y^b),   \big({\text{$\D$}} \Upsilon^b[\psi]\vphi\big)(y^b)\rangle_{\mcH^b}\rmd y^b\\&=  \int_{\Lambda^{[b]}}\sum_{\bfd\in\mcD}
        b^{\mfk_0}_{k_0}(1+\1\{(\mfk_0,\mfk_1)=(\mfe,\mff)\})
        \langle\xi^b_\epmu (x,y^b) \partial_{\ttx}^{k_0+1-k_1}\vphi(y^b_{\mfk_0 k_0b^{\mfk_0}_{k_0}}),   
        \Upsilon^{b+d(\bfd)}[\psi](y^{b+d(\bfd)})\rangle_{\mcH^{b+d(\bfd)}} \rmd y^b\,.
    \end{equs}
\end{lemma}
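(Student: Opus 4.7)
The plan is to derive \eqref{eq:exprDI} by a direct Leibniz-rule computation, exploiting the product structure of $\Upsilon^{b}$. By \eqref{eq:elem_differential}, $\Upsilon^b[\psi](y^b)$ is a product of $\mfs(b)$ local factors indexed by $(\mfk,i,j)\in[b]$, each depending on $\psi$ only through its value (and, for some labels, through $\partial_\ttx\psi$) at the single point $y^b_{\mfk ij}$. Thus $\D\Upsilon^b[\psi]\vphi$ decomposes as a sum over $(\mfk,i,j)\in[b]$ of terms obtained by Fr\'echet-differentiating the factor at position $(\mfk,i,j)$ while leaving the others unchanged.

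I would then classify the derivatives label-by-label. For $\mfk\in\{\mfb,\mfd,\mfg,\mfh\}$ the factor at $(\mfk,i,j)$ is of the form $k^{(i)}(\psi(y^b_{\mfk ij}))$ for the corresponding scalar- or tensor-valued function $k\in\{b,d,g,h\}$, and differentiating yields $k^{(i+1)}(\psi)\vphi$, matching the derivator $(\mfk,\mfk,i,i+1)\in\mcD$ with spatial order $k_0+1-k_1=0$. For $\mfk=\mfc$ the factor $d^{(i)}(\psi)\cdot\partial_\ttx\psi$ contributes two pieces: one from differentiating $d^{(i)}(\psi)$, matching $(\mfc,\mfc,i,i+1)$, and one from differentiating $\partial_\ttx\psi$, producing $d^{(i)}(\psi)\cdot\partial_\ttx\vphi$ and matching $(\mfc,\mfd,i,i)$. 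The label $\mff$ is handled in the same way, producing $(\mff,\mff,i,i+1)$ and $(\mff,\mfg,i,i)$. For $\mfk=\mfe$, each factor carries the symmetric pair $\Partial\otimes_s\Partial$, so differentiating the spatial-derivative part produces two identical contributions by symmetry; this is the origin of the factor $1+\1\{(\mfk_0,\mfk_1)=(\mfe,\mff)\}$ appearing in \eqref{eq:exprDI}.

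Next I would invoke the $\mfS^b$-symmetry of $\xi^b_\epmu$ to collapse the sum over $j\in[b^{\mfk_0}_{k_0}]$ into a single representative choice $j=b^{\mfk_0}_{k_0}$, absorbing the redundancy into the multiplicity factor $b^{\mfk_0}_{k_0}$. The final step is to verify that, for each $\bfd=(\mfk_0,\mfk_1,k_0,k_1)\in\mcD$, the undifferentiated factors at positions $(\mfk,i,j)\in[b]\setminus\{(\mfk_0,k_0,b^{\mfk_0}_{k_0})\}$, together with the new factor created by the derivative at position $(\mfk_1,k_1,b^{\mfk_1}_{k_1}+1)$, reassemble exactly into $\Upsilon^{b+d(\bfd)}[\psi](y^{b+d(\bfd)})$, with the relabelling of points prescribed in the statement (the point $y^b_{\mfk_0 k_0 b^{\mfk_0}_{k_0}}$ being inherited by the new slot).

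I expect this reassembly to be the main obstacle, since it requires a careful case-by-case identification of the tensorial placement in $\mcH^{b+d(\bfd)}$: for example the $\mff\to\mfg$ transition requires matching the $\mcH$-slot originally occupied by $f[\psi]=g(\psi)\cdot\partial_\ttx\psi$ with a new $(\mcH\otimes_s\mcH)$-slot carrying $g(\psi)$, paired on one of its tensor factors with $\partial_\ttx\vphi$. One must also check that no spurious derivators outside $\mcD$ arise: for instance $\mfb\to\mfc$ is impossible because a $\mfb$-factor carries no $\partial_\ttx\psi$ dependence, and $\mfe\to\mfg$ is impossible because only one of the two symmetric $\Partial$'s can be differentiated at a time. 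Once these identifications are verified, collecting terms yields \eqref{eq:exprDI}.
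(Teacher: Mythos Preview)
Your proposal is correct and follows exactly the natural approach; the paper states this lemma without proof, and the Leibniz-rule computation you outline---differentiating each factor of the product $\Upsilon^b$, classifying the resulting terms by the derivators in $\mcD$, using the $\mfS^b$-symmetry of $\xi^b_\epmu$ to extract the multiplicity $b^{\mfk_0}_{k_0}$, and identifying the factor of $2$ from the symmetric tensor in the $\mfe\to\mff$ case---is precisely what is implicitly intended.
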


\begin{definition}\label{def:B}
    Fix multi-indices $a,b,c\in\mcM$ and a derivator $\bfd=(\mfk_0,\mfk_1,k_0,k_1)\in\mcD$ verifying $a=b+c+d(\bfd)$, and pick $\sigma=(\sigma^\mfk_i)_{(\mfk,i)\in\mathrm{supp}(a^\mfk)}\in\mfS^a$. Given some $y^a=(y^a_{\mfk ij})_{\mfk ij\in[a]}\in\Lambda^{[a]}$, we define some $y^b= (y^b_{\mfk ij})_{\mfk ij\in[b]}\in\Lambda^{[b]}$ and $y^c=(y^c_{\mfk ij})_{\mfk ij\in[c]}\in\Lambda^{[c]}$ as follows. First, let 
\begin{equs}
    z\eqdef y^{a}_{{\mfk_1}k_1\sigma^{\mfk_1}_{k_1}(a^{\mfk_1}_{k_1})}\,.
\end{equs}    
Then, $y^b$ is defined by
 \begin{equs}     
y_{\mfk ij}^{b}\eqdef \begin{cases}  
      z & \text{if} \;(\mfk,i,j)= (\mfk_0, k_0,b^{\mfk_0}_{k_0})\,, \\
        y^{a}_{\mfk i\sigma^\mfk_i(j)} & \text{otherwise} \,, \\
\end{cases}   \end{equs}
and $y^c$ is defined by
\begin{equs}     
y_{\mfk ij}^{c}\eqdef y^{a}_{\mfk i\sigma^\mfk_i(b^\mfk_i+j-\1\{(\mfk,i)=(\mfk_0,k_0)\})}\,.
\end{equs}       
Recall that for any function $\psi$, we view its spatial gradient $\d_{\ttx}\psi=\big(\d_{\ttx_1}\psi,\dots,\d_{\ttx_n}\psi\big)$ as an $\mcH$ valued function. For $X\in\mcD(\Lambda^{[b]+1},\mcH^b)$ and $Y\in\mcD(\Lambda^{[c]+1},\mcH^c)$ we define the $\mcD(\Lambda^{[a]+1},\mcH^a)$-valued bilinear map 
\begin{equs}\label{eq:defB}
    \B_\mu(X,Y)(x,&y^a)\\&\eqdef \frac{b^{\mfk_0}_{k_0}}{a!}(1+\text{$\1$}\{(\mfk_0,\mfk_1)=(\mfe,\mff)\})\,X(x,y^b)\int_\Lambda\partial_{\ttx}^{k_0+1-k_1}\dot G_\mu( z-w)Y(w,y^c)\rmd w\,.
\end{equs}
Note that $\B_\mu$ is indeed well-defined, since one does have $\mcH^a=\mcH^b\otimes \mcH^c\otimes \mcH^{\otimes(k_0+1-k_1)}$.\\
Finally, in view of this definition, we introduce, for $a\in\mcM_*$, the index set for the flow equation of $\xi^a_\epmu$:
  \begin{equs}
     \text{$ \mathrm{Ind}$}(a)\eqdef\Big\{(\sigma,b,c,\bfd)\in\mfS^a\times\mcM^2\times\mcD:a=b+c+d(\bfd)\Big\}\;.
  \end{equs}  
\end{definition}
With the notation of Definition~\ref{def:B}, we have
\begin{equs}\label{eq:rhsPol}
    \D S_\epmu[\psi]\dot G_\mu S_\epmu[\psi](x)= \sum_{a\in\mcM_{*}} \sum_{\substack{(\sigma,b,c,\bfd)\in\text{\scriptsize{$\mathrm{Ind}(a)$}}}} \int_{\Lambda^{[a]}}
   \langle \B_\mu(\xi^b_\epmu,\xi^c_\epmu)(x,y^a),\Upsilon^a[\psi](y^a)\rangle_{\mcH^a}\rmd y^a\,.\textcolor{white}{blablablab}
\end{equs}
We now give a crucial estimate on $\B_\mu$. 
\begin{lemma}
For every $N\in\N$, uniformly in $X\in\mcD(\Lambda^{[b]+1},\mcH^b)$, $Y\in\mcD(\Lambda^{[c]+1},\mcH^c)$ and $\mu\in(0,1]$ we have
    \begin{equs}\label{eq:propB}
        \|K_{N,\mu}^{\otimes[a]+1}&\B_\mu(X,Y)\|_{L^\infty_x L^1_{y^a}(\Lambda^{[a]+1})}\\&\lesssim \|   \mcP^{2N}_\mu \dot G_\mu\|_{\mcL^{\infty,\infty}}
\|K_{N,\mu}^{\otimes[b]+1}X\|_{L^\infty_x L^1_{y^b}(\Lambda^{[b]+1})}
\|K_{N,\mu}^{\otimes[c]+1}Y\|_{L^\infty_x L^1_{y^c}(\Lambda^{[c]+1})}    \,.   
    \end{equs}
\end{lemma}
\begin{proof}
Denoting by $C$ the prefactor in \eqref{eq:defB}, and by $y^{\hat b}$ the collection of all the variables inside $y^b$ expect for $z$, we have 
    \begin{equs}        {}&K_{N,\mu}^{\otimes[a]+1}\B_\mu(X,Y)(x,y^a)\\&=C\int_\Lambda K_{N,\mu}(z-v)\big(K_{N,\mu}^{\otimes[\hat b]+1}\otimes\Id\big)X(x,y^{\hat b},v)\int_\Lambda
\mcP^N_\mu\partial_{\ttx}^{k_0+1-k_1}\dot G_\mu( v-w)
K_{N,\mu}^{\otimes[c]+1}
Y(w,y^c)\rmd w  \rmd v  \\
&=C\int_\Lambda K_{N,\mu}^{\otimes[ b]+1}X(x,y^{\hat b},v)  \big(\mcP_\mu^N\big)^\dagger \Big(K_{N,\mu}(z-\bigcdot)\int_\Lambda
\mcP^N_\mu\partial_{\ttx}^{k_0+1-k_1}\dot G_\mu( \bigcdot-w)
K_{N,\mu}^{\otimes[c]+1}
Y(w,y^c)\Big)(v)\rmd w  \rmd v\,.
    \end{equs}
When the operator $\big(\mcP_\mu^{N}\big)^\dagger$ hits the kernel of $K_{N,\mu}$
this can create some space-time derivatives of $K_{N,\mu}$ multiplied by $\mu$. Then, by \eqref{eq:spacederivKmu}, these newly created kernels also are in $L^1$ uniformly in $\mu$. This means that there exist some kernels $\big(A_\mu^{(\mfl)}:\mfl\in \N^{n+1},|\mfl|\leqslant 4N\big)$ belonging to $L^1$ uniformly in $\mu$ such that 
    \begin{equs}        {}&K_{N,\mu}^{\otimes[a]+1}\B_\mu(X,Y)(x,y^a)\\ 
&=C\int_\Lambda K_{N,\mu}^{\otimes[ b]+1}X(x,y^{\hat b},v)  \sum_{\mfl\in \N^{n+1},|\mfl|\leqslant 4N} A_\mu^{(\mfl)}(z-v)\int_\Lambda
\mu^{|\mfl|}\partial^{\mfl}\mcP^N_\mu\partial_{\ttx}^{k_0+1-k_1}\dot G_\mu( v-w)
K_{N,\mu}^{\otimes[c]+1}
Y(w,y^c)\rmd w  \rmd v\,,
    \end{equs}
which concludes the proof.   
\end{proof}
\begin{lemma}
The definition of $\Pi^{\leqslant\Gamma}$ combined with \eqref{eq:rhsPol} implies that the equation $\Pol_\mu^{\leqslant\Gamma}(S_\epmu)$ rewrites, projected onto the multi-indices, as the following system of equations for the stationary force coefficients
\begin{equs}\label{eq:flow2}
    \d_\mu &\xi^a_\epmu=-\sum_{\substack{(\sigma,b,c,\bfd)\in\text{\scriptsize{$\mathrm{Ind}(a)$}}}}    \B_\mu(\xi^b_\epmu,\xi^c_\epmu)
    \,.
\end{equs}
\end{lemma}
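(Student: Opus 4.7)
The plan is to directly insert the ansatz \eqref{eq:ansatzF}--\eqref{eq:ansatzFa} for $S_{\eps,\mu}$ into the definition $\Pol^{\leqslant\Gamma}_\mu(S_\epmu)=\d_\mu S_\epmu+\Pi^{\leqslant\Gamma}\big(\D S_\epmu[\bigcdot]\dot G_\mu S_\epmu[\bigcdot]\big)=0$ and then read off the equation on the stationary force coefficients $\xi^a_\epmu$ by matching coordinates associated to distinct multi-indices $a\in\mcM$.

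First I would interchange the $\mu$-derivative with the $y^a$ and $x$ integrations in \eqref{eq:ansatzFa}, which is legitimate since for each fixed $\eps>0$ the kernels $\xi^a_\epmu$ are smooth and compactly supported (so the only delicate point here is formal and follows from smoothness in $\eps>0$). This gives
\begin{equs}
\d_\mu S_\epmu[\psi](x)=\sum_{a\in\mcM}\int_{\Lambda^{[a]}}\langle\d_\mu\xi^a_\epmu(x,y^a),\Upsilon^a[\psi](y^a)\rangle_{\mcH^a}\rmd y^a\,.
\end{equs}
For the second term I would invoke the already-recorded identity \eqref{eq:rhsPol}, which expresses $\D S_\epmu[\psi]\dot G_\mu S_\epmu[\psi](x)$ as a sum over $a\in\mcM_*$ indexed by triples in $\mathrm{Ind}(a)$, with the role of the kernel played by $\B_\mu(\xi^b_\epmu,\xi^c_\epmu)$. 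The derivation of \eqref{eq:rhsPol} itself goes via the Leibniz-type identity \eqref{eq:exprDI} applied to each summand $S^b_\epmu$, followed by the convolution $\dot G_\mu S^c_\epmu$ which produces the $\int\partial_{\ttx}^{k_0+1-k_1}\dot G_\mu(z-w)\xi^c_\epmu(w,y^c)\rmd w$ factor in \eqref{eq:defB}; the combinatorial prefactor $b^{\mfk_0}_{k_0}/a!$ and the symmetrization over $\sigma\in\mfS^a$ keep track of the overcounting when re-indexing from $b$ and $c$ into $a=b+c+d(\bfd)$.

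Next I would apply $\Pi^{\leqslant\Gamma}$ to truncate the previous expansion to $a\in\mcM_{\leqslant\Gamma}=\mcM$. Adding the two contributions and imposing $\Pol^{\leqslant\Gamma}_\mu(S_\epmu)=0$ yields
\begin{equs}
\sum_{a\in\mcM}\int_{\Lambda^{[a]}}\Big\langle\d_\mu\xi^a_\epmu(x,y^a)+\!\!\!\!\!\!\!\!\sum_{(\sigma,b,c,\bfd)\in\text{\scriptsize{$\mathrm{Ind}(a)$}}}\!\!\!\!\!\!\!\!\B_\mu(\xi^b_\epmu,\xi^c_\epmu)(x,y^a),\Upsilon^a[\psi](y^a)\Big\rangle_{\mcH^a}\rmd y^a=0
\end{equs}
for every $\psi\in\mcD(\Lambda)$ and $x\in\Lambda$.

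The remaining, and main, step is identifiability: extracting \eqref{eq:flow2} from this global vanishing. The argument I would use has two layers. At the outer layer, distinct multi-indices $a\in\mcM$ produce elementary differentials $\Upsilon^a[\psi]$ which are linearly independent as functionals of $\psi$; this can be seen by testing against polynomial $\psi$ of a large enough degree and reading off coefficients, exploiting that $b,d,g,h$ are free parameters so one can choose them to separate labels and derivative orders encoded by $a$. At the inner layer, once $a$ is fixed, $\Upsilon^a[\psi](y^a)$ is manifestly invariant under the symmetric group $\mfS^a$ acting on $y^a$, so pairing against it only probes the $\mfS^a$-symmetric part of the kernel. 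Since we have assumed $\xi^a_\epmu$ is $\mfS^a$-symmetric (as recorded in the remark preceding Definition~\ref{def:B}), and since $\B_\mu(\xi^b_\epmu,\xi^c_\epmu)$ is constructed with the appropriate $1/a!$ factor and summation over $\sigma\in\mfS^a$ so as to produce an $\mfS^a$-symmetric kernel as well, matching $\mfS^a$-symmetric kernels yields \eqref{eq:flow2} pointwise in $(x,y^a)$. The identifiability step is the only real content of the lemma; everything else is bookkeeping, and once identifiability is in place the result is immediate.
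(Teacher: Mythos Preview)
Your first two steps (differentiating the ansatz in $\mu$ and quoting \eqref{eq:rhsPol} for the bilinear term, then applying $\Pi^{\leqslant\Gamma}$) are exactly what the paper has in mind; indeed the paper gives no separate proof and treats the lemma as immediate from these two ingredients.

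The issue is your ``identifiability'' step. It is both unnecessary and, as written, incorrect. It is unnecessary because the projector $\Pi^{\leqslant\Gamma}$ in Definition~\ref{def:proj} is \emph{defined} on functionals already presented in the form \eqref{eq:formL}: it acts by selecting the summands indexed by $a\in\mcM_{\leqslant\Gamma}$. In other words, the multi-indices are used throughout as \emph{formal coordinates}, and the equation $\Pol^{\leqslant\Gamma}_\mu(S_\epmu)=0$ is, by construction, an equality of two sums over $a\in\mcM$ whose $a$-th summands are paired with $\Upsilon^a$. ``Projecting onto the multi-indices'' means precisely matching those $a$-th summands, which is \eqref{eq:flow2}. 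No testing against $\psi$ is involved. Equivalently: what is actually needed downstream is only the sufficient direction --- if the $\xi^a_\epmu$ solve \eqref{eq:flow2}, then the ansatz $S_\epmu$ solves the truncated flow --- and that direction is what your first two steps already give.

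Your identifiability argument is also flawed as stated: you write that one can separate the $\Upsilon^a$ ``exploiting that $b,d,g,h$ are free parameters'', but in this paper $b,d,g,h$ are \emph{fixed} data of the equation, not parameters one may vary. For particular choices (say $h\equiv 0$, or $b=d_i$ for some $i$) distinct $\Upsilon^a$ can coincide or vanish, so the linear-independence claim fails in general. This does not matter for the lemma, for the reason above, but you should drop the step rather than try to repair it.
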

\begin{remark}
    This system of equations is hierarchical in the order of the multi-indices, since it holds $\mfo(b),\mfo(c)\leqslant\mfo(a)-1$.
\end{remark}
Before we carry on with the statement of the properties of the stationary force coefficients, let us already point out one immediate consequence of the flow equation \eqref{eq:flow2}.
\begin{lemma}Fix $a\in\mcM$, $\epmu\in(0,1]$. We have the following support property of $\xi^a_\epmu$:   
\begin{equs}    \xi^a_\epmu(x,y^a)=0\;\mathrm{if}\; y^a_0\notin[x_0-2\mu^2\mfo(a),x_0]^{[a]}\,.\label{eq:supportXi}
\end{equs}
\end{lemma}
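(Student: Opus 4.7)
The plan is to argue by induction on the order $\mfo(a)$, using the hierarchical structure of~\eqref{eq:flow2}, the support property~\eqref{eq:suppGmu} of $\dot G_\nu$, and the scaling identity for populated multi-indices. For the base case $\mfo(a)=0$, the constraint $\mfs(a)=\mfo(a)+1$ forces $a\in\{\1^{\mfb}_0,\1^{\mfc}_0,\1^{\mfe}_0,\1^{\mfh}_0\}$, and the index set $\mathrm{Ind}(a)$ is empty since any splitting $a=b+c+d(\bfd)$ with $b,c\in\mcM$ populated would require $\mfs(b)+\mfs(c)=\mfs(a)=1$, which is impossible. Hence $\xi^a_{\eps,\mu}=\xi^a_{\eps,0}$, which is determined by the local initial force $S_\eps$ and therefore supported at $y^a_0=x_0\in[x_0,x_0]^{[a]}$.

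For the inductive step, suppose the claim holds for all populated multi-indices of order strictly less than $\mfo(a)\geqslant 1$. Since then $\xi^a_{\eps,0}=0$, integrating~\eqref{eq:flow2} gives
\begin{equs}
\xi^a_{\eps,\mu}(x,y^a)=-\int_0^\mu \sum_{(\sigma,b,c,\bfd)\in\mathrm{Ind}(a)}\B_\nu(\xi^b_{\eps,\nu},\xi^c_{\eps,\nu})(x,y^a)\,\rmd\nu\,.
\end{equs}
Fix one summand. Combining $a=b+c+d(\bfd)$, $\mfs(d(\bfd))=0$, and $\mfs=\mfo+1$ for populated multi-indices yields the key scaling identity $\mfo(a)=\mfo(b)+\mfo(c)+1$. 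By Definition~\ref{def:B}, $\B_\nu(\xi^b_{\eps,\nu},\xi^c_{\eps,\nu})(x,y^a)$ factorizes as $\xi^b_{\eps,\nu}(x,y^b)\int_\Lambda\partial^{k_0+1-k_1}_{\ttx}\dot G_\nu(z-w)\,\xi^c_{\eps,\nu}(w,y^c)\,\rmd w$, where $z=y^a_{\mfk_1 k_1\sigma^{\mfk_1}_{k_1}(a^{\mfk_1}_{k_1})}$ is one distinguished component of $y^a$ while all other components of $y^a$ are partitioned bijectively (through $\sigma$) between those of $y^b$ and those of $y^c$.

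Three support constraints then chain together. The induction hypothesis applied to $b$ localizes all time coordinates of $y^b$ to $[x_0-2\nu^2\mfo(b),x_0]$, including $z_0$. The support property~\eqref{eq:suppGmu} of $\dot G_\nu$ then forces $w_0\in[z_0-2\nu^2,z_0-\nu^2]\subset[z_0-2\nu^2,z_0]$. Finally, the induction hypothesis applied to $c$ localizes all time coordinates of $y^c$ to $[w_0-2\nu^2\mfo(c),w_0]$. Combining, every time coordinate of $y^a$ lies in $[x_0-2\nu^2(\mfo(b)+1+\mfo(c)),x_0]=[x_0-2\nu^2\mfo(a),x_0]\subset[x_0-2\mu^2\mfo(a),x_0]$, and this containment is preserved under integration over $\nu\in(0,\mu]$.

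The argument is essentially bookkeeping and no serious obstacle is anticipated. The only mildly delicate check is verifying that the rules of Definition~\ref{def:B} do bijectively partition the components of $y^a$ into those appearing in $y^b$ (alongside the link $z$) and those appearing in $y^c$, but this is immediate from the explicit formulas and the fact that each $\sigma^{\mfk}_i$ is a permutation of $[a^{\mfk}_i]$.
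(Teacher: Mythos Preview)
Your proof is correct and follows the same inductive approach as the paper's, only more explicitly (the paper skips the base case and does not spell out the integration over $\nu$). One small inaccuracy: for $\mfo(a)\geqslant 1$ it is not generally true that $\xi^a_{\eps,0}=0$, since the local counterterm $\mfc_\eps$ in $S_\eps$ contributes Dirac-type initial data for relevant multi-indices of positive order; the correct statement is that $\xi^a_{\eps,0}$ is supported on the diagonal $y^a_0=x_0$ (because $S_\eps$ is a local functional), which still lies in $[x_0-2\mu^2\mfo(a),x_0]^{[a]}$ and so the rest of your argument goes through unchanged.
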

\begin{proof}
The proof is by induction over the order of $a$. With the notation of Definition~\ref{def:B}, the induction hypothesis already implies the result for all the $y^a$ that are contained in $y^b$. Regarding the ones that are contained in $y^c$, for $\xi^a_\epmu(x,y^a)$ to be non zero, the inductions hypothesis implies that $y^a_0$ must be at most at a distance $2\mu^2\mfo(c)$ of $w_0$ (in its past) which implies that, by the support properties of $\dot G_\mu$ (see \eqref{eq:suppGmu}) it must be at at most $2\mu^2(\mfo(c)+1)$ of $z_0$. Again, the induction hypothesis gives that $z_0\in[x_0-2\mu^2\mfo(b),x_0]$, and we and up with $y^a_0$ being at most at a distance $2\mu^2(\mfo(b)+\mfo(c)+1)=2\mu^2\mfo(a)$ of $x_0$, hence the thesis.
\end{proof}
\subsection{Norms for stochastic objects}
In Section~\ref{sec:Sec4}, we show that \eqref{eq:flow2} is sufficient to control by induction all the force coefficients, in the topology that we define hereafter.
\begin{definition}\label{def:normforxi}
    Consider a collection of random $(\lambda^a_\epmu)^{a\in\mcM}_{\epmu\in(0,1]}$ with $\lambda^a_\epmu\in\mcD(\Lambda^{[a]+1},\mcH^a)$. For every $N\geqslant1$, $a\in\mcM$ and $\eps,\mu\in(0,1]$, we let
    \begin{equs}
        K^{\otimes[a]+1}_{N,\mu}\lambda^a_\epmu(x,y^a)\eqdef \big(K_{N,\mu}\otimes\dots\otimes K_{N,\mu}\big)*\lambda^a_\epmu(x,y^a)\,,
    \end{equs}
and endow $\lambda^a_\epmu$ with the norm
    \begin{equs}     \label{eq:defTripleNorm}  \nnorm{\lambda^a_{\eps,\mu}}_N\eqdef \norm{K^{\otimes[a]+1}_{N,\mu}\lambda^a_{\eps,\mu}}_{L^\infty_xL^1_{y^a}(\Lambda^{[a]+1})}\equiv \sup_{x\in\Lambda}\int_{\Lambda^{[a]}} |K^{\otimes[a]+1}_{N,\mu} \lambda^a_{\eps,\mu}(x,y^a)|\rmd y^a\,.
    \end{equs}
For every $a\in\mcM$ and $(x,y^a)\in\Lambda^{[a]+1}$, given a family $\mfl^a=(\mfl^a_{\mfk ij})_{\mfk ij\in [a]}$ of indices $\mfl^a_{\mfk ij}\in\N^{n+1}$, we also set 
\begin{equ}
\text{$\mathbf X$}^{\mfl^a}(x,y^a)
\eqdef
\prod_{\mfk ij\in[a]}
\frac{(x-y^a_{\mfk ij})^{\mfl^a_{\mfk ij}}}{\mfl^a_{\mfk ij}!}\,,
\end{equ}
and we write
\begin{equs}
    \lambda^{a,\mfl^a}_{\eps,\mu}(x,y^a)\eqdef\bfX^{\mfl^a}(x,y^a)\lambda_{\eps,\mu}^a(x,y^a)\,.
\end{equs}    
Finally, for $P\in\N_{\geqslant1}$, $\eta>0$ and $c>0$, we define
\begin{equs}
\Norm{\lambda}_{P,N,\eta}  \eqdef \E\Big[\Big( \max_{a\in\mcM}\max_{\mfl_a\in(\{0,1\}^{n+1})^{[a]}}\sup_{\eps,\mu\in(0,1]}\mu^{-|a|-|\mfl^a|+\eta}\nnorm{\lambda^{a,\mfl^a}_{\eps,\mu}}_{N}\Big)^P\Big]^{1/P}\,,
\end{equs}
and 
\begin{equs}
 \Norm{\lambda}_{P,N,\eta,c} \eqdef \E\Big[\Big( \max_{a\in\mcM}\max_{\mfl_a\in(\{0,1\}^{n+1})^{[a]}}\sup_{\eps,\eps',\mu\in(0,1]}(\eps\vee\eps')^{-c}\mu^{-|a|-|\mfl^a|+\eta}\nnorm{\lambda^{a,\mfl^a}_{\eps,\mu}-\lambda^{a,\mfl^a}_{\eps',\mu}}_{N}\Big)^P\Big]^{1/P}\,.
\end{equs}
\end{definition}
The following Theorem is proven in Section~\ref{sec:Sec4}.
\begin{theorem}\label{thm:sto} 
Fix $u\in\mcD(\R)$ and let $u\xi=\big((u\xi)^a_\epmu\big)^{a\in\mcM}_{\epmu\in(0,1]}$ denote the collection of all the stationary for coefficients multiplied by the time weight $u$, that is to say for $(x,y^a)\in\Lambda^{[a]+1}$, we write $(u\xi)^a_\epmu(x,y^a)\eqdef u(x_0)\xi^a_\epmu(x,y^a)$. Then, there exist a universal/combinatorial constant $N_0\geqslant1$ such that the following holds: for every $P{\geqslant1}$, there exist $c^\star,\eta^\star>0$ such that for all $c\in(0,c^\star]$ and $\eta\in(0,\eta^\star]$, we have
\begin{equs}\label{eq:boundXi}
  \Norm{u\xi}_{P,N_0^{2\Gamma+1},\eta}\vee  \Norm{u\xi}_{P,N_0^{2\Gamma+1},\eta,c}\lesssim_{P,u} 1\,.
\end{equs}
\end{theorem}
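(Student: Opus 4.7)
The plan is to induct on the order $\mfo(a)$ of the multi-index $a \in \mcM$, exploiting the lower-triangular structure of the flow equation \eqref{eq:flow2}: for $\mfo(a) \geqslant 1$, the right-hand side only involves $\xi^b_{\eps,\nu}, \xi^c_{\eps,\nu}$ with $\mfo(b), \mfo(c) \leqslant \mfo(a) - 1$. Since each $\xi^a_{\eps,\mu}$ is a polynomial in the Gaussian noise $\xi$ of bounded degree (controlled by $\mfs(a^{\mfh}) \leqslant \mfs(a) \leqslant \Gamma+1$), Gaussian hypercontractivity reduces the $L^P$ bound to a single large even moment, with the $P$-dependence absorbed into the allowed ranges of $\eta, c$ and into the implicit constant. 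Concretely, one takes expectations of tensor products $\mathbb{E}\big[\bigotimes_{i=1}^q \xi^{a_i}_{\eps,\mu}\big]$ and derives from \eqref{eq:flow2} a lower-triangular deterministic hierarchy for the cumulant kernels, which is then closed by induction on both $\sum_i \mfo(a_i)$ and the tensor rank $q$.

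\textbf{Base case and the deterministic bilinear estimate.} At order $0$ the multi-indices are $\1^{\mfk}_0$ for $\mfk \in \{\mfb,\mfc,\mfe,\mfh\}$; the initial coefficient $\xi^{\1^{\mfk}_0}_{\eps,0}(x,y)$ is $\delta(x-y)$ for $\mfk \neq \mfh$ and $\delta(x-y)\xi_\eps(y)$ for $\mfk = \mfh$, and standard heat-kernel estimates combined with the covariance from Assumption~\ref{assump:noise} give $\nnorm{\xi^{\1^{\mfk}_0}_{\eps,\mu}}_N \lesssim \mu^{|\1^{\mfk}_0|}$. The heart of the inductive step is a bilinear estimate on the insertion operator~\eqref{eq:defB} of the schematic form
\begin{equation*}
\nnorm{\B_\nu(X,Y)}_{N_0 N}
\;\lesssim\;
\nu^{|a|-|b|-|c|-1}\,\nnorm{X}_{N}\,\nnorm{Y}_{N}
\qquad \text{for } a = b + c + d(\bfd)\,,
\end{equation*}
where the $\nu$-scaling comes from Lemma~\ref{lem:dotG}-type estimates on $\partial_{\ttx}^{k_0+1-k_1}\dot G_\nu$ and the inflation $N \mapsto N_0 N$ absorbs the extra $Q_\nu$-kernels produced when commuting $K_{N_0 N,\mu}$ past $\dot G_\nu$ into the inner $K_{N,\nu}$-convolutions used to test $X$ and $Y$. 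Iterated along the depth of the flow, which reaches multi-indices in $\mcM_* = \mcM_{\leqslant 2\Gamma+1}$, this produces the cumulative kernel count $N_0^{2\Gamma+1}$ appearing in the statement.

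\textbf{Integrating the flow, time weight, and $\eps$-difference.} Given the bilinear estimate, one integrates \eqref{eq:flow2} in $\nu$: for irrelevant multi-indices (those with $|a|>0$, automatic when $\mfo(a) \geqslant \Gamma+1$ by~\eqref{eq:LowerBoundScaling}), the $\nu$-integral converges and the bound is obtained from $\nu=0$ up to $\mu$; for the few relevant ones the initial data is tuned through the counterterm $\mfc_\eps$ so that the flow remains regular. The sum over $\mathrm{Ind}(a)$ is finite, and the scaling identity~\eqref{eq:scalingGood} ensures the accumulated exponent of $\mu$ matches $|a|-\eta$, the $\eta$-loss accommodating logarithmic factors at marginal directions. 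The time weight $u$ plays essentially no role: it is smooth and compactly supported in $(-\infty,1]$, and the support property~\eqref{eq:supportXi} already localises $\xi^a_{\eps,\mu}$ in time, so multiplying by $u$ only brings in finitely many derivatives of $u$ absorbed into the $\lesssim_u$ constant. The $\eps$-difference bound $\Norm{u\xi}_{P,N,\eta,c}$ is proved by the same induction applied to telescopic differences, the base-case input being the standard mollification estimate $\|\xi_\eps - \xi_{\eps'}\|_{\mcC^{\alpha-\eta}} \lesssim (\eps \vee \eps')^{c^\star}$ (valid for $c^\star$ smaller than the regularity gap).

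\textbf{Main obstacle.} I expect the hardest part to be the deterministic bilinear estimate together with its induction-compatible $N$-inflation: one needs a convolution lemma rewriting $K_{N_0 N,\mu} * \partial_{\ttx}^{\mfl}\dot G_\nu$ as a bounded operator composed with $K_{N,\nu}$-convolutions, with explicit, uniform scaling in $\mu/\nu$, while respecting the $\mcH^a$-tensor structure and the spatial derivatives inherited from the derivators $\bfd \in \mcD$. The combinatorial sum over $\mathrm{Ind}(a)$ is harmless since $\mcM$ is finite, but the power-counting is sharp: in the super-renormalizable regime one cannot afford to lose powers of $\mu$ through crude bounds, and the accounting between the outer test-kernel scale $\mu$ and the internal flow scale $\nu$ has to be done with some care.
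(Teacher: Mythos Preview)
Your overall architecture---induction on order via the flow hierarchy, cumulant expansion, counterterm for relevant directions---matches the paper's Section~\ref{sec:Sec4}, but two concrete ingredients are missing that the paper needs to close the argument.

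\textbf{Localization of relevant force coefficients.} You write that ``for the few relevant ones the initial data is tuned through the counterterm $\mfc_\eps$ so that the flow remains regular''. This is not quite right: the counterterm only fixes the \emph{local} (i.e.\ $\delta^a$-supported) part of the expectation. The full kernel $\xi^a_{\eps,\mu}$ is non-local, and for $|a|\leqslant 0$ the flow equation~\eqref{eq:flow2} is \emph{not} integrable from $\nu=0$ in a pathwise sense. The paper handles this in two places. At the cumulant level (Lemma~\ref{lem:cumul}), only the integrated quantity $\bfI^a(\xi^{\tilde a}_{\eps,\mu})$ has a usable flow equation~\eqref{eq:flowcumulRel} with boundary at $\mu=1$; to recover the full kernel one invokes the Taylor-type localization identity~\eqref{eq:localization}, which rewrites a relevant expectation as local terms (killed by $\mfc_\eps$) plus an irrelevant non-local remainder. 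The same device is used again in the pathwise induction of Section~\ref{subsec:44}. Without this localization step your induction for relevant $a$ does not close.

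\textbf{Kolmogorov argument for the supremum.} The target norm $\Norm{u\xi}_{P,N,\eta}$ has $\sup_{\eps,\mu}$ \emph{inside} the expectation, whereas cumulant estimates (or hypercontractivity) only give you $\sup_{\eps,\mu}\mathbb{E}[(\cdots)^P]$. Bridging this gap requires a Kolmogorov--Chentsov argument in the two parameters $(\eps,\mu)$, for which one needs moment bounds on $\partial_\eps^s\partial_\mu^t$ of the coefficients. This is precisely why the paper introduces the generalized multi-indices $\widetilde{\mfM}$ carrying extra labels $s\in\{0,1,2\}$ and $t\in\{0,1\}$, derives flow equations~\eqref{eq:flowGen} for them, and then runs the Kolmogorov step as Lemma~\ref{lem:Kolmo}. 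Your proposal does not address how the supremum enters the expectation. Relatedly, the time weight $u$ is not inert: the noise is stationary on all of $(-\infty,1]\times\T^n$, so without $u$ the $L^P_x$ norms used in the Kolmogorov step would not be finite; commuting $u$ through $K_{N,\mu}$ is the content of Lemma~\ref{lem:Duch9.10}.
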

In Section~\ref{Sec:Sec5}, we show that one can construct the effective force $F_\epmu$ knowing only the stationary force $S_\epmu$ without any additional renormalization. To do so, we introduce an ansatz for $F_\epmu$ \eqref{eq:expressionFmu} below, which is inspired by our ansatz for $S_\epmu$, and by the way we deal with the initial condition -- see the discussion at the beginning of Section~\ref{Sec:Sec5}. We have the following result about $F_\epmu$.
\begin{theorem}
  \label{coro:2} 
There exists a collection $(\zeta^a_\epmu)^{a\in\mcM}_{\eps,\mu\in(0,1]}$ of random variables $\zeta^a_\epmu\in\mcD(\Lambda_{0;1}^{[a]+1},\mcH^a)$ called $\mathrm{force}$ $ \mathrm{ coefficients} $ such that it holds
\begin{equs}\label{eq:expressionFmu}
      F_\epmu[\psi](x)=\sum_{a\in\mcM} F^a_\epmu[\psi](x)=\sum_{a\in\mcM}\int_{\Lambda^{[a]}} \langle \zeta^a_\epmu(x,y^a) ,\Upsilon^a[\psi](y^a)\rangle_{\mcH^a}\rmd y^a\,.
\end{equs}    
Moreover, there exist a universal/combinatorial constant $N_1\geqslant N_0$ such that the following holds: for all $P\in\N_{\geqslant1}$, there exist $c^\star,\eta^\star>0$ such that for all $c\in(0,c^\star]$ and $\eta\in(0,\eta^\star]$, $(\zeta^a_\epmu)^{a\in\mcM}_{\eps,\mu\in(0,1]}$, it holds 
\begin{equs}\label{eq:boundZeta}
  \Norm{\zeta}_{P,N_1^{3\Gamma+1},\eta}\vee  \Norm{\zeta}_{P,N_1^{3\Gamma+1},\eta,c}\lesssim_P 1\,.
\end{equs}
\end{theorem}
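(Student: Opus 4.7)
The proof reduces the construction and estimation of $F_\epmu$ and its force coefficients $\zeta^a_\epmu$ to an inductive argument over $\mfo(a)$ based on the same bilinear hierarchy \eqref{eq:flow2} that drives the stationary force coefficients. Plugging the ansatz \eqref{eq:expressionFmu} into the approximate flow equation \eqref{eq:Pol} and using that $\Pi^{\leqslant\Gamma}_{G\phi_\eps}$ projects onto functionals depending on $\psi$ only through $\psi + G\phi_\eps$ (which is exactly how the $\Upsilon^a[\psi+G\phi_\eps]$ enter the ansatz), one finds that $F_\epmu$ solves \eqref{eq:Pol} with $F_{\eps,0} = \1^0_{(0,\infty)}\,S_\eps[\bigcdot + G\phi_\eps]$ if and only if the $\zeta^a_\epmu$ satisfy the same system as the $\xi^a_\epmu$'s,
$$
\partial_\mu \zeta^a_\epmu = -\sum_{(\sigma,b,c,\bfd)\in\mathrm{Ind}(a)} \B_\mu(\zeta^b_\epmu, \zeta^c_\epmu),
$$
with shifted initial condition $\zeta^a_{\eps,0}(x,y^a) = \1^0_{(0,\infty)}(x_0)\,\xi^a_{\eps,0}(x,y^a)$. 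Since $\B_\mu$ acts trivially in $x$, induction on $\mfo(a)$ propagates the support in $\{x_0 > 0\}$, and the truncated hierarchy uniquely determines $(\zeta^a_\epmu)_{a \in \mcM}$ for every fixed $\eps > 0$.

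Bounds on $\zeta^a_\epmu$ are then proved by induction on $\mfo(a)$, mirroring the proof of Theorem~\ref{thm:sto}. For the inductive step, one integrates the flow in $\mu$,
$$
\zeta^a_\epmu = \zeta^a_{\eps,0} - \int_0^\mu \sum_{(\sigma,b,c,\bfd)\in\mathrm{Ind}(a)} \B_\nu(\zeta^b_\epnu, \zeta^c_\epnu)\,\rmd\nu,
$$
and controls the integrand using the lower-order inductive bounds together with kernel estimates on $\partial_{\ttx}^{k_0+1-k_1}\dot G_\nu$ (from Lemma~\ref{lem:dotG}). The population constraint \eqref{eq:pop_constraint}, the scaling identity \eqref{eq:scalingGood}, and the positive margin $\delta$ of \eqref{eq:def_delta} together ensure that the $\nu$-integration converges and produces the target $\mu^{|a|-\eta}$ scaling of $\nnorm{\zeta^a_\epmu}_{N_1^{3\Gamma+1}}$. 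The jump from the universal constant $N_0$ to $N_1 \geqslant N_0$ is required to accommodate the extra smoothing absorbed at each of the at most $\Gamma$ inductive levels as well as the kernel losses from the spatial derivatives appearing in $\B_\nu$. The same scheme, applied to the differenced coefficients $\zeta^a_\epmu - \zeta^a_{\eps',\mu}$ driven by the differenced initial data, yields the companion bound on $\Norm{\zeta}_{P,N_1^{3\Gamma+1},\eta,c}$.

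The principal obstacle is in initializing the induction: Theorem~\ref{thm:sto} provides estimates on $u\xi$ only for \emph{smooth} time weights $u \in \mcD((-\infty,1])$, while $\zeta^a_{\eps,0}$ carries the non-smooth indicator $\1^0_{(0,\infty)}$. The idea, which is the content of Section~\ref{Sec:Sec5}, is to decompose $\1^0_{(0,\infty)} = u_\mu + r_\mu$ where $u_\mu \in \mcD((-\infty,1])$ equals $1$ outside a parabolic neighborhood of width $\mu^2$ around $x_0=0$, and $r_\mu$ is supported in that neighborhood. The smooth piece is controlled uniformly in $\mu$ by Theorem~\ref{thm:sto}. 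The boundary piece, whose support has parabolic volume $O(\mu^{2})$, is absorbed using the positive scaling margin $\delta > 0$ that the truncation $\mfo \leqslant \Gamma$ guarantees for every pre-multi-index that would otherwise be lost. Carefully propagating this non-stationary estimate through the bilinear hierarchy without eating into the margin at each iteration is the main technical difficulty, and is what ultimately forces the specific blow-up of the smoothing parameter from $N_0^{2\Gamma+1}$ to $N_1^{3\Gamma+1}$ in \eqref{eq:boundZeta}.
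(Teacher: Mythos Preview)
Your inductive step in the second paragraph contains a genuine gap. You claim that the $\nu$-integration of
\[
\int_0^\mu \sum_{(\sigma,b,c,\bfd)\in\mathrm{Ind}(a)} \B_\nu(\zeta^b_\epnu,\zeta^c_\epnu)\,\rmd\nu
\]
converges thanks to the margin $\delta$ of \eqref{eq:def_delta}. This is false: the integrand scales like $\nu^{|a|-1-2\eta}$, and for relevant $a\in\mcM$ one has $|a|\leqslant 0$, so the integral diverges at $\nu=0$. The margin $\delta>0$ applies only to pre-multi-indices with $\mfo(a)\geqslant\Gamma+1$, i.e.\ those \emph{excluded} from $\mcM$; it says nothing about the multi-indices you are actually trying to control.

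The paper's argument is different in a way that matters. It does not integrate the flow for $\zeta^a_\epmu$ from $\mu=0$. Instead, Lemma~\ref{lem:support} shows that $\zeta^a_\epmu(x,y^a)=\xi^a_\epmu(x,y^a)$ whenever $x_0>2\mu^2\mfo(a)$, so one decomposes $\zeta^a_\epmu$ itself (not the initial data) as $\zeta^a_\epmu = v_\mu\xi^a_\epmu + w_\mu\zeta^a_\epmu$ with $\mu$-dependent weights, $v_\mu$ supported away from $x_0=0$ and $w_\mu$ supported in $|x_0|\lesssim\mu^2$. The first piece is controlled directly by Theorem~\ref{thm:sto}. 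For the second piece, the key is to work in the auxiliary norm $\nnorm{\cdot}_{1,N}$ which is $L^{\infty,1}$ in $x$: the small time support of $w_\mu$ then yields a factor $\mu^2$ via \eqref{eq:1052b}, so the flow integrand for $w_\mu\zeta^a_\epmu$ scales like $\nu^{|a|+1-2\eta}$, which \emph{is} integrable because $|a|\geqslant -2+\alpha$. After integrating, one converts back to the $L^\infty$-based norm via \eqref{eq:KmuLp}, at the cost of $\mu^{-2}$, recovering the target $\mu^{|a|-2\eta}$. Your final paragraph gestures at a decomposition of this kind, but you apply it to the wrong object (the initial data rather than $\zeta^a_\epmu$ at scale $\mu$) and attribute the gain to the wrong mechanism ($\delta$ rather than the $\mu^2$ from the small time support).
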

\begin{remark}\label{rem:convergenceZeta}
    The estimate $\Norm{\zeta}_{P,N_1^{3\Gamma+1},\eta,c}\lesssim_P 1$ implies the existence of a collection   $(\zeta^a_{0,\mu})^{a\in\mcM}_{\mu\in(0,1]}$ of random variables verifying for $\eta>0$ the estimate
\begin{equs}
    \E\Big[\Big( \max_{a\in\mcM}\sup_{\mu\in(0,1]}\mu^{-|a|+\eta}\nnorm{\zeta^a_{0,\mu}}_{N_1^{3\Gamma+1}}\Big)^P\Big]^{1/P}\lesssim_P 1
\end{equs}    
and such that the family $(\zeta^a_\epmu)_{\mu\in(0,1]}$ converges in probability to $(\zeta^a_{0,\mu})_{\mu\in(0,1]}$ as $\eps\downarrow0$.
\end{remark}
With the force coefficients at hand, we are ready to give the estimates for the three functionals necessary to solve \eqref{eq:sys2}. With these estimates in hand, it will be possible to make sense of the remainder $R_{\eps,\bigcdot}$ uniformly in $\eps>0$, and to use it to construct the solution $\psi_\eps$. We obtain $R_{\eps,\bigcdot}$ via a fixed point argument in Section~\ref{subseq:Remainder}, while we construct the solution $\psi_\eps$ in Section~\ref{subsec:solution}. Before we state the estimates for the three functionals, the following definitions are necessary.
\begin{definition}\label{def:mcG}
We define an increasing function $\mcG:\R_{\geqslant0}\rightarrow\R_{\geqslant0}$ by 
\begin{equs}\label{eq:defmcG}
    \mcG:t\mapsto  t^{8\Gamma+8+3N_1^{3\Gamma+1}}\Big(\norm{(b,d,g,h)}_{C^{\Gamma+1+3N_1^{3\Gamma+1}}(B_0(t))}\Big)^{2\Gamma+2}\,.
\end{equs}
\end{definition}
Next, we introduce the norm in which we control the argument of the force.
\begin{definition}\label{def:solnorm}
Through out the rest of the article, we fix a small positive parameter $\kappa\leqslant\kappa_0=(\alpha/2)\wedge\big(\delta/(2\Gamma+2)\big)$. For any $\mu,T\in(0,1]$, we define the \textit{solution norm} denoted $\|\bigcdot\|_{{\tt S},\mu,T}$ on smooth functions $\Lambda_{0;T}\rightarrow\R$ by
\begin{equs}
     \|\lambda\|_{{\tt S},\mu,T}\eqdef\norm{\lambda}_{L^\infty_{0;T}}\vee\big( \mu^{1-\alpha+\kappa}\norm{\partial_{\ttx}\lambda}_{L^\infty_{0;T}}\big)
\,.
\end{equs} 
\end{definition}
 \begin{definition}
   For any $\psi\in\mcD\big((-\infty,1]\big)$, the derivative $\D F_\epmu[\psi]\varphi$ of the effective force at $\psi$ in the direction $\varphi\in\mcD\big((-\infty,1]\big)$ can be seen as the operator $\D F_\epmu[\psi]$ acting on $\varphi$. Given $x,y\in\Lambda$, we denote the kernel of this operator (the gradient of $F_\epmu[\psi]$) by
   \begin{equs}
       \D F_\epmu[\psi](x,y)\equiv \D_y F_\epmu[\psi](x)\,.
   \end{equs}
   Moreover, we denote by $\bfX_0 \D F_\epmu[\psi]$ the operator with kernel 
    \begin{equs}
      (x_0-y_0) \D F_\epmu[\psi](x,y)\equiv(x_0-y_0) \D_y F_\epmu[\psi](x)\,.
   \end{equs}
Finally, for $k\in\{0,1\}$, we write $\bfX_0^k \D F_\epmu[\psi]$ to denote $ \D F_\epmu[\psi]$ when $k=0$ and $\bfX_0 \D F_\epmu[\psi]$ when $k=1$.
\end{definition}
\begin{corollary}\label{coro:1}
Recall the definition \eqref{eq:def_delta} of $\delta>0$, and the fact that for $T\in(0,1]$, we define a scale $\mu_T=\sqrt{T}$. Fix $T\in(0,1]$ and a family of functions $\theta=(\theta_\mu)_{\mu\in(0,\mu_T]}$ such that for every $\mu\in(0,\mu_T]$ $\theta_\mu\in\mcD(\Lambda_{0;T})$ and verifying the estimate
\begin{equs}  \label{eq:eqCoro224}  \|\theta_\mu\|_{{\tt S},\mu,T}
\leqslant C_{\vphi,R,\varpi}\,,
\end{equs}
for some positive constant $C_{\vphi,R,\varpi}$, uniformly in $\mu\in(0,\mu_T]$.

Recall that to lighten the notation, we set $K_\mu= K_{N_1^{3\Gamma+1},\mu}$ and $\mcR_\mu=\mcP_\mu^{N_1^{3\Gamma+1}}$. We let
\begin{equs}
    \tF_\epmu[\bigcdot]\eqdef K_\mu F_\epmu[\bigcdot]\,,\;
\bfX_0\widetilde{\D F}_\epmu[\bigcdot]\eqdef K^{\otimes2}_\mu\big(\bfX_0\D F_\epmu[\bigcdot]\big)    
\;{and}\;\tH_\epmu[\bigcdot]\eqdef K_\mu H_\epmu[\bigcdot]\,.
\end{equs}
Then, there exists a universal/combinatorial constant $C_F>0$ such that for all $k,k'\in\{0,1\}$ and $\eta>0$ it holds
    \begin{equs}
     \label{eq:boundF}   \norm{ \tF_\epmu[K_\mu\theta_\mu]}_{L^\infty_{0;T}}&\leqslant C_F\mcG(1\vee C_{\vphi,R,\varpi})\mu^{-2+\alpha-\eta}\,,\\
     \label{eq:boundDF}    \norm{ \bfX_0^k\widetilde{\D F}_\epmu[K_\mu\theta_\mu]}_{\mcL_{0;T}^{\infty,\infty}}&\leqslant C_F\mcG(1\vee C_{\vphi,R,\varpi})\mu^{-2+2k+\alpha-\kappa-\eta}\,,\\
    \label{eq:boundDFharmo}    \norm{ \bfX_0^k\widetilde{\D F}_\epmu[K_\mu\theta_\mu]\mcR_\mu^\dagger(\t^{k'}\dot G_\mu\phi_\eps)}_{L_{0;T}^{\infty}}&\leqslant C_F\mcG(1\vee C_{\vphi,R,\varpi})\mu^{-3+\alpha+2(k+k')-\eta}\,,\\
       \label{eq:boundL}       \norm{\tH_\epmu[K_\mu\theta_\mu]}_{L_{0;T}^\infty}&\leqslant C_F\mcG(1\vee C_{\vphi,R,\varpi})\mu^{-1+\alpha-\eta}\,,
    \end{equs}
    uniformly in $\eps\in(0,1]$ and $\mu\in(0,\mu_T]$. Note that \eqref{eq:boundDFharmo} is an improvement of\eqref{eq:boundDF} by a factor $\kappa$.
   
\end{corollary}
\begin{remark}\label{rem:convergenceFL}
The convergence when $\eps\downarrow0$ of the force coefficients stated in Remark~\ref{rem:convergenceZeta} implies that $(\tF_{\eps,\bigcdot},\tH_{\eps,\bigcdot})$ converge in probability to some functionals $(\tF_{0,\bigcdot},\tH_{0,\bigcdot})$ that verify the estimates \eqref{eq:boundF}, \eqref{eq:boundDF}, \eqref{eq:boundDFharmo} and \eqref{eq:boundL}.
\end{remark}
\begin{proof}[of Corollary~\ref{coro:1}]
    The proofs of all three relations are similar, and rely on a careful analysis of the power counting. We start by proving \eqref{eq:boundF}. Starting from \eqref{eq:expressionFmu} and \eqref{eq:defTripleNorm}, we have
     \begin{equs}
       \norm{\tF_\epmu[K_\mu\theta_\mu]}_{L_{0;T}^\infty}&\lesssim\sum_{a\in\mcM}\nnorm{\zeta^a_\epmu}_{N_1^{3\Gamma+1}}\norm{\big(\mcR_\mu^\dagger\big)^{\otimes[a]}\Upsilon^a[K_\mu\theta_\mu]}_{{L^\infty(\Lambda^{[a]})}}\,,
 \end{equs} 
where $\big(\mcR_\mu^\dagger\big)^{\otimes[a]}\Upsilon^a[\bigcdot]$ denotes the action of $\big(\mcR_\mu^\dagger\big)$ on $\Upsilon^a$ at the level of all its arguments.

Taking the example of $\mcR_\mu^\dagger h^{(i)}(K_\mu\theta_\mu)$, one sees using the Fa\`a di Bruno formula that it holds 
\begin{equs}
    \norm{\mcR_\mu^\dagger h^{(i)}(K_\mu\theta_\mu)}_{L^\infty_{0;T}}\lesssim \norm{\mcR_\mu^\dagger K_\mu \theta_\mu}_{L^\infty_{0;T}}^{3N_1^{3\Gamma+1}}\norm{h}_{C^{i+3N_1^{3\Gamma+1}}(B_0(\norm{K_\mu \theta_\mu}_{L^\infty_{0;T}}))} \lesssim C_{\vphi,R,\varpi}^{3N_1^{3\Gamma+1}}\norm{h}_{C^{i+3N_1^{3\Gamma+1}}(B_0(C_{\vphi,R,\varpi}))}\,,
\end{equs}
where we used \eqref{eq:spacederivKmu} that implies that the kernel of $\mcR_\mu^\dagger K_\mu$ is integrable uniformly in $\mu\in(0,1]$.

Therefore, still using the Fa\`a di Bruno formula, we obtain the generalization of \eqref{eq:boundIa}:
\begin{equs}    \textcolor{white}{A}&\norm{\big(\mcR_\mu^\dagger\big)^{\otimes[a]}\Upsilon^a[K_\mu\theta_\mu]}_{L^\infty(\Lambda^{[a]})}\\&\qquad\lesssim C_{\vphi,R,\varpi}^{4\Gamma+4+3N_1^{3\Gamma+1}}
 \Big(\norm{(b,d,g,h)}_{C^{\Gamma+3N_1^{3\Gamma+1}}(B_0(C_{\vphi,R,\varpi}))}\Big)^{\Gamma+1}
    \mu^{(\alpha-1-\kappa)(\mfs(a^\mfc)+2\mfs(a^\mfe)+\mfs(a^\mff))}\,,
\end{equs}
where we have used the fact that for all multi-indices, $\mfo(a)\leqslant\Gamma$ and $\mfs(a),\mfs(a^\mfc),\mfs(a^\mfe),\mfs(a^\mff)\leqslant\Gamma+1$.\\ 
We can now control the norm of $\zeta^a_\epmu$ using \eqref{eq:boundZeta}. We obtain
    \begin{equs}
       \textcolor{white}{A} &\norm{\tF_\epmu[K_\mu\theta_\mu]}_{L_{0;T}^\infty}\\ &\qquad\lesssim
    C_{\vphi,R,\varpi}^{4\Gamma+4+3N_1^{3\Gamma+1}}  \Big(\norm{(b,d,g,h)}_{C^{\Gamma+3N_1^{3\Gamma+1}}(B_0(C_{\vphi,R,\varpi}))}\Big)^{\Gamma+1}
    \sum_{a\in\mcM}\mu^{|a|+(\alpha-1-\kappa)(\mfs(a^\mfc)+2\mfs(a^\mfe)+\mfs(a^\mff))-\eta}      
    \,.
    \end{equs}
We first bound 
\begin{equs}
    (\alpha-1-\kappa)\big((\mfs(a^\mfc)+2\mfs(a^\mfe)+\mfs(a^\mff)\big)\geqslant -\mfs(a^\mfc)+(2\alpha-2-2\kappa)\mfs(a^\mfe)-\mfs(a^\mff)\,.
\end{equs}
Combining the above with \eqref{eq:scalingGood} yields
\begin{equs}
    |a|&+(\alpha-1-\kappa)\big(\mfs(a^\mfc)+2\mfs(a^\mfe)+\mfs(a^\mff)\big)\\&\geqslant-2+
    \alpha\mfs(a^{\mfh})+(2\alpha-2\kappa)\mfs(a^{\mfe})
    +\mfs(a^\mfc)+\mfs(a^\mfd)+
    2\mfs(a^{\mfb})\\  &\geqslant-2+\alpha\mfn(a)
    +(\alpha-2\kappa)\mfs(a^{\mfe})
    +(1-\alpha)\mfs(a^\mfc)+\mfs(a^\mfd)+
    (2-\alpha)\mfs(a^{\mfb})\,,\label{eq:firstlowerbounda}
\end{equs}
where we have set $\mfn(a)\eqdef\mfs(a^\mfb)+\mfs(a^\mfc)+\mfs(a^\mfe)+\mfs(a^\mfh)$. Because $\mfn(a)$ is larger than the number of leafs of the trees in $a$, we have $\mfn(a)\geqslant1$. Since $\kappa\leqslant\alpha/2$, we end up with 
\begin{equs}
     |a|+(\alpha-1-\kappa)\big(\mfs(a^\mfc)+2\mfs(a^\mfe)+\mfs(a^\mff)\big)\geqslant-2+\alpha\,,
\end{equs}
which concludes the proof of \eqref{eq:boundF}. 

We now turn to \eqref{eq:boundDF}. 
Using \eqref{eq:exprDI}
combined with \eqref{eq:expressionFmu} gives 
\begin{equs}
\text{$ \bfX_0 $} &\widetilde{\D  F}_\epmu[\psi](x,w)\\&= \sum_{b\in\mcM}\sum_{\bfd\in\mcD}C\int_{\Lambda^{[b]}} \langle \big(K_\mu\otimes\Id^{\otimes[b]}\big)\zeta^{(b,\text{\scriptsize{$\mfl^b$}})}_\epmu(x,y^b) \partial_\ttx^{k_0+1-k_1}K_\mu(w-z),\Upsilon^{b+\text{\scriptsize{$\bfd$}}(b)}[\psi](y^{b+\bfd(b)})\rangle_{\mcH^b}\rmd y^{b}\\
&=\sum_{b\in\mcM}\sum_{\bfd\in\mcD}C\int_{\Lambda^{[b]}} \langle \big(K^{\otimes[\hat b]+1}_\mu\otimes\Id\big)\zeta^{(b,\text{\scriptsize{$\mfl^b$}})}_\epmu(x,y^{\hat b},z) \partial_\ttx^{k_0+1-k_1}K_\mu(w-z),\big(\mcR_\mu^\dagger\big)^{\otimes[\hat b]}\Upsilon^{b+\text{\scriptsize{$\bfd$}}(b)}[\psi](y^{b+\bfd(b)})\rangle_{\mcH^b}\rmd y^{b}\,.
\end{equs}
Here to shorten the notation we have replaced the prefactor in \eqref{eq:exprDI} by $C$, we have set $z\eqdef y^b_{\mfk_0 k_0b^{\mfk_0}_{k_0}}$ and we let $y^{\hat b}$ denote the collection of all the other variables in $y^b$. 

Using the same argument as in the proof of \eqref{eq:propB}, we will place the last kernel $K_\mu$ in the variable $z$ in front of $\zeta_\epmu^{(b,\mfl^b)}(x,y^{\hat b},z)$ by integrating by parts. The derivatives inside this newly created $\mcR_\mu^\dagger$ can hit either $\Upsilon^{b+\text{\scriptsize{$\bfd$}}(b)}[\psi]$ or $ \partial_\ttx^{k_0+1-k_1}K_\mu(y-z)$. In the second case, this creates a kernel whose $\mcL^{\infty,\infty}$ norm is still bounded by $\mu^{-(k_0+1-k_1)}$. Overall, we end up with the bound
\begin{equs}\label{eq:somestepinproofDF}
       \|\bfX_0 &\widetilde{\D F}_\epmu[K_\mu\theta_\mu]\|_{\mcL_{0;T}^{\infty,\infty}}\\&\lesssim\sum_{b\in\mcM}\sum_{\bfd\in\mcD}\nnorm{\zeta^{(b,\mfl^b)}_\epmu}_{N_1^{3\Gamma+1}}\norm{\big(\mcR_\mu^\dagger\big)^{\otimes[b]}\Upsilon^{b+d(\bfd)}[K_\mu\theta_\mu]}_{L^\infty(\Lambda^{[b+d(\bfd)]})}\mu^{-(k_0+1-k_1)}  \,.        \end{equs}
Reasoning as before, and using the fact that going from $b$ to $b+d(\bfd)$ increases $\mfl(b)$ by at most one while leaving $\mfs(b)$ unmodified, again we can bound $\mfl(b)$ by $\Gamma $ and $\mfs(b)$ by $\Gamma+1$, and we end up with  
\begin{equs}    \textcolor{white}{A}&\norm{\big(\mcR_\mu^\dagger\big)^{\otimes[b]}\Upsilon^{b+d(\bfd)}[K_\mu\theta_\mu]}_{L^\infty(\Lambda^{[b+d(\bfd)]})}\\&\qquad\lesssim C_{\vphi,R,\varpi}^{4\Gamma+4+3N_1^{3\Gamma+1}}
 \Big(\norm{(b,d,g,h)}_{C^{\Gamma+1+3N_1^{3\Gamma+1}}(B_0(C_{\vphi,R,\varpi}))}\Big)^{\Gamma+1}
    \mu^{(\alpha-1-\kappa)(\mfs((b+d(\bfd))^\mfc)+2\mfs((b+d(\bfd))^\mfe)+\mfs((b+d(\bfd))^\mff))}\,.
\end{equs}
Moreover, the definition of $d(\bfd)$ implies that we have
 \begin{equs}
  \mfs((b+d(\bfd))^\mfc)+   2\mfs((b+d(\bfd))^\mfe)+\mfs((b+d(\bfd))^\mff)=
      \mfs(b^\mfc)+2\mfs(b^\mfe)+\mfs(b^\mff)-(k_0+1-k_1)\,.
 \end{equs}       
Gathering all these estimates and using \eqref{eq:heat1} and yields      
\begin{equs}
 \norm{\bfX_0 \widetilde{\D F}_\epmu[K_\mu\theta_\mu]}_{\mcL_{0;T}^{\infty,\infty}}&\lesssim C_{\vphi,R,\varpi}^{4\Gamma+4+3N_1^{3\Gamma+1}}
        \Big(\norm{(b,d,g,h)}_{C^{\Gamma+1+3N_1^{3\Gamma+1}}(B_0(C_{\vphi,R,\varpi}))}\Big)^{\Gamma+1}\\
        &\textcolor{white}{\lesssim}\times\sum_{b\in\mcM}\sum_{\bfd\in\mcD}\mu^{|b|+2k+(\alpha-1-\kappa)(\mfs(b^\mfc)+
         2\mfs(b^\mfe)+\mfs(b^\mff)-(k_0+1-k_1)
        )-(k_0+1-k_1)-\eta}    
       \,.
 \end{equs}
Here, not that in estimating the norm of $\zeta_\epmu^{a,\mfl^a}$, we used the fact that $|\mfl^b|=2k$.

Using \eqref{eq:firstlowerbounda}
 \begin{equs}
 |b| &+(\alpha-1-\kappa)\big(\mfs(b^\mfc)+
         2\mfs(b^\mfe)+\mfs(b^\mff)-(k_0+1-k_1)
        \big)-(k_0+1-k_1)\\
      &=   |b| +(\alpha-1-\kappa)\big(\mfs(b^\mfc)+
         2\mfs(b^\mfe)+\mfs(b^\mff)\big)-(\alpha-\kappa)(k_0+1-k_1)
             \\
  &\geqslant-2+\alpha\mfn(b)
    +(\alpha-2\kappa)\mfs(b^{\mfe})
    +(1-\alpha)\mfs(b^\mfc)+\mfs(b^\mfd)+
    (2-\alpha)\mfs(b^{\mfb})  -(\alpha-\kappa)(k_0+1-k_1)
     \,.
 \end{equs}
If $\mfn(b)\geqslant2$, then we can directly conclude that the above quantity is greater than $-2+\alpha$. The only cases where $\mfn(b)=1$ are $b\in\{ \1^\mfb_0,\1^\mfc_0,\1^\mfe_0,\1^\mfh_0\}$. The cases $b=\1^\mfb_0,\1^\mfh_0$ follow using the fact that if $b\in\{\1^\mfb_0,\1^\mfh_0\}$, then one necessarily has $k_0+1-k_1=0$. The case $b=\1^\mfc_0$ is simpler due to the good factor $(1-\alpha)\mfs(b^\mfc)$. The case $b=\1^\mfe_0$ is more problematic because for $b=\1^\mfe_0$ it can happen that $k_0+1-k_1=1$, and we only obtain the bound
\begin{equs}\label{eq:badboundb}
    |b| &+(\alpha-1-\kappa)\big(\mfs(b^\mfc)+
         2\mfs(b^\mfe)+\mfs(b^\mff)-(k_0+1-k_1)
        \big)-(k_0+1-k_1)\geqslant -2+\alpha-\kappa\,.\textcolor{white}{badbdd}
\end{equs}
This concludes the proof of \eqref{eq:boundDF}.

We now discuss the necessary modifications in order to upgrade \eqref{eq:boundDF} to \eqref{eq:boundDFharmo}. Following the same steps as before, we obtain the improvement of \eqref{eq:somestepinproofDF}
\begin{equs}
     \|\bfX^k_0 \widetilde{\D F}_\epmu[K_\mu\theta_\mu]&\mcR_\mu^\dagger(\t^{1-k}\dot G_\mu\phi_\eps)\|_{L_{0;T}^{\infty}}\\&\lesssim\sum_{b\in\mcM}\sum_{\bfd\in\mcD}\nnorm{\zeta^{(b,\mfl^b)}_\epmu}_{N_1^{3\Gamma+1}}\norm{\big(\mcR_\mu^\dagger\big)^{\otimes[b]}\Upsilon^{b+d(\bfd)}[K_\mu\theta_\mu]}_{L^\infty(\Lambda^{[b+d(\bfd)]})}\|\partial_\ttx^{k_0+1-k_1}\mcR_\mu^\dagger(\t^{k'}\dot G_\mu\phi_\eps)\|_{L^\infty_{0;T}}\\
     &\lesssim     \sum_{b\in\mcM}\sum_{\bfd\in\mcD}\nnorm{\zeta^{(b,\mfl^b)}_\epmu}_{N_1^{3\Gamma+1}}\norm{\big(\mcR_\mu^\dagger\big)^{\otimes[b]}\Upsilon^{b+d(\bfd)}[K_\mu\theta_\mu]}_{L^\infty(\Lambda^{[b+d(\bfd)]})}
     \mu^{(-(k_0+1-k_1)+\alpha-\kappa)\wedge0-1+2k'}
     \,,     \end{equs}
where in going from the first to the second inequality we used \eqref{eq:harmoniccompletion} and \eqref{eq:harmoniccompletion3}. The interest of this better estimates is that when $k_0+1-k_1=1$, then one gains a factor $\mu^{\alpha-\kappa}$. In particular, this is the case when $b=\1_0^\mfe$ and $k_0+1-k_1=1$, in which case the gain of the factor $\alpha-\kappa$ allows us to get rid of the bad factor $\kappa$ in \eqref{eq:badboundb} (because $\alpha-2\kappa\geqslant0$), whence the improvement \eqref{eq:boundDFharmo}.

 It remains to show the bound \eqref{eq:boundL}. In view of \eqref{eq:expL}, we have the bound
 \begin{equs}
    \norm{\tH_\epmu[K_\mu\theta_\mu]}_{L_{0;T}^\infty}&\lesssim\sum_{a\in\mcM_{>\Gamma}}  \sum_{\substack{(\sigma,b,c,\bfd)\in\text{\scriptsize{$\mathrm{Ind}(a)$}}}}  \norm{K^{\otimes[a]+1}_\mu\B_\mu(\zeta^b_\epmu,\zeta^c_\epmu)}_{L_{0;T}^\infty}
   \norm{\big(\mcR_\mu^\dagger\big)^{\otimes[a]}\Upsilon^a[K_\mu\theta_\mu]}_{{L^\infty(\Lambda^{[a]})}}\,, 
 \end{equs}
Then, \eqref{eq:defB} implies that
 \begin{equs}
    K^{\otimes[a]+1}_\mu\B_\mu(\zeta^b_\epmu,\zeta^c_\epmu)(x,y^a)= CK^{\otimes[b]+1}_\mu\zeta^b_\epmu(x,y^b)\int_\Lambda\mcR_\mu\partial_{\ttx}^{k_0+1-k_1}\dot G_\mu( z-w)K^{\otimes[c]+1}_\mu\zeta^c_\epmu(w,y^c)\rmd w\,,
\end{equs}
for $C$ an inessential constant, so that using \eqref{eq:propB} and \eqref{eq:boundZeta} we have 
\begin{equs}    \norm{K^{\otimes[a]+1}_\mu\B_\mu(\zeta^b_\epmu,\zeta^c_\epmu)}_{L_{0;T}^\infty}\lesssim\nnorm{\zeta^b_\epmu}_{N_1^{3\Gamma+1}}\norm{\mcR^2_\mu\partial_{\ttx}^{k_0+1-k_1}\dot G_\mu}_{\mcL^{\infty,\infty}}\nnorm{\zeta^c_\epmu}_{N_1^{3\Gamma+1}}\lesssim\mu^{|b|+|c|+1-(k_0+1-k_1)-\eta}\,.
\end{equs}
Moreover, proceeding as before, we have
\begin{equs}    \textcolor{white}{A}&\norm{\big(\mcR_\mu^\dagger\big)^{\otimes[a]}\Upsilon^a[K_\mu\theta_\mu]}_{{L^\infty(\Lambda^{[a]})}}\\&\qquad\lesssim C_{\vphi,R,\varpi}^{8\Gamma+8+3N_1^{3\Gamma+1}}
 \Big(\norm{(b,d,g,h)}_{C^{\Gamma+1+3N_1^{3\Gamma+1}}(B_0(C_{\vphi,R,\varpi}))}\Big)^{2\Gamma+2}
    \mu^{(\alpha-1-\kappa)(\mfs(a^\mfc)+2\mfs(a^\mfe)+\mfs(a^\mff))}\,,
\end{equs}
where this time we have bounded the order of $a\in\mcM_*$ by $2\Gamma+1$ (and thus its size by $2\Gamma+2$) and used the fact that $\mfl(a)\leqslant\mfl(b+d(\bfd))\vee\mfl(c)$. Since as before $\mfl(b+d(\bfd))\leqslant\mfl(b)+1\leqslant\Gamma+1$ and $\mfl(c)\leqslant\Gamma$, we have $\mfl(a)\leqslant\Gamma+1$. We thus end up with 
 \begin{equs}
    \norm{\tH_\epmu[K_\mu\theta_\mu]}_{L_{0;T}^\infty}&\lesssim\sum_{a\in\mcM_{>\Gamma}} \sum_{\substack{(\sigma,b,c,\bfd)\in\text{\scriptsize{$\mathrm{Ind}(a)$}}}} C_{\vphi,R,\varpi}^{8\Gamma+8+3N_1^{3\Gamma+1}}
    \Big(\norm{(b,d,g,h)}_{C^{\Gamma+1+3N_1^{3\Gamma+1}}(B_0(C_{\vphi,R,\varpi}))}\Big)^{2\Gamma+2}
    \\&\textcolor{white}{\lesssim}
    \mu^{1+|b|+|c|+(\alpha-1-\kappa)(\mfs(a^\mfc)+2\mfs(a^\mfe)+\mfs(a^\mff))-(k_0+1-k_1)-\eta}
    \,.
 \end{equs}
Moreover, observe that by \eqref{eq:lastboundsizea} we have
 \begin{equs}
    1&+ |b|+|c|+(\alpha-1-\kappa)\big(\mfs(a^\mfc)+2\mfs(a^\mfe)+\mfs(a^\mff)\big)-(k_0+1-k_1)\\&=|a|-1+(\alpha-1-\kappa)\big(\mfs(a^\mfc)+2\mfs(a^\mfe)+\mfs(a^\mff)\big)\\     
         &\geqslant-1+\alpha+\delta+(\alpha-2\kappa)  \mfs(a^\mfe)-\kappa\mfs(a^{\mff})\,.
 \end{equs}
 Here, we do not seek to optimise and bound $\mfs(a^\mff)\leqslant\mfs(a)=\mfo(a)+1\leqslant2\Gamma+2$, which gives
 \begin{equs}
    1&+ |b|+|c|+(\alpha-1-\kappa)\big(\mfs(a^\mfc)+2\mfs(a^\mfe)+\mfs(a^\mff)\big)-(k_0+1-k_1)\\  
         &\geqslant-1+\alpha+\big(\delta-(2\Gamma+2)\kappa\big)+(\alpha-2\kappa)  \mfs(a^\mfe)\,,
 \end{equs}
so that enforcing $\kappa\leqslant(\alpha/2)\wedge\big(\delta/(2\Gamma+2)\big)$ yields the desired result.
 \end{proof}

\section{Deterministic analysis}\label{Sec:Sec3}
In Section~\ref{sec:flow_setup}, we reformulated \eqref{eq:eqDef} on $\Lambda_{0;1}$ as
\begin{equs}\label{eq:eqFinal}    \psi_\eps(x)=G\big( F_\eps[\psi_\eps]+\phi_\eps\big)(x)\,.
\end{equs}
In order to construct the solution $\psi_\eps$, we need to rewrite it, taking the flow approach into account.
\begin{lemma}
    Fix $\eps\in(0,1]$, $T\in(0,1]$ and recall that $\mu_T=\sqrt{T}$. If $\psi_\eps$ solves \eqref{eq:eqDef1} on $\Lambda_{0;T}$, then for all $x\in\Lambda_{0;T}$ we have
    \begin{equs}\label{eq:Phi2}
        \psi_\eps(x)=G\phi_\eps+(G-G_{\mu_T})\big(F_{\eps,\mu_T}[0]+R_{\eps,\mu_T}\big)(x)\,.
    \end{equs}
\end{lemma}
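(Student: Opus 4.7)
The strategy is to start from the reformulation $\psi_\eps = G\phi_\eps + \varphi_\eps$ with $\varphi_\eps = G F_\eps[\varphi_\eps]$ given in \eqref{eq:eqFinal}, split $G = (G - G_{\mu_T}) + G_{\mu_T}$, and use the support property of $\varphi_{\eps,\mu_T}$ together with the locality of the force coefficients to rewrite everything in terms of $F_{\eps,\mu_T}[0]$ and the remainder.

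The first step is already observed in the remark following \eqref{eq:phi_mu}: by \eqref{eq:psimu} one has $G_{\mu_T} F_\eps[\varphi_\eps] = \varphi_{\eps,\mu_T}$, which is supported on $\{x_0 \geqslant \mu_T^2\} = \{x_0 \geqslant T\}$. Hence for $x \in \Lambda_{0;T}$ (with the case $x_0 = T$ recovered by continuity from $x_0 < T$) we have
\begin{equs}
\varphi_\eps(x) \;=\; (G - G_{\mu_T}) F_\eps[\varphi_\eps](x).
\end{equs}
Next I would apply the defining identity \eqref{eq:FmuRmu} at scale $\mu_T$, namely $F_\eps[\varphi_\eps] = F_{\eps,\mu_T}[\varphi_{\eps,\mu_T}] + R_{\eps,\mu_T}$, and add $G\phi_\eps$ on both sides, reducing the claim \eqref{eq:Phi2} to showing
\begin{equs}\label{eq:target}
(G - G_{\mu_T}) F_{\eps,\mu_T}[\varphi_{\eps,\mu_T}](x) \;=\; (G - G_{\mu_T}) F_{\eps,\mu_T}[0](x) \qquad \text{for every } x \in \Lambda_{0;T}.
\end{equs}

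The main technical input for \eqref{eq:target} is a causality statement for the force coefficients $\zeta^a_{\eps,\mu_T}$: analogously to the support property \eqref{eq:supportXi} for $\xi^a_{\eps,\mu}$, the coefficients $\zeta^a_{\eps,\mu_T}(y,z^a)$ of the ansatz \eqref{eq:expressionFmu} are supported on $z^a_0 \leqslant y_0$ (the support is in fact contained in an interval of size controlled by $\mu_T^2$, but only backward-in-time suffices here). Accepting this, for $x \in \Lambda_{0;T}$ the support property \eqref{eq:suppGmu} forces $(G - G_{\mu_T})(x - y) = 0$ unless $y_0 \in (x_0 - 2\mu_T^2, x_0)$, so in particular $y_0 < T = \mu_T^2$; and then $\zeta^a_{\eps,\mu_T}(y,\cdot)$ only sees arguments $z^a$ with $z^a_0 \leqslant y_0 < \mu_T^2$. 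Since $\varphi_{\eps,\mu_T}$ vanishes identically on $\{z_0 < \mu_T^2\}$ together with all of its spatial derivatives (which are the only derivatives of the solution entering $\Upsilon^a$, by inspection of Definition~\ref{def:usefullocalfunctionals}), we get
\begin{equs}
\Upsilon^a[\varphi_{\eps,\mu_T} + G\phi_\eps](z^a) \;=\; \Upsilon^a[G\phi_\eps](z^a)
\end{equs}
on the support of $\zeta^a_{\eps,\mu_T}(y,\cdot)$, which yields \eqref{eq:target} after summing over $a \in \mcM$.

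The one step that is not purely bookkeeping is the backward-in-time support property for $\zeta^a$, which is the main obstacle. It does not appear explicitly in the statement of Theorem~\ref{coro:2}, but it should be a direct consequence of the construction of $F_{\eps,\mu}$ from the stationary force $S_{\eps,\mu}$ outlined in Section~\ref{Sec:Sec5}: the stationary coefficients $\xi^a_{\eps,\mu}$ have the backward support property by \eqref{eq:supportXi}, and both the shift by $G\phi_\eps$ in the argument and the multiplication by $\1^0_{(0,\infty)}$ in the initial data preserve it since the convolutions involved in the flow \eqref{eq:flow2} only compose $\dot G_\nu$'s, which themselves propagate forward in time by \eqref{eq:suppGmu}. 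Once this property is in hand, the rest of the argument is the combination of the three displays above.
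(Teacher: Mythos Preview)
Your proof is correct and follows essentially the same route as the paper's: derive $\psi_\eps = G\phi_\eps + (G-G_{\mu_T})(F_{\eps,\mu_T}[\varphi_{\eps,\mu_T}]+R_{\eps,\mu_T})$ on $\Lambda_{0;T}$ via the support of $G_{\mu_T}$, then replace $\varphi_{\eps,\mu_T}$ by $0$ using that $\varphi_{\eps,\mu_T}$ vanishes on $\Lambda_{0;T}$. The paper's proof is terse and simply writes ``which concludes the proof'' after noting $\varphi_{\eps,\mu_T}$ is supported outside $\Lambda_{0;T}$; your version makes explicit the ingredient that justifies this last replacement, namely the backward-in-time support of the non-stationary coefficients $\zeta^a_{\eps,\mu}$, which indeed follows from the same inductive argument as \eqref{eq:supportXi} applied to the flow \eqref{eq:flowzeta} with diagonal initial data.
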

\begin{proof} 
Using \eqref{eq:FmuRmu} for $\mu=\mu_T$ and the support properties of $G_{\mu_T}$ (see \eqref{eq:suppGmu}), we reexpress \eqref{eq:eqFinal} as
\begin{equs}    \psi_\eps(x)=G\phi_\eps+(G-G_{\mu_T})\big(F_{\eps,\mu_T}[\psi_{\eps,\mu_T}]+R_{\eps,\mu_T}\big)(x)\,,
\end{equs}
where $\psi_{\eps,\mu_T}$ is defined by \eqref{eq:psimu} with $\mu=\mu_T$. The support properties of $G_{\mu_T}$ then imply that $\psi_{\eps,\mu_T}$ is supported outside $\Lambda_{0;T}$, which concludes the proof.
\end{proof}
We can conclude from \eqref{eq:Phi2} shows that, if $R_\epmu$ can be constructed thank to \eqref{eq:eqRwelldefined} up to some scale $\mu_T$, then $\psi_\eps$ has local solutions on $\Lambda_{0;T}$. In Section~\ref{subseq:Remainder}, we show that for $T$ small enough, it is indeed possible to construct $R_{\eps,\mu_T}$, while in Section \ref{subsec:solution}, we construct the solution $\psi\eqdef\lim_{\eps\downarrow0}\psi_\eps$ in a suitable Besov space.

\subsection{Construction of the remainder}\label{subseq:Remainder} 
We aim to show that the system~\eqref{eq:sys2} allows one to construct the remainder $R_\epmu$ close to $\mu=0$. However, for technical convenience, following \cite{Duch23}, we rather solve a slightly different system. Recall the shorthand notation
\begin{equs}
    \tF_\epmu[\bigcdot]= K_\mu F_\epmu[\bigcdot]\,,\;\text{and}\;\tH_\epmu[\bigcdot]= K_\mu H_\epmu[\bigcdot]\,.
\end{equs}
Making the change of variables 
\begin{equs} \vphi_\epmu \eqdef \mcR_\mu\psi_\epmu-\mcR_\mu G_\mu\phi_\eps \Leftrightarrow \psi_\epmu=G_\mu \phi_\eps+K_\mu\vphi_\epmu \,,
\end{equs}
\eqref{eq:sys2} rewrites as
\begin{subequations}\label{eq:sys3}
  \begin{empheq}[left=\empheqlbrace]{alignat=2}  
\label{eq:sys31} R_{\eps,\mu}&=-\int_0^\mu \big(\D F_{\eps,\nu}[K_\nu\theta_\epnu]\dot G_\nu (R_\epnu+\phi_\eps)+H_\epnu[K_\nu\theta_\epnu]\big)\rmd\nu\,,\\
        \vphi_\epmu&= -\int_\mu^{\mu_T}\tilde K_{\nu,\mu}\big(\mcR_\nu^2\dot G_\nu \tilde F_\epnu[K_\nu\theta_\epnu]+\mcR_\nu\mcR_{+,\nu} \dot G_\nu K_{+,\nu}R_\epnu\big)\rmd\nu\,,
  \end{empheq}
\end{subequations}
where to lighten the notation we have set
\begin{equs}\label{eq:field_change_of_var}
\theta_\epmu\eqdef\vphi_\epmu+\mcR_\mu G_\mu\phi_\eps=\mcR_\mu\psi_\epmu    \,,
\end{equs}
and used the fact that $R_{\eps,0}=0$. Here, for $\lambda\geqslant\tau$, we write $\tilde K_{\lambda,\tau}=\mcR_\tau K_\lambda$. Note that by \eqref{eq:Kmunu}, for $\lambda\geqslant\tau$, $\tilde K_{\lambda,\tau}$ is a bounded operator $L^\infty\rightarrow L^\infty$. 

Our aim is to solves the system \eqref{eq:sys3} in the following topology.
\begin{definition}\label{def:Rnorm}
Recall that $\kappa\leqslant(\alpha/2)\wedge\big(\delta/(2\Gamma+2)\big)$ was fixed in Definition~\ref{def:solnorm}. For any $\mu,T\in(0,1]$, we define the \textit{remainder norm} denoted $\|\bigcdot\|_{{\tt R},\mu,T}$ on smooth functions $\Lambda_{0;T}\rightarrow\R$ by
    \begin{equs}
        \|\lambda\|_{{\tt R},\mu,T}\eqdef \mu^{-\alpha+\kappa/2}\Big(
\|K_{\mu} \lambda\|_{L^{\infty,1}_{0;T}}\vee\|K_{\mu}(\t  \lambda)\|_{L^\infty_{0;T}}\Big)\,.
    \end{equs}
    Moreover, for two families $\theta=(\theta_\mu)_{\mu\in(0,1]}$ and $\lambda=(\lambda_\mu)_{\mu\in(0,1]}$ such that $\theta_\mu$, $\lambda_\mu$ are smooth functions $\Lambda_{0;T}\rightarrow\R$, we set 
\begin{equs}    \label{eq:defbignorm}\nnorm{\theta,\lambda}_T\eqdef 
    \sup_{\mu\in(0,\mu_T]}\|\theta_\mu\|_{{\tt S},\mu,T}\vee\|\lambda_\mu\|_{\ttR,\mu,T}\,,
\end{equs}
where the solution norm $\|\bigcdot\|_{{\tt S},\mu,T}$ was introduced in Definition~\ref{def:solnorm}.
\end{definition}
This choice of the remainder norm is motivated by the following lemma, which states it controls some quantities which appear when performing the fixed point argument leading to the construction of the remainder.
\begin{lemma}\label{lem:RtoLinfty}
For every $k\in\{0,1\}$, it holds 
    \begin{equs}        \label{eq:RtoLinfty}\|K_{+,\mu}\lambda\|_{L^\infty_{0;T}}&\lesssim \mu^{-2+\alpha-\kappa/2}\|\lambda\|_{{\tt R},\mu,T} \\
    \label{eq:RtoXGdot}        \|\mcR^\dagger_\mu(\bfX^k_0\dot G_\mu) \lambda\|_{L^\infty_{0;T}}&\lesssim \mu^{-1+\alpha+2k-\kappa/2}\|\lambda\|_{{\tt R},\mu,T} \\\label{eq:RtoGdotT}
          \|\mcR^\dagger_\mu\dot G_\mu(\t \lambda)\|_{L^\infty_{0;T}}&\lesssim \mu^{1+\alpha-\kappa/2}\|\lambda\|_{{\tt R},\mu,T} 
    \end{equs}
    uniformly in smooth $\lambda$ and $\mu\in(0,1]$.
\end{lemma}
\begin{proof}
\eqref{eq:RtoLinfty} is a direct application of the Sobolev embedding type estimate \eqref{eq:KmuLp}:
\begin{equs}
     \|K_{+,\mu} \lambda\|_{L^\infty_{0;T}}\lesssim\mu^{-2}\|K_{\mu}\lambda\|_{L^{\infty,1}_{0;T}}\leqslant\mu^{-2+\alpha-\kappa/2}\|\lambda\|_{\ttR,\mu,T}\,.
\end{equs} 
\eqref{eq:RtoXGdot} is an immediate consequence of \eqref{eq:RtoLinfty}. Indeed, using \eqref{eq:heat1}, we have
    \begin{equs}
         \|\mcR^\dagger_\mu(\bfX^k_0\dot G_\mu) \lambda\|_{L^\infty_{0;T}}\lesssim \|\mcR^\dagger_\mu\mcR_{+,\mu}(\bfX^k_0\dot G_\mu)\|_{\mcL^{\infty,\infty}}
         \|K_{+,\mu}\lambda\|_{L^\infty_{0,T}}\lesssim \mu^{-1+\alpha+2k-\kappa/2}\|\lambda\|_{\ttR,\mu,T}\,.
    \end{equs}
\eqref{eq:RtoGdotT} also follows from \eqref{eq:heat1}:
\begin{equs}
    \|\mcR^\dagger_\mu\dot G_\mu(\t \lambda)\|_{L^\infty_{0;T}}\lesssim
     \|\mcR^\dagger_\mu\mcR_{\mu}\dot G_\mu\|_{\mcL^\infty_{0;T}}\|K_{\mu}(\t  \lambda)\|_{L^\infty_{0;T}}\lesssim\mu^{1+\alpha-\kappa/2}\|\lambda\|_{\ttR,\mu,T}\,.
\end{equs}
\end{proof}
With the norm $\nnorm{\bigcdot}_T$ is hand, we are ready to state the fixed point argument leading to the construction of the remainder. 
\begin{proposition}\label{eq:prop1} 
    For fixed $\eps\in(0,1]$ and $C_{\vphi,R}>0$, there exists a random $T\in(0,1]$ such that the map    
    \begin{equs}
    \Phi:\binom{\vphi_{\eps,\bigcdot}}{R_{\eps,\bigcdot}}\mapsto\binom{\Phi^\vphi_{\eps,\bigcdot}}{\Phi^{R}_{\eps,\bigcdot}}(\vphi_{\eps,\bigcdot},R_{\eps,\bigcdot})\eqdef\binom{-\int_{\bigcdot}^{\mu_T}\tilde K_{\nu,\bigcdot}\big(\mcR^2_{\nu} \dot G_{\nu}  \tF_{\eps,\nu}[K_{\nu}\theta_{\eps,\nu}]+\mcR_\nu\mcR_{+,\nu}\dot G_\nu K_{+,\nu}R_{\eps,\nu}\big)\rmd\nu}{-\int_0^{\bigcdot}\big(
      \D F_{\eps,\nu}[K_{\nu}\theta_{\eps,\nu}]\dot G_\nu (R_\epnu+\phi_\eps)+H_\epnu[K_\nu\theta_\epnu]\big)
      \rmd\nu}    
     \end{equs}
    is a contraction for the norm $\nnorm{\bigcdot}_T$ given by \eqref{eq:defbignorm} on the ball of radius $C_{\vphi,R}$. In particular, the system \eqref{eq:sys2} has a unique solution that we will denote by $\big( (\theta_{\eps,\mu},R_{\eps,\mu} ) : \mu \in (0,\mu_T] \big)$ where we recall that $\theta_{\eps,\mu}$ and $\varphi_{\eps,\mu}$ are related via \eqref{eq:field_change_of_var}. 
 Moreover, this solution is continuous in the data $\big( ( \tF_{\eps,\mu},\tH_{\eps,\mu} ) : \mu \in (0,\mu_T] \big)$.    
\end{proposition}
\begin{remark}\label{rem:convergenceR}
 In the sequel, we will mostly be interested in the remainder field $\big( R_{\eps,\mu} : \mu \in (0,\mu_T] \big)$ component of the solution promised by Proposition~\ref{eq:prop1}, and we note that the statement of Proposition~\ref{eq:prop1} implies that it satisfies
    \begin{equs}\label{eq:boundR}
        \sup_{\mu\in(0,\mu_T]}\mu^{2-\alpha+\kappa/2}\norm{K_{+,\mu} R_\epmu}_{L^\infty_{0;T}}\leqslant C_R\,,
    \end{equs}
where $C_R>0$ is a universal constant, the radius of the ball where the above map above is a contraction. Moreover, the convergence in probability of the data $(\tF_{\eps,\bigcdot},\tH_{\eps,\bigcdot})$ when $\eps\downarrow0$ (see Remark~\ref{rem:convergenceFL}) implies that there exists $R_{0,\bigcdot}$ verifying \eqref{eq:boundR} and such that it holds 
 \begin{equs}
      \lim_{\eps\downarrow0}  \sup_{\mu\in(0,\mu_T]}\mu^{2-\alpha+\kappa}\norm{K_{+,\mu}\big(R_\epmu-R_{0,\mu}\big)}_{L^\infty_{0;T}}=0\,.
    \end{equs} 
    \end{remark}
We now introduce some time localization to help with the proof of Proposition~\ref{eq:prop1}.
\begin{definition}\label{def:weightvw}
We define two families of time weights as follows. Fix two smooth function $v,w:\R\rightarrow[0,1]$ such that the following holds:
\begin{equs}    \text{supp}\,v\subset[2,\infty)\,,\;v\upharpoonright{[3,\infty)}=1\,,
\quad
\text{and}\quad    \text{supp}\,w\subset[-3,3]\,,\;w\upharpoonright{[-2,2]}=1\,,
\end{equs}
and $v$, $w$ verify
\begin{equs}    
(v+w)|_{[-2,\infty)}=1\,.
\end{equs} 
We then define two collections of time weights $v=(v_\mu)_{\mu\in(0,1]}$, $w=(w_\mu)_{\mu\in(0,1]}$ by
\begin{equs}
    v_\mu(t)\eqdef v(t/\mu^2)\,,\;\text{and}\;w_\mu(t)\eqdef w(t/\mu^2)\,.
\end{equs}
These weights are defined in such a way that we have $v_\mu\in\mcW^\infty_{N,\mu}$ and $w_\mu\in\mcW^1_{N,\mu}\cap\mcW^\infty_{N,\mu}$ for all $N\in\N$ and that it holds 
\begin{equs}    \text{supp}\,v_\mu\subset[2\mu^2,\infty)\,,\;\text{and}\;\text{supp}\,w_\mu\subset[-3\mu^2,3\mu^2]\,,
\end{equs}
as well as $v_\mu(t)+w_\mu(t)=1$ if $t\geqslant0$.
\end{definition}
\begin{proof}[of Proposition~\ref{eq:prop1}]
We show that $\Phi$ maps a ball of radius $C_{\vphi,R}$ into itself. The proofs that it is a contraction, and that it is continuous in the data are totally similar, and can be carried out with very close estimates.  
    
 Suppose that $\nnorm{\vphi_{\eps,\bigcdot},R_{\eps,\bigcdot}}_T\leqslant C_{\vphi,R}$, we aim to show that $\nnorm{\Phi(\vphi_{\eps,\bigcdot},R_{\eps,\bigcdot})}_T\leqslant C_{\vphi,R}$. To do so, we need to make the following preliminary observation: the hypothesis that $\nnorm{\vphi_{\eps,\bigcdot},R_{\eps,\bigcdot}}_T\leqslant C_{\vphi,R}$ implies that $\theta_\epmu=\vphi_\epmu+\mcR_\mu G_\mu\phi_\eps$ satisfies 
 \begin{equs}
     \|\theta_\epmu\|_{{\tt S},\mu,T}\leqslant C_{\vphi,R,\varpi}\,.
 \end{equs}
Indeed, 
 \begin{equs}
      \|\theta_\epmu\|_{{\tt S},\mu,T}\leqslant  \|\vphi_\epmu\|_{{\tt S},\mu,T}+\|  \mcR_\mu G_\mu\phi_\eps \|_{{\tt S},\mu,T}\,,
 \end{equs}
 and the second norm is controlled in \eqref{eq:harmoniccompletion2} and \eqref{eq:harmoniccompletionwithderivative}. 
 
$\theta_\epmu$ thus verifies the hypothesis of Corollary~\ref{coro:1} with $\theta_\mu=\theta_\epmu$, so that we can make use of \eqref{eq:boundF}, \eqref{eq:boundDF}, \eqref{eq:boundDFharmo} and \eqref{eq:boundL}.

We first study the $\vphi$ component. Fix $\mfl\in\N^n$ such that $|\mfl|\in\{0,1\}$. We have
\begin{equs}    \norm{\partial^\mfl_{\ttx}\Phi_\epmu^\vphi}_{L^\infty_{0;T}}&\lesssim\int_\mu^{\mu_T}\Big(
\norm{\mcR_\nu^2\partial^\mfl_{\ttx}\dot G_\nu\tF_\epnu[K_\nu\theta_\epnu]}_{L_{0;T}^\infty}+\norm{\mcR_\nu\mcR_{+,\nu}\partial^\mfl_{\ttx}\dot G_\nu K_{+,\nu} R_\epnu}_{L_{0;T}^\infty}\Big)
\rmd\nu\\&\lesssim
\int_\mu^{\mu_T}\Big(
\norm{\mcR^2_\nu\partial^\mfl_{\ttx}\dot G_\nu}_{\mcL_{0;T}^{\infty,\infty}}\norm{\tF_\epnu[K_\nu\theta_\epnu]}_{L_{0;T}^\infty}+\norm{\mcR_\mu\mcR_{+,\nu}\partial^\mfl_{\ttx}\dot G_\nu}_{\mcL_{0;T}^{\infty,\infty}}\norm{K_{+,\nu}R_\epnu}_{L_{0;T}^\infty}\Big)
\rmd\nu\,.
\end{equs}
By \eqref{eq:heat1}, the operator norms are bounded by $\nu^{1-|\mfl|}$, and the $L_{0;T}^\infty$ norm of $\tF_\epnu[K_\mu\theta_\epnu]$ can be controlled by $\nu^{-2+\alpha-\eta}$ using \eqref{eq:boundF}. Moreover, by \eqref{eq:RtoLinfty}, we have
\begin{equs}    \norm{K_{+,\nu}R_\epnu}_{L_{0;T}^\infty}\lesssim\nu^{-2+\alpha-\kappa/2}\|R_\epnu\|_{\ttR,\nu,T}\lesssim\nu^{-2+\alpha-\kappa/2}C_{\vphi,R}\,.
\end{equs}
Gathering all these estimates yields, taking $\eta=\kappa/2$,
\begin{equs}    \norm{\partial^\mfl_{\ttx}\Phi_\epmu^\theta}_{L_{0;T}^\infty}&\lesssim\int_\mu^{\mu_T}\Big(\nu^{-1+\alpha-|\mfl|-\eta}+\nu^{-1+\alpha-|\mfl|-\kappa/2}\Big)\rmd\nu\lesssim
\int_\mu^{\mu_T}\nu^{-1+\alpha-|\mfl|-\kappa/2}\rmd\nu\,.
\end{equs}
If $|\mfl|=0$ the argument of the integral is integrable at $\nu=0$ and we have
\begin{equs}
   \norm{\partial^\mfl_{\ttx}\Phi_\epmu^\theta}_{L_{0;T}^\infty}&\lesssim\int_\mu^{\mu_T}\nu^{-1+\alpha-\kappa/2}\rmd\nu\lesssim
     \int_0^{\mu_T}\nu^{-1+\alpha-\kappa/2}\rmd\nu\lesssim\mu_T^{\alpha-\kappa/2}\,,
\end{equs}
while if $|\mfl|=1$ we use
\begin{equs}
    \int_\mu^{\mu_T}\nu^{-2+\alpha-\eta}\rmd\nu\lesssim
    \mu_T^{\kappa/2} \int_\mu^{1}\nu^{-2+\alpha-\kappa}\rmd\nu\lesssim\mu^{-1+\alpha-\kappa}\mu_T^{\kappa/2}\,.
\end{equs}
Taking $T$ (and thus $\mu_T$) small enough, we can bring the implicit constant in the above inequalities to the desired value $C_{\theta,R}$.

We now deal with the remainder. Note that we need to control it in two topologies, since we need to estimate $\|K_\mu R_\epmu\|_{L^{\infty,1}_{0;T}}$ but also $\|K_{\mu}(\t v_\mu R_\epmu)\|_{L^{\infty}_{0;T}}$. We start by studying the first norm. \eqref{eq:sys31} combined with \eqref{eq:Kmunu} yields
\begin{equs}
\|K_{\mu} R_\epmu\|_{L^{\infty,1}_{0;T}}&\lesssim \int_0^\mu \Big(\|    K_\nu\big( \D F_\epnu[K_\nu\theta_\epnu] \dot G_\nu (R_\epnu+\phi_\eps)\big)\|_{L^{\infty,1}_{0;T}}
+\|\tH_\epnu[K_\nu\theta_{\epnu}]\|_{L^{\infty,1}_{0;T}}\Big)\rmd\nu\,.
\end{equs}
The second term is dealt with in Lemma~\ref{lem:fixedpointRH} below, see \eqref{eq:RHSeqR1H}. Moreover, inserting $1=v_\nu+w_\nu$ in front of $\D F_\epnu$, we split the first term as
\begin{equs}
    \|    K_\nu\big( &\D F_\epnu[K_\nu\theta_\epnu] \dot G_\nu (R_\epnu+\phi_\eps)\big)\|_{L^{\infty,1}_{0;T}}\\&\lesssim
     \|    K_\nu\big(v_\nu \D F_\epnu[K_\nu\theta_\epnu] \dot G_\nu (R_\epnu+\phi_\eps)\big)\|_{L^{\infty,1}_{0;T}}+ \|    K_\nu\big( w_\nu\D F_\epnu[K_\nu\theta_\epnu] \dot G_\nu (R_\epnu+\phi_\eps)\big)\|_{L^{\infty,1}_{0;T}}\,.
\end{equs}
These term are handled separately in Lemmas~\ref{lem:fixedpointRw} and \ref{lem:fixedpointR} below, see \eqref{eq:RHSeqR1w} and \eqref{eq:RHSeqR1}. 

Putting all the estimates together and taking $\eta=\kappa/4$, we have
\begin{equs}
    \|K_{\mu} R_\epmu\|_{L^{\infty,1}_{0;T}}&\lesssim \int_0^\mu\nu^{-1+\alpha-\kappa/4}\rmd\nu\lesssim\mu^{\alpha-\kappa/2}\mu_T^{\kappa/4}\,.
\end{equs}
This is the desired estimate, since taking $T$ small enough, we can bring the implicit constant back to the value $C_{\vphi,R}$.

It remains to control the norm $\|K_{\mu}(\t  R_\epmu)\|_{L^{\infty}_{0;T}}$. To do so, observe that \eqref{eq:sys31} implies that
\begin{equs}
    K_{\mu}(\t  R_\epmu)=-\int_0^\mu  \tilde K_{\mu,\nu} \Big(K_\nu\big(\t \D F_\epnu [K_\nu\theta_\nu]\dot G_\nu(R_\epnu+\phi_\eps)\big)+K_\nu\big(\t H_\epnu[K_\nu\theta_\epnu]\big)\Big)\rmd\nu\,.
\end{equs}
There, using \eqref{eq:Kmunu}, we have
\begin{equs}
\|K_{\mu}(\t  R_\epmu)\|_{L^{\infty}_{0;T}}&\lesssim \int_0^\mu \Big(\|    K_\nu\big( \t \D F_\epnu[K_\nu\theta_\epnu] \dot G_\nu (R_\epnu+\phi_\eps)\big)\|_{L^{\infty}_{0;T}}
+\|K_\nu\big(\t  H_\epnu[K_\nu\theta_{\epnu}]\big)\|_{L^{\infty}_{0;T}}\Big)\rmd\nu\,.
\end{equs}
As previously, the second term is handled in Lemma~\ref{lem:fixedpointRH} below, see \eqref{eq:RHSeqRinftyH}. To study the first term, as before, we insert $1=v_\nu+w_\nu$ in front of $\D F_\epnu$, so that this term splits as
\begin{equs}
    \|    K_\nu\big( &\t\D F_\epnu[K_\nu\theta_\epnu] \dot G_\nu (R_\epnu+\phi_\eps)\big)\|_{L^{\infty}_{0;T}}\\&\lesssim
     \|    K_\nu\big( \t v_\nu \D F_\epnu[K_\nu\theta_\epnu] \dot G_\nu (R_\epnu+\phi_\eps)\big)\|_{L^{\infty}_{0;T}}+ \|    K_\nu\big( \t w_\nu\D F_\epnu[K_\nu\theta_\epnu] \dot G_\nu (R_\epnu+\phi_\eps)\big)\|_{L^{\infty,1}_{0;T}}\,.
\end{equs}
Again, these term are handled separately in Lemmas~\ref{lem:fixedpointRw} and \ref{lem:fixedpointR}, see \eqref{eq:RHSeqRinftyw} and \eqref{eq:RHSeqRinfty}.

To conclude, proceeding as before, again we can bound $\|K_{\mu}(\t  R_\epmu)\|_{L^{\infty}_{0;T}}$ by $\mu^{\alpha-\kappa/2}\mu_T^{\kappa/4}$, and bring the implicit constant to a value $C_{\vphi,R}$ by taking the time $T$ small enough. This finishes the proof.
\end{proof}
Some technical results necessary to the proof of Proposition~\ref{eq:prop1} are proven below.
\begin{lemma}\label{lem:fixedpointRH}
Assume that $\|\theta_\nu\|_{{\tt S},\nu,T}\leqslant C_{\vphi,R,\varpi}$. Then, uniform in $\nu\in(0,1]$, the following estimates hold for every $\eta>0$:
 \begin{equs}
 \label{eq:RHSeqR1H}
    \|  \tH_\epnu[K_\nu\theta_\nu]\|_{L^{\infty,1}_{0;T}}&\lesssim\nu^{-1+\alpha-\eta} \,,
 \\
 \label{eq:RHSeqRinftyH}
    \|    K_\nu\big(\t H_\epnu[K_\nu\theta_\nu] \big)\|_{L^\infty_{0;T}}&\lesssim\nu^{-1+\alpha-\eta} \,.
\end{equs}
\end{lemma}
\begin{proof}
\eqref{eq:RHSeqR1H} is a straightforward consequence of \eqref{eq:boundL}:
\begin{equs}    \|\tH_\epnu[K_\nu\theta_{\epnu}]\|_{L^{\infty,1}_{0;T}}\lesssim\|\tH_\epnu[K_\nu\theta_{\epnu}]\|_{L^{\infty}_{0;T}}\lesssim\nu^{-1+\alpha-\eta}\,.
\end{equs}

To prove \eqref{eq:RHSeqRinftyH}, observe that by assumption $H_\epnu[K_\nu\theta_{\epnu}]$ is supported on $[0,1]$. We can thus multiply $\T$ by a smooth function $u$ which has support on $[-1,2]$ and is equal to one on $[0,1]$. 
Note that, contrary\footnote{Note that $\t \not \in \mcW^\infty_{N,\nu}$ because it is unbounded for very negative values of $x_0$} to $\t $, $u\t $ is in $\mcW^\infty_{N,\nu}$. 
The weight $u\T$ can thus be eliminated using \eqref{eq:1052a}. 
Combining this observation with \eqref{eq:boundL} we end up with
\begin{equs}
    \|K_\nu\big(\t H_\epnu[K_\nu\theta_{\nu}]\big)\|_{L^{\infty}_{0;T}}=\|K_\nu\big( u\t  H_\epnu[K_\nu\theta_{\nu}]\big)\|_{L^{\infty}_{0;T}}\lesssim\| \tH_\epnu[K_\nu\theta_{\nu}]\|_{L^{\infty}_{0;T}}\lesssim\nu^{-1+\alpha-\eta}\,.
\end{equs}
\end{proof}
\begin{lemma}\label{lem:fixedpointRw}
Assume that $\|\theta_\nu\|_{{\tt S},\nu,T}\leqslant C_{\vphi,R,\varpi}$. Then, uniform in smooth $\lambda$ and in $\nu\in(0,1]$, the following estimates hold for every $\eta>0$:
 \begin{equs}
 \label{eq:RHSeqR1w}
    \|    K_\nu\big( w_\nu \D F_\epnu[K_\nu\theta_\nu] \dot G_\nu (\lambda+\phi_\eps)\big)\|_{L^{\infty,1}_{0;T}}&\lesssim\nu^{-1+\alpha-\eta}\big(1+\|\lambda\|_{\ttR,\nu,T}\big) \,,
 \\
 \label{eq:RHSeqRinftyw}
    \|    K_\nu\big(\t w_\nu \D F_\epnu[K_\nu\theta_\nu] \dot G_\nu (\lambda+\phi_\eps)\big)\|_{L^\infty_{0;T}}&\lesssim\nu^{-1+\alpha-\eta}\big(1+\|\lambda\|_{\ttR,\nu,T}\big) \,.
\end{equs}
\end{lemma}
\begin{proof}
We first prove \eqref{eq:RHSeqR1w}. To do so, we rely on the fact that $w_\nu\in\mcW_{N,\nu}^1$. An application of \eqref{eq:1052b} thus yields
\begin{equs}
     \|    K_\nu\big(w_\nu \D &F_\epnu[K_\nu\theta_\nu] \dot G_\nu (\lambda+\phi_\eps)\big)\|_{L^{\infty,1}_{0;T}}\\&\lesssim\nu^2 \|    \D \tF_\epnu[K_\nu\theta_\nu] \dot G_\nu (\lambda+\phi_\eps)\|_{L^{\infty}_{0;T}}\\
     &\lesssim\nu^2\|  \widetilde{\D F}_\epnu[K_\nu\theta_\nu] \|_{\mcL^{\infty,\infty}_{0;T}}\|\mcR_\nu\dot G_\nu \lambda\|_{L^{\infty}_{0;T}}+
     \nu^2\|\  \widetilde{\D F}_\epnu[K_\nu\theta_\nu]\mcR_\nu^\dagger \dot G_\nu \phi_\eps\|_{L^{\infty}_{0;T}}
     \\
     &\lesssim\nu^{-1+2\alpha-3\kappa/2-\eta}\|\lambda\|_{{\tt R},\nu,T}+\nu^{-1+\alpha-\eta}\lesssim\nu^{-1+\alpha-\eta}\big(1+\|\lambda\|_{{\tt R},\nu,T}\big)\,.\label{eq:stepinfixedpoint}
\end{equs}
In order to go from the second to the third line, in the first term we controlled the force using \eqref{eq:boundDF} and the remainder using \eqref{eq:RtoXGdot}, while in the second term we used \eqref{eq:boundDFharmo}. In the last inequality we used $\kappa\leqslant2\alpha/3$.

We now prove \eqref{eq:RHSeqRinftyw}. To do so, we set $\tilde w_\nu(t)\eqdef\nu^{-2}t w_\nu(t/\nu^2)$. observe that right as $w_\nu$, $\tilde w_\nu$ lies in $\mcW^\infty_\nu$, so that we can use \eqref{eq:1052a} to eliminate it. Using the upper bound on $\nu^2 \|    \D \tF_\epnu[K_\nu\theta_\epnu] \dot G_\nu (\lambda+\phi_\eps)\|_{L^{\infty}_{0;T}}$ obtained in \eqref{eq:stepinfixedpoint}, we thus have
\begin{equs}
     \|    K_\nu\big(\t w_\nu \D &F_\epnu[K_\nu\theta_\nu] \dot G_\nu (\lambda+\phi_\eps)\big)\|_{L^{\infty}_{0;T}}=\nu^2 \|    K_\nu\big( \tilde w_\nu \D F_\epnu[K_\nu\theta_\nu] \dot G_\nu (\lambda+\phi_\eps)\big)\|_{L^{\infty}_{0;T}}\\&\lesssim\nu^2 \|    \D \tF_\epnu[K_\nu\theta_\nu] \dot G_\nu (\lambda+\phi_\eps)\|_{L^{\infty}_{0;T}}\lesssim\nu^{-1+\alpha-\eta}(1+\|\lambda\|_{{\tt R},\nu,T}\big)\,.
\end{equs}
\end{proof}
\begin{lemma}\label{lem:fixedpointR}
Assume that $\|\theta_\nu\|_{{\tt S},\nu,T}\leqslant C_{\vphi,R,\varpi}$. Then, uniform in smooth $\lambda$ and in $\nu\in(0,1]$, the following estimates hold for every $\eta>0$:
 \begin{equs}
  \label{eq:RHSeqR1}
    \|    K_\nu\big( v_\nu \D F_\epnu[K_\nu\theta_\nu] \dot G_\nu (\lambda+\phi_\eps)\big)\|_{L^{\infty,1}_{0;T}}&\lesssim\nu^{-1+\alpha-\eta}\big(1+\|\lambda\|_{\ttR,\nu,T}\big) \,,
 \\
 \label{eq:RHSeqRinfty}
    \|    K_\nu\big(\t v_\nu \D F_\epnu[K_\nu\theta_\nu] \dot G_\nu (\lambda+\phi_\eps)\big)\|_{L^\infty_{0;T}}&\lesssim\nu^{-1+\alpha-\eta}\big(1+\|\lambda\|_{\ttR,\nu,T}\big) \,.  
\end{equs}
\end{lemma}
\begin{proof}  We first prove \eqref{eq:RHSeqRinfty}. We start from
\begin{equs}\label{eq:beginningprooflong}
    K_\nu\big(\t &v_\nu \D F_\epnu[K_\nu\theta_\nu] \dot G_\nu (\lambda+\phi_\eps)\big)(x)\\&=\int_{\Lambda^2} K_\nu(x-y) y_0 v_\nu(y_0) \D F_\epnu[K_\nu\theta_\nu](y,z)\dot G_\nu (\lambda+\phi_\eps)(z)\rmd y\rmd z\,.
\end{equs}
Using the identity $y_0=y_0-z_0+z_0$, we reexpress the above as
\begin{equs}
    K_\nu\big(\t v_\nu \D F_\epnu[K_\nu\theta_\nu] \dot G_\nu (\lambda+\phi_\eps)\big)(x)&=\int_{\Lambda^2} K_\nu(x-y)  v_\nu(y_0) \bfX_0\D F_\epnu[K_\nu\theta_\nu] (y,z)\dot G_\nu (\lambda+\phi_\eps)(z)\rmd y\rmd z
    \\
    &\quad+\int_{\Lambda^2} K_\nu(x-y)  v_\nu(y_0) \D F_\epnu[K_\nu\theta_\nu] (y,z)z_0\dot G_\nu( \lambda+\phi_\eps)(z)\rmd y\rmd z
            \,.
\end{equs}
Writing $z_0\dot G_\nu \lambda(z)=\int_\Lambda z_0\dot G_\nu (z-w) \lambda(w)\rmd w$ and using $z_0=z_0-w_0+w_0$, we have
\begin{equs}\label{eq:tricktelescop}
    z_0\dot G_\nu \lambda(z)=(\bfX_0 \dot G_\nu) \lambda(z)+\dot G_\nu(\t \lambda)(z)\,,
\end{equs}
which implies that
\begin{equs}\label{eq:longexpression}
    K_\nu\big(\t &v_\nu \D F_\epnu[K_\nu\theta_\nu] \dot G_\nu (\lambda+\phi_\eps)\big)(x)\\&=\int_{\Lambda^2}K_\nu(x-y)  v_\nu(y_0) \bfX_0\D F_\epnu[K_\nu\theta_\nu] (y,z)\dot G_\nu (\lambda+\phi_\eps)(z)\rmd y\rmd z
    \\
    &\quad+\int_{\Lambda^2}K_\nu(x-y)  v_\nu(y_0) \D F_\epnu[K_\nu\theta_\nu] (y,z)\big((\bfX_0 \dot G_\nu) \lambda(z)+\dot G_\nu(\t \lambda)(z)\big)\rmd y\rmd z
     \\
    &\quad+\int_{\Lambda^2}K_\nu(x-y)  v_\nu(y_0) \D F_\epnu[K_\nu\theta_\nu] (y,z)z_0\dot G_\nu\phi_\eps(z)\rmd y\rmd z\\
    &=\int_{\Lambda^2}K_\nu(x-y)  v_\nu(y_0) K_\nu\big(\bfX_0\D F_\epnu[K_\nu\theta_\nu] (y,\bigcdot)\big)(z)\mcR_\nu^\dagger\dot G_\nu (\lambda+\phi_\eps)(z)\rmd y\rmd z
    \\
    &\quad+\int_{\Lambda^2}K_\nu(x-y)  v_\nu(y_0)K_\nu\big( \D F_\epnu[K_\nu\theta_\nu] (y,\bigcdot)\big)(z)\big(\mcR^\dagger_\nu(\bfX_0 \dot G_\nu) \lambda(z)+\mcR^\dagger_\nu\dot G_\nu(\t \lambda)(z)\big)\rmd y\rmd z
     \\
    &\quad+\int_{\Lambda^2}K_\nu(x-y)  v_\nu(y_0) K_\nu\big(\D F_\epnu[K_\nu\theta_\nu] (y,\bigcdot)\big)(z) \mcR^\dagger_\nu( \t\dot G_\nu\phi_\eps)(z)\rmd y\rmd z
            \,.
\end{equs}
Overall, we have thus obtained that
\begin{equs}
     \|  K_\nu&\big(\t v_\nu \D F_\epnu[K_\nu\theta_\nu] \dot G_\nu (\lambda+\phi_\eps)\big)\|_{L^\infty_{0;T}}\\
&\lesssim
\| K_\nu^{\otimes2}\big( (v_\nu\otimes1)\bfX_0\D F_\epnu[K_\nu\theta_\nu]\big)\mcR_\nu^\dagger\dot G_\nu(\lambda+\phi_\eps)\|_{L^{\infty}_{0;T}}\\
&\quad+\| K_\nu^{\otimes2}\big( (v_\nu\otimes1)\D F_\epnu[K_\nu\theta_\nu]\big)\big(\mcR_\nu^\dagger(\bfX_0\dot G_\nu)\lambda+\mcR_\nu^\dagger\dot G_\nu(\t\lambda)+\mcR_\nu^\dagger(\t\dot G_\nu\phi_\eps)\big)\|_{L^{\infty}_{0;T}}\,.
\end{equs}
At this stage, using the fact that $v_\nu$ belongs to $\mcW^\infty_{N,\nu}$, we can eliminate the weight $v_\nu$ using \eqref{eq:1052a}, which gives
\begin{equs}\label{eq:stepintediousproof}
     \|  K_\nu&\big(\t v_\nu \D F_\epnu[K_\nu\theta_\nu] \dot G_\nu (\lambda+\phi_\eps)\big)\|_{L^\infty_{0;T}}\\
&\lesssim
\|  \bfX_0\widetilde{\D  F}_\epnu[K_\nu\theta_\nu]\mcR_\nu^\dagger\dot G_\nu(\lambda+\phi_\eps)\|_{L^{\infty}_{0;T}}+\|\widetilde{\D F}_\epnu[K_\nu\theta_\nu]\big(\mcR_\nu^\dagger(\bfX_0\dot G_\nu)\lambda+\mcR_\nu^\dagger\dot G_\nu(\t\lambda)+\mcR_\nu^\dagger(\t\dot G_\nu\phi_\eps)\big)\|_{L^{\infty}_{0;T}}\\
&\lesssim\norm{\bfX_0\widetilde{\D F}_\epnu[K_\nu\theta_\nu]}_{\mcL^{\infty,\infty}_{0;T}}\norm{\mcR_\nu^\dagger\dot G_\nu\lambda}_{L^{\infty}_{0;T}}+\norm{\bfX_0\widetilde{\D  F}_\epnu[K_\nu\theta_\nu]\mcR_\nu^\dagger\dot G_\nu\phi_\eps}_{L^{\infty}_{0;T}}\\&\quad+\norm{\widetilde{\D F}_\epnu[K_\nu\theta_\nu]}_{\mcL^{\infty,\infty}_{0;T}}\Big(\norm{\mcR_\nu^\dagger(\bfX_0\dot G_\nu)\lambda}_{L^{\infty}_{0;T}}+\norm{\mcR_\nu^\dagger\dot G_\nu(\t\lambda)}_{L^{\infty}_{0;T}}\Big)+\norm{\widetilde{\D F}_\epnu[K_\nu\theta_\nu]\mcR_\nu^\dagger(\t\dot G_\nu\phi_\eps)}_{L^{\infty}_{0;T}}
\,.
\end{equs}
We can now control the first and third terms using \eqref{eq:boundDF}, \eqref{eq:RtoXGdot} and \eqref{eq:RtoGdotT}, and the second term and last terms using \eqref{eq:boundDFharmo}. This yields
\begin{equs}
      \|  K_\nu\big(\t v_\nu \D F_\epnu[K_\nu\theta_\nu] \dot G_\nu (\lambda+\phi_\eps)\big)\|_{L^\infty_{0;T}}
&\lesssim\nu^{-1+2\alpha-3\kappa/2-\eta}\|\lambda\|_{\ttR,\nu,T}+\nu^{-1+\alpha-\eta}\,.
\end{equs}
Observing that $\kappa\leqslant2\alpha/3$, \eqref{eq:RHSeqRinfty} is thus proven.

We now turn to the proof of \eqref{eq:RHSeqR1}. We start from 
\begin{equs}
    K_\nu\big( v_\nu \D F_\epnu[K_\nu\theta_\nu] \dot G_\nu (\lambda+\phi_\eps)\big)(x)&=\int_{\Lambda^2}K_\nu(x-y) y_0^{-1} y_0 v_\nu(y_0) \D F_\epnu[K_\nu\theta_\nu] (y,z)\dot G_\nu (\lambda+\phi_\eps)(z)\rmd y\rmd z \\
    &=\nu^{-2}\int_{\Lambda^2}K_\nu(x-y) y_0  \tilde 
    v_\nu(y_0) \D F_\epnu[K_\nu\theta_\nu] (y,z)\dot G_\nu (\lambda+\phi_\eps)(z)\rmd y\rmd z \,.
   \end{equs}
   Here, on the first line we inserted $1=y_0^{-1}y_0$ and on the second line we set $\tilde v_\nu(t)\eqdef \nu^2 t^{-1}v_\nu(t)$. This expression is absolutely the same as \eqref{eq:beginningprooflong}, except that $v_\nu$ is replaced by $\nu^{-2}\tilde v_\nu$. Following the same steps as in the proof of \eqref{eq:RHSeqRinfty} above, we thus end up with 
   \begin{equs}
     \|  K_\nu&\big( v_\nu \D F_\epnu[K_\nu\theta_\nu] \dot G_\nu (\lambda+\phi_\eps)\big)\|_{L^{\infty,1}_{0;T}}\\
&\lesssim
\nu^{-2}\| K_\nu^{\otimes2}\big( (\tilde v_\nu\otimes1)\bfX_0\D F_\epnu[K_\nu\theta_\nu]\big)\mcR_\nu^\dagger\dot G_\nu(\lambda+\phi_\eps)\|_{L^{\infty,1}_{0;T}}\\
&\quad+\nu^{-2}\| K_\nu^{\otimes2}\big( (\tilde v_\nu\otimes1)\D F_\epnu[K_\nu\theta_\nu]\big)\big(\mcR_\nu^\dagger(\bfX_0\dot G_\nu)\lambda+\mcR_\nu^\dagger\dot G_\nu(\t\lambda)+\mcR_\nu^\dagger(\t\dot G_\nu\phi_\eps)\big)\|_{L^{\infty,1}_{0;T}}\,.
\end{equs}
   At this stage, note that while $v_\nu$ only lies in $\mcW_{N,\nu}^\infty$, $\tilde v_\nu$ lies in $\mcW_{N,\nu}^{1+\eta}$ for every $\eta>0$. The weights $\tilde v_\nu$ can thus be eliminated using \eqref{eq:1052b}, which entails that $ \|  K_\nu\big( v_\nu \D F_\epnu[K_\nu\theta_\nu] \dot G_\nu (\lambda+\phi_\eps)\big)\|_{L^{\infty,1}_{0;T}}$ is smaller than the RHS of \eqref{eq:stepintediousproof} times  $\nu^{-2+2/(1+\eta)}\lesssim\nu^{-\eta}$. This concludes the proof of \eqref{eq:RHSeqR1}.
\end{proof}

\subsection{Convergence of the solution}\label{subsec:solution}
We are now ready to construct $$\psi_\eps-G\phi_\eps=-\int_0^{\mu_T}\dot G_\nu \big(F_{\eps,\mu_T}[0]+R_{\eps,\mu_T}\big)\rmd\nu$$ in the space $\mcC^{\alpha-\kappa}([0,T]\times\T^n)$. 
\begin{lemma}\label{lem:solutionconstruction}
Fix $\eps\in(0,1]$. Recall that $\psi_\eps$ is the solution defined by \eqref{eq:Phi2}. There exists a random $T\in(0,1]$ such that it holds
    \begin{equs}
\norm{\psi_\eps-G\phi_\eps}_{\mcC^{\alpha-\kappa}([0,T]\times\T^n)} = \sup_{\mu\in(0,1]}  \mu^{-\alpha+\kappa}   \norm{(Q_\mu-\Id)\big(\psi_\eps-G\phi_\eps\big)}_{L^\infty_{0;T}}\lesssim 1\,.
    \end{equs}
\end{lemma}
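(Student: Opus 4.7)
The plan is to decompose $\psi_\eps = G\phi_\eps + \vphi_\eps$ and bound the two contributions to $K_\mu\psi_\eps - \psi_\eps$ separately. The initial-data contribution $(K_\mu - 1)G\phi_\eps$ is controlled using Assumption~\ref{assump:IC}, which makes $\varpi_\eps$ smooth enough that $G\phi_\eps$ has many bounded space-time derivatives on $(0,T]\times\T^n$ and standard smoothing estimates for $K_\mu$ give $\norm{(K_\mu - 1)G\phi_\eps}_{L^\infty_{0;T}} \lesssim \mu^{\alpha-\eta}$ with large room to spare (the boundary-layer at $x_0 = 0$ being handled by the inherent Hölder regularity of $\psi_\eps$ in time).

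The main work goes into the $\vphi_\eps$ contribution. Specializing the identity displayed in the remark following \eqref{eq:F_0} to $\mu = 0$, and using $\vphi_{\eps,\mu_T} \equiv 0$ on $[0,T]\times\T^n$ (a consequence of the support property of $G_{\mu_T}$), one has for $x \in [0,T]\times\T^n$
\begin{equs}
\vphi_\eps(x) = -\int_0^{\mu_T} \dot G_\nu\big(F_{\eps,\nu}[\vphi_{\eps,\nu}] + R_{\eps,\nu}\big)(x)\rmd\nu\,.
\end{equs}
Rewriting $F_{\eps,\nu}[\vphi_{\eps,\nu}] + R_{\eps,\nu} = \mcR_\nu\big(\tF_{\eps,\nu}[K_\nu\theta_{\eps,\nu}] + \tiR_{\eps,\nu}\big)$ and applying $K_\mu - 1$ yields
\begin{equs}
K_\mu \vphi_\eps - \vphi_\eps = -\int_0^{\mu_T} (K_\mu - 1)\dot G_\nu \mcR_\nu \big(\tF_{\eps,\nu}[K_\nu\theta_{\eps,\nu}] + \tiR_{\eps,\nu}\big)\rmd\nu\,,
\end{equs}
which I split at $\nu = \mu$.

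For the low-scale piece $\nu \in [0,\mu]$, I use $\norm{K_\mu - 1}_{\mcL^{\infty,\infty}} \lesssim 1$ together with the kernel estimate $\norm{\dot G_\nu \mcR_\nu}_{\mcL^{\infty,\infty}} \lesssim \nu$ (obtained by expanding $\mcR_\nu = \mcP_\nu^{N_1^{3\Gamma+1}} = \sum_k \nu^{2k} P_k$ with $P_k$ a parabolic differential operator of order $2k$, and using $\norm{P_k \dot G_\nu}_{L^1} \lesssim \nu^{1 - 2k}$ by scaling), combined with the running bounds $\norm{\tF_{\eps,\nu}[K_\nu\theta_{\eps,\nu}]}_{L^\infty_{0;T}} \lesssim \nu^{-2+\alpha-\eta}$ from \eqref{eq:boundF} and $\norm{\tiR_{\eps,\nu}}_{L^\infty_{0;T}} \lesssim \nu^{\delta/2}$ from Remark~\ref{rem:convergenceR}. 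The integrand is then of size $\nu^{-1+\alpha-\eta}$, and integration over $[0,\mu]$ yields $\mu^{\alpha-\eta}$. For the high-scale piece $\nu \in [\mu,\mu_T]$, I exploit the smoothing of $K_\mu$ via the identity $K_\mu - 1 = -K_\mu(\mcP_\mu^{N_1^{3\Gamma+1}} - 1)$; writing $\mcP_\mu^{N_1^{3\Gamma+1}} - 1 = \sum_{k \geqslant 1} \mu^{2k} P_k$, the same scaling argument gives $\norm{(K_\mu - 1)\dot G_\nu \mcR_\nu}_{\mcL^{\infty,\infty}} \lesssim \sum_{k \geqslant 1} \mu^{2k}\nu^{1-2k} \lesssim \mu^2/\nu$ for $\nu \geqslant \mu$ (the geometric series being dominated by its first term). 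The integrand is then of size $\mu^2 \nu^{-3+\alpha-\eta}$, and since $\alpha \leqslant 1$ ensures $-3+\alpha-\eta < -2$, integration over $[\mu,\mu_T]$ again yields $\mu^{\alpha-\eta}$. Summing gives $\norm{K_\mu\psi_\eps - \psi_\eps}_{L^\infty_{0;T}} \lesssim \mu^{\alpha-\eta}$ uniformly in $\mu \in (0,1]$, which is the claim.

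The main obstacle is the smoothing bound $\norm{(K_\mu - 1)\dot G_\nu \mcR_\nu}_{\mcL^{\infty,\infty}} \lesssim \mu^2/\nu$ for $\nu \geqslant \mu$: this is a standard parabolic kernel computation based on the identity $K_\mu = \mcP_\mu^{-N_1^{3\Gamma+1}}$ and on the fact that $\mcP_\mu$ differs from the identity by at least a factor of $\mu^2$, but it requires a careful bookkeeping of operator scalings entirely analogous to the one already performed in the proof of Corollary~\ref{coro:1}.
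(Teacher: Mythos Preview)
Your argument is correct and takes a genuinely different route from the paper. The paper works with the representation \eqref{eq:Phi2} frozen at scale $\mu_T$, namely $\psi_\eps = G\phi_\eps + (G-G_{\mu_T})(F_{\eps,\mu_T}[0]+R_{\eps,\mu_T})$, and then needs Lemma~\ref{lem:Lem45} (proved afterwards via a separate inductive argument) to control $K_\nu\big(F_{\eps,\mu_T}[0]+R_{\eps,\mu_T}\big)$ for all $\nu\in(0,\mu_T]$. You instead use the running-scale representation $\vphi_\eps=-\int_0^{\mu_T}\dot G_\nu\big(F_{\eps,\nu}[\vphi_{\eps,\nu}]+R_{\eps,\nu}\big)\rmd\nu$ and feed in the already-established running bounds \eqref{eq:boundF} and \eqref{eq:boundR} at scale $\nu$ directly. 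This bypasses Lemma~\ref{lem:Lem45} entirely and is more economical. Your key smoothing estimate $\norm{(K_\mu-1)\mcR_\nu\dot G_\nu}_{\mcL^{\infty,\infty}}\lesssim\mu^2/\nu$ for $\nu\geqslant\mu$ is indeed available: write $\mcR_\nu\dot G_\nu=K_\nu\mcR_\nu^2\dot G_\nu$, apply \eqref{eq:comKmuKnu}, and use \eqref{eq:heat1} with $2N$ in place of $N$. Two small presentational points: your parenthetical about the boundary layer at $x_0=0$ being handled by ``the inherent H\"older regularity of $\psi_\eps$'' reads as circular---what you need is the H\"older regularity of $G\phi_\eps$, which follows from Assumption~\ref{assump:IC}; and you do not explicitly treat $\mu>\mu_T$, but there the high-scale piece is empty and the low-scale bound gives $\mu_T^{\alpha-\eta}\leqslant\mu^{\alpha-\eta}$, so nothing new is required.
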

\begin{proof}
We first consider the more subtle case of $\mu\leqslant\mu_T$.
Starting from \eqref{eq:Phi2}, we have
    \begin{equs}
     \norm{(Q_\mu-\Id)\big(\psi_\eps-G\phi_\eps\big)}_{L^\infty_{0;T}}&\lesssim   \int_0^{\mu_T}   \norm{(Q_\mu-\Id)\dot G_\nu\big(F_{\eps,\mu_T}[0]+R_{\eps,\mu_T}\big)}_{L^\infty_{0;T}}\rmd\nu  \\  &\lesssim  \int_0^{\mu} \label{eq:eqinterproof45}  \norm{\dot G_\nu\big(F_{\eps,\mu_T}[0]+R_{\eps,\mu_T}\big)}_{L^\infty_{0;T}}\rmd\nu     
     \\&\textcolor{white}{\lesssim}+\int_\mu^{\mu_T}   \norm{(Q_\mu-\Id)Q_\nu\mcP_\nu\dot G_\nu\big(F_{\eps,\mu_T}[0]+R_{\eps,\mu_T}\big)}_{L^\infty_{0;T}}\rmd\nu\,.
    \end{equs}
The second term is handled using \eqref{eq:Lem45} (which is proven in Lemma~\ref{lem:Lem45} below), and \eqref{eq:heat1}:
\begin{equs}
\int_0^{\mu} &  \norm{\dot G_\nu\big(F_{\eps,\mu_T}[0]+R_{\eps,\mu_T}\big)}_{L^\infty_{0;T}}\rmd\nu \\  
    \lesssim &
      \int_0^{\mu} \norm{\mcR_{+,\nu}\dot G_\nu}_{\mcL^{\infty,\infty}_{0;T}}\norm{K_{+,\nu}\big(F_{\eps,\mu_T}[0]+R_{\eps,\mu_T}\big)}_{L^\infty_{0;T}}\rmd\nu \lesssim
      \int_0^\mu \nu^{-1+\alpha-\kappa/2}\rmd\nu\lesssim\mu^{\alpha-\kappa/2}\,.
\end{equs}
The third term is a bit more subtle and requires using \eqref{eq:comKmuKnu}:
\begin{equs}
    \int_\mu^{\mu_T} &  \norm{(Q_\mu-\Id)Q_\nu\mcP_\nu\dot G_\nu\big(F_{\eps,\mu_T}[0]+R_{\eps,\mu_T}\big)}_{L^\infty_{0;T}}\rmd\nu\lesssim\mu^{2}\int_\mu^{\mu_T}\nu^{-2} \norm{\mcP_\nu\mcR_{+,\nu}\dot G_\nu K_{+,\mu}\big(F_{\eps,\mu_T}[0]+R_{\eps,\mu_T}\big)}_{L^\infty_{0;T}}\rmd\nu\,.
\end{equs}
This plus \eqref{eq:Lem45} and \eqref{eq:heat1} gives
\begin{equs}
     \int_\mu^{\mu_T}   \norm{(Q_\mu-\Id)Q_\nu\mcP_\nu\dot G_\nu\big(F_{\eps,\mu_T}[0]+R_{\eps,\mu_T}\big)}_{L^\infty_{0;T}}\rmd\nu&\lesssim\mu^{2}\int_\mu^{\mu_T}\nu^{-1}\rmd\nu \,\mu^{-2+\alpha-\kappa/2} \lesssim\mu^{\alpha-\kappa}\,.
\end{equs}
In the last inequality, we have used the fact that the integrand is now no longer integrable at $\nu=0$. 

The case $\mu\geqslant\mu_T$ is easier, since we do not have to leverage the presence of the operator $Q_\mu-\Id$. Proceeding exactly as for the first integral in \eqref{eq:eqinterproof45} yields
\begin{equs}
     \norm{(Q_\mu-\Id)\big(\psi_\eps-G\phi_\eps\big)}_{L^\infty_{0;T}}&\lesssim\mu_T^{\alpha-\kappa/2}\lesssim\mu^{\alpha-\kappa/2}\,.
\end{equs}
\end{proof}
\begin{remark}\label{rem:convergenceSol}
    Carrying out the analysis of the proof of Lemma~\ref{lem:solutionconstruction} applied to $$\norm{(Q_\mu-\Id)\big(\psi_\eps-G\phi_\eps\big)-(Q_\mu-\Id)\big(\psi_{\eps'}-G\phi_{\eps'}\big)}_{L^\infty_{0;T}}\,,$$
one is led to bound this quantity using only $
\norm{K_\nu\big(F_{\eps,\mu_T}[0]-F_{\eps',\mu_T}[0]\big)}_{L^\infty_{0;T}}$ and 
$
\|K_{+,\nu} \big(R_{\eps,\mu_T}-R_{\eps',\mu_T}\big)\|_{L^\infty_{0;T}}$. In particular, by the convergence properties of $F_{\eps,\bigcdot}$ and $R_{\eps,\bigcdot}$ (see Remarks~\ref{rem:convergenceFL} and \ref{rem:convergenceR}), it suffices to slightly modify the proof of Lemma~\ref{eq:Lem45} below in order to obtain that $\phi_\eps-G\phi_\eps$ converges in probability in $\mcC^{\alpha-\kappa}([0,T]\times\T^n)$. 
\end{remark}
Finally, the following lemma is necessary to the above construction of the solution. While its proof partly follows the structure of the Section 11 of \cite{Duch22}, it is considerably different, since we leverage the fact that the solution is of positive regularity to balance the non-polynomial growth of the non-linearity. 
\begin{lemma}\label{lem:Lem45}
Pick $\tau\in(0,1)$ depending only on $\Gamma$ close enough to one so that 
\begin{equs}\label{eq:assumptau}
    (2/\tau^2-1)^{N_1^{3\Gamma+1}}\leqslant2\,.
\end{equs}
Let $C_\varpi$ be the maximum of the implicit constants in \eqref{eq:harmoniccompletion2} and \eqref{eq:harmoniccompletionwithderivative}, and set $C_S\eqdef C_F\mcG(1+C_\varpi)+C_R$, where $C_F$ was defined in Corollary~\ref{coro:1} and $C_R$ in Remark~\eqref{rem:convergenceR}. 
Then, there exists a random $\tilde{T}\in(0,1)$ such that, for any $T \in (0,\tilde{T}]$ and any $p\in\N$, it holds
    \begin{equs}
         \norm{K_{+,\tau^p\mu_T}\big( F_{\eps,\mu_T}[0]+R_{\eps,\mu_T}\big)}_{L^\infty_{0;T}}\leqslant  C_S(\tau^p\mu_T)^{-2+\alpha-\kappa/2}\,.
    \end{equs}
Consequently, by interpolation, we have that 
\begin{equs}\label{eq:Lem45}
     \norm{K_{+,\mu}\big( F_{\eps,\mu_T}[0]+R_{\eps,\mu_T}\big)}_{L^\infty_{0;T}}\lesssim\mu^{-2+\alpha-\kappa/2}
\end{equs}
uniformly in $\mu\in(0,1]$.
\end{lemma}
\begin{proof}
We prove the thesis by recursion on $p$. First, observe that by \eqref{eq:boundF} and \eqref{eq:boundR} we have for every $\eta>0$
\begin{equ}
    \norm{K_{+,\mu_T}\big( F_{\eps,\mu_T}[0]+R_{\eps,\mu_T}\big)}_{L^\infty_{0;T}}\leqslant C_F\mcG(1)\mu_T^{-2+\alpha-\eta}+C_R\mu_T^{-2+\alpha-\kappa/2}\,.
\end{equ}
Taking $\eta=\kappa/2$, we can deduce from the above inequality that
\begin{equ}
   \norm{K_{+,\mu_T}\big( F_{\eps,\mu_T}[0]+R_{\eps,\mu_T}\big)}_{L^\infty_{0;T}}\leqslant C_S\mu_T^{-2+\alpha-\kappa/2}\,.    
\end{equ}
Now, suppose that that the desired result
 \begin{equs}
         \norm{K_{+,\tau^p\mu_T}\big( F_{\eps,\mu_T}[0]+R_{\eps,\mu_T}\big)}_{L^\infty_{0;T}}\leqslant  C_S(\tau^p\mu_T)^{-2+\alpha-\kappa/2}
    \end{equs}
holds for some fixed $p\in\N_{>0}$. Using \eqref{eq:Kmunu} to replace $K_{\tau^{p}\mu_T}$ by $K_{\tau^{p+1}\mu_T}$ in the above inequality, along with the assumption \eqref{eq:assumptau} yields
\begin{equs}    \norm{K_{+,\tau^{p+1}\mu_T}\big( F_{\eps,\mu_T}[0]+R_{\eps,\mu_T}\big)}_{L^\infty_{0;T}}
 &\leqslant 2C_S(\tau^p\mu_T)^{-2+\alpha-\kappa/2}\leqslant 2C_S(\tau^{p+1}\mu_T)^{-2+\alpha-\kappa/2}
    \,.\textcolor{white}{blablabla}\label{eq:intertau}
\end{equs}
Note also that by interpolation, the above above bound holds with $\tau^{p+1}\mu_T$ replaced by any $\nu\in[\tau^{p+1}\mu_T,\mu_T]$. Here, recall that we have for all $x\in\Lambda_{0;T}$ and $\mu\in(0,\mu_T]$
\begin{equs}\label{eq:FmuRmuBis}
   \big( F_{\eps,\mu_T}[0]+R_{\eps,\mu_T}\big)(x)=\big(F_{\eps,\tau^{p+1}\mu_T}[\psi_{\eps,\tau^{p+1}\mu_T}]+R_{\eps,\tau^{p+1}\mu_T}\big)(x)\,.
\end{equs}
Using \eqref{eq:vphimu}, \eqref{eq:FmuRmuBis} and the support properties of $G_{\mu_T}$, we obtain that $C_\varpi\mu^{(-|\mfl|+\alpha-\kappa)\wedge0}+$
\begin{equs}    \norm{\partial_\ttx^\mfl\big(\psi_{\eps,\tau^{p+1}\mu_T}-G_{\tau^{p+1}\mu_T}\phi_\eps\big)}_{L^\infty_{0;T}}&\leqslant\int^{\mu_T}_{\tau^{p+1}\mu_T}\norm{\partial_\ttx^\mfl\dot G_{\nu} \big( F_{\eps,\mu_T}[0]+R_{\eps,\mu_T}\big)}_{L^\infty_{0;T}}\rmd\nu\\
&\leqslant \int^{\mu_T}_{\tau^{p+1}\mu_T}\norm{\partial_\ttx^\mfl\mcR_{+,\nu}\dot G_{\nu}}_{\mcL^{\infty,\infty}_{0;T}}\norm{K_{+,\nu} \big( F_{\eps,\mu_T}[0]+R_{\eps,\mu_T}\big)}_{L^\infty_{0;T}}\rmd\nu\,.
\end{equs}
Using \eqref{eq:heat1} and \eqref{eq:intertau}, we finally obtain
\begin{equs}    \norm{\partial_\ttx^\mfl\big(\psi_{\eps,\tau^{p+1}\mu_T}-G_{\tau^{p+1}\mu_T}\phi_\eps\big)}_{L^\infty_{0;T}}&\leqslant 2C_SC_G
\int^{\mu_T}_{\tau^{p+1}\mu_T}\nu^{-1+\alpha-|\mfl|-\kappa/2}\rmd\nu\,.
\end{equs}
where $C_G$ is the implicit constant in \eqref{eq:heat1}. If $\mfl=0$ then the integral over $\nu$ is bounded by $ 2C_SC_G(\alpha-\kappa/2)^{-1}\mu_T^{\alpha-\kappa/2}$. If $|\mfl|=1$, then it is bounded by $2C_SC_G(1-\alpha+\kappa)^{-1}\mu_T^{\kappa/2}(\tau^{p+1}\tau_T)^{-1+\alpha-\kappa}$.

By taking the time $T$ small enough, we can enforce 
$$\big( 2C_SC_G(\alpha-\kappa/2)^{-1}\mu_T^{\alpha-\kappa/2}\big)\vee\big(2C_SC_G(1-\alpha+\kappa)^{-1}\mu_T^{\kappa/2}\big)\leqslant1\,.$$ $\theta_{\tau^{p+1}\mu_T}=\mcR_{\tau^{p+1}\mu_T}\psi_{\eps,\tau^{p+1}\mu_T}$ thus verifies the hypothesis of Corollary~\ref{coro:1} with $C_{\theta,R,\varpi}=1+C_\varpi$, and we can make use of \eqref{eq:boundF} to control $F_{\eps,\tau^{p+1}\mu_T}[\psi_{\eps,\tau^{p+1}\mu_T}]$ in \eqref{eq:FmuRmuBis}. Hence, using \eqref{eq:boundF} and \eqref{eq:boundR}, we finally obtain
\begin{equs}
    \norm{K_{+,\tau^{p+1}\mu_T}\big( F_{\eps,\mu_T}[0]+R_{\eps,\mu_T}\big)}_{L^\infty_{0;T}}&\leqslant\norm{\tF_{\eps,\tau^{p+1}\mu_T}[\psi_{\eps,\tau^{p+1}\mu_T}]}_{L^\infty_{0;T}}+\norm{K_{+,\tau^{p+1}\mu_T}R_{\eps,\tau^{p+1}\mu_T}}_{L^\infty_{0;T}}
       \\&\leqslant   C_F\mcG(1+C_\varpi)(\tau^{p+1}\mu_T)^{-2+\alpha-\eta}+C_R(\tau^{p+1}\mu_T)^{-2+\alpha-\kappa/2}\,,
\end{equs}
which is the desired result. 
\end{proof}

\section{Probabilistic analysis}\label{sec:Sec4}
This Section is devoted to the construction of the stationary force coefficients,  primarily through probabilistic arguments.
A crucial observation first made in \cite{Duch21} (also used in \cite{Duch22,GR23}) was that the Polchinski equation could be used to control cumulants of force coefficients -- we apply this approach to our setting to obtain estimates for the stationary force coefficients.

The need for renormalization appears when trying to close the Polchinski flow for cumulants, we want to impose finite boundary conditions at large scales for the expectations of relevant force coefficients which requires us to choose divergent in $\eps$ initial data at scale $\eps$ for the same quantities.  
However, the coordinates introduced in section~\ref{subsubsec:coord} are non-local, so simply choosing our renormalization counterterm along the lines aboveabove would give a counterterm $\mfc_{\eps}$ in \eqref{eq:eqDefreg} that might be a non-local function of $\psi_{\eps}$ and $\partial_{\ttx} \psi_{\eps}$. 

Non-local counterterms are not satisfactory, and so in order to obtain a local counterterm like given in \eqref{eq:eqDefreg} we perform another space-time localization step as in \cite{Duch21,Duch22} using Taylor expansion. 
We can then write relevant force coefficients as a sum of completely local terms along with a non-local remainder which will be power-counting irrelevant. 
This Taylor expansion produces a larger system of coordinates\footnote{It would have also been possible to implement these localization within our probabilistic argument without introducing new ``generalized coordinates'' as in \cite{Duch23}, but this would have been quite messy since, unlike \cite{Duch23}, our scale cut-off doesn't force coefficients to have sharp space/space-time support properties.} which we describe in \eqref{subsec:gen_force_coeff}.

In Section~\ref{subsec:cumulant_analysis} we use the Polcshinki flow and the imposing of renormalization conditions to obtain cumulant estimates for the larger class of generalized coordinates mentioned above, the main result being the cumulant bounds Lemma~\ref{lem:cumul}. 

These are then used as input for a Kolmogorov-type argument in Section~\ref{subsec:cumulant_analysis}. 
This implementation of the Kolmogorov argument gives us control of Besov-type norms of relevant generalized force coefficients, summarized in Lemma~\ref{lem:ConcluProba}. 
However, our Kolmogorov argument isn't suitable to control irrelevant generalized force coefficents since it doesn't control ``large scales''.  
We bypass this issue in Section~\ref{subsec:44} where we use a deterministic argument (using the Polchsinki flow for force coefficients, not their cumulants) to show that path-wise control on the relevant generalized force coefficients gives control over the irrelevant ones -- this is the last ingredient in proving Theorem~\ref{thm:sto}.

\subsection{Generalized force coefficients}\label{subsec:gen_force_coeff}
We now introduce the promised extension of our previous set of coordinates. 
\begin{definition}  
Fix $a\in\mcM$, $(x,y^a)\in\Lambda^{[a]+1}$, and a family $\mfl^a=(\mfl^a_{\mfk ij})_{\mfk ij\in [a]}$ of indices $\mfl^a_{\mfk ij}\in\N^{n+1}$. 
We then set 
\begin{equ}
\mfM
\eqdef
\big\{\big(a,\mfl^a=(\mfl^a_{\mfk ij})_{\mfk ij\in [a]}\big)\in\mcM\times\big(\N^{n+1}\big)^{[a]}\big\}\,.
\end{equ}
We also define the set of \textit{generalized multi-indices} $\tilde{
\mfM}$ by setting
\begin{equs}    \widetilde{\mfM}\eqdef\mfM\times\{0,1,2\}\times\{0,1\}\;.
\end{equs}
We also write, for $t\in\{0,1\}$,  $\widetilde{\mfM}_t\eqdef\mfM\times\{0,1,2\}\times\{t\}\subset\widetilde{\mfM}$.    

Finally, for $\tilde{a}=(a,\mfl^a,s,t)\in\widetilde{\mfM}$ and $(x,y^a)\in\Lambda^{[a]+1}$, we introduce the \textit{generalized force coefficients}
\begin{equs}
    \xi_{\eps,\mu}^{\tilde{a}}(x,y^a)\eqdef\bfX^{\mfl^a}(x,y^a)\partial_\eps^s\partial_\mu^t\xi_{\eps,\mu}^a(x,y^a)\,,
\end{equs}
where the notation $\bfX^{\mfl^a}(x,y^a)$ was introduced in Definition~\ref{def:normforxi}.
\end{definition}

We introduce a corresponding extension of our set of derivators and use them to express the non-linearity in the flow equation. 
\begin{definition}  
We define
\begin{equ}    \tilde\mcD\eqdef\mcD\times\N^{n+1}\times\{0\}\times\{1\}\,.
\end{equ}
Fix $\tilde a=(a,\mfl^a,s,1)\in\widetilde\mfM_1$, $\tilde b=(b,\mfl^{b},s'_1,0)\in\widetilde{\mfM}_0$, $\tilde c=(c,\mfl^{c},s'_2,0)\in\widetilde{\mfM}_0$, $\tilde \bfd=(\bfd,\mfl^{\bfd},0,1)\in\tilde\mcD$, and $\sigma=(\sigma^\mfk_i)_{(\mfk,i)\in\mathrm{supp}(a_i^\mfk)}\in\mfS^{a_i}$. We say that
\begin{equs}
    \tilde a=\tilde b +\tilde c+\tilde d(\tilde\bfd)
\end{equs}
if the following three conditions hold:
\begin{enumerate}
\item  as multi-indices, we have
\begin{equs}
    a=b+c+d(\bfd)\,;
\end{equs}
\item  there exist families of indices $\mfm^b=(\mfm^b_{\mfk ij})_{\mfk ij\in [a]}\in\big(\N^{n+1}\big)^{[a]}$, $\mfm^c=(\mfm^c_{\mfk ij})_{\mfk ij\in [a]}\in\big(\N^{n+1}\big)^{[a]}$, $\mfm^\bfd=(\mfm^\bfd_{\mfk ij})_{\mfk ij\in [a]}\in\big(\N^{n+1}\big)^{[a]}$ such that 
\begin{equs}
    \mfm^b_{\mfk ij}+\mfm^c_{\mfk ij}+\mfm^{\bfd}_{\mfk ij}=\mfl^a_{\mfk i\sigma_i^\mfk(j)}\,,\;\forall\,(\mfk, i,j)\in[a]\,,
\end{equs}
and
\begin{equs}
    \mfm^c_{\mfk ij}=\mfm^{\bfd}_{\mfk ij}=0\,,\text{ if}\; j\leqslant b_i^\mfk-\1\{(\mfk,i)=(\mfk_0,k_0)\}\,,
\end{equs}
and
\begin{equs}     
\mfl_{\mfk ij}^{b}&= \begin{cases}  
      \sum_{\substack{(\mfk, i,j):j\geqslant 
 1 +b_{i}^\mfk-\1\{(\mfk,i)=(\mfk_0,k_0)\}
 }}\mfm^b_{\mfk ij} & \text{if} \;(\mfk,i,j)= (\mfk_0, k_0,b^{\mfk_0}_{k_0})\,, \\
        \mfm^{b}_{\mfk ij} & \forall (\mfk,i,j)\in[b]\setminus\{(\mfk_0, k_0,b^{\mfk_0}_{k_0})\} \,, \\
\end{cases}  \\  \mfl^c_{\mfk ij}&=\mfm^c_{\mfk i(j +b_i^\mfk-\1\{(\mfk,i)=(\mfk_0,k_0)\})}\;\forall\,(\mfk,i,j)\in[c]\,,\\ 
    \mfl^{\bfd}&= \sum_{(\mfk, i,j)\in[a]}\mfm^{\bfd}_{\mfk ij}\,,
\end{equs}
(all these conditions imply that $|\mfl^a|=|\mfl^b|+|\mfl^c|+|\mfl^\bfd|$);
\item  $s=s'_1+s'_2$.
\end{enumerate}
Finally, using the notation of Definition~\ref{def:B}, we define, for
$X\in\mcD(\Lambda^{[b]+1},\mcH^b)$ and $Y\in\mcD(\Lambda^{[c]+1},\mcH^c)$, the $\mcD(\Lambda^{[a]+1},\mcH^a)$-valued bilinear map 
\begin{equs}
    \tilde\B_\mu(X,Y)(x,&y^a)\\&\eqdef \frac{b^{\mfk_0}_{k_0}}{a!}\frac{s!}{s'_1!s'_2!}(1+\1\{(\mfk_0,\mfk_1)=(\mfe,\mff)\})\,X(x,y^b)\int_\Lambda\big(\text{$\bfX$}^{\mfl^{\text{\tiny{$\bfd$}}}}\partial_{\ttx}^{k_0+1-k_1}\dot G_\mu\big)(z-w)Y(w,y^c)\rmd w\,,
\end{equs}
where we write $\text{$\bfX$}^{\mfl^\bfd}(z-w)\equiv\text{$\bfX$}^{\mfl^\bfd}(z,w)\eqdef\frac{(z-w)^{\mfl^\bfd}}{\mfl^\bfd!}$.
\end{definition}
The following lemma verifies the above non-linearity captures the flow equation in our new coordinates. 
\begin{lemma}
For $\tilde a\in\widetilde{\mfM}_1$, we define
  \begin{equs}
     \text{$ \mathrm{Ind}$}(\tilde a)\eqdef\Big\{(\sigma,\tilde b,\tilde c,\tilde\bfd)\in\mfS^a\times\text{$\widetilde{\mfM}$}_0^2\times\tilde\mcD:\tilde a=\tilde b+\tilde c+\tilde d(\tilde\bfd)\Big\}
  \end{equs}
the index set for the flow of $\xi^{\tilde a}_\epmu$. Then, the generalized force coefficients verify the following system of flow equations:
\begin{equs}\label{eq:flowGen}
     \xi^{\tilde a}_\epmu=-\sum_{\substack{(\sigma,\tilde b,\tilde c,\bfd)\in\text{\scriptsize{$\mathrm{Ind}(\tilde a)$}}}}    \tilde\B_\mu(\xi^{\tilde b}_\epmu,\xi^{\tilde c}_\epmu)
    \,.
\end{equs}
\end{lemma}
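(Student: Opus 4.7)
The plan is to derive \eqref{eq:flowGen} by applying the operator $\bfX^{\mfl^a}(x,y^a)\partial_\eps^s\partial_\mu$ to both sides of the flow equation \eqref{eq:flow2} satisfied by $\xi^a_\epmu$ and reorganizing the resulting expression into a sum of $\tilde\B_\mu$-terms. Since $\xi^{\tilde a}_\epmu=\bfX^{\mfl^a}\partial_\eps^s\partial_\mu\xi^a_\epmu$ for $\tilde a=(a,\mfl^a,s,1)\in\widetilde\mfM_1$, the first step is to substitute \eqref{eq:flow2} for $\partial_\mu\xi^a_\epmu$ and apply $\partial_\eps^s$ via Leibniz. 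Because $\dot G_\mu$ carries no $\eps$-dependence, only the two force-coefficient factors inside $\B_\mu$ receive $\eps$-derivatives, yielding a sum over splits $s=s'_1+s'_2$ with Leibniz coefficient $s!/(s'_1!\,s'_2!)$, which is precisely the combinatorial factor appearing in $\tilde\B_\mu$.

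Second, I would distribute the polynomial prefactor $\bfX^{\mfl^a}(x,y^a)$ across the bilinear structure. For a fixed $\sigma\in\mfS^a$, each $y^a$-slot is identified via $\sigma$ with a $y^b$-variable on the ``$b$-side'' (when $j\leqslant b^\mfk_i-\1\{(\mfk,i)=(\mfk_0,k_0)\}$), with the intermediate variable $z=y^b_{\mfk_0k_0b^{\mfk_0}_{k_0}}$ (when $(\mfk,i,j)=(\mfk_1,k_1,a^{\mfk_1}_{k_1})$), or with a $y^c$-variable. On the $b$-side and at the special $z$-position the factor $(x-y^a_{\mfk i\sigma^\mfk_i(j)})^{\mfl^a_{\ldots}}/\mfl^a_{\ldots}!$ is absorbed directly into $\bfX^{\mfl^b}(x,y^b)$ inside $\xi^{\tilde b}_\epmu$, with the associated $\mfm^c$, $\mfm^\bfd$ vanishing as required. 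On each remaining $c$-side position I would apply the telescoping identity
\begin{equs}
x-y^c_{\ldots}=(x-z)+(z-w)+(w-y^c_{\ldots})
\end{equs}
and expand by the trinomial theorem, splitting $\mfl^a_{\mfk i\sigma^\mfk_i(j)}$ into $\mfm^b_{\mfk ij}+\mfm^\bfd_{\mfk ij}+\mfm^c_{\mfk ij}$. The $\mfm^b$-contributions accumulate at the special $y^b$-slot (producing $\mfl^b_{\mfk_0k_0b^{\mfk_0}_{k_0}}=\sum_{c\text{-side}}\mfm^b_{\mfk ij}$), the $\mfm^\bfd$-contributions combine into a single factor $\bfX^{\mfl^\bfd}(z-w)$ multiplying $\partial_{\ttx}^{k_0+1-k_1}\dot G_\mu(z-w)$, and the $\mfm^c$-contributions populate $\bfX^{\mfl^c}(w,y^c)$ inside $\xi^{\tilde c}_\epmu$ after the index shift $j\mapsto j-b^\mfk_i+\1\{(\mfk,i)=(\mfk_0,k_0)\}$.

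Finally, grouping the result by the data $(\sigma,\tilde b,\tilde c,\tilde\bfd)$ extracted from the expansion reproduces $-\sum_{(\sigma,\tilde b,\tilde c,\tilde\bfd)\in\mathrm{Ind}(\tilde a)}\tilde\B_\mu(\xi^{\tilde b}_\epmu,\xi^{\tilde c}_\epmu)$: the three conditions defining $\tilde a=\tilde b+\tilde c+\tilde d(\tilde\bfd)$ are exactly the constraints produced by the multinomial splits, and the factors $b^{\mfk_0}_{k_0}/a!$ and $1+\1\{(\mfk_0,\mfk_1)=(\mfe,\mff)\}$ are inherited unchanged from $\B_\mu$. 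The main obstacle, more a matter of careful bookkeeping than of genuine difficulty, is to verify that the multinomial coefficients arising from the $c$-side expansions combine with the $1/\mfl^a_{\ldots}!$ factors in $\bfX^{\mfl^a}$ to exactly reproduce the factorials appearing in $\bfX^{\mfl^b}$, $\bfX^{\mfl^c}$, and $\bfX^{\mfl^\bfd}$; this is handled by interpreting the existence-based relation $\tilde a=\tilde b+\tilde c+\tilde d(\tilde\bfd)$ as summing over the hidden $\mfm$-realizations, so that no further combinatorial factor beyond those already present in $\tilde\B_\mu$ is needed.
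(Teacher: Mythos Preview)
Your proposal is correct and follows essentially the same approach as the paper: apply the Leibniz rule in $\eps$ (using that $\dot G_\mu$ is $\eps$-independent) and, on each $c$-side position, expand via the trinomial identity $(x-y^a_{\ldots})=(x-z)+(z-w)+(w-y^a_{\ldots})$, then regroup by $(\mfl^b,\mfl^c,\mfl^\bfd)$. The paper's proof is terser but identical in content; your remark that the multinomial coefficients are absorbed by reading the existential constraint $\tilde a=\tilde b+\tilde c+\tilde d(\tilde\bfd)$ as an implicit sum over $\mfm$-realizations is a bookkeeping point the paper leaves unstated.
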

\begin{proof} 
The key part of going from \eqref{eq:flow2} to \eqref{eq:flowGen} is tracking the effect of both polynomials and derivatives in $\eps$. 
While the effect of the derivatives in $\eps$ stems from the general Leibniz rule, we give more explanation concerning the polynomials, following Section 7 of \cite{Duch22}. If fact, observing that in $\bfX^{\mfl^a}(x,y^a)$, for some $y^a_{\mfk i\sigma^\mfk_i(j)}$ located in $y_c$ (with the notation of Definition~\ref{def:B}), i.e. $j\geqslant b^\mfk_i-\1\{(\mfk,i)=(\mfk_0,k_0)\}+1$, the term ${ (x-y^a_{\mfk i\sigma^\mfk_i(j)})^{\mfl^a_{\mfk i\sigma^\mfk_i(j)}}}$ can be expanded with the multinomial formula as
\begin{equs}
  (x-y^a_{\mfk i\sigma^\mfk_i(j)})^{\mfl^a_{\mfk i\sigma^\mfk_i(j)}}&=(x-z+z-w+w-y^a_{\mfk i\sigma^\mfk_i(j)})^{\mfl^a_{\mfk i\sigma^\mfk_i(j)}}\\
  &=\sum_{\substack{\mfm^b_{\mfk ij},\mfm^c_{\mfk ij},\mfm^\bfd_{\mfk ij}\\  \mfm^b_{\mfk ij}+\mfm^c_{\mfk ij}+\mfm^{\bfd}_{\mfk ij}=\mfl^a_{\mfk i\sigma_i^\mfk(j)}}}\frac{\mfl^a_{\mfk i\sigma^\mfk_i(j)}!}{\mfm^b_{\mfk ij}!\mfm^c_{\mfk ij}!\mfm^\bfd_{\mfk ij}!}
(x-z)^{\mfm^b_{\mfk ij}}
  (z-w)^{\mfm^\bfd_{\mfk ij}}  
  (w-y^a_{\mfk i\sigma^\mfk_i(j)})^{\mfm^c_{\mfk ij}}\,.
\end{equs}
Performing this expansion for every $(\mfk,i,j)\in[a]$ with $j\geqslant b^\mfk_i-\1\{(\mfk,i)=(\mfk_0,k_0)\}+1$ and organizing contributions by powers of $(z-w)$ and $(x-w)$ then gives the desired result.
\end{proof}
Writing the flow equation for relevant expectations requires an additional localization step which we now introduce notation for.  

We close this subsection by recalling the main statement \eqref{eq:localization} of the localization procedure presented in Section 8 of \cite{Duch22}.
Consider $(a,\mfl^a)\in\mfM$ and a function $f\in\mcD(\Lambda^{[a]+1})$. 

We first define $\delta^a\in\mcD'(\Lambda^{[a]+1})$ by setting 
\begin{equs}    \delta^a(x,y^a)\eqdef\prod_{\mfk ij\in[a]}\delta(x-y^a_{\mfk ij})\,.
\end{equs}
For $\mfm^a\in\big(\N^{n+1}\big)^a$ we also set 
\begin{equs}
    \partial_{y_a}^{\mfm^a}\delta^a(x,y^a)\eqdef\prod_{\mfk ij\in[a]}\delta^{({\mfm^a_{\mfk ij}})}(x-y^a_{\mfk ij})\,.
\end{equs}
We also define
\begin{equs}
    \bfI^{a} f(x)\eqdef \int_{\Lambda^{[a]}}f(x,y^a)\,\rmd y^a\,,
\end{equs}
and
\begin{equs}
      \bfL_\tau f(x,y^a)\eqdef \tau^{-(n+2)|[a]|}f(x,x+(y^a-x)/\tau)\,.
\end{equs}
It then holds that, for $\ell\in\{1,2\}$, the following equality holds in a distributional sense:
\begin{equs}\label{eq:localization}
   \bfX^{\mfl^a} f=&\sum_{\text{\scriptsize{$\mfm$}}^a:|\text{\scriptsize{$\mfl$}}^a+\text{\scriptsize{$\mfm$}}^a|<\ell}\binom{\text{$\mfl$}^a+\text{$\mfm$}^a}{\text{$\mfl$}^a}\partial_{{ y}^a}^{\text{\scriptsize{$\mfm$}}^a}\delta^a \, \text{$\bfI^{a}$}\big(   \bfX^{\mfl^a+\mfm^a} f\big) \\&+\sum_{\mfm^a:|\mfl^a+\mfm^a|=\ell}|\mfm^a|\binom{\mfl^a+\mfm^a}{\mfl^a}\int_0^1(1-\tau)^{|\mfm^a|-1}\partial_{{y}^a}^{\mfm^a} \text{$\bfL$}_\tau \big(\text{$\bfX$}^{\mfl^a+\mfm^a}f\big)\rmd\tau\,.
\end{equs}

\subsection{Cumulant analysis}\label{subsec:cumulant_analysis}
This section is dedicated to the cumulant analysis with the main resulting estimate being Lemma~\ref{lem:cumul} below. 
From now on and until the end of Section~\ref{sec:Sec4}, we fix $P\in2\N_{\geqslant1}$ and write $\eta=\frac{2+n/r}{P}$.

We start by stating a standard but useful cumulant identity. 
\begin{definition}
Fix a finite subset $J\subset \N_{\geqslant1}$. 
We denote by $\mcP(J)$ the set of all partitions of $J$. For $\rho\in \mcP(J)$, we write $|\rho|$ for the number of elements of $\rho$. We denote the elements of $\rho$ by $(\rho_q)_{q\in[|\rho|]}$: they are non-empty subsets of $J$, non-overlapping, and their union is $J$.
We adopt the convention of ordering these subsets by the order of their minima, writing $\rho=(\rho_q)_{q\in[|\rho|]}$ where $k<p\Rightarrow\min\rho_k<\min\rho_p$. 

Moreover, for two finite non-empty subsets $I,J \subset \N_{\geqslant1}$ we define 
\begin{equs}
    \mcQ(I,J)\eqdef\Big\{(\pi,\rho):\rho\in\mcP(J)\;\text{and}\;\pi:I\rightarrow[|\rho|]\Big\}\,,
\end{equs}
and we adopt the convention that if $I=\emptyset$, then we set $  \mcQ(I,J)=\mcP(J)$.

Finally, given $(\pi,\rho)\in\mcQ(I,J)$, for every $q\in[|\rho|]$, we use the shorthand $\pi_q\eqdef \pi^{-1}(q)$. The $\pi_q$'s are therefore some possibly empty subsets of $I$, non-overlapping, and whose union is $I$.
\end{definition}
We let $\kappa_{|I|}\big((X_i)_{i\in I}\big)$ denote the joint cumulant of the family of random variables $(X_i)_{i\in I}$. 
The promised cumulant identity can then be stated as follows. 
\begin{lemma}
Pick $I$ and $J$ two finite subsets of  $\N_{\geqslant1}$ ($I$ is possibly empty), and $(X_i)_{i\in I}$ and $(Y_j)_{j\in J}$ two families of random variables. Then we have
    \begin{equs}   \label{eq:relcum} \kappa_{|I|+1}\big((X_i)_{i\in I},\prod_{j\in J}Y_j\big)=\sum_{(\pi,\rho)\in\mcQ(I,J)}\prod_{k=1}^{|\rho|}\kappa_{|\pi_k|+|\rho_k|}\big((X_i)_{i\in \pi_k},(Y_j)_{j\in \rho_k}\big)\,.
\end{equs}
\end{lemma}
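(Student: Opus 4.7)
The plan is to follow the classical Leonov--Shiryaev argument via joint cumulant generating functions, combined with the exponential (Faà di Bruno) formula and the generalized Leibniz rule. First, introduce the joint cumulant generating function
\begin{equs}
\Psi(t, s) \eqdef \log \E\Big[\exp\Big(\sum_{i\in I} t_i X_i + \sum_{j\in J} s_j Y_j\Big)\Big]\,,
\end{equs}
whose mixed derivatives at the origin give all joint cumulants of the $X_i$'s and $Y_j$'s. Using the identity $\prod_{j\in J} Y_j = \partial_{s^J} e^{\sum_j s_j Y_j}|_{s=0}$ with $\partial_{s^J} \eqdef \prod_{j\in J} \partial_{s_j}$, and introducing an auxiliary parameter $u$, one expands
\begin{equs}
\log \E\Big[\exp\Big(\sum_i t_i X_i + u \prod_{j \in J} Y_j\Big)\Big] = \Psi(t,0) + u\, R(t) + O(u^2)\,,
\end{equs}
with $R(t) \eqdef e^{-\Psi(t,0)} \partial_{s^J} e^{\Psi(t,s)}|_{s=0}$, so that the target joint cumulant equals $\partial_{t^I} R(t)|_{t=0}$ where $\partial_{t^I} \eqdef \prod_{i\in I}\partial_{t_i}$.

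Next, apply the exponential formula to $e^\Psi$ to obtain
\begin{equs}
R(t) = \sum_{\rho \in \mcP(J)} \prod_{k=1}^{|\rho|} \partial_s^{\rho_k}\Psi(t,0)\,,
\end{equs}
and then distribute $\partial_{t^I}$ across the $|\rho|$ factors via the generalized Leibniz rule: each $\partial_{t_i}$ chooses one of the factors to act on, producing a sum over maps $\phi: I \to \{1,\dots,|\rho|\}$. Setting $\pi_k \eqdef \phi^{-1}(k)$, the tuple $(\pi_k)_{k \leq |\rho|}$ decomposes $I$ into $|\rho|$ (possibly empty) blocks, one for each block of $\rho$, which pairs with $\rho$ to form an element $(\pi, \rho) \in \mcP^c(I, J)$. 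Each factor evaluates to $\partial_t^{\pi_k}\partial_s^{\rho_k}\Psi|_{0} = \kappa_{|\pi_k|+|\rho_k|}((X_i)_{\pi_k}, (Y_j)_{\rho_k})$, yielding the desired identity.

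The only step that requires care is the combinatorial bookkeeping matching the double sum over $(\rho, \phi)$ produced by Faà di Bruno plus Leibniz with the parametrization by $\mcP^c(I,J)$ in the lemma. The edge case $I = \emptyset$ reduces $\phi$ to the trivial map and recovers the standard moment-cumulant formula $\E[\prod_j Y_j] = \sum_{\rho \in \mcP(J)} \prod_k \kappa_{|\rho_k|}((Y_j)_{\rho_k})$, consistent with the lemma's special convention $\mcP^c(\emptyset, J) = \mcP(J)$.
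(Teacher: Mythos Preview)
The paper does not give a proof of this lemma; it is introduced as ``a standard but useful cumulant identity'' and simply stated. Your generating-function argument is the classical Leonov--Shiryaev derivation and is correct: expanding $\partial_{s^J}e^{\Psi}$ by the exponential (Fa\`a di Bruno) formula and then distributing $\partial_{t^I}$ over the resulting product by Leibniz is exactly how this identity is usually obtained.

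One bookkeeping point worth flagging. The sum your argument produces is over pairs $(\rho,\phi)$ with $\rho\in\mcP(J)$ and $\phi\colon I\to[|\rho|]$, i.e.\ over ordered decompositions of $I$ into $|\rho|$ \emph{possibly empty} pieces, which you correctly note. Under the paper's literal definition $\mcP^c(I,J)=\{(\pi,\rho)\in\mcP(I)\times\mcP(J):|\pi|=|\rho|\}$, the blocks of $\pi$ are non-empty, so these two index sets do not coincide in general (e.g.\ $|I|=1$, $|J|=2$, $|\rho|=2$). The formula you obtain is the correct one and is consistent with how the identity is applied downstream (and with the special convention $\mcP^c(\emptyset,J)=\mcP(J)$); so this is a notational looseness in the paper's definition of $\mcP^c$, not a gap in your proof.
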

\begin{definition}
We introduce a set of \textit{lists of generalized multi-indices}
\begin{equs}
    \widehat\mfM\eqdef\{\bma=(\tilde{a}_1,\dots,\tilde{a}_{p(\text{\scriptsize{$\bma$}})})\in\widetilde{\mfM}^{p(\bma)}:p(\bma)\in[P]\}\,.
\end{equs}
Whenever $\text{$\bma$}=(\tilde{a}_1,\dots,\tilde{a}_{p(\text{\scriptsize{$\bma$}})})\in\widehat\mfM$, we always write $\tilde a_i=(a_i,\mfl^{a_i},s_i,t_i)$ for $i\in[p(\bma)]$ .\\ 
For two lists of generalized multi-indices $\text{$\bma$},\text{$\bmb$}\in\widehat\mfM$ and an ordered subset $I=(i_1,\dots,i_{|I|})$ of $[p(\bma)]$, we write
\begin{equs}
    \bma_I\eqdef\big(\tilde a_{i_1},\dots,\tilde a_{i_{|I|}}\big)\,,
\end{equs}
and we define the concatenation of $\bma$ and $\bmb$ that we write $\bma\sqcup\bmb$ by
\begin{equs}    \bma\sqcup\bmb\eqdef(\tilde{a}_1,\dots,\tilde{a}_{p(\bma)},\tilde{b}_1,\dots,\tilde{b}_{p(\bmb)})\,.
\end{equs}
Fix $\bma\in\widehat\mfM$. Given for every for $i\in[p(\bma)]$ space-time points $(x_{a_i},y^{a_i})\in\Lambda^{[a_i]+1}$, we write that
\begin{equs}
    x_{\bma}\eqdef (x_{a_1},\dots,x_{a_{p(\bma)}})  \,,\;\text{and}\;y^\bma\eqdef(y^{a_1},\dots,y^{a_{p(\bma)}})\,,
\end{equs}
belong respectively to $\Lambda^{[p(\bma)]}$ and $\Lambda^{[\bma]}$. 
We use the shorthand notation $\Lambda^{[\bmp(\bma)]}\eqdef\Lambda^{[p(\bma)]}\times\Lambda^{[\bma]}$ to write $(x_\bma,y^\bma)\in\Lambda^{[\bmp(\bma)]}$ and let $\bmp(\bma)\eqdef p(\bma)+|[\bma]|$.
\end{definition}
\begin{definition}
    In the next definition, we will introduce the cumulants of the generalized force coefficients, along with a topology in which we will control them. When $n\geqslant2$ or $\alpha>1/2$ in $n=1$, the covariance of the noise is integrable, which makes the $L^1$ norms used in \cite{Duch21, Duch22} unsuitable. Indeed, being integrable, the covariance of the noise should be irrelevant  when controlled in the $L^1$ norm and we would need it to vanish as $\mu \downarrow 0$, but this is hopeless since the noise itself is constant along the flow. 

To deal with this issue, we adapt how we control covariances to make them either slightly relevant (or marginal if the noise is white). Fixing a small $\iota>0$, we define the integrability index
    \begin{equs}
        r\equiv r(\alpha,n,\iota)\eqdef\begin{cases}
            1&\text{if }\; n=1 \;\text{and }\;\alpha\leqslant1/2\,,\\
           \frac{n}{2-2\alpha} (1+\iota)&\text{otherwise.}
        \end{cases}
    \end{equs}  
The parameter $r$ will the integrability index in space, we recall the shorthand notation $L^{r,1}_x=L_\ttx^rL^1_{x_0}$ to denote the $L^r$ in space and $L^1$ in time norm. Explicitly, 
\begin{equs}
    \norm{\psi}_{L^{r,1}}\eqdef \bigg(\int_{\T^n}\Big(\int_{(-\infty,1]}|\psi(x)|\rmd x_0 \Big)^r\rmd\ttx\bigg)^{1/r}\,.
\end{equs}
\end{definition}
\begin{definition}
For $\eps,\mu\in(0,1]$, $\text{$\bma$}\in\widehat\mfM$ and $(x_\bma,y^\bma)\in\Lambda^{[\bmp(\bma)]}$ we define the joint cumulant of the generalized force coefficients indexed by $\bma$ by
\begin{equ}
    \Kappa_{\eps,\mu}^{\bma}( x_\bma,y^\bma)\eqdef\kappa_{p(\bma)}\big((\xi_{\eps,\mu}^{\tilde{a}_i}(x_{a_i},y^{a_i}))_{i\in[p(\bma)]}\big)\,.
\end{equ}
Note that $  \Kappa_{\eps,\mu}^{\bma}$ takes values in
\begin{equs}
    \mcH^{\bma}\eqdef\bigotimes_{i\in[p(\bma)]}\mcH^{a_i}\,.
\end{equs} 
Moreover, writing for $N\geqslant1$
\begin{equs}
 K_{N,\mu}^{\otimes \bmp(\bma)}    &\Kappa_{\eps,\mu}^{\bma}( x_\bma,y^\bma)\eqdef (K_{N,\mu}\otimes\dots\otimes K_{N,\mu})*\Kappa_{\eps,\mu}^{\bma}( x_\bma,y^\bma)
\end{equs}
for the convolution of $\Kappa_{\eps,\mu}^{\bma}$ with $K_{N,\mu}$ at the level of all its arguments, we endow the cumulants of the stationary force coefficients with the norm
\begin{equs}    
{}&\nnorm{\Kappa_{\eps,\mu}^{\bma}}_N \eqdef\norm{K_{N,\mu}^{\otimes \bmp(\bma)}\Kappa_{\eps,\mu}^{\bma}}_{L^\infty_{x_{a_1}}L^r_{{\ttx}_{a_2}}\dots L^r_{{\ttx}_{a_{p(\bma)}}}L^1_{x_{a_2,0}}\dots L^1_{x_{a_{p(\bma)},0}}
L^1_{y^{\bma}}(\Lambda^{[\bmp(\bma)]})}\\
&\equiv \sup_{x_{a_1}\in\Lambda}\bigg(
\int_{(\T^n)^{p(\text{\tiny{$\bma$}})-1}}
\Big(\int_{\Lambda^{[\text{\tiny{$\bma$}}]}\times(-\infty,1]^{p(\bma)-1}}
| K_{N,\mu}^{\otimes \text{\scriptsize{$\bmp$}}(\text{\scriptsize{$\bma$}})}    \Kappa_{\eps,\mu}^{\text{\scriptsize{$\bma$}}}( x_{\text{\scriptsize{$\bma$}}},y^{\text{\scriptsize{$\bma$}}})|\text{$\rmd$} y^{\text{\scriptsize{$\bma$}}}\rmd x_{a_2,0}\dots\rmd x_{a_{p(\bma)},0}\Big)^r
\rmd {\ttx}_{a_2}\dots\rmd {\ttx}_{a_{p(\bma)}}\bigg)^{1/r}
\,.
\end{equs}
Here, $x_{a_i,0}$ denote the time component of $x_{a_i}$, so that $x_{a_i} = (x_{a_i,0},\ttx_{a_{i}})$. \\
Finally, the \textit{scaling} of $\Kappa_{\eps,\mu}^{\bma}$ is defined by
\begin{equs}    |\bma|\eqdef\sum_{i=1}^{p(\bma)}|a_i|+\sum_{i=1}^{p(\bma)}|\mfl^{a_i}|+\big(p(\bma)-1\big)\Big(2+\frac{n}{r}\Big)\,,\label{eq:defScalingCum}
\end{equs}
the \textit{order} of $\Kappa_{\eps,\mu}^{\bma}$ by $\mfo(\bma)\eqdef \sum_{i\in[p(\bma)]}\mfo(a_i) $, and we set $s(\bma)\eqdef\sum_{i\in[p(\bma)]}s_i$ and $t(\bma)\eqdef\sum_{i\in[p(\bma)]}t_i$. 
\end{definition}
\begin{remark}\label{rem:Covariance}
With respect to the scaling \eqref{eq:defScalingCum}, the only relevant cumulants are the expectations of the relevant generalized force coefficients, and the covariance of the noise (which is marginal only if $(n,\alpha)=(1,1/2)$, i.e. is the case of the space-time white noise).
    
If we took $n=1$ and $\alpha \leqslant 1/4$, then new divergent covariances of objects of higher order would appear for which the renormalization prescriptions here would be insufficient.  
\end{remark}
In order to control the cumulants of the stationary force coefficients, we need to establish three flow equations for cumulants, starting from \eqref{eq:flow2}. To do so, the following piece of notation is required.
\begin{definition}
Fix $\text{$\bma$}\in\widehat\mfM$ and $i\in[p(\bma)]$ (if $p(\bma)=1$ then automatically $i=1$). We define a new generalized multi-index $\hat a_i\in\widetilde{\mfM}$ by $$\hat a_i\eqdef(a_i,\mfl^{a_i},s_i,1\vee t_i)\in\widetilde{\mfM}_1\,.$$ Pick $(\sigma,\tilde b_1,\tilde b_2,\tilde\bfd)\in\mathrm{Ind}(\hat a_i)$, and two partitions $(\pi,\rho)\in\mcQ\big([p(\bma)]\setminus\{i\},[2]\big)$ with $|\pi|=|\rho|$. We write for $i\in[2]$ $\tilde b_i=(b_i,\mfl^{b_i},s'_i,0)$, we denote $\text{$\bmb$}\eqdef(\tilde b_1,\tilde b_2)\in\widehat\mfM$, and we introduce the shorthand notation $(\sigma,\text{$\bmb$},\tilde\bfd)\in\mathrm{Ind}(\hat a_i)$ to denote the fact that $\bmb=(\tilde b_1,\tilde b_2)$ and $(\sigma,\tilde b_1,\tilde b_2,\tilde\bfd)\in\mathrm{Ind}(\hat a_i)$.

For $k\in|\rho|$, we introduce a new list of enhanced multi-indices defined by
\begin{equs}
    \bmc_k\eqdef {\bma}_{\pi_k}\sqcup{\bmb}_{\rho_k}\,,
\end{equs}
with the understanding that $\bmc_k=\bmb_{\rho_k}$ if $p(\bma)=1$. When $|\rho|=1$, by convention, we write $$\bmc\equiv\bmc_1=\bma_{[p(\bma)]\setminus\{i\}}\sqcup \bmb\,.$$

Recall the notations of Definition~\ref{def:B}, and in particular the points $y^b$, $y^c$, $w$ and $z$ introduced therein. With this notation in hand, we write
\begin{equs}
    (x_{b_1},y^{b_1})\eqdef(x_{a_i},y^b)\in\Lambda^{[\bmp(\bmb_1)]}\,,\;\text{and}\;  (x_{b_2},y^{b_2})\eqdef(w,y^c)\in\Lambda^{[\bmp(\bmb_2)]}\,.
\end{equs}
We can finally introduce the operator entering the RHS of the flow equation:
\begin{equs}    \label{eq:defA}\text{A}_\mu&\Big(\big(\Kappa_{\eps,\mu}^{\bmc_k}\big)_{k\in|\rho|}\Big)(x_{\bma},y^\bma)\\&\eqdef\frac{b^{\text{\scriptsize{$\mfk$}}_0}_{1,k_0}}{a_i!}
    (1+\text{$\1$}\{(\text{{$\mfk$}}_0,\text{{$\mfk$}}_1)=(\text{{$\mfe$}},\text{{$\mff$}})\})\frac{s_i!}{s'_1!s'_2!}\,\int_\Lambda\big(\text{$\bfX$}^{\text{\scriptsize{$\mfl$}}^{\text{\tiny{$\bfd$}}}}\partial_{\ttx}^{k_0+1-k_1}\dot G_\mu\big)( z-w)
    \prod_{k\in|\rho|}\Kappa_{\eps,\mu}^{\bmc_k}(x_{\bmc_k},y^{\bmc_k})\text{$\rmd$} w\,.
\end{equs}
\end{definition}
We read from its definition that the operator $\A$ verifies the following.
\begin{lemma}\label{lem:boundA}
Fix a partition $\rho\in\mcP([2])$, and introduce the shorthand notation $${\tt I}(\rho)\eqdef \begin{cases}  
    (r,1) & \text{if} \;|\rho|= 1\,, \\
       \infty & \text{if} \;|\rho|= 2\,. \\
\end{cases} $$
Moreover, fix $N\geqslant1$, $\text{$\bma$}\in\widehat\mfM$, $i\in[p(\bma)]$, and $(\sigma,\text{$\bmb$},\tilde\bfd)\in\mathrm{Ind}(\hat a_i)$. Then, for any collection $(\psi_k)_{k\in|\rho|}$ of functions such that, for every $k\in|\rho|$, $\psi_k\in\mcD(\Lambda^{\bmp[\bmc_k]})$, it holds 
    \begin{equs}        \nnorm{\A_\mu\big((\psi_k)_{k\in|\rho|}\big)}_N\lesssim\norm{\mcP^{2N}_\mu\text{$\bfX$}^{\text{\scriptsize{$\mfl$}}^{\text{\tiny{$\bfd$}}}}\partial^{k_0+1-k_1}_{\ttx}\dot G_\mu}_{\mcL^{{\tt I}(\rho),\infty}}\prod_{k\in|\rho|}\nnorm{\psi_k}_N
\end{equs}
uniformly in $\mu\in(0,1]$.
\end{lemma}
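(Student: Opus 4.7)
The plan is to push the smoothing kernels $K_{N,\mu}^{\otimes\bmp(\bma)}$ through the integral defining $\A_\mu$ so that each $\psi_k$ becomes $K_{N,\mu}^{\otimes\bmp(\bmc_k)}\psi_k$ and the $w$-integration is effected by the regularized kernel $\mcP_\mu^N\partial_\ttx^{k_0+1-k_1}\dot G_\mu$. First I note that the combinatorial prefactor in \eqref{eq:defA} is uniformly bounded, since all enhanced multi-indices appearing in $\mathrm{Ind}(\hat a_i)$ have bounded order, and that the polynomial $\bfX^{\mfl^\bfd}(z-w)$ is likewise bounded: the support property \eqref{eq:suppGmu} of $\dot G_\mu$ confines $z-w$ to the compact set $[-2\mu^2,2\mu^2]\times\T^n$ while $|\mfl^\bfd|$ stays bounded. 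Both constants can be absorbed into $\lesssim$.

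Most factors of $K_{N,\mu}$ will pose no difficulty: those acting on $x_{a_j}$ for $j\neq i$, on $x_{a_i}=x_{b_1}$, and on $y^{a_j}$ for $j\neq i$ commute with the $\rmd w$-integration and distribute directly onto the appropriate $\psi_k$-factor, while the $K_{N,\mu}$-factors on $y^{a_i}$ redistribute under the bijection $\sigma$ and become part of the $K_{N,\mu}^{\otimes\bmp(\bmc_k)}$-smoothings. The subtle arguments are $z\in y^{a_i}$ and, when $|\rho|=2$, the integrated variable $w=x_{b_2}$, which appear simultaneously in the derivative-kernel and in a $\psi_k$-factor; at these arguments I would use the fundamental-solution identity $K_{N,\mu}\mcP_\mu^N=\mathrm{Id}$ to transfer the extra copies of $K_{N,\mu}$ onto the kernel, converting $\partial_\ttx^{k_0+1-k_1}\dot G_\mu$ into $\mcP_\mu^N\partial_\ttx^{k_0+1-k_1}\dot G_\mu$ while retaining the full smoothings $K_{N,\mu}^{\otimes\bmp(\bmc_k)}\psi_k$ on the $\psi_k$-factors.

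Finally, taking absolute values and applying a Young-type estimate in the $w$-direction closes the bound. The $w$-integration against the regularized kernel is controlled by the operator norm $\norm{\mcP_\mu^N\partial_\ttx^{k_0+1-k_1}\dot G_\mu}_{\mcL^{{\tt I}(\rho),\infty}}$, with ${\tt I}(\rho)$ chosen to match the ambient integrability seen by $w$ inside the factor containing it: $L^\infty$ when $|\rho|=2$ (giving $\mcL^{\infty,\infty}$) and $L^r_\ttx L^1_{x_0}$ when $|\rho|=1$ (giving $\mcL^{(r,1),\infty}$). The outer $L^\infty_{x_{a_1}}$ of $\nnorm{\bigcdot}_N$ passes to the factor $\bmc_1$ that contains $x_{a_1}$ (which is the first argument of $\bmc_1$ by the ordering convention on $\pi$, with a small case-split when $i=1$ where $x_{a_1}=x_{b_1}$), and the remaining $L^r_\ttx$, $L^1_{x_0}$, and $L^1_y$ norms factor across $\prod_k\psi_k$ to yield $\prod_k\nnorm{\psi_k}_N$. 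The main technical obstacle will be this bookkeeping, particularly verifying that $w$ always sits in a slot of the relevant $\bmc_k$ compatible with the claimed operator norm, which requires careful attention to the ordering convention on $(\pi,\rho)$ and its interaction with the coordinate structure of the $\bmc_k$.
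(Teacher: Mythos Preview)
Your approach matches the paper's: push the $K_{N,\mu}$ factors onto the $\psi_k$, insert $K_{N,\mu}\mcP_\mu^N=\mathrm{Id}$ at the $w$-slot to put $\mcP_\mu^N$ on the kernel, and then use H\"older in $w$ (the paper does this explicitly, leaning on translation invariance of $\dot G_\mu$ so the kernel norm is independent of $z$). One small point of difference: the paper keeps $\bfX^{\mfl^\bfd}$ inside the kernel $G$ throughout and only bounds $\norm{\mcP_\mu^N G}$ at the very end, whereas absorbing it pointwise first, as you do, forces absolute values before the algebraic step and obstructs the clean insertion of $\mcP_\mu^N$.
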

\begin{proof}
Recall the notations $(z,y^b,w,y^c)$ in the definition of $\A$. The difference between the cases $|\rho|=1$ and $|\rho|=2$ is that in the latter case, since there are two terms in the product over $k$, we can indeed take the $L^\infty$ norm in $w$ while, in the former case, we do need the $L^{r,1}$ norm in $w$, which forces us to take the $\mcL^{(r,1),\infty}$ norm of $\dot G_\nu$. 

The situation is slightly different from \cite{Duch21,Duch22}, since the fact that the $L^\infty$, $L^r$ and $L^1$ norms constituting $\nnorm{\bigcdot}_N$ do not commute might at first seem worrying. However, this is handled by the fact that we have taken care to apply first all the $L^1$ norms, then all the $L^r$ norms, and finally the $L^\infty$ norm. We provide more details in the case $|\rho|=1$, the case $|\rho|=2$ being similar, but closer to the context of \cite{Duch21,Duch22}. 

To lighten the notation, we write $\psi=\psi_1$, $G=\text{$\bfX$}^{\text{\scriptsize{$\mfl$}}^{\text{\tiny{$\bfd$}}}}\partial_{\ttx}^{k_0+1-k_1}\dot G_\mu$, and denote by $\tilde y^\bma$ the collection of all $y^\bma_{\mfk ij}$ apart from $z$, and by $\tilde x_\bma$ the collection of all $x_{a_i}$ apart from $x_{a_1}$. For simplicity, we assume $i\ne1$. With this shorthand notation, $\A_\mu(\psi)(x_\bma,y^\bma)$ rewrites as 
\begin{equs}
    \A_\mu(\psi)(x_\bma,y^\bma)=C \int_\Lambda G(z-w)\psi\big((x_\bma,w),(\tilde y^\bma,z)\big)\rmd w
\end{equs}
for an inessential constant $C\in\R$. As in the proof of \eqref{eq:propB}, we can insert by hand the kernel $K_{N,\mu}$ is the variable $z$ in front of $\psi$. With the same argument as in the proof of \eqref{eq:propB}, we have the existence of some kernels $\big(A_\mu^{(\mfl)}:\mfl\in \N^{n+1},|\mfl|\leqslant 4N\big)$ belonging to $L^1$ uniformly in $\mu$ such that 
    \begin{equs}        {}& K_{N,\mu}^{\otimes\bmp(\bma)}  \A_\mu(\psi)(x_\bma,y^\bma)\\
&=C\int_\Lambda  \sum_{\text{\scriptsize{$\mfl$}}\in \N^{n+1},|\text{\scriptsize{$\mfl$}}|\leqslant 4N} A_\mu^{(\text{\scriptsize{$\mfl$}})}(z-v)\int_\Lambda
\mu^{|\text{\scriptsize{$\mfl$}}|}\partial^{\text{\scriptsize{$\mfl$}}}\mcP^N_\mu G( v-w)
K_{N,\mu}^{\otimes\bmp(\bmc)}\psi\big((x_\bma,w),(\tilde y^\bma,v)\big)\rmd w  \rmd v\,.
    \end{equs}
Write $\tilde\psi\eqdef K_{N,\mu}^{\otimes\bmp(\bmc)}\psi$ and $\tilde G=\mu^{|\text{\scriptsize{$\mfl$}}|}\partial^{\text{\scriptsize{$\mfl$}}}\mcP^N_\mu G$. Using H\"older's inequality to remove the kernels $A_\mu^{(\mfl)}$, and then using H\"older's inequality firstly in time and then in space, we have
\begin{equs}
   \textcolor{white}{a} &\nnorm{  \A_\mu(\psi)}_N  
    \\&=\norm{  K_{N,\mu}^{\otimes\bmp(\bma)}  \A_\mu(\psi)}_{L^\infty_{x_{a_1}}L^r_{\tilde \ttx_\bma}
    L^1_{\tilde x_{\bma,0}}L^1_{y^\bma}}\\
&\lesssim\sup_{x_1\in\Lambda}\bigg(\int_{(\T^n)^{p(\bma)-1}}\Big(\int_{\R_{\leqslant1}^{p(\bma)-1}}\int_{\Lambda^{[\bma]}}\Big|\int_\Lambda\tilde G(z-w)\tilde\psi\big((x_\bma,w),(\tilde y^\bma,z)\big)\rmd w\Big|\rmd y^\bma\rmd \tilde x_{\bma,0}\Big)^r\rmd\tilde\ttx_\bma\bigg)^{1/r}\\
&\lesssim
\sup_{x_1\in\Lambda}\bigg(\int_{(\T^n)^{p(\bma)-1}}\Big(\int_{\T^n}\int_{\R_{\leqslant1}}\int_{\R_{\leqslant1}^{p(\bma)-1}}\int_{\Lambda^{[\bma]}}|\tilde G(z-w)\tilde\psi\big((x_\bma,w),(\tilde y^\bma,z)\big)|\rmd y^\bma\rmd \tilde x_{\bma,0}\rmd w_0\rmd{\tt w}\Big)^r\rmd\tilde\ttx_\bma\bigg)^{1/r}\\
&\lesssim
\sup_{x_1\in\Lambda}\bigg(\int_{(\T^n)^{p(\bma)-1}}\Big(\int_{\T^n}\int_{\R_{\leqslant1}}\int_{\R_{\leqslant1}^{p(\bma)-1}}\int_{\Lambda^{[\bma]}}\sup_{u_0\in\R_{\leqslant1}}|\tilde G({\tt z-w},u_0)||\tilde\psi\big((x_\bma,w),(\tilde y^\bma,z)\big)|\rmd y^\bma\rmd \tilde x_{\bma,0}\rmd w_0\rmd{\tt w}\Big)^r\rmd\tilde\ttx_\bma\bigg)^{1/r}\\
&\lesssim\norm{\tilde G}_{\mcL^{(r,1),\infty}}
\sup_{x_1\in\Lambda}\bigg(\int_{(\T^n)^{p(\bma)-1}}\int_{\T^n}\Big(\int_{\R_{\leqslant1}}\int_{\R_{\leqslant1}^{p(\bma)-1}}\int_{\Lambda^{[\bma]}}|\tilde\psi\big((x_\bma,w),(\tilde y^\bma,z)\big)|\rmd y^\bma\rmd \tilde x_{\bma,0}\rmd w_0\Big)^r\rmd{\tt w}\rmd\tilde\ttx_\bma\bigg)^{1/r}\\
&\lesssim\norm{\mcP^{2N}_\mu G}_{\mcL^{(r,1),\infty}}\nnorm{\psi}_N
\,,
\end{equs}
which is the desired result. Note that we have made use of the translation invariance of $\tilde G$, which implies that for all $z\in\Lambda$
\begin{equs}
\int_{\T^n} \Big(   \sup_{w_0\in(-\infty,1]}|\tilde G({\tt z-w},w_0-z_0)|\Big)^{r'}\rmd{\tt w}=\int_{\T^n} \Big(   \sup_{u_0\in(-\infty,1]}|\tilde G({\tt u},u_0)|\Big)^{r'}\rmd{\tt u}=\norm{\tilde G}_{\mcL^{(r,1),\infty}}^{r'}\,,
\end{equs}
where $r'=r/(r-1)$ stands for the H\"older conjugate to $r$.
\end{proof}
We are now ready to present the flow equations of the cumulants.
\begin{lemma}
   First, we consider $\text{$\bma$}\in\widehat\mfM$ such that $t(\bma)\geqslant1$, $\mfo(\bma)\geqslant1$, and let $i\eqdef\min\{j\in[p(\bma)]:t_j=1\}$. We define
\begin{equs}
    {\mathrm{Ind}}(\text{${\mathBM a}$})\eqdef\Big\{&(\sigma,\text{${\mathBM b}$},\text{$\tilde{\bfd}$},\pi,\rho)\in\text{$\mathrm{Ind}$}(\hat{a}_i)\times\mcQ\big([p(\text{${\mathBM a}$})]\setminus\{i\}, [2]\big)\Big\}\,,
\end{equs}
the index set for the flow of the cumulant $\Kappa^{\bma}_{\eps,\mu}$. Then, we have the following flow equation for the cumulants containing at least one $\mu$ derivative:
\begin{equs}\label{eq:flowcumuldmu}
\Kappa_{\eps,\mu}^{\bma}=&-  \sum_{(\sigma,\text{\scriptsize{$\bmb,\tilde\bfd$}},\pi,\rho)\in\text{\scriptsize{$\mathrm{Ind}$}}(\text{\scriptsize{$\bma$}})     }  \text{$\mathrm A$}_\mu\Big(\big(\Kappa_{\eps,\mu}^{\bmc_k}\big)_{k\in|\rho|}\Big)
       \,.
\end{equs}
Next, we turn to $\text{$\bma$}\in\widehat\mfM$ such that $t(\bma)=0$, $\mfo(\bma)\geqslant1$, and $|\bma|>0$. We define
\begin{equs}
     {\mathrm{Ind}}(\text{${\mathBM a}$})\eqdef\Big\{&(i,\sigma,\text{${\mathBM b}$},\text{$\tilde{\bfd}$},\pi,\rho)\in[p(\text{${\mathBM a}$})]\times \text{$\mathrm{Ind}$}(\hat{a}_i)  \times\mcQ\big([p(\text{${\mathBM a}$})]\setminus\{i\}, [2]\big)\Big\}\,,
\end{equs}
the index set for the flow of the cumulant $\Kappa^{\bma}_{\eps,\mu}$. Then, we have the following flow equation for the irrelevant cumulants
\begin{equs}\label{eq:floweqcumulIrrel}
\Kappa_{\eps,\mu}^{\bma}=&-  \int_0^\mu\sum_{(i,\sigma,\text{\scriptsize{$\bmb,\tilde\bfd$}},\pi,\rho)\in\text{\scriptsize{$\mathrm{Ind}$}}(\text{\scriptsize{$\bma$}})     } \text{$\mathrm A$}_\nu\Big(\big(\Kappa_{\eps,\nu}^{\text{\scriptsize{$\bmc_k$}}}\big)_{k\in|\rho|}\Big)\rmd\nu
       \,.
\end{equs}
Finally, we deal with $\text{$\bma$}\in\widehat\mfM$ such that $t(\bma)=0$, $\mfo(\bma)\geqslant1$, and $|\bma|\leqslant0$ (in particular we have $p(\bma)=1$, and we write $\bma=(\tilde a)$ where $\tilde a= (a,\mfl^a,s,0)$). We define
 \begin{equs}
     {\mathrm{Ind}}(\text{${\mathBM a}$})\eqdef\Big\{&(\sigma,\text{${\mathBM b}$},\text{${\tilde\bfd}$},\rho)\in \text{$\mathrm{Ind}$}(\hat{a})\times\mcP( [2]) \Big\}\,,
\end{equs}
the index set for the flow of $\E[\bfI^a\big(\xi^{\text{\scriptsize{$\tilde a$}}}_{\eps,\mu}\big)]=\E[\bfI^a\big(\bfX^{\mfl^a}\partial_\eps^s\xi^a_{\eps,\mu}\big)]$. Then, we have the following flow equation for the local part of the relevant expectations
\begin{equs}
\E[\text{$\bfI$}^a\big(\xi^{\text{\scriptsize{$\tilde a$}}}_{\eps,\mu}\big)]&=\partial_\eps^s 
  {\text{$\mfc$}}^{a,{\text{\scriptsize{$\mfl$}}}^a}_\eps-\int_0^\mu\sum_{(\sigma,\text{\scriptsize{$\bmb,\tilde\bfd,\rho$}})\in\text{\scriptsize{$\mathrm{Ind}$}}(\text{\scriptsize{$\bma$}})
     }     \text{$\bfI$}^a    \text{$\mathrm A$}_\nu\Big(\big(\Kappa_{\eps,\nu}^{\text{\scriptsize{$\bmc_k$}}}\big)_{k\in|\rho|}\Big)\rmd\nu  \\
       &= \int^1_\mu \sum_{(\sigma,\text{\scriptsize{$\bmb,\tilde\bfd,\rho$}})\in\text{\scriptsize{$\mathrm{Ind}$}}(\text{\scriptsize{$\bma$}})
     }               
    \text{$\bfI$}^a \text{$\mathrm A$}_\nu\Big(\big(\Kappa_{\eps,\nu}^{\text{\scriptsize{$\bmc_k$}}}\big)_{k\in|\rho|}\Big)\rmd\nu\bigg)\,,\label{eq:flowcumulRel}
\end{equs}
where the counterterm $\mfc^{a,\mfl^a}_\eps$ is chosen such that $\xi^{(a,\mfl^a,0,0)}_{\eps,0}=\delta^a\mfc^{a,\mfl^a}_\eps$, and is defined as
\begin{equs}
    {\text{$\mathfrak c$}}^{a,{\text{\scriptsize{$\mfl$}}}^a}_\eps\equiv  {\text{$\mathfrak c$}}^{a,{\text{\scriptsize{$\mfl$}}}^a}_\eps(x)\eqdef
\sum_{(\sigma,\text{\scriptsize{$\bmb,\tilde\bfd,\rho$}})\in\text{\scriptsize{$\mathrm{Ind}$}}(\text{\scriptsize{$(a,\mfl^a,0,1)$}})
     }    \int_0^1\text{$ \bfI$}^a      \text{$\mathrm A$}_\nu\Big(\big(\Kappa_{\eps,\nu}^{\text{\scriptsize{$\bmc_k$}}}\big)_{k\in|\rho|}\Big)(x)\rmd\nu\,.
\end{equs}
Note that $\mfc^{a,\mfl^a}_\eps$ is local and that, by stationarity, it is independent of $x$.
\end{lemma}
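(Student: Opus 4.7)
The plan is to derive all three flow equations from the underlying identity \eqref{eq:flowGen} for the generalized force coefficients combined with the joint-cumulant identity \eqref{eq:relcum}, then integrate in $\mu$ with boundary conditions dictated by whether the cumulant contains a $\mu$-derivative (no integration needed), is irrelevant (zero initial value at $\mu=0$), or is relevant (renormalization condition imposed at $\mu=1$).

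For the case $t(\bma)\geqslant 1$, pick $i\eqdef\min\{j:t_j=1\}$. Since $t_i=1$, we have $\xi^{\tilde a_i}_{\eps,\mu}=\bfX^{\mfl^{a_i}}\partial_\eps^{s_i}\partial_\mu\xi^{a_i}_{\eps,\mu}$, and the flow equation \eqref{eq:flowGen} rewrites this as $-\sum_{(\sigma,\bmb,\tilde\bfd)\in\mathrm{Ind}(\hat a_i)}\tilde\B_\mu(\xi^{\tilde b_1}_{\eps,\mu},\xi^{\tilde b_2}_{\eps,\mu})$. Substituting into $\Kappa^{\bma}_{\eps,\mu}$ and using multilinearity of the joint cumulant produces cumulants whose $i$-th slot contains the product $\xi^{\tilde b_1}_{\eps,\mu}\xi^{\tilde b_2}_{\eps,\mu}$; applying \eqref{eq:relcum} to that slot reindexes the remaining $p(\bma)-1$ entries via a compatible partition $(\pi,\rho)\in\mcP^c([p(\bma)]\setminus\{i\},[2])$ and produces a product $\prod_{k\in|\rho|}\Kappa^{\bmc_k}_{\eps,\mu}$ with $\bmc_k=\bma_{\pi_k}\sqcup\bmb_{\rho_k}$. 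This matches exactly the definition of $\A_\mu$ in \eqref{eq:defA}, yielding \eqref{eq:flowcumuldmu}.

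For the irrelevant case $t(\bma)=0$, $\mfo(\bma)\geqslant 1$, $|\bma|>0$, I differentiate $\Kappa^{\bma}_{\eps,\mu}$ in $\mu$ and distribute over the $p(\bma)$ entries by multilinearity, each resulting term being the cumulant with $\hat a_i$ in position $i$ for some $i\in[p(\bma)]$, which by the previous case equals the bilinear expression. Integrating from $0$ to $\mu$, the boundary term at $\mu=0$ vanishes: since $\mfo(\bma)\geqslant 1$ some $a_i$ has $\mfo(a_i)\geqslant 1$, while $S_{\eps,0}=S_\eps$ populates only the four order-zero multi-indices $\1^\mfb_0,\1^\mfc_0,\1^\mfe_0,\1^\mfh_0$ (together with the local counterterm, also of order zero), so $\xi^{a_i}_{\eps,0}=0$ identically and the joint cumulant vanishes. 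This gives \eqref{eq:floweqcumulIrrel}. For the relevant case $t(\bma)=0$, $p(\bma)=1$, $|\bma|\leqslant 0$, the same differentiation and forward integration give the first expression in \eqref{eq:flowcumulRel}, with the non-vanishing initial datum $\E[\bfI^a(\xi^{\tilde a}_{\eps,0})]=\partial_\eps^s\mfc^{a,\mfl^a}_\eps$ supplied by the local counterterm in $S_\eps$. Defining $\mfc^{a,\mfl^a}_\eps$ precisely as $\int_0^1\sum\bfI^a\A_\nu(\ldots)\rmd\nu$ at $s=0$ imposes the renormalization condition $\E[\bfI^a(\xi^{(a,\mfl^a,0,0)}_{\eps,1})]=0$, and subtracting the forward integral from the full $[0,1]$ integral produces the equivalent backward expression $\int_\mu^1\sum\bfI^a\A_\nu\rmd\nu$. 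Stationarity of $\xi_\eps$ in space-time makes $\mfc^{a,\mfl^a}_\eps$ manifestly independent of $x$.

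The main obstacle is the combinatorial bookkeeping in the first case: one must verify that the multinomial Taylor split $(\mfl^{b_1},\mfl^{b_2},\mfl^\bfd)$ of $\mfl^{a_i}$ around the points $x_{a_i},z,w$, the Leibniz split $s_i=s'_1+s'_2$ of $\partial_\eps^{s_i}$, the symmetrisation factors from $\sigma\in\mfS^{a_i}$, and the factorial weights in $\tilde\B_\mu$ combine with the compatible-partition sum from \eqref{eq:relcum} to reproduce \emph{exactly} the operator $\A_\mu$ in \eqref{eq:defA} (including the prefactor $b^{\mfk_0}_{1,k_0}/a_i!$ and the multinomial $s!/(s'_1!s'_2!)$). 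Once this correspondence between the multiset-based indexing of $\mathrm{Ind}(\hat a_i)$ and the concatenation $\bma_{\pi_k}\sqcup\bmb_{\rho_k}$ is checked, the remaining two cases are routine applications of the fundamental theorem of calculus with the appropriate choice of boundary.
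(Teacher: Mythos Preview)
Your approach matches the paper's one-line proof (combine \eqref{eq:flowGen} with \eqref{eq:relcum}), and your elaboration of how the cumulant identity distributes the bilinear term over compatible partitions is correct. There is, however, a slip in your justification of the vanishing boundary term for \eqref{eq:floweqcumulIrrel}.

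You write that the counterterm is ``also of order zero'' and conclude $\xi^{a_i}_{\eps,0}=0$ identically for $\mfo(a_i)\geqslant 1$. This is false: the counterterms $\mfc^{a,\mfl^a}_\eps$ populate precisely the \emph{relevant} multi-indices of order $\geqslant 1$ (see \eqref{eq:counterterm}), so $\xi^{a_i}_{\eps,0}$ is in general a nonzero, deterministic distribution supported on the diagonal. The boundary term $\Kappa^{\bma}_{\eps,0}$ still vanishes, but for different reasons. If $p(\bma)\geqslant 2$, one slot of the joint cumulant is deterministic, so $\kappa_{p(\bma)}(\ldots)=0$. If $p(\bma)=1$, then $|\bma|=|a_1|+|\mfl^{a_1}|>0$, while the counterterm for $a_1$ is a combination of $\partial^{\mfm}_{y^{a_1}}\delta^{a_1}$ with $|a_1|+|\mfm|\leqslant 0$; since $|\mfl^{a_1}|>|\mfm|$, componentwise domination $\mfl^{a_1}\leqslant\mfm$ fails and $\bfX^{\mfl^{a_1}}\partial^{\mfm}\delta^{a_1}=0$ as a distribution. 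Either way the initial datum vanishes and the forward integral is justified.
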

\begin{proof}
    These flow equations directly stem from the combination of \eqref{eq:flowGen} with \eqref{eq:relcum}.
\end{proof}
\begin{remark}
   The flow equations of the cumulants are hierarchical in the order of the cumulants, in the sense that we have $\mfo(\bmc_k)\leqslant\mfo(\bma)-1$ for every $k\in|\rho|$.
\end{remark}
\begin{remark}
We have the following expression of the counterterm for $x\in\Lambda_{0;1}$ as a local functional: 
    \begin{equs}\label{eq:counterterm}    \mfc_\eps[\psi_\eps](x)=\mfc_\eps(\psi_\eps(x),\partial_{\ttx}\psi_\eps(x))=\sum_{\substack{(a,\mfl^a)\in\mfM\\|a|+|\mfl^a|\leqslant0} }
    \langle \mfc^{a,\mfl^a}_\eps ,\partial_{\ttx}^{\mfl^a}\Upsilon^a[\psi](x^{[a]})\rangle_{\mcH^a}\,,
\end{equs}
where we denote by $x^{[a]}$ the element $(x,\dots,x)\in\Lambda_{0;1}^{[a]}$ with all entries equal to $x$.
\end{remark}
We are now ready to make use of the flow equations for cumulants to construct all the cumulants inductively. We start by the following lemma that deals with the base case.
\begin{lemma}\label{lem:basecaseCum}
Recall that the noise is contained in the effective force coefficient $\xi_\epmu^{\1_0^\mfh}(x,y)=\xi_\eps(y)\delta(x-y)$. In particular, $\xi_\epmu^{\1_0^\mfh}$ is constant along the flow, and its only non-vanishing cumulant is its covariance $\Kappa_\epmu^\bma$ for $\text{$\bma$}=\big((\1_0^\mfh,0,s_1,0),(\1_0^\mfh,0,s_2,0)\big)$, for which there exists $N_2\geqslant1$ and $c^\star>0$ such that for all $c\in(0,c^\star]$ it verifies
   \begin{equs}   \nnorm{\Kappa_{\eps,\mu}^{\bma}}_{N_2}\lesssim \eps^{s(\bma)(-1+c)}\mu^{|\bma|-s(\bma)c-t(\bma)}
    \end{equs}
uniformly in $(\eps,\mu)\in(0,1]$.
\end{lemma}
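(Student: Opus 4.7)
My approach is to first argue that $\xi_\epmu^{\1_0^\mfh}$ is constant in $\mu$, then reduce the cumulant norm to an explicit scalar convolution estimate, and finally match the target scaling by standard mollification analysis, with the $\eps$-derivatives requiring one careful integration by parts.

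For stationarity in $\mu$: the flow equation~\eqref{eq:flow2} has an empty right-hand side when $a=\1_0^\mfh$, since any decomposition $\1_0^\mfh=b+c+d(\bfd)$ with $b,c\in\mcM$ populated would force $\mfs(b)+\mfs(c)=\mfs(\1_0^\mfh)=1$, contradicting $\mfs(b),\mfs(c)\geq 1$. Hence $\xi_\epmu^{\1_0^\mfh}=\xi_{\eps,0}^{\1_0^\mfh}(x,y)=\xi_\eps(x)\delta(x-y)$ for all $\mu$, and Gaussianity of $\xi_\eps$ kills every cumulant but the covariance. Writing $s=s(\bma)=s_1+s_2$, the one non-vanishing cumulant is
\begin{equs}
\Kappa_\epmu^\bma(x_1,y_1,x_2,y_2)=\partial_\eps^{s}\Cov_\eps(x_1-x_2)\,\delta(x_1-y_1)\delta(x_2-y_2),
\end{equs}
with $\Cov_\eps=\rho_\eps*\rho_\eps*\Cov$. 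Applying $K_{N,\mu}^{\otimes 4}$ collapses both deltas and, since $K_{N,\mu}=Q_\mu^{*N}$ is a positive probability kernel, the $L^1$-norms in $y_1,y_2$ each yield a factor $1$. Spatial translation invariance of $\Cov_\eps$ then gives $\nnorm{\Kappa^\bma_\epmu}_N\simeq\|K_{N,\mu}*K_{N,\mu}*\partial_\eps^{s}\Cov_\eps\|_{L^{r,1}}$.

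For the scaling at $s=0$ I plan to exploit the tensor-product structure $K_{N,\mu}(x_0,\ttx)=T_{N,\mu}(x_0)S_{N,\mu}(\ttx)$ with $\|T_{N,\mu}\|_{L^1}=1$ to absorb the $L^1$-in-time part for free, reducing matters to a purely spatial estimate on $\|S_{N,\mu}*S_{N,\mu}*G_\alpha^\eps\|_{L^r(\T^n)}$, where $G_\alpha^\eps$ is the spatial mollification at scale $\eps$ of the Bessel kernel $G_\alpha(\ttx)=(1-\Delta_\ttx)^{1-n/2-\alpha}(\ttx)\lesssim|\ttx|^{-(2-2\alpha)}$. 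After the additional mollification at scale $\mu$ this is $\lesssim(|\ttx|\vee(\eps\vee\mu))^{-(2-2\alpha)}$, whose $L^r$-norm (via elementary splitting, using that $r$ is chosen so that $r(2-2\alpha)\geq n$) is $\lesssim(\eps\vee\mu)^{n/r-(2-2\alpha)}=(\eps\vee\mu)^{|\bma|}$, matching the prescribed scaling since $|\1_0^\mfh|=\alpha-2$ so $|\bma|=2(\alpha-2)+2+n/r$.

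Each $\eps$-derivative naively produces a factor $\eps^{-1}$ from the parabolic scaling of $\rho_\eps$, which is too large when $\eps\ll\mu$; this is the main obstacle. My plan is to integrate by parts in that regime, transferring $\partial_\eps$ onto the smooth-at-scale-$\mu$ kernel $K_{N,\mu}$, whose every derivative costs only $\mu^{-1}$; this yields the effective cost $(\eps\vee\mu)^{-1}$ per derivative and thus $\|K_{N,\mu}*K_{N,\mu}*\partial_\eps^s\Cov_\eps\|_{L^{r,1}}\lesssim(\eps\vee\mu)^{|\bma|-s}$. Splitting $(\eps\vee\mu)^{-s}=(\eps\vee\mu)^{-s(1-c)}(\eps\vee\mu)^{-sc}$, bounding $(\eps\vee\mu)^{-s(1-c)}\leq\eps^{-s(1-c)}$ and $(\eps\vee\mu)^{|\bma|-sc}\leq\mu^{|\bma|-sc}$ (valid since $|\bma|-sc\leq 0$ once $c>0$ is small enough) produces the claimed bound; any $N_2\geq 1$ suffices for the mollifications, and $c^\star$ can be taken proportional to $|2\alpha-2+n/r|$, with an appropriate small positive choice in the marginal space-time white noise case $(n,\alpha)=(1,1/2)$.
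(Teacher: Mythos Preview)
Your sketch is correct and follows essentially the same route as the paper. Both reduce the cumulant norm to the scalar convolution estimate $\|K_{N,\mu}^{*2}*(\partial_\eps^{s_1}\rho_\eps)*(\partial_\eps^{s_2}\rho_\eps)*\Cov\|_{L^{r,1}}$ (note the $\eps$-derivatives sit on the separate mollifiers, not on $\Cov_\eps$ as a whole). The paper then uses Young's inequality to split this into $\prod_i\|K_{\cdot,\mu}*\partial_\eps^{s_i}\rho_\eps\|_{L^1}\cdot\|K_{\cdot,\mu}*\Cov\|_{L^{r,1}}$, invoking the black-box bound~\eqref{eq:KmuRhoeps} for each mollifier factor and computing the covariance factor by hand. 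Your integration-by-parts argument (exploiting that the parabolic moments of $\partial_\eps^{s_i}\rho_\eps$ of order $<s_i$ vanish, hence it is an $s_i$-th order parabolic derivative of an $L^1$-bounded function) is precisely the mechanism behind~\eqref{eq:KmuRhoeps}, and your resulting bound $(\eps\vee\mu)^{-s_i}$ is in fact slightly sharper than the cited $\eps^{-s_i(1-c)}\mu^{-s_ic}$.

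One small imprecision worth flagging: your tensor-product reduction ``$K_{N,\mu}=T_{N,\mu}\otimes S_{N,\mu}$, absorb time for free, reduce to $\|S*S*G_\alpha^\eps\|_{L^r}$'' does not quite work as written, because $\Cov_\eps=\rho_\eps^{*2}*\Cov$ is \emph{not} a space-time tensor product --- the space-time mollifier smears out the $\delta(x_0)$ factor. The paper's Young-splitting handles this cleanly: pulling the mollifiers into separate $L^1$ factors leaves $K_{\cdot,\mu}*\Cov$, and $\Cov=\delta_\R\otimes G_\alpha$ \emph{is} a tensor product, so the time factor integrates to $1$ and only the spatial Bessel estimate remains. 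This also makes the white-noise case $\Cov=\delta$ immediate.
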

\begin{proof}
First, observe that
\begin{equs}
   \Kappa^{\bma}_\epmu(x_1,x_2,y_1,y_2)= \delta(x_1-y_2)\delta(x_2-y_2)\Cov_\eps^{s_1,s_2}(y_1-y_2)\,,
\end{equs}
where $\Cov_\eps^{s_1,s_2}\eqdef\d_\eps^{s_1}\mcS_\eps\rho*\d_\eps^{s_2}\mcS_\eps\rho*\Cov$, which implies that, using the relation $\Id=\mcP_\mu^{N_2}K_{N_2,\mu}$,
\begin{equs}   K^{\otimes4}_{N_2,\mu}&\Kappa^{\bma}_\epmu(x_1,x_2,y_1,y_2)
\\&=
\int_{\Lambda^2}K_{N_2,\mu}(y_1-r_1)K_{N_2,\mu}(y_2-r_2) K_{N_2,\mu}(x_1-r_1)K_{N_2,\mu}(x_2-r_2)\Cov_\eps^{s_1,s_2}(r_1-r_2)\rmd r_1\rmd r_2
\\&=
\int_{\Lambda^2}
\big(\mcP_\mu^{N_2}\big)^\dagger\big(K_{N_2,\mu}(y_1-\bigcdot)K_{N_2,\mu}(x_1-\bigcdot)\big)(r_1)\big(\mcP_\mu^{N_2}\big)^\dagger\big(K_{N_2,\mu}(y_2-\bigcdot) K_{N_2,\mu}(x_2-\bigcdot)\big)(r_2)
\\&\quad\times
\big(K_{N_2,\mu}*\Cov_\eps^{s_1,s_2}*K_{N_2,\mu}\big)(r_1-r_2)
\rmd r_1\rmd r_2
\,,
\end{equs}
Therefore, using Hölder's inequality and the definition of the norm $\nnorm{\bigcdot}_{N_2}$,
\begin{equs}  \label{eq:stepinproofcovnoise} {}&\nnorm{\Kappa^{\bma}_\epmu}_{N_2}
\\&\lesssim
\norm{
\big(\mcP_\mu^{N_2}\big)^\dagger\big(K_{N_2,\mu}(y_1-\bigcdot)K_{N_2,\mu}(x_1-\bigcdot)\big)(r_1)
}_{L^\infty_{x_1}L^1_{y_1}L^1_{r_1}}\norm{\big(\mcP_\mu^{N_2}\big)^\dagger\big(K_{N_2,\mu}(y_2-\bigcdot) K_{N_2,\mu}(x_2-\bigcdot)\big)(r_2)
}_{L^{\infty}_{r_{2,0}}L^{r'}_{{\tt r}_2}L^1_{x_2}L^1_{y_2}}
\\&\quad\times\norm{K_{N_2,\mu}*\Cov_\eps^{s_1,s_2}*K_{N_2,\mu}(r_1-r_2)
}_{L^\infty_{r_1}L^{r,1}_{r_2}}\,,
\end{equs}
where $r'$ is Hölder conjugate to $r$.

We first study the first two terms of the RHS of \eqref{eq:stepinproofcovnoise}, that are very similar once, for the second term, the $L^{r'}$ norm in ${\tt r}_2$ is bounded by an $L^\infty$ norm. The aim is to show that this two terms are bounded  uniformly in $\mu$. It turns out that when the operator $\big(\mcP_\mu^{N_2}\big)^\dagger$ hits one of the kernels $K_{N_2,\mu}$, this possibly creates some space-time derivatives of the kernel $K_{N_2,\mu}$ multiplied by $\mu$. Then, by \eqref{eq:spacederivKmu}, the newly created kernel is also in $L^1$. Overall, this means that there exists a finite set $I$ and some kernels $\big(A_\mu^{(i)},B_\mu^{(i)}:i\in I\big)$ belonging to $L^1$ uniformly in $\mu$ such that 
\begin{equs}
    \big(\mcP_\mu^{N_2}\big)^\dagger\big(K_{N_2,\mu}(y-\bigcdot)K_{N_2,\mu}(x-\bigcdot)\big)(r)
=\sum_{i\in I} A_\mu^{(i)}(y-r)B_\mu^{(i)}(x-r)\,.
\end{equs}
By translation invariance, this kernel really depends on two variables of the three variables $x$, $y$ and $r$, and by taking the $L^1$ norm in two of the variables, we can indeed conclude that the first two terms are bounded uniformly in $\mu\in(0,1]$.

We turn turn to the last term of the RHS of \eqref{eq:stepinproofcovnoise}, that is to say the norm of $K_{N_2,\mu}*\partial^{s_1}_\eps\mcS_\eps\rho*K_{N_2,\mu}*\partial^{s_2}_\eps\mcS_\eps\rho*\Cov
$. We first deal with the case of a noise which is not white (i.e. $(n,\alpha)\neq(1,1/2)$). 
By translation invariance, the supremum over $r_1$ can be eliminated. Moreover, Young's inequality for convolution implies that
\begin{equs}
\nnorm{\Kappa^\bma_\epmu}_{N_2}  
&\lesssim
\norm{ K_{N_2^{1/2},\mu}*\partial^{s_1}_\eps\mcS_\eps\rho}_{L^{1}}
\norm{ K_{N_2^{1/2},\mu}*\partial^{s_2}_\eps\mcS_\eps\rho}_{L^{1}}
\norm{ K_{N_2,\mu}*\Cov}_{L^{r,1}}
\,.
\end{equs}
To obtain this last inequality, we have redistributed the powers of $Q_\mu$ to put a $K_{N_2,\mu}$ in front of the covariance.
Noting that $\partial^2_\eps\mcS_\eps\rho=\eps^{-1}\partial_\eps\mcS_\eps\tilde\rho$ for some other smooth compactly supported function $\tilde\rho$
and using \eqref{eq:KmuRhoeps}, we first obtain
\begin{equs}
    \norm{ K_{N_2^{1/2},\mu}*\partial^{s_1}_\eps\mcS_\eps\rho}_{L^{1}}
\norm{ K_{N_2^{1/2},\mu}*\partial^{s_2}_\eps\mcS_\eps\rho}_{L^{1}}\lesssim\eps^{s(\bma)(-1+c)}\mu^{-s(\bma)c}\,.
\end{equs}
Then, note that the short scale properties of pseudo differential operators imply that the kernel of $(1-\Delta)^{1-n/2-\alpha}$ is bounded by $|\ttx|^{-(2-2\alpha)}$ on short scales. With this observation in hand, we have
\begin{equs}
\norm{ K_{N_2,\mu}*\Cov}_{L^{r,1}}&\lesssim\bigg(\int_{\T^n}\Big(\mu^{-n}\int_{\T^n} (1-\Delta)^{-N_2}\big(({\tt y-x})/{\mu}\big)(1-\Delta)^{1-n/2-\alpha}({\tt y})\rmd { \tt y}\Big)^r\rmd \ttx\bigg)^{1/r}\\
&\lesssim\bigg(\mu^{n}\int_{\T^n}\Big(\int_{\T^n} (1-\Delta)^{-N_2}({\tt y-x})(1-\Delta)^{1-n/2-\alpha}(\mu {\tt y})\rmd {\tt y}\Big)^r\rmd \ttx\bigg)^{1/r}\\
&\lesssim\mu^{-(2-2\alpha)+n/r}=\mu^{2(-2+\alpha)+2+n/r}=\mu^{|\bma|}
\end{equs}
uniformly in $\mu\in(0,1]$. Here, note that the fact that $-(2-2\alpha)+n/r<0$ is crucial, since otherwise the above quantity would be bounded by one. Here, we would also like to draw the reader's attention on the fact that we \textit{do} cover the case $n=1$ and $\alpha<1/2$, in which $(1-\Delta)^{1-n/2-\alpha}$ is \textit{not} in $L^1$. This requires $N_2$ to be large enough so that $(1-\Delta)^{-N_2}$ can be convolved with $(1-\Delta)^{1-n/2-\alpha}$ and yield a bounded kernel: in particular, we need the former to be better that $L^\infty$, so that $N_2=1/2 $ would not sufficient. However, in the regime we are interested in, that is to say $\alpha>1/4$, it suffices to take $N_2\geqslant3/4$, so that in practice at this step we can set $N_2=1$. 

The case of the white noise ($(n,\alpha)=(1,1/2)$) is simpler: here $r=1$ and $\Cov=\delta$, so that we end up with 
\begin{equs}    \nnorm{\Kappa^\bma_\epmu}_{N_2}&\lesssim\norm{ \big(K_{N_2,\mu}*\partial^{s_1}_\eps\mcS_\eps\rho*K_{N_2,\mu}*\partial^{s_2}_\eps\mcS_\eps\rho*\Cov\big(x_1-x_2)}_{L_{x_1}^{\infty}L^1_{x_2}}\\&\lesssim
\norm{ K_{N_2,\mu}*\partial^{s_1}_\eps\mcS_\eps\rho*K_{N_2,\mu}*\partial^{s_2}_\eps\mcS_\eps\rho}_{L^1}\lesssim \eps^{s(\bma)(-1+c)}\mu^{-s(\bma)c}
\,.
\end{equs}
\end{proof}
We can now combine the base case with the flow equations to propagate the estimates.
\begin{lemma}\label{lem:cumul}
There exist $N_2\geqslant1$ and $c^\star>0$ such that for all $\text{$\bma$}\in\widehat\mfM$ and $c\in(0,c^\star]$, it holds
    \begin{equs}   \label{eq:Boundcum}\nnorm{\Kappa_{\eps,\mu}^{\bma}}_{N_2^{\text{\tiny{$\Gamma$}}+1}}\lesssim \eps^{s(\bma)(-1+c)}\mu^{|\bma|-s(\bma)c-t(\bma)}
    \end{equs}
uniformly in $(\eps,\mu)\in(0,1]$.
\end{lemma}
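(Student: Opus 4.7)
The plan is to prove the estimate by strong induction on the order $\mfo(\bma)$, using the three flow equations \eqref{eq:flowcumuldmu}, \eqref{eq:floweqcumulIrrel}, and \eqref{eq:flowcumulRel} combined with the operator bound of Lemma~\ref{lem:boundA}. The base case $\mfo(\bma) = 0$ is exactly Lemma~\ref{lem:basecaseCum}, which also fixes the starting regularity index $N_2$. The exponent $\Gamma + 1$ in $N_2^{\Gamma+1}$ arises because each application of Lemma~\ref{lem:boundA} within the flow equation forces one to convert between $K_{N,\mu}$-type norms and $\mcP_\mu^N$-type norms at a different scale, losing a multiplicative factor in the regularity index; since the maximal order in $\widehat\mfM$ is $2\Gamma+1$ and the induction descends by at least one order per step, the total loss is absorbed into $N_2^{\Gamma+1}$.

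For the inductive step, fix $\bma$ with $\mfo(\bma) \geqslant 1$ and assume the bound holds for all $\bmb \in \widehat\mfM$ with $\mfo(\bmb) < \mfo(\bma)$. We distinguish three subcases matching the three flow equations. When $t(\bma) \geqslant 1$, equation \eqref{eq:flowcumuldmu} expresses $\Kappa^{\bma}_{\eps,\mu}$ as a finite sum of $\A_\mu$-terms; applying Lemma~\ref{lem:boundA} and the heat-kernel estimates for $\mcP_\mu^N \partial_\ttx^{k_0+1-k_1} \dot G_\mu$ in $\mcL^{{\tt I}(\rho),\infty}$ (with the time-singular case $|\rho|=1$ using the $L^{r,1}$ control), one multiplies the scaling of the kernel by the product of the inductive bounds $\prod_{k} \nnorm{\Kappa^{\bmc_k}_{\eps,\mu}}_{N_2^{\Gamma+1}}$. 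The combinatorial identity that glues $(|\bmc_k|,s(\bmc_k),t(\bmc_k))$ to $(|\bma|,s(\bma),t(\bma))$, together with the scaling counting in $|\rho|$ (which produces the correct $n/r$ terms in \eqref{eq:defScalingCum}), yields precisely $\mu^{|\bma|-s(\bma)c-1}$ matching the required $t(\bma)=1$ exponent.

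When $t(\bma) = 0$ and $|\bma| > 0$, one integrates \eqref{eq:floweqcumulIrrel} from $0$ to $\mu$ and uses the estimate just proved for the $t(\bma)=1$ case applied to the integrand. This gives
\begin{equs}
\nnorm{\Kappa^{\bma}_{\eps,\mu}}_{N_2^{\Gamma+1}} \lesssim \eps^{s(\bma)(-1+c)} \int_0^\mu \nu^{|\bma|-s(\bma)c-1}\, \rmd\nu \lesssim \eps^{s(\bma)(-1+c)} \mu^{|\bma|-s(\bma)c},
\end{equs}
where the integrability at $\nu=0$ holds because $|\bma|>0$ and $c$ is chosen small enough. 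When $t(\bma)=0$ and $|\bma|\leqslant 0$ (necessarily $p(\bma)=1$), the local expectation $\E[\bfI^a(\xi^{\tilde a}_{\eps,\mu})]$ is controlled by the renormalized flow equation \eqref{eq:flowcumulRel}, whose choice of counterterm $\mfc^{a,\mfl^a}_\eps$ converts the divergent integration on $(0,\mu)$ into a convergent integration on $(\mu,1)$, producing the correct $\mu^{|\bma|}$ growth. To recover the full $\nnorm{\cdot}_{N_2^{\Gamma+1}}$ norm (not merely its $\bfI^a$ projection), we apply the Taylor localization identity \eqref{eq:localization} with $\ell$ chosen so that each remainder term $\bfX^{\mfl^a+\mfm^a} \partial_\eps^s\xi^a_{\eps,\mu}$ has scaling $|\bma|+|\mfm^a|>0$, reducing it to Case 2, while the delta-type local terms are the renormalized expectations handled above.

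The main obstacle is the bookkeeping in the relevant case: one must verify that the Taylor localization increases the scaling by exactly $|\mfm^a|$ (so that the remainder moves strictly into the irrelevant regime), that the counterterm $\mfc^{a,\mfl^a}_\eps$ is space-time translation invariant (which follows from the stationarity of the initial $S_\eps$ and the hierarchical nature of the flow), and that the regularity index $N_2^{\Gamma+1}$ is indeed sufficient uniformly over all cases. A delicate but routine check is that the decomposition into $|\rho|=1$ (covariance contraction) and $|\rho|=2$ (product of two cumulants of order $\geqslant 1$) matches the $+(2+n/r)(p(\bma)-1)$ correction in the definition of $|\bma|$, which is the mechanism by which the non-$L^1$ nature of the covariance when $n\geqslant 2$ or $\alpha>1/2$ gets absorbed into the scaling.
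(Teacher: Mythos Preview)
Your proposal is correct and follows essentially the same route as the paper: induction on $\mfo(\bma)$ with base case Lemma~\ref{lem:basecaseCum}, the three flow equations \eqref{eq:flowcumuldmu}, \eqref{eq:floweqcumulIrrel}, \eqref{eq:flowcumulRel} for the three subcases, and the Taylor localization \eqref{eq:localization} to pass from $\E[\bfI^a(\xi^{\tilde a}_{\eps,\mu})]$ to the full relevant expectation.

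One point deserves sharpening: your explanation of why $N_2^{\Gamma+1}$ suffices is not quite right. You attribute the loss of one factor of $N_2$ to ``each application of Lemma~\ref{lem:boundA}'', and then invoke ``maximal order $2\Gamma+1$'' --- but that arithmetic would require $N_2^{2\Gamma+2}$, not $N_2^{\Gamma+1}$. In the paper's argument the $\A_\mu$ step itself costs nothing: irrelevant cumulants of order $k$ are controlled in $\nnorm{\cdot}_{N_2^{k}}$, and the flow equation feeds in lower-order cumulants at index $N_2^{\mfo(\bmc_k)+1}\leqslant N_2^{k}$ with no loss. The single factor of $N_2$ is lost only in the \emph{localization} step for relevant expectations, through the estimate on $\bfL_\tau$ in Lemma~\ref{lem:Lemma4_18} (this is where $N_2\geqslant 36$ is enforced). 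Since relevant expectations exist only for $\mfo(a)\leqslant\Gamma$, at most $\Gamma+1$ such losses occur; beyond order $\Gamma$ all cumulants are irrelevant and one stays at $N_2^{\Gamma+1}$. You should also note explicitly the use of \eqref{eq:Kmunu} to pass from the scale-$\nu$ integrand to the scale-$\mu$ norm on the left, which the paper invokes before \eqref{eq:inter4_15}.
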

\begin{proof}
Again, we argue by induction on the order of $\bma$, the base case being handled by Lemma~\ref{lem:basecaseCum}. Let us now deal with the induction step. 

In this proof, we pay a particular attention to the power of $N$ in the norm $\nnorm{\bigcdot}_{N}$ necessary to control the cumulants. The idea is that for the first $\Gamma$ steps of induction, we have to deal with some relevant cumulants, which forces us to lose a factor $N_2$ at each step. We therefore first work with $N=N_2^{\text{\scriptsize{$\mfo$}}(\text{\scriptsize{$\bma$}})+1}$. Afterwards, no loss is necessary any more, so we can take $N=N_2^{\Gamma+1}$. We thus deal separately with these two cases. 

The case of cumulants with $\text{$\mfo$}(\text{$\bma$})\leqslant\Gamma$ and $t(\bma)\geqslant1$ or $|\bma|>0$ is easily dealt with using equations \eqref{eq:flowcumuldmu} and \eqref{eq:floweqcumulIrrel}, and the property of the operator $\A$ stated in Lemma~\ref{lem:boundA}. 
For example, using \eqref{eq:Kmunu}, we have in the latter situation
\begin{equs}
\nnorm{\Kappa_{\eps,\mu}^{\bma}}_{N_2^{\mfo(\bma)}}\lesssim  &\int_0^\mu\sum_{\substack{(i,\sigma,\text{\scriptsize{$\bmb,\tilde\bfd$}},\pi,\rho)\in\text{\scriptsize{$\mathrm{Ind}$}}(\text{\scriptsize{$\bma$}})  } } \label{eq:inter4_15}\norm{\mcP^{N_2^{\text{\tiny{$\mfo(\bma)$}}}}_\nu\text{$\bfX^{\mfl^\bfd}$}\partial^{k_0+1-k_1}_{\ttx}\dot G_\nu}_{\mcL^{{\tt I}(\rho),\infty}}\prod_{k\in|\rho|}\nnorm{\Kappa^{\bmc_k}_\epnu}_{N_2^{\mfo(\bmc_k)+1}} \rmd\nu\,.\textcolor{white}{blablaa}
\end{equs}
Note that we took care to evaluate the irrelevant cumulant in the norm $\nnorm{\bigcdot}_{N_2^{\text{\tiny{$\mfo(\bma$}})}}$ and not $\nnorm{\bigcdot}_{N_2^{\text{\tiny{$\mfo(\bma$}})+1}}$ (or $\nnorm{\bigcdot}_{N_2^{\text{\tiny{$\Gamma$}}+1}}$): keeping some room will turn out to be useful when dealing with relevant cumulants. At this stage, the desired result now follows using the induction hypothesis and \eqref{eq:heat1}. We end up with 
\begin{equs}    \nnorm{\Kappa_{\eps,\mu}^{\bma}}_{N_2^{\mfo(\bma)}}\lesssim &\eps^{s(\bma)(-1+c)} \int_0^\mu\sum_{\substack{(i,\sigma,\text{\scriptsize{$\bmb,\tilde\bfd$}},\pi,\rho)\in\text{\scriptsize{$\mathrm{Ind}$}}(\text{\scriptsize{$\bma$}})    } }\\ &\Big(\nu^{\sum_{j\neq i}|a_j|+\sum_{j\neq i}|\text{\scriptsize{$\mfl$}}^{a_j}|+|b_1|+|\text{\scriptsize{$\mfl$}}^{b_1}|+|b_2|+|\text{\scriptsize{$\mfl$}}^{b_2}|+1+\text{\scriptsize{$|\mfl^\bfd|$}}-(k_0+1-k_1)+(p(\bma)-1)(2+n/r)-s(\bma)c}\\
    &+\nu^{\sum_{j\neq i}|a_j|+\sum_{j\neq i}|\text{\scriptsize{$\mfl$}}^{a_j}|+|b_1|+|\text{\scriptsize{$\mfl$}}^{b_1}|+|b_2|+|\text{\scriptsize{$\mfl$}}^{b_2}|-1-n/r+\text{\scriptsize{$|\mfl^\bfd|$}}-(k_0+1-k_1)+p(\bma)(2+n/r)-s(\bma)c}\Big)\rmd\nu\\
    &\lesssim  \eps^{s(\bma)(-1+c)}  \int_0^\mu\nu^{|\bma|-1-s(\bma)c}\rmd\nu\lesssim\eps^{s(\bma)(-1+c)}\mu^{|\bma|-s(\bma)c}\,.
\end{equs}
In the last step, we have used the fact that the integral is convergent for $c$ small enough, which yields the desired result. 

When $\text{$\mfo$}(\text{$\bma$})>\Gamma$, we deal with the cumulants with $t(\bma)\geqslant1$ or $|\bma|>0$ in the exact same way, but using the norm $\nnorm{\bigcdot}_{N_2^{\Gamma+1}}$, and the fact that we do not need to lose any power of $N_2$ in the kernels $K_{N,\mu}$.

The case of a relevant expectation is more subtle, since we only have a flow equation for $\E[\text{$\bfI$}^a\big(\xi^{\text{\scriptsize{$\tilde a$}}}_{\eps,\mu}\big)]$ where $\tilde a=(a,\mfl^a,s,0)$ and $\mfo(a)\leqslant\Gamma$. First, observe that by stationarity, $\E[\text{$\bfI$}^a\big(\xi^{\text{\scriptsize{$\tilde a$}}}_{\eps,\mu}\big)]$ is constant as a function of space-time, which combined with \eqref{eq:normKmu} implies that $$\E[\text{$\bfI$}^a\big(\xi^{\text{\scriptsize{$\tilde a$}}}_{\eps,\mu}\big)]=K_{N_2^{\text{\tiny{$\mfo(a)$}}},\mu}\E[\text{$\bfI$}^a\big(\xi^{\text{\scriptsize{$\tilde a$}}}_{\eps,\mu}\big)]\,.$$ 
Moreover, the definition of $\bfI^a$ shows that we have the freedom to insert all the other needed convolutions with $K_{N_2^{\text{\tiny{$\mfo(a)$}}},\nu}$ to obtain, starting from \eqref{eq:flowcumulRel} and proceeding as in the irrelevant case 
\begin{equs}
|\E[\text{$\bfI$}^a\big(\xi^{\text{\scriptsize{$\tilde a$}}}_{\eps,\mu}\big)]|&\lesssim\int_\mu^1\sum_{\substack{(\sigma,\text{\scriptsize{$\bmb$}},   
    \text{\scriptsize{$\tilde\bfd$}}    ,\rho)\in\text{\scriptsize{$\mathrm{Ind}$}}(\text{\scriptsize{$\bma$}})   } } \nnorm{  \text{$\mathrm A$}_\nu\Big(\big(\Kappa_{\eps,\nu}^{\text{\scriptsize{$\bmc_k$}}}\big)_{k\in|\rho|}\Big)}_{N_2^{\text{\tiny{$\mfo(a)$}}}}\rmd\nu\\&\lesssim\int_\mu^1\sum_{\substack{(\sigma,\text{\scriptsize{$\bmb,\tilde\bfd$}},\rho)\in\text{\scriptsize{$\mathrm{Ind}$}}(\text{\scriptsize{$\bma$}})   } } \norm{\mcP^{N_2^{\text{\tiny{$\mfo(a)$}}}}_\nu\partial^{k_0+1-k_1}_{\ttx}\dot G_\nu}_{\mcL^{{\tt I}(\rho),\infty}}\prod_{k\in|\rho|}\nnorm{\Kappa^{\bmc_k}_\epnu}_{N_2^{\text{\tiny{$\mfo(\bmc_k)$}}+1}}\rmd\nu\\
    &\lesssim\eps^{s(-1+c)}\int_\mu^1\nu^{|a|+|\text{\scriptsize{$\mfl$}}^a|-1-sc}\rmd\nu\lesssim\eps^{s(-1+c)}\mu^{|a|+|\text{\scriptsize{$\mfl$}}^a|-sc}\,.\label{eq:intermediateStoBound}
\end{equs}
To go from $\E[\text{$\bfI$}^a\big(\xi^{\text{\scriptsize{$\tilde a$}}}_{\eps,\mu}\big)]$ to $\E[\xi^{\text{\scriptsize{$\tilde a$}}}_{\eps,\mu}]$, observe that by applying \eqref{eq:localization} to $\E[\xi^{\text{\scriptsize{$\tilde a$}}}_{\eps,\mu}]$ taking the smallest $\ell$ such that $|a|+|\text{{$\mfl$}}^a|+\ell>0$ ($\ell\in\{1,2\}$), we can re-express it in terms of some irrelevant expectations already constructed of the form $\E[\xi^{\text{\scriptsize{$\tilde a'$}}}_{\eps,\mu}]$ for $\tilde a'=(a,\mfl^a+\mfm^a,s,0)$ and of some $\E[\text{$\bfI$}^a\big(\xi^{\text{\scriptsize{$\tilde a'$}}}_{\eps,\mu}\big)]$ for $\tilde a'=(a,\mfl^a+\mfm^a,s,0)$ equally already constructed in view of the previous analysis. Indeed, for $\bma=(\tilde a)$, with this notation, we have
\begin{equs}  
\nnorm{\Kappa^{\text{\scriptsize{$\bma$}}}_\epmu}_{N_2^{\text{\tiny{$\mfo(a)$}}+1}}=\norm{K_{{N_2^{\text{\tiny{$\mfo(a)$}}+1}},\mu}^{\otimes[a]+1}\E[\xi^{\text{\scriptsize{$\tilde a$}}}_{\eps,\mu}]}_{L_x^\infty L^1_{y^a}}&\lesssim\sum_{\text{\scriptsize{$\mfm$}}^a:|\text{\scriptsize{$\mfl$}}^a+\text{\scriptsize{$\mfm$}}^a|<\ell}\Big(
\prod_{\text{\scriptsize{$\mfk$}} ij\in[a]}\norm{\partial_{{ y}_{\text{\tiny{$\mfk$}}ij}^a}^{\text{\scriptsize{$\mfm$}}_{\text{\tiny{$\mfk$}} ij}^a}  K_{{N_2},\mu} }_{\mcL^{\infty,\infty}}\Big)
\norm{K_{{N_2^{\text{\tiny{$\mfo(a)$}}}},\mu}\E[\text{$\bfI$}^a\big(\xi^{\text{\scriptsize{$\tilde a'$}}}_{\eps,\mu}\big)]}_{L^\infty_x}\\&\quad+\sum_{\text{\scriptsize{$\mfm$}}^a:|\text{\scriptsize{$\mfl$}}^a+\text{\scriptsize{$\mfm$}}^a|=\ell}\int_0^1\norm{\partial_{{y}^a}^{\text{\scriptsize{$\mfm$}}^a} K_{{N_2^{\text{\tiny{$\mfo(a)$}}+1}},\mu}^{\otimes[a]+1}\text{$\bfL$}_\tau \big(\E[\xi^{\text{\scriptsize{$\tilde a'$}}}_{\eps,\mu}]\big)}_{L_x^\infty L^1_{y^a}}\rmd\tau\,.
\end{equs}
While these new expressions with index $\mfl^a+\mfm^a$ have a better power counting than the original one, we trade the derivatives $\partial_{y^a}^{\mfm^a}$ appearing in \eqref{eq:localization} for some bad factors using \eqref{eq:spacederivKmu} (and thus taking $N_2$ large enough), which restores the scaling of $\bma$:
\begin{equs} \label{eq:step} 
\textcolor{white}{A}&\nnorm{\Kappa^\bma_\epmu}_{N_2^{\text{\tiny{$\mfo(a)$}}+1}}\\&\qquad\lesssim\sum_{\text{\scriptsize{$\mfm$}}^a:|\text{\scriptsize{$\mfl$}}^a+\text{\scriptsize{$\mfm$}}^a|<\ell}\mu^{-|\text{\scriptsize{$\mfm$}}^a|}
|\E[\text{$\bfI$}^a\big(\xi^{\text{\scriptsize{$\tilde a'$}}}_{\eps,\mu}\big)]|+\sum_{\text{\scriptsize{$\mfm$}}^a:|\text{\scriptsize{$\mfl$}}^a+\text{\scriptsize{$\mfm$}}^a|=\ell}\mu^{-|\text{\scriptsize{$\mfm$}}^a|}\int_0^1\norm{K_{{N_2^{\text{\tiny{$\mfo(a)$}}+1/2}},\mu}^{\otimes[a]+1}\text{$\bfL$}_\tau \big(\E[\xi^{\text{\scriptsize{$\tilde a'$}}}_{\eps,\mu}]\big)}_{L_x^\infty L^1_{y^a}}\rmd\tau\,.
\end{equs}
We now conclude using \eqref{eq:6_12B}, which yields
\begin{equs}    \norm{K_{{N_2^{\text{\tiny{$\mfo(a)$}}+1/2}},\mu}^{\otimes[a]+1}\text{$\bfL$}_\tau \big(\E[\xi^{\text{\scriptsize{$\tilde a'$}}}_{\eps,\mu}]\big)}_{L_x^\infty L^1_{y^a}}\lesssim
\norm{K_{{N_2^{\text{\tiny{$\mfo(a)$}}}},\mu}^{\otimes[a]+1}\E[\xi^{\text{\scriptsize{$\tilde a'$}}}_{\eps,\mu}]}_{L_x^\infty L^1_{y^a}}=\nnorm{\Kappa^{\text{\scriptsize{$\bma$}}'}_\epmu}_{
{N_2^{\text{\tiny{$\mfo(a)$}}}}
}\lesssim\eps^{(-1+c)s}\mu^{|a|+|\mfl^a|+|\mfm^a|-cs}\,,
\end{equs}
where $\bma'=(\tilde a')$, and in the last inequality we have used the induction hypothesis, thus enforcing $N_2\geqslant 36$. Note that we used the fact that the irrelevant cumulant $\Kappa^{\bma'}_\epmu$ has been controlled in the norm $\nnorm{\bigcdot}_{N_2^{\text{\tiny{$\mfo(\bma$}})}}$ in the previous analysis. Controlling the first term of \eqref{eq:step} with \eqref{eq:intermediateStoBound} finally concludes the proof.
\end{proof}
For the reader's convenience, we recall with our notation \cite[Lemma~6.12 (B)]{Duch21}: 
\begin{lemma}\label{lem:Lemma4_18}
  Fix $N\in\N$, $a\in\mcM$ and $\psi\in\mcD(\Lambda^{[a]+1})$. It holds
    \begin{equs}\label{eq:6_12B}
     \norm{ K_{6N,\mu}^{\otimes[a]+1}\text{$\bfL$}_\tau \psi}_{L_x^\infty L^1_{y^a}}\lesssim
\norm{ K_{N,\mu}^{\otimes[a]+1}\psi}_{L_x^\infty L^1_{y^a}}
    \end{equs}
uniformly in $\mu,\tau\in(0,1]$.
\end{lemma}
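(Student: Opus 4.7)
The plan is to pass $\bfL_\tau$ through the $y^a$-convolutions, at the cost of dilating the kernel, and then absorb the extra powers $K_{5N,\mu}$ into an $L^1$-bounded "transfer kernel". The argument has three main ingredients, the second of which is the heart of the matter.

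\textbf{Commutation of $\bfL_\tau$ with convolution in $y^a$.} A direct change of variables $w=x+(z-x)/\tau$ inside $K_\mu*_y \bfL_\tau\psi(x,y)$ yields the identity
\begin{equs}
K_\mu*_y \bfL_\tau\psi(x,y) = \bfL_\tau\bigl((K_\mu)^{(\tau)}*_y\psi\bigr)(x,y),
\qquad (K_\mu)^{(\tau)}(u)\eqdef \tau^{n+2}K_\mu(\tau u)\;.
\end{equs}
Iterating over the $|[a]|$ variables in $y^a$ gives
$K_{6N,\mu}^{\otimes[a]}*_{y^a}\bfL_\tau\psi=\bfL_\tau\bigl((K_{6N,\mu}^{(\tau)})^{\otimes[a]}*_{y^a}\psi\bigr)$. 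The same change of variables (with Jacobian $\tau^{(n+2)|[a]|}$) shows $\bfL_\tau$ is an isometry on $L^\infty_x L^1_{y^a}$.

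\textbf{Dilation factorization.} I will show that $K_{6N,\mu}^{(\tau)}=L_\mu^{(\tau)}*K_{N,\mu}$ with $\|L_\mu^{(\tau)}\|_{L^1}\leqslant 1$ uniformly in $\mu,\tau\in(0,1]$. Setting $L_\mu^{(\tau)}\eqdef \mcP_\mu^N K_{6N,\mu}^{(\tau)}$ the identity is immediate from $\mcP_\mu^N K_{N,\mu}=\delta$; the content is the $L^1$ bound. A computation using the chain rule gives $\mcP_\mu K_{6N,\mu}^{(\tau)}(u)=\tau^{n+2}[\widetilde{\mcP}K_{6N,\mu}](\tau u)$ with $\widetilde{\mcP}\eqdef (1+\mu^2\tau\partial_0)(1-\mu^2\tau^2\Delta)$. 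The key observation is that for $\tau\in[0,1]$ each factor is a convex combination
\begin{equs}
1+\mu^2\tau\partial_0 &= \tau(1+\mu^2\partial_0)+(1-\tau)\,,\\
1-\mu^2\tau^2\Delta &= \tau^2(1-\mu^2\Delta)+(1-\tau^2)\,,
\end{equs}
so that writing $\mcP_1\eqdef 1+\mu^2\partial_0$, $\mcP_2\eqdef 1-\mu^2\Delta$ (note $\mcP_\mu=\mcP_1\mcP_2$),
\begin{equs}
\widetilde{\mcP} = \tau^3\mcP_\mu + \tau(1-\tau^2)\mcP_1 + \tau^2(1-\tau)\mcP_2 + (1-\tau)(1-\tau^2)\cdot 1\,,
\end{equs}
with the four coefficients being non-negative and summing to $1$. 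Expanding $\widetilde{\mcP}^N$ by the multinomial formula produces a convex combination of compositions $\mcP_\mu^{k_1}\mcP_1^{k_2}\mcP_2^{k_3}$ with $k_1+k_2+k_3\leqslant N$. Since $K_{6N,\mu}=K^t_{6N,\mu}\otimes K^s_{6N,\mu}$ factors as a product of a (positive) time and (positive) space Green's function, each such composition applied to $K_{6N,\mu}$ produces a positive kernel $K^t_{6N-k_1-k_2,\mu}\otimes K^s_{6N-k_1-k_3,\mu}$ with $L^1$ norm $1$ (the constraint $6N-k_1-k_2,6N-k_1-k_3\geqslant 1$ holds as $k_1+k_2+k_3\leqslant N\leqslant 5N$). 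Summing non-negative coefficients gives $\|\widetilde{\mcP}^N K_{6N,\mu}\|_{L^1}\leqslant 1$, and dilation preserves $L^1$, so $\|L_\mu^{(\tau)}\|_{L^1}\leqslant 1$.

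\textbf{Assembly.} Combining the two ingredients,
\begin{equs}
K_{6N,\mu}^{\otimes [a]+1}\bfL_\tau\psi
= K_{6N,\mu}*_x\bfL_\tau\bigl[(L_\mu^{(\tau)})^{\otimes[a]}*_{y^a}K_{N,\mu}^{\otimes[a]}*_{y^a}\psi\bigr]\,.
\end{equs}
Each operator is bounded on $L^\infty_x L^1_{y^a}$ with operator norm $\leqslant 1$: $K_{6N,\mu}*_x$ by Young (using $\|K_{6N,\mu}\|_{L^1}=1$, which follows from positivity and $\int Q_\mu=1$), $\bfL_\tau$ by the isometry above, and $(L_\mu^{(\tau)})^{\otimes[a]}*_{y^a}$ by Young in $y^a$. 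Finally, in the application $\psi=\E[\xi^{\tilde a'}_{\eps,\mu}]$ is stationary in space-time by Assumption~\ref{assump:noise}, hence depends only on $y^a-x$, and the same factorization applied in the $x$ direction (using that $K_{6N,\mu}=K_{5N,\mu}*K_{N,\mu}$) allows us to replace $K_{N,\mu}^{\otimes[a]}*_{y^a}\psi$ with $K_{N,\mu}^{\otimes[a]+1}\psi$ on the right-hand side up to a bounded factor.

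\textbf{Main obstacle.} The technically delicate step is the $L^1$ bound on $L_\mu^{(\tau)}$: it is crucial that $\tau\leqslant 1$, for otherwise the convex-combination decomposition of $\widetilde{\mcP}$ involves negative coefficients and the $L^1$ norm of $\widetilde{\mcP}^N K_{6N,\mu}$ could blow up. This is precisely where the gap $6N$ versus $N$ is used — we need enough derivatives of the Green's function to absorb the dilation for arbitrary $\tau\in(0,1]$.
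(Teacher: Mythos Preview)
Your first two ingredients are correct and form the heart of the argument: the commutation identity $K_\mu*_y\bfL_\tau\psi=\bfL_\tau\bigl((K_\mu)^{(\tau)}*_y\psi\bigr)$ is right, and your convex–combination proof that $\mcP_\mu^N K_{6N,\mu}^{(\tau)}$ has unit $L^1$ norm is clean. (One small slip: under the parabolic scaling implicit in the prefactor $\tau^{-(n+2)}$ of $\bfL_\tau$, the time factor in $\widetilde{\mcP}$ should be $(1+\mu^2\tau^2\partial_0)$, not $(1+\mu^2\tau\partial_0)$; the convexity argument is unaffected.)

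The genuine gap is in the \textbf{Assembly} step. Your chain of inequalities yields
\[
\|K_{6N,\mu}^{\otimes[a]+1}\bfL_\tau\psi\|_{L^\infty_xL^1_{y^a}}
\;\leqslant\;
\|K_{N,\mu}^{\otimes[a]}*_{y^a}\psi\|_{L^\infty_xL^1_{y^a}}\,,
\]
i.e.\ a bound with \emph{no} $K_{N,\mu}$-smoothing in the $x$–variable on the right. Since convolving in $x$ with $K_{N,\mu}$ can only \emph{decrease} the $L^\infty_x$ norm, your right-hand side dominates the desired one $\|K_{N,\mu}^{\otimes[a]+1}\psi\|_{L^\infty_xL^1_{y^a}}$, and the inequality goes the wrong way. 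Your stationarity fix does not repair this: even for translation-invariant $\psi$, ``deconvolving'' the diagonal $K_{N,\mu}*_x$ amounts to applying the differential operator $\mcR_\mu$ along the diagonal of $y^a-x$, which is \emph{not} $L^1$–bounded. Moreover, the lemma is also invoked in the proof of Theorem~\ref{thm:sto} with $\psi=u\,\xi^{(a,\mfl^a+\mfm^a,s,0)}_{\eps,\mu}$, which is not translation-invariant because of the time weight $u$.

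The underlying reason your assembly fails is that $\bfL_\tau$ dilates $y^a$ \emph{about the moving centre $x$}, so $K_{N,\mu}*_x$ does not commute through $\bfL_\tau$. A correct argument writes $\psi=\mcR_\mu^{\otimes[a]+1}\Psi$ with $\Psi=K_{N,\mu}^{\otimes[a]+1}\psi$, commutes the $\mcR_\mu$ acting on the \emph{first} argument past $\bfL_\tau$ to obtain a mixed differential operator in $(x',y'^a)$ (schematically $\mcR_\mu(\partial_{x'}+(1-\tau)\sum_j\partial_{y'^a_j})$), and then shows that $K_{6N,\mu}^{\otimes[a]+1}$ can absorb this operator together with the $\prod_j\mcR_{\mu\tau}^{(y'^a_j)}$ already handled by your convexity step. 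The cross-derivatives now land on \emph{several} of the $|[a]|+1$ kernel factors simultaneously, and it is precisely this distribution of derivatives across factors that consumes the extra $5N$ powers of $Q_\mu$ you left unused. This is the content of \cite[Lemma~6.12(B)]{Duch21}, to which the paper simply defers.
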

\begin{remark}\label{rem:Rem4_16}
    The cumulant analysis can be carried out for a covariance kernel bounded by $(|x_0|^{1/2}+|\ttx|)^{-4+2\alpha}$ for small $(x_0,\ttx)$, along with analogous bounds on
space-time derivatives, replacing the $L^{r,1}$ norm by an $L^{(n+2)(1+\iota)/(4-2\alpha)}$ norm when the noise is better behaved than a white noise, and thus adapting the power counting.
\end{remark}

\subsection{Kolmogorov argument}\label{subsec:K_argument}
In this subsection, we first control thanks to a probabilistic argument the integrated and relevant force coefficients that are introduced in Definition~\ref{def:defu4_20} below. The main result of this subsection is Lemma~\ref{lem:ConcluProba}. The integrated and relevant force coefficients turn out to be sufficient to control all the force coefficients thanks to a last inductive argument which is postponed to Subsection~\ref{subsec:44}. 
\begin{definition}\label{def:defu4_20} Let
    \begin{equs}       \mfM_{\rel}\eqdef  \{(a,\mfl^a)\in\mfM : |a|+|\mfl^a|\leqslant 0\}\,.
    \end{equs}
    For $(a,\mfl^a)\in\mfM_{\rel}$ and $\epmu\in(0,1]$, we define by
    \begin{equs}\label{eq:defXi}
        \Xi_\epmu^{ a,\mfl^a}\eqdef  \text{$\bfI$}^a\big(\xi^{( a,\mfl^a,0,0)}_\epmu\big)\in C^\infty(\Lambda)
    \end{equs}  
    the \textit{integrated and relevant force coefficients}. Finally, for any time weight $u\in\mcD(\R)$, we write $ \big(u\Xi_\epmu^{ a,\mfl^a}\big)(x)\eqdef u(x_0)\Xi_\epmu^{ a,\mfl^a}(x)$.
\end{definition}
\begin{lemma}\label{lem:ConcluProba}
Recall that $P\in2\N_{\geqslant1}$ is fixed, that $\eta=(2+n/r)/P$, and fix $u\in\mcD(\R)$. There exists $N_0\geqslant1$ and $c^\star>0$ such that for all $c\in(0,c^\star]$ and all $\eta>0$ we have
\begin{equs}\label{eq:coro423}\max_{(a,\mfl^a)\in\mfM_{\rel}}
  \max_{s\in\{0,1\}}  \E\Big[\Big(\sup_{\eps,\mu\in(0,1]} \eps^{s(1-c)}\mu^{-|a|-|\mfl^a|+sc+2\eta}\norm{\d_\eps^s\big(K_{N_0^{\Gamma+1},\mu}(u\Xi^{a,\mfl^a}_{\eps,\mu})\big)}_{L^\infty_x(\Lambda)}\Big)^P\Big]^{1/P}\lesssim_P 1\,.\textcolor{white}{blablablablab}
\end{equs}
\end{lemma}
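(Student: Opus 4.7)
The strategy combines the cumulant bounds of Lemma~\ref{lem:cumul} with a Kolmogorov-type chaining argument.

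For the pointwise moments, fix $(a,\mfl^a) \in \mfM_{\rel}$ and $s \in \{0,1\}$, and set $Z_{\eps,\mu}(x) \eqdef K_{N_2^{\Gamma+1},\mu}\big(u\partial_\eps^s\Xi^{a,\mfl^a}_{\eps,\mu}\big)(x)$. Since $Z_{\eps,\mu}(x)$ is linear in $\xi^a_{\eps,\mu}$ (through the integration $\bfI^a$) and $P$ is even, the moment-to-cumulant formula gives
\begin{equs}
\E\big[|Z_{\eps,\mu}(x)|^P\big] = \sum_{\pi \in \mcP([P])} \prod_{k=1}^{|\pi|} \kappa_{|\pi_k|}\big(Z_{\eps,\mu}(x),\ldots,Z_{\eps,\mu}(x)\big)\,.
\end{equs}
Each joint cumulant factor is an integral of $\Kappa^{\bma_k}_{\eps,\mu}$ with $\bma_k \eqdef ((a,\mfl^a,s,0))^{|\pi_k|}$ against $|\pi_k|$ copies of $u(x_{i,0}) K_{N_2^{\Gamma+1},\mu}(x-x_i)$ in the $x_{a_i}$-variables. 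Applying H\"older's inequality along the topology defining $\nnorm{\cdot}_{N_2^{\Gamma+1}}$ (placing $L^\infty$ on one of the $x_{a_i}$-coordinates and $L^{r'}_\ttx L^\infty_{x_0}$ on the remaining $|\pi_k|-1$ coordinates, with $L^1$ in the $y$-variables), and using the scaling $\|u K_{N,\mu}\|_{L^{r'}_\ttx L^\infty_{x_0}} \lesssim_u \mu^{-(2+n/r)}$, one sees that the factor $(|\pi_k|-1)(2+n/r)$ appearing in \eqref{eq:defScalingCum} cancels exactly with the loss from the $|\pi_k|-1$ kernel factors. Invoking Lemma~\ref{lem:cumul} yields
\begin{equs}
|\kappa_{|\pi_k|}(Z_{\eps,\mu}(x),\ldots)| \lesssim \eps^{-s|\pi_k|(1-c)}\,\mu^{|\pi_k|(|a|+|\mfl^a|-sc)}\,,
\end{equs}
and summing over partitions gives
\begin{equs}
\E\big[|Z_{\eps,\mu}(x)|^P\big]^{1/P} \lesssim_P \eps^{-s(1-c)}\mu^{|a|+|\mfl^a|-sc}\,,
\end{equs}
which beats the target bound by a factor $\mu^{2\eta}$.

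To upgrade to the supremum over $(x,\eps,\mu)$, we derive analogous $P$-th moment bounds on spatial-temporal increments $Z_{\eps,\mu}(x)-Z_{\eps,\mu}(x')$ exploiting parabolic H\"older continuity of $K_{N,\mu}$; on the $\mu$-derivative $\partial_\mu Z_{\eps,\mu}$ via the flow equation \eqref{eq:flow2} for $\partial_\mu\xi^a$, which produces quantities again controlled by Lemma~\ref{lem:cumul} (now with $t(\bma)=1$); and on $\eps$-increments, using that $\partial_\eps\Xi^{a,\mfl^a}_{\eps,\mu}$ is the generalized coefficient with $s$-index raised by one, so Lemma~\ref{lem:cumul} applies with the extra factor $\eps^{-1+c}$. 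A standard dyadic chaining on $\Lambda \times (0,1]^2$ then converts these pointwise bounds into the desired supremum estimate. The slack $\mu^{2\eta}$ with $\eta = (2+n/r)/P$ is absorbed into the Kolmogorov losses proportional to the parabolic dimension $n+2$ of $\Lambda$ together with the two dimensions of the parameter space $(\eps,\mu)$; one takes $P$ sufficiently large and $N_0 \geqslant N_2$ large enough that $K_{N_0^{\Gamma+1},\mu}$ can absorb the additional convolutions needed for the spatial regularity used in the chaining step.

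The main obstacle is aligning the pointwise moment computation with the mixed $L^\infty L^r L^1$ topology hidden in $\nnorm{\cdot}_N$: the exponent $(|\pi_k|-1)(2+n/r)$ built into \eqref{eq:defScalingCum} is precisely the loss one incurs when H\"older's inequality trades an $L^\infty$ coordinate for $L^{r'}_\ttx L^\infty_{x_0}$ on the remaining $|\pi_k|-1$ arguments, so the exponent counting closes exactly once this substitution is matched with the scaling of $K_{N,\mu}$ in the dual topology. Once this bookkeeping is set up, the remaining Kolmogorov chaining follows standard lines.
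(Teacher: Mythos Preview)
Your proposal is correct and follows essentially the same route as the paper: cumulant bounds from Lemma~\ref{lem:cumul} are converted to pointwise $P$-th moment bounds via the moment--cumulant formula and H\"older's inequality in the mixed $L^\infty L^r L^1$ topology, and then a Kolmogorov-type argument upgrades these to a supremum over $(x,\eps,\mu)$. The paper packages these two steps into black-box Lemmas~\ref{lem:Duch8.17} and~\ref{lem:Kolmo} (and uses Lemma~\ref{lem:Duch9.10} to commute the time weight $u$ past $K_{N,\mu}$, a point you glossed over), and works with $L^P_x$ moments first before upgrading to $L^\infty_x$ via the extra smoothing in $K_{N_0^{\Gamma+1},\mu}$ rather than controlling spatial increments directly, but the substance is the same.
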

\begin{proof}  
Let $(a,\mfl^a)\in\mfM_{\rel}$, $s\in\{0,1,2\}$ and $t\in\{0,1\}$ be fixed. Throughout this proof, we write $\tilde a\eqdef(a,\mfl^a,s,t)\in\widetilde{\mfM}$ and for $p\in[P]$ we set 
\begin{equs}
 \text{$\tilde a^p$}\eqdef \underbrace{(\tilde a,\dots,\tilde a)}_{p\; \text{\footnotesize times}}\in\widehat\mfM\,.
\end{equs}
We first aim to establish that
\begin{equs}   \label{eq:FirstStep} \E[\norm{\partial_\eps^s\partial_\mu^tK_{N_3^{\Gamma+1},\mu}(u\Xi^{a,\mfl^a}_{\eps,\mu})}_{L_x^P(\Lambda)}^P]\lesssim \Big(\eps^{s(-1+c)}\mu^{|a|+|\mfl^a|-sc-t-\eta}\Big)^P\,.
\end{equs}
The thesis then follows combining \eqref{eq:FirstStep} with Lemma~\ref{lem:Kolmo}.

It remains to prove \eqref{eq:FirstStep}. In view of the definition~\ref{eq:defXi} of $\Xi_\epmu^{a,\mfl^a}$, for every $p\in[P]$, it holds
\begin{equs}    {}&\norm{\kappa_p\big(K_{N_2^{\Gamma+1},\mu}\partial_\eps^s\partial_\mu^t\Xi^{a,\mfl^a}_{\eps,\mu}(x_1),\dots,K_{N_2^{\Gamma+1},\mu}\partial_\eps^s\partial_\mu^t\Xi^{a,\mfl^a}_{\eps,\mu}(x_p)\big)}_{L^\infty_{x_1}L^r_{{\tt x}_2,\dots,{\tt x}_p}L^1_{{ x}_{2,0},\dots,{ x}_{p,0}}(\Lambda^p)}\\&\qquad\qquad\lesssim\nnorm{\Kappa_{\eps,\mu}^{\text{\scriptsize{$\tilde a^p$}}}}_{N_2^{\Gamma+1}}\lesssim \Big(\eps^{s(-1+c)}\mu^{|a|+|\mfl^a|+2+n/r-sc-t-\eta}\Big)^P\,,
\end{equs}
where on the second inequality we used \eqref{eq:Boundcum}. To obtain the first inequality, we used the fact that we can bound the integrals in the definition of $\Xi_\epmu^{a,\mfl^a}$ by $L^1$ norms, and that by stationarity one has the freedom to introduce the kernels $K_{N_2^{\Gamma+1},\mu}$ in front of each variable $y_a$.

Thus, applying Lemma~\ref{lem:Duch8.17} to $\lambda=\partial_\eps^s\partial_\mu^t\Xi^{a,\mfl^a}_{\eps,\mu}$ and taking $N_3\geqslant N_2+2+n/r$, we have
\begin{equs}    \E[\big(K_{N_3^{\Gamma+1},\mu}\partial_\eps^s\partial_\mu^t\Xi^{a,\mfl^a}_{\eps,\mu}(x)\big)^P]\lesssim \Big(\eps^{s(-1+c)}\mu^{|a|+|\mfl^a|-sc-t-\eta}\Big)^P
\end{equs}
uniformly in $x\in\Lambda$ and $\epmu\in(0,1]$, which implies that for any time weight $v\in\mcD(\R)$, we have
\begin{equs}    \E[\norm{vK_{N_3^{\Gamma+1},\mu}\partial_\eps^s\partial_\mu^t\Xi^{a,\mfl^a}_{\eps,\mu}}_{L_x^P(\Lambda)}^P]\lesssim \Big(\eps^{s(-1+c)}\mu^{|a|+|\mfl^a|-sc-t-\eta}\Big)^P\,.
\end{equs}
Therefore, using Lemma~\ref{lem:Duch9.10} with $\lambda=\partial_\eps^s\partial_\mu^t\Xi^{a,\mfl^a}_{\eps,\mu}$, we can conclude that uniformly in $\epmu\in(0,1]$
\begin{equs}   \label{eq:BoundForceIntermediate} \E[\norm{K_{N_3^{\Gamma+1},\mu}(u\partial_\eps^s\partial_\mu^t\Xi^{a,\mfl^a}_{\eps,\mu})}_{L_x^P(\Lambda)}^P]\lesssim \Big(\eps^{s(-1+c)}\mu^{|a|+|\mfl^a|-sc-t-\eta}\Big)^P\,.
\end{equs}
which is the desired result \eqref{eq:FirstStep} for $t=0$. Regarding the case $t=1$, rewriting
\begin{equs}
    \d_\eps^s\d_\mu \big(K_{N_3^{\Gamma+1},\mu}(u\Xi_{\eps,\mu}^{a,\mfl^a})\big)= \mcP^{N_3^{\Gamma+1}}_\mu\d_\mu K_{N_3^{\Gamma+1},\mu}K_{N_3^{\Gamma+1},\mu} \big( u\d_\eps^s \Xi_{\eps,\mu}^{a,\mfl^a}\big)+ K_{N_3^{\Gamma+1},\mu} \big(u\d_\eps^s\d_\mu\Xi_{\eps,\mu}^{a,\mfl^a}\big)\,,
\end{equs} 
and combining the latter with an application of \eqref{eq:derivKmu} and \eqref{eq:BoundForceIntermediate}
 yields \eqref{eq:FirstStep}, which finally concludes the proof.

\end{proof}
The following technical lemmas from \cite{Duch21} are required in the proof of Lemma~\ref{lem:ConcluProba}. First, we need an adaptation of \cite[Lemma 8.17]{Duch21} to our $L^{r,1}$ norms:
\begin{lemma}\label{lem:Duch8.17}
    Suppose that the cumulants of a random field $\lambda\in C^\infty(\Lambda)$ verify for $p\in[P]$
    \begin{equs}        \norm{\kappa_p\big(\lambda(x_1),\dots,\lambda(x_p)\big)}_{L^\infty_{x_1}L^r_{{\tt x}_2,\dots,{\tt x}_p}L^1_{{ x}_{2,0},\dots,{ x}_{p,0}}(\Lambda^p)}\lesssim1\,.
    \end{equs}
    Then, for every $N\geqslant 2+n/r$, it holds
    \begin{equs}        \E[\big(K_{N,\mu}\lambda(x)\big)^P]\lesssim \mu^{-P(2+n/r)}
    \end{equs}
    uniformly in $x\in\Lambda$ and $\mu\in(0,1]$.
\end{lemma}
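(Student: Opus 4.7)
The plan is to expand the $P$-th moment via the moment-cumulant formula, bound each partition term using the hypothesis together with mixed-norm H\"older estimates on $K_{N,\mu}$, and finally sum over partitions.

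First, write
\begin{equs}
  \E\big[(K_{N,\mu}\lambda(x))^P\big]
  = \int_{\Lambda^P} \prod_{i=1}^P K_{N,\mu}(x-x_i)\,
  \E\Big[\prod_{i=1}^P \lambda(x_i)\Big] \rmd x_1 \cdots \rmd x_P\;,
\end{equs}
and apply the moment-to-cumulant identity
\begin{equs}
  \E\Big[\prod_{i=1}^P \lambda(x_i)\Big]
  = \sum_{\pi \in \mcP([P])} \prod_{B \in \pi} \kappa_{|B|}\big((\lambda(x_i))_{i \in B}\big)\;.
\end{equs}
For each block $B \in \pi$, designate a distinguished index $i_B \in B$ and use the hypothesis to bound the cumulant $\kappa_{|B|}$ in the topology where $i_B$ carries the $L^\infty$ norm and the remaining $|B|-1$ variables carry the $L^r_\ttx L^1_{x_0}$ norm.

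The main step is then the following kernel estimate. For $N \geqslant 2 + n/r$, the scaling properties of $K_{N,\mu}$ give
\begin{equs}\label{eq:kernelbound-planned}
  \norm{K_{N,\mu}}_{L^1(\Lambda)} \lesssim 1\;, \qquad
  \norm{K_{N,\mu}}_{L^{r'}_\ttx L^\infty_{x_0}(\Lambda)} \lesssim \mu^{-(2+n/r)}\;,
\end{equs}
where $r' = r/(r-1)$ is the H\"older conjugate of $r$. The first bound is the standard $L^1$ normalization of $K_{N,\mu}$; the second follows by counting: $K_{N,\mu}$ is morally $\mu^{-(n+2)}$ on a set of spatial width $\mu$ and time width $\mu^2$, so its $L^\infty_{x_0}$-slice in $\ttx$ has $L^{r'}_\ttx$-norm of order $\mu^{-(n+2)} \cdot \mu^{n/r'} = \mu^{-(2+n/r)}$. (The exact bound appears among the auxiliary heat-kernel estimates used throughout Section~\ref{sec:Sec4}, and the requirement $N \geqslant 2+n/r$ ensures that $K_{N,\mu}$ has enough regularity for the convolutions below to be defined.)

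With \eqref{eq:kernelbound-planned} in hand, for a fixed partition $\pi$ with blocks $B_1, \dots, B_k$ of sizes $p_1, \dots, p_k$, integrate each block's contribution by pairing: the distinguished variable $x_{i_{B_j}}$ is paired with $K_{N,\mu}(x - x_{i_{B_j}})$ in $L^1$, and each of the remaining $p_j - 1$ variables is paired with $K_{N,\mu}(x - \cdot)$ in $L^{r'}_\ttx L^\infty_{x_0}$ (dual to the $L^r_\ttx L^1_{x_0}$ norm carried by the cumulant). This yields
\begin{equs}
  \bigg| \int_{\Lambda^{p_j}} \prod_{i \in B_j} K_{N,\mu}(x - x_i)\, \kappa_{|B_j|}\big((\lambda(x_i))_{i \in B_j}\big) \prod_{i \in B_j}\rmd x_i \bigg|
  \lesssim \mu^{-(2+n/r)(p_j-1)}\;,
\end{equs}
so multiplying over blocks gives $\mu^{-(2+n/r)(P-k)}$. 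Summing over the finite set of partitions of $[P]$ yields the bound $\mu^{-(2+n/r)(P-1)} \lesssim \mu^{-P(2+n/r)}$, which is the claimed estimate.

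The main subtlety to be careful about is the order of applying H\"older's inequality to match the mixed-norm structure $L^\infty_{x_1} L^r_{\ttx_2,\dots}L^1_{x_{2,0},\dots}$ of the cumulant hypothesis; one must take $L^1$ in time before $L^r$ in space (using duality $L^1$-$L^\infty$ in time first, then $L^r$-$L^{r'}$ in space) to avoid a commutation issue between non-commuting mixed norms. No further stochastic input beyond the moment-cumulant formula is needed, so the remainder of the argument is purely deterministic bookkeeping.
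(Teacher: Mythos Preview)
Your proposal is correct and follows essentially the same strategy as the paper: expand the $P$-th moment in terms of cumulants via the moment--cumulant formula, and control each contribution using the mixed-norm kernel bound $\norm{K_{N,\mu}}_{\mcL^{(r,1),\infty}}\lesssim\mu^{-(2+n/r)}$ (which is precisely \eqref{eq:KmuLp} with $(p,q)=(r,1)$, hence the requirement $N\geqslant 2+n/r$) together with $\norm{K_{N,\mu}}_{L^1}\lesssim 1$. The paper states the argument more tersely --- it first bounds the whole expression by $\mu^{-(P-1)(2+n/r)}$ times the mixed norm of the full moment and then invokes the moment--cumulant relation in one line --- whereas you carry out the partition expansion explicitly and factor the integral over blocks before applying H\"older. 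Your block-by-block treatment is in fact the cleaner way to justify that last step, since it avoids having to control $\|\kappa_{|B|}\|_{L^{r,1}}$ in \emph{all} variables for blocks not containing the distinguished index (a bound not directly furnished by the hypothesis on an unbounded time domain).
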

\begin{proof}
First, write
\begin{equs}
    \E[\big(K_{N,\mu}\lambda(x)\big)^P]&=\int_{\Lambda^P}K_{N,\mu}(x-y_1)\cdots K_{N,\mu}(x-y_P)\E[\lambda(y_1)\cdots\lambda(y_P)]\rmd y_1\dots\rmd y_P\\&\lesssim
    \norm{K_{N,\mu}^{\otimes P}*\E[\lambda^{\otimes P}](x_1,\cdots,x_P)}_{L_{x_1,\dots,x_P}^\infty(\Lambda^P)}\,,
\end{equs}
where $*$ denote the convolution of $\Lambda^P$. Using \eqref{eq:KmuLp}, or more precisely the fact that $K_{N,\mu}$ is an exterior product of some space and time components, and that the $\mcL(L^1,L^\infty)$ norm of the time component is of order $\mu^{-2}$ and the $\mcL(L^r,L^\infty)$ norm of the space component of order $\mu^{-n/r}$, we can therefore conclude that
\begin{equs}    \E[\big(K_{N,\mu}\lambda(x)\big)^P]&\lesssim    \mu^{-(P-1)(2+r/n)}\norm{\E[\lambda(x_1)\cdots\lambda(x_P)]}_{L^\infty_{x_1}L^r_{{\tt x}_2,\dots,{\tt x}_P}L^1_{x_{2,0},\dots,x_{P,0}}(\Lambda^P)}\,.
\end{equs}
Note that to derive the above inequality, we took care to first use the operators norms estimates in $x_{2,0},\cdots,x_{P,0}$, and only after in ${\tt x}_{2,0},\cdots,{\tt x}_{P,0}$.

The statement now follows from the relation between moments and cumulants.
\end{proof}
Then, we recall \cite[Lemma 9.10]{Duch21}, that allows to put the time weights inside the kernels $K_{N,\mu}$:
\begin{lemma}\label{lem:Duch9.10}
    Fix $N\in\N$, $u\in\mcD(\R)$, and $\lambda\in C^\infty(\Lambda)$. It holds
    \begin{equs}
        \norm{K_{N,\mu}(u\lambda)}_{L_x^1(\Lambda)}\lesssim\sum_{M=0}^N \mu^{M}   \norm{\partial_{x_0}^M u K_{N,\mu}\lambda}_{L_x^1(\Lambda)}
    \end{equs}
    uniformly in $\mu\in(0,1]$.
\end{lemma}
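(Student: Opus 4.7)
The approach is to Taylor expand $u(y_0)$ in the time variable around $x_0$ and exploit the tensor-product structure of $K_{N,\mu}$. The kernel factors as $K_{N,\mu}(x)=K^{\mathrm{t}}_{N,\mu}(x_0)\,K^{\mathrm{sp}}_{N,\mu}(\ttx)$, where the time component is $K^{\mathrm{t}}_{N,\mu}(t)=\frac{t^{N-1}}{(N-1)!\,\mu^{2N}}e^{-t/\mu^2}\1_{t>0}$, i.e.\ the $N$-fold convolution of $\mu^{-2}e^{-\cdot/\mu^2}\1_{\cdot>0}$. A direct computation gives the key algebraic identity
\begin{equs}
t^{M}K^{\mathrm{t}}_{N,\mu}(t)=\frac{(N+M-1)!}{(N-1)!}\,\mu^{2M}\,(K^{\mathrm{t}}_{M,\mu}*K^{\mathrm{t}}_{N,\mu})(t),
\end{equs}
which converts monomial factors $(y_0-x_0)^M$ into extra time-convolutions by probability densities at a cost of $\mu^{2M}$.

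First, I would write $u(y_0)=\sum_{M=0}^{N-1}\frac{(y_0-x_0)^M}{M!}\partial_{x_0}^M u(x_0)+\frac{1}{(N-1)!}\int_0^1(1-\tau)^{N-1}(y_0-x_0)^N\partial_{x_0}^N u(x_0+\tau(y_0-x_0))\,\rmd\tau$ and substitute into the convolution integral defining $K_{N,\mu}(u\lambda)(x)$. For each polynomial term, the above identity together with Fubini turns the contribution into
\begin{equs}
\frac{(-1)^M c_{N,M}}{M!}\,\mu^{2M}\,\partial_{x_0}^M u(x_0)\;\big(K^{\mathrm{t}}_{M,\mu}*_{t}K_{N,\mu}\lambda\big)(x),
\end{equs}
where $*_{t}$ denotes convolution in the time variable only. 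Taking $L^1_x$-norm and applying Young's inequality in time with $\|K^{\mathrm{t}}_{M,\mu}\|_{L^1(\R)}=1$ yields a bound of the form $\mu^{2M}\|\partial_{x_0}^M u\cdot K_{N,\mu}\lambda\|_{L^1_x}$, up to replacing $\partial_{x_0}^M u$ by the smoothed weight $K^{\mathrm{t}}_{M,\mu}*|\partial_{x_0}^M u|$; since $u\in\mcD((-\infty,1])$ is smooth with bounded derivatives, this smoothing is harmless. Finally, since $\mu\in(0,1]$ and $M\geqslant 0$ one has $\mu^{2M}\leqslant\mu^M$, giving the claimed inequality for the polynomial terms.

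The main technical subtlety will be bounding the Taylor remainder, which features $(y_0-x_0)^N\partial^N u(x_0+\tau(y_0-x_0))$ and does not factor as a pure function of $x_0$ times a kernel in $(x-y)$. I would handle it by writing out the $\tau$-integral, applying the identity with $M=N$ to absorb $(y_0-x_0)^N$ against $K^{\mathrm{t}}_{N,\mu}$ at the price $\mu^{2N}$, and then performing the change of variable $s=x_0+\tau(y_0-x_0)$ to express the remainder as a weighted average of $\partial^N u(s)$ against a probability density supported on the segment joining $x_0$ and $y_0$. Fubini followed by Young's inequality in time then yields a contribution of the form $\mu^{2N}\|\partial_{x_0}^N u\cdot K_{N,\mu}\lambda\|_{L^1_x}$, which is the $M=N$ term of the stated bound (again using $\mu^{2N}\leqslant\mu^N$). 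Summing over $M$ closes the estimate uniformly in $\mu\in(0,1]$.
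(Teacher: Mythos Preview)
Your overall strategy—exploiting the explicit Gamma form of the time kernel—is sound, and the identity $t^{M}K^{\mathrm{t}}_{N,\mu}(t)=\frac{(N+M-1)!}{(N-1)!}\mu^{2M}K^{\mathrm{t}}_{N+M,\mu}(t)$ is correct. The gap is precisely where you write ``this smoothing is harmless''. After your Taylor expansion, the $M$-th polynomial contribution reads $c_{N,M}\mu^{2M}\,\partial_{x_0}^{M}u(x_0)\,(K^{\mathrm{t}}_{M,\mu}*_{t}K_{N,\mu}\lambda)(x)$ with $\partial^{M}u$ \emph{outside} the extra time convolution. Taking $L^{1}_{x}$ and Fubini gives $\mu^{2M}\int v_{M,\mu}(s_0)\,|K_{N,\mu}\lambda(s_0,\ttx)|\,\rmd s_0\rmd\ttx$ with $v_{M,\mu}(s_0)=\int_{0}^{\infty}K^{\mathrm{t}}_{M,\mu}(r)\,|\partial^{M}u(s_0+r)|\,\rmd r$. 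There is no reason for $v_{M,\mu}(s_0)\lesssim|\partial^{M}u(s_0)|$ pointwise (take any $s_0$ where $\partial^{M}u$ vanishes but is nonzero nearby), so this does not yield $\|\partial^{M}u\cdot K_{N,\mu}\lambda\|_{L^{1}}$. The same obstruction, only worse, appears in your remainder term, where $\partial^{N}u$ is evaluated at an interior point of the segment $[y_0,x_0]$; your sketch does not explain how to land on $\partial^{N}u$ evaluated at the \emph{same} point as $K_{N,\mu}\lambda$.

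The clean fix avoids Taylor expansion altogether and puts $\partial^{M}u$ \emph{inside} the residual convolution. Write $K_{N,\mu}(u\lambda)=K_{N,\mu}\big(u\,\mcR_{\mu}K_{N,\mu}\lambda\big)$ with $\mcR_{\mu}=(1+\mu^{2}\partial_{x_0})^{N}(1-\mu^{2}\Delta)^{N}$. Since $u$ depends only on time it commutes with the spatial factor, and the operator Leibniz identity
\[
M_{u}\,(1+\mu^{2}\partial_{x_0})^{N}=\sum_{M=0}^{N}\binom{N}{M}(-\mu^{2})^{M}(1+\mu^{2}\partial_{x_0})^{N-M}\,M_{\partial_{x_0}^{M}u}
\]
(which follows from $[\,1+\mu^{2}\partial_{x_0},M_{u}\,]=\mu^{2}M_{u'}$ iterated) gives the exact finite expansion
\[
K_{N,\mu}(u\lambda)=\sum_{M=0}^{N}\binom{N}{M}(-\mu^{2})^{M}\,(1+\mu^{2}\partial_{x_0})^{-M}\big[(\partial_{x_0}^{M}u)\,K_{N,\mu}\lambda\big].
\]
Now $(1+\mu^{2}\partial_{x_0})^{-M}$ is convolution in time with $K^{\mathrm{t}}_{M,\mu}$, a probability density, so Young's inequality yields $\|K_{N,\mu}(u\lambda)\|_{L^{1}}\leqslant\sum_{M=0}^{N}\binom{N}{M}\mu^{2M}\|\partial_{x_0}^{M}u\cdot K_{N,\mu}\lambda\|_{L^{1}}$ directly, with no smoothing issue and no remainder. (The paper does not reprove the lemma but only cites \cite{Duch21}; the argument above is the standard one.)
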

Finally, we recall the Kolmogorov estimates of \cite[Lemma~9.5]{Duch21}:
\begin{lemma}\label{lem:Kolmo}
Pick $(a,\mfl^a)\in\mfM_{\rel}$, and suppose that for all $c\in(0,c^\star]$ and all $\eta>0$ we have
\begin{equs}
  \max_{s\in\{0,1,2\}}\max_{t\in\{0,1\}}\sup_{\eps,\mu\in(0,1]}  \E\Big[\Big( \eps^{s(1-c)}\mu^{-|a|-|\mfl^a|+sc+t+\eta}\norm{\d_\eps^s\d_\mu^t\big(K_{N_3^{\Gamma+1},\mu}(u\Xi^{a,\mfl^a}_{\eps,\mu})\big)}_{L^P_x(\Lambda)}\Big)^P\Big]^{1/P}\lesssim_{P,a,\mfl^a,u} 1\,.
\end{equs}
Then, for every $N_0\geqslant N_3+2$, it holds
  \begin{equs}
  \max_{s\in\{0,1\}}  \E\Big[\Big(\sup_{\eps,\mu\in(0,1]} \eps^{s(1-c)}\mu^{-|a|-|\mfl^a|+sc+2\eta}\norm{\d_\eps^s\big(K_{N_0^{\Gamma+1},\mu}(u\Xi^{a,\mfl^a}_{\eps,\mu})\big)}_{L^\infty_x(\Lambda)}\Big)^P\Big]^{1/P}\lesssim_{P,a,\mfl^a,u} 1\,.
\end{equs}
\end{lemma}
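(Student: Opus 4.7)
The plan is to execute a standard Kolmogorov-type upgrade, closely paralleling \cite[Lemma~9.5]{Duch21}. The argument has two essentially independent ingredients: a spatial upgrade $L^P_x\to L^\infty_x$ using the extra $Q_\mu$ convolutions afforded by $N_0\geqslant N_3+2$, and a parameter-space Kolmogorov step exploiting the mixed derivatives $\partial_\eps^s\partial_\mu^t$ (for $s\in\{0,1,2\}$, $t\in\{0,1\}$) to move the supremum over $(\eps,\mu)\in(0,1]^2$ inside the expectation.

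For the spatial upgrade, I would factorize $K_{N_0^{\Gamma+1},\mu}=Q_\mu^{*\Delta N}*K_{N_3^{\Gamma+1},\mu}$ with $\Delta N=N_0^{\Gamma+1}-N_3^{\Gamma+1}\geqslant 2$, and apply a parabolic Sobolev embedding $W^{k,P}(\Lambda)\hookrightarrow L^\infty(\Lambda)$ valid for $k>(n+2)/P$. Commuting the required derivatives into the extra $Q_\mu^{*\Delta N}$ factor costs at worst $\mu^{-(n+2)/P}$, producing
\[
\|K_{N_0^{\Gamma+1},\mu}\,g\|_{L^\infty_x}\;\lesssim\;\mu^{-(n+2)/P}\,\|K_{N_3^{\Gamma+1},\mu}\,g\|_{L^P_x},
\]
applied with $g=\partial_\eps^s(u\Xi^{a,\mfl^a}_{\eps,\mu})$. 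The $\mu^{-(n+2)/P}$ loss must then be absorbed into the extra $\mu^{\eta}$ present in the conclusion's weight.

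For the parameter-space step, I would use a dyadic decomposition of $(0,1]^2$ into boxes $B_{i,j}:=[2^{-i-1},2^{-i}]\times[2^{-j-1},2^{-j}]$ for $i,j\in\N$. Setting $X_{\eps,\mu}:=\eps^{s(1-c)}\mu^{-|a|-|\mfl^a|+sc+2\eta}\|\partial_\eps^s K_{N_0^{\Gamma+1},\mu}(u\Xi^{a,\mfl^a}_{\eps,\mu})\|_{L^\infty_x}$ and applying the fundamental theorem of calculus twice, one controls $\sup_{(\eps,\mu)\in B_{i,j}} X_{\eps,\mu}$ by the value of $X$ at the base corner of $B_{i,j}$, plus the integrals of $\partial_\eps X$, $\partial_\mu X$, and $\partial_\eps\partial_\mu X$ over $B_{i,j}$. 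Differentiating the weight prefactors either raises the $s$- or the $t$-index by one, staying inside the range $s\leqslant 2$, $t\leqslant 1$ allowed by the hypothesis, or produces algebraic factors of $\eps^{-1}$ or $\mu^{-1}$ that are reabsorbed in the corresponding weights. Taking $P$-th powers, expectations, and summing over $(i,j)\in\N^2$ then yields a geometric series that converges provided each box contributes a bound decaying as $2^{-(i+j)\eta_0}$ for some $\eta_0>0$.

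The main obstacle is the bookkeeping of powers of $\mu$: the Sobolev loss $\mu^{-(n+2)/P}$ from the spatial step plus the dyadic-summation loss must be jointly absorbed by the single extra factor $\mu^{\eta}$ available in the conclusion. In the space-time white-noise case $(n,\alpha)=(1,1/2)$ one has $r=1$ and hence $(n+2)/P=\eta$ exactly, so there is no slack. In the non-white regime $r>1$, where the apparent gap is $n(r-1)/(rP)>0$, I would trade the two extra units $N_0-N_3\geqslant 2$ against Sobolev smoothness by running Young's inequality in the anisotropic $L^{r,1}$-topology used in the cumulant bound of Lemma~\ref{lem:cumul}, yielding an effective spatial loss of order $\mu^{-\eta}$ instead of $\mu^{-(n+2)/P}$. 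The boundary behavior near $\eps\downarrow 0$ or $\mu\downarrow 0$ is harmless since $(a,\mfl^a)\in\mfM_{\rel}$ forces $\mu^{-|a|-|\mfl^a|+sc+2\eta}$ to be a positive power of $\mu$ and $\eps^{s(1-c)}$ is a positive power of $\eps$.
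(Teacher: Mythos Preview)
The paper does not give its own proof here; the lemma is stated as a recollection of \cite[Lemma~9.5]{Duch21}, with the $\eps$-uniformity handled as in \cite[Lemma~13.6]{Duch22}. Your two-step plan---an $L^P_x\to L^\infty_x$ upgrade via the extra $Q_\mu$ factors, then a dyadic Kolmogorov argument in $(\eps,\mu)$ using the $\partial_\eps^s\partial_\mu^t$ bounds---is exactly the standard route taken in those references, and is correct.

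Your handling of the exponent in the last paragraph is where things go wrong. The spatial step costs exactly $\mu^{-(n+2)/P}$, since $\|Q_\mu^{*M}\|_{\mcL(L^P,L^\infty)}\sim\mu^{-(n+2)/P}$ by parabolic scaling (this is \eqref{eq:KmuLp} with $p=q=P$); no anisotropic $L^{r,1}$ refinement can improve this, because the hypothesis of the lemma is a moment bound on a single field in $L^P_x(\Lambda)$, not an $L^{r,1}$-type cumulant bound with integrated-out base points. The $L^{r,1}$ structure of Lemma~\ref{lem:cumul} has already been spent in Lemma~\ref{lem:Duch8.17} to produce the pointwise $P$-th moment, and is simply not available at this stage. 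The correct mechanism is the universal quantifier ``for all $\eta>0$'' in the hypothesis: you invoke it with an auxiliary $\eta_1>0$ as small as you like, absorb the Sobolev loss $(n+2)/P$ and the dyadic-summation loss, and obtain the conclusion for any $\eta$ with $2\eta>(n+2)/P+\eta_1$. In particular your claim of ``no slack'' in the white-noise case is also not right: there $\eta=(n+2)/P$, the conclusion carries weight $\mu^{2\eta}$, and after using the hypothesis with some $\eta_1<\eta$ and paying the Sobolev cost $\mu^{-\eta}$ one still has $\mu^{\eta-\eta_1}$ of room for the dyadic sum to converge.
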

\begin{remark}
  The statement of \cite[Lemma~9.5]{Duch21} does not include uniform control on the additional parameter $\eps\in(0,1]$. See \cite[Lemma~13.6]{Duch22} for a similar statement including the uniform control on $\eps$.
\end{remark}

\subsection{Deterministic post-processing}\label{subsec:44}
In this last subsection, we conclude the proof of Theorem~\ref{thm:sto} by a post-processing of the bound~\eqref{eq:coro423}: by means of a second induction, we show that the control over the integrated and relevant force coefficients is sufficient to control all the force coefficients.

\begin{proof}[of Theorem~\ref{thm:sto}]
Just like Lemma~\ref{lem:cumul}, Theorem~\ref{thm:sto} is proved by induction over the order of $a$. However, rather than the flow equation for cumulants, we can directly rely on the hierarchy of flow equations~\ref{eq:flowGen}. Moreover, as a consequence of the structure of \ref{eq:flowGen}, rather than $\partial^s_\eps\xi_\epmu^a$, we actually control all the $\xi_\epmu^{(a,\mfl^a,s,0)}$ for $\mfl^a\in\big(\N^{n+1}\big)^{[a]}$.

The base case is a straightforward consequence of Lemma~\ref{lem:ConcluProba}, since one has $\partial^s_\eps\xi_\epmu^{\1_0^\mfh}(x,y)=\partial^s_\eps\xi_\eps(x)\delta(x-y)$ so that $\partial^s_\eps\Xi_\epmu^{\1_0^\mfh,0}(x)=\partial^s_\eps\xi_\eps(x)$, and control on the $L^\infty_x$ norm of $\partial^s_\eps\Xi_\epmu^{\1_0^\mfh,0}$ directly entails control on the $L^\infty_xL^1_y$ norm of $\partial^s_\eps\xi_\epmu^{\1_0^\mfh}$. Note this importance of the time weight $u$ in order to control the noise on $(-\infty,1]\times\T^n$.

The induction step is also direct for the irrelevant force coefficients $\xi_\epmu^{(a,\mfl^a,s,0)}$, that is to say when $|a|+|\mfl^a|>0$, since, in this case, the RHS of \eqref{eq:flowGen} applied with $\tilde a=(a,\mfl^a,s,1)$ is integrable at $\mu=0$.

As in the proof of Lemma~\ref{lem:cumul}, we take care of the choice of the value of $N$ for the kernel $K_{N,\mu}$ in the norms for the irrelevant coefficients. Rather than directly working with $N=N_0^{2\Gamma+1}$, with actually show the desired bound for some $(a,\mfl^a)\in\mfM$ with $N=N_0^{\Gamma+\mfo(a)+1}$. Moreover, in the case of irrelevant coefficients, we rather
set $N=N_0^{\Gamma+\text{\scriptsize{$\mfo(a)$}}}$ rather than $N=N_0^{\Gamma+\text{\scriptsize{$\mfo(a)$}}+1}$, since we do not need to lose any factor in the induction. This is possible since we do not need to use any Sobolev embedding type bound.

We thus have the following estimate on irrelevant coefficients:
\begin{equs}  \label{eq:irrelcoefs}  \norm{K_{{N_0^{\Gamma+\text{\tiny{$\mfo(a)$}}}},\mu}^{\otimes[a]+1}(u\xi^{(a,\mfl^a,s,0)}_{\eps,\mu})}_{L_x^\infty L^1_{y^a}}&\lesssim \eps^{s(-1+c)}\mu^{|a|+|\mfl^a|-sc-2\eta}\,.
\end{equs}
It remains to deal with the case of relevant force coefficients $\xi_\epmu^{(a,\mfl^a,s,0)}$ for $(a,\mfl^a)\in\mfM_{\rel}$. In this case, the flow equation \eqref{eq:flowGen} is not integrable at $\mu=0$, so that we have to make use of \eqref{eq:localization} taking $\ell$ to be the smallest integer such that $|a|+|\mfl^a|+\ell>0$ (again $\ell\in\{1,2\}$). This yields the estimate
\begin{equs}  
\norm{K_{{N_0^{\Gamma+\text{\tiny{$\mfo(a)$}}+1}},\mu}^{\otimes[a]+1}(u\xi^{(a,\mfl^a,s,0)}_{\eps,\mu})}_{L_x^\infty L^1_{y^a}}&\lesssim\sum_{\text{\scriptsize{$\mfm$}}^a:|\text{\scriptsize{$\mfl$}}^a+\text{\scriptsize{$\mfm$}}^a|<\ell}\Big(
\prod_{\text{\scriptsize{$\mfk$}} ij\in[a]}\norm{\partial_{{ y}_{\text{\tiny{$\mfk$}}ij}^a}^{\text{\scriptsize{$\mfm$}}_{\text{\tiny{$\mfk$}} ij}^a}  K_{{N_0},\mu} }_{\mcL^{\infty,\infty}}\Big)
\norm{\d_\eps^s\big(K_{{N_0^{\Gamma+\text{\tiny{$\mfo(a)$}}}},\mu}(u\Xi^{a,\mfl^a+\mfm^a}_{\eps,\mu})\big)}_{L^\infty_x}\\&\quad+\sum_{\text{\scriptsize{$\mfm$}}^a:|\text{\scriptsize{$\mfl$}}^a+\text{\scriptsize{$\mfm$}}^a|=\ell}\int_0^1\norm{\partial_{{y}^a}^{\text{\scriptsize{$\mfm$}}^a} K_{{N_0^{\Gamma+\text{\tiny{$\mfo(a)$}}+1}},\mu}^{\otimes[a]+1}\text{$\bfL$}_\tau \big(u\xi^{(a,\mfl^a+\mfm^a,s,0)}_{\eps,\mu}\big)}_{L_x^\infty L^1_{y^a}}\rmd\tau\,.
\end{equs}
As in the proof of Lemma~\ref{lem:cumul}, we trade the derivatives $\partial_{y^a}^{\mfm^a}$ appearing in \eqref{eq:localization} for some bad factors using \eqref{eq:spacederivKmu} (taking $N_0$ large enough), which restores the scaling of $a$:
\begin{equs} 
{}\norm{K_{{N_0^{\Gamma+\text{\tiny{$\mfo(a)$}}+1}},\mu}^{\otimes[a]+1}(u\xi^{(a,\mfl^a,s,0)}_{\eps,\mu})}_{L_x^\infty L^1_{y^a}}\lesssim &\sum_{\text{\scriptsize{$\mfm$}}^a:|\text{\scriptsize{$\mfl$}}^a+\text{\scriptsize{$\mfm$}}^a|<\ell}\mu^{-|\text{\scriptsize{$\mfm$}}^a|}
\norm{\d_\eps^s\big(K_{{N_0^{\Gamma+\text{\tiny{$\mfo(a)$}}}},\mu}(u\Xi^{a,\mfl^a+\mfm^a}_{\eps,\mu})\big)}_{L^\infty_x}\\&+\sum_{\text{\scriptsize{$\mfm$}}^a:|\text{\scriptsize{$\mfl$}}^a+\text{\scriptsize{$\mfm$}}^a|=\ell}\mu^{-|\text{\scriptsize{$\mfm$}}^a|}\int_0^1\norm{K_{{N_0^{\Gamma+\text{\tiny{$\mfo(a)$}}+1/2}},\mu}^{\otimes[a]+1}\text{$\bfL$}_\tau \big(u\xi^{(a,\mfl^a+\mfm^a,s,0)}_{\eps,\mu}\big)}_{L_x^\infty L^1_{y^a}}\rmd\tau\,.
\end{equs}
The first term of the RHS can now be controlled using \eqref{eq:coro423}, while we can get rid of $\text{$\bfL$}_\tau$ in the second term using \eqref{eq:6_12B}, and thus enforcing $N_0\geqslant36$. As in the proof of Lemma~\ref{lem:cumul}, this yields
\begin{equs}    \norm{K_{{N_0^{\Gamma+\text{\tiny{$\mfo(a)$}}+1/2}},\mu}^{\otimes[a]+1}\text{$\bfL$}_\tau \big(u\xi^{(a,\text{\scriptsize{$\mfl^a+\mfm^a$}},s,0)}_{\eps,\mu}\big)}_{L_x^\infty L^1_{y^a}}\lesssim
\norm{K_{{N_0^{\Gamma+\text{\tiny{$\mfo(a)$}}}},\mu}^{\otimes[a]+1}(u\xi^{(a,\text{\scriptsize{$\mfl^a+\mfm^a$}},s,0)}_{\eps,\mu})}_{L_x^\infty L^1_{y^a}}\lesssim\eps^{s(-1+c)}\mu^{|a|+|\mfl^a|+|\mfm^a|-sc-2\eta}\,,
\end{equs}
where on the last inequality we used \eqref{eq:irrelcoefs}. This concludes the proof.
\end{proof}
 
\section{Construction of the non-stationary effective force}\label{Sec:Sec5}
This section is devoted to the proof of Theorem~\ref{coro:2}.
We construct the non-stationary force coefficients $(\zeta^a_\epmu)^{a\in\mcM}_{\epmu\in(0,1]}$ starting from the stationary force coefficients $(\xi^a_\epmu)^{a\in\mcM}_{\epmu\in(0,1]}$ by deterministic means, and with the input that the stationary force coefficients verify the estimate \eqref{eq:boundXi}, which was proven in Section~\ref{sec:Sec4}.

First, recall that we can express the stationary effective force $S_\epmu$ defined as
\begin{equs}
     S_\epmu[\psi](x)=\sum_{a\in\mcM} S^a_\epmu[\psi](x)=\sum_{a\in\mcM}\int_{\Lambda^{[a]}} \langle \xi^a_\epmu(x,y^a) ,\Upsilon^a[\psi](y^a)\rangle_{\mcH^a}\rmd y^a\,,
\end{equs}
and that it verifies $S_\epmu=\Pi^{\leqslant\Gamma}S_\epmu$ and solves $\Pol_\mu^{\leqslant\Gamma}(S_\epmu)=0$ with initial condition $S_\eps$.

We now aim to make sense of the solution $F_\epmu$ to $\Pol_{\mu}^{\leqslant\Gamma}(F_\epmu)=0$ with initial condition $F_{\eps,0}=\1^0_{(0,\infty)}S_\eps[\bigcdot]$. The strategy is to show that far from 0, $F_\epmu$ coincides with $S_\epmu$, and that it is therefore constructed with the same renormalization, which ultimately turns out to be the same renormalization necessary to make sense of $S_\epmu$.

Moreover, since for $\psi\in\mcD(\Lambda)$, $F_\eps[\psi](x)$ is supported on positive times, one is only interested in constructing $F_\epmu[\psi](x)$ supported on positive times. We therefore make the ansatz that $F_\epmu$ is of the form
\begin{equs}\label{eq:Fmu_ansatz}
     F_\epmu[\psi](x)=\sum_{a\in\mcM} F^a_\epmu[\psi](x)=\sum_{a\in\mcM}\int_{\Lambda^{[a]}} \langle \zeta^a_\epmu(x,y^a) ,\Upsilon^a[\psi](y^a)\rangle_{\mcH^a}\rmd y^a\,,
\end{equs}
for $(\zeta^a_\epmu)^{a\in\mcM}_{\epmu\in(0,1]}$ a collection of force coefficients such that $\zeta^a_\epmu(x,y^a)=\1_{(0,\infty)}(x_0)\zeta^a_\epmu(x,y^a)$.

The crucial observation in Section 10.4 of \cite{Duch21} is that $\xi^a_\epmu$ and $\zeta^a_\epmu$ coincide when evaluated sufficiently far (depending on $\mu$) from the the zero time plane in their first arguments. 
This is the content of the following lemma, analogous to \cite[Lemma~10.47]{Duch21}.  
\begin{lemma}\label{lem:support}
  Fix $a\in\mcM$, $\epmu\in(0,1]$ and $y^a\in\Lambda^{[a]}$. For all $x=(x_0,\ttx)\in (2\mu^2\mfo(a),1]\times\T^n$, it holds
    \begin{equs}
        \zeta^a_\epmu(x,y^a)=\xi^a_\epmu(x,y^a)\,.
    \end{equs}
\end{lemma}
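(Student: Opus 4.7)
The approach is an induction on $\mfo(a)$, leveraging the fact that $\zeta^a_\epmu$ and $\xi^a_\epmu$ satisfy identical flow equations of the form \eqref{eq:flow2}; the only difference between them is the initial datum, since $F_{\eps,0}[\bigcdot] = \1^0_{(0,\infty)}\big(\tS_{\eps,0}[\bigcdot]\big)$ gives $\zeta^a_{\eps,0}(x,y^a) = \1_{(0,\infty)}(x_0)\,\xi^a_{\eps,0}(x,y^a)$. Setting $\delta^a_\epmu \eqdef \zeta^a_\epmu - \xi^a_\epmu$, the lemma reduces to showing $\delta^a_\epmu(x,y^a) = 0$ whenever $x_0 > 2\mu^2\mfo(a)$.

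The base case $\mfo(a)=0$ is immediate: the hierarchical property $\mfo(b),\mfo(c)\leqslant\mfo(a)-1$ forces the sum in \eqref{eq:flow2} to be empty, so $\zeta^a_\epmu$ and $\xi^a_\epmu$ are both constant in $\mu$ and equal to their $\mu=0$ values. On the region $x_0 > 2\mu^2\mfo(a) = 0$ the indicator $\1_{(0,\infty)}(x_0)$ equals $1$, which yields the claim.

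For the inductive step, integrating the common flow equation and using bilinearity of $\B_\nu$ yields
\begin{equs}
\delta^a_\epmu(x,y^a) = \delta^a_{\eps,0}(x,y^a) - \int_0^\mu \sum_{(\sigma,b,c,\bfd) \in \mathrm{Ind}(a)} \bigl[\B_\nu(\delta^b_\epnu,\zeta^c_\epnu) + \B_\nu(\xi^b_\epnu,\delta^c_\epnu)\bigr](x,y^a)\,\rmd\nu\,.
\end{equs}
Assume $x_0 > 2\mu^2\mfo(a)$. The initial term vanishes since $x_0 > 0$. The term $\B_\nu(\delta^b,\zeta^c)$ factors through the value $\delta^b(x,y^b)$ at the original first argument $x$, and since $\mfo(b)\leqslant\mfo(a)-1$ and $\nu\leqslant\mu$, one has $x_0 > 2\mu^2\mfo(a) \geqslant 2\nu^2\mfo(b)$, so $\delta^b(x,y^b) = 0$ by induction.

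The main subtlety lies in the term $\B_\nu(\xi^b,\delta^c)(x,y^a) = C\,\xi^b(x,y^b)\int \partial^{k_0+1-k_1}_{\ttx}\dot G_\nu(z-w)\,\delta^c(w,y^c)\,\rmd w$. For this to be nonzero, three support conditions must hold simultaneously: (i) the support property \eqref{eq:supportXi} of $\xi^b$, applied to the coordinate $z = y^b_{\mfk_0 k_0 b^{\mfk_0}_{k_0}}$, forces $z_0 \geqslant x_0 - 2\nu^2\mfo(b)$; (ii) the support of $\dot G_\nu$ forces $w_0 \in [z_0-2\nu^2, z_0-\nu^2]$; and (iii) the inductive hypothesis applied to $\delta^c$ forces $w_0 \leqslant 2\nu^2\mfo(c)$. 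Combining (ii) and (iii) gives $z_0 \leqslant 2\nu^2(\mfo(c)+1)$, which together with (i) yields $x_0 \leqslant 2\nu^2(\mfo(b)+\mfo(c)+1)$. A direct case check across the four families of derivators in $\mcD$ shows $\mfo(d(\bfd))=1$ in every case, so the decomposition $a = b+c+d(\bfd)$ gives $\mfo(a) = \mfo(b)+\mfo(c)+1$, whence $x_0 \leqslant 2\nu^2\mfo(a) \leqslant 2\mu^2\mfo(a)$, contradicting the standing assumption. Hence all contributions vanish and the induction closes. The only real difficulty is this last bookkeeping of the support constraints, which hinges on verifying that each derivator contributes exactly one unit of order.
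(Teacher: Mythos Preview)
Your proof is correct and follows essentially the same approach as the paper: induction on $\mfo(a)$, using that $\xi^a$ and $\zeta^a$ satisfy the same flow hierarchy with initial data agreeing on $\{x_0>0\}$, then combining the support property \eqref{eq:supportXi} of $\xi^b$ with the support of $\dot G$ to propagate the induction hypothesis to the $c$-factor. The only cosmetic differences are that you work with the integrated difference $\delta^a$ and argue by contradiction, whereas the paper shows $\partial_\mu\zeta^a=\partial_\mu\xi^a$ directly at scale $\mu$; your explicit verification that $\mfo(d(\bfd))=1$ for every derivator is a detail the paper uses implicitly when writing $\mfo(a)-\mfo(b)-1=\mfo(c)$.
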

\begin{proof}
    The proof is by recursion on the order of $a$. Since on positive times, $\xi_{\eps,0}^a(x,y^a)=\zeta_{\eps,0}^a(x,y^a)=\mfc^a_\eps\delta^a(x,y^a)$, it suffices to prove that $\d_\mu\xi_\epmu^a(x,y^a)=\d_\mu\zeta_\epmu^a(x,y^a)$ for $x_0>2\mu^2\mfo(a)$. Moreover, we know that it holds
\begin{equs}
      \d_\mu &\xi^a_\epmu(x,y^a)=-\sum_{\substack{(\sigma,b,c,\bfd)\in\text{\scriptsize{$\mathrm{Ind}(a)$}}}}  
    \B_\mu(\xi^b_\epmu,\xi^c_\epmu)(x,y^a)
    \,.
\end{equs}
By construction, the $(\zeta^a_\epmu)^{a\in\mcM}_{\epmu\in(0,1]}$ solve the same hierarchy of equations:

\begin{equs}\label{eq:flowzeta}
       \d_\mu &\zeta^a_\epmu(x,y^a) =-\sum_{\substack{(\sigma,b,c,\bfd)\in\text{\scriptsize{$\mathrm{Ind}(a)$}}}}      \B_\mu(\zeta^b_\epmu,\zeta^c_\epmu) (x,y^a)
    \,.
\end{equs}
The recursion hypothesis already ensures that we can replace $\zeta^b_\epmu$ by $\xi^b_\epmu$ on the RHS. At this stage, we need to us the support property of $\xi^a_\epmu$ stated in \eqref{eq:supportXi}. This property implies that, using the notation $z,w$ as in \eqref{eq:defB}, we have $z_0\geqslant x_0-2\mu^2\mfo(b)>2\mu^2(\mfo(a)-\mfo(b))$. In the second inequality, we have used the hypothesis that $x_0>2\mu^2\mfo(a)$. Moreover, since $\dot G_\mu$ is supported on $[0,2\mu^2]\times\T^n$, we therefore have $w_0\geqslant z_0-2\mu^2>2\mu^2(\mfo(a)-\mfo(b)-1)=2\mu^2\mfo(c)$ so that by the induction hypothesis we can replace $\zeta^c_\epmu$ by $\xi^c_\epmu$. This confirms that after a time $2\mu^2\mfo(a)$, $ \d_\mu \zeta^a_\epmu =\d_\mu \xi^a_\epmu$, and therefore concludes the proof. 
\end{proof}
Since $\zeta^a_\epmu(x,y^a)$ is now well-defined for $x_{0} > 2\mu^2\mfo(a)$, it remains to construct it for $x_0\in(0,2\mu^2\mfo(a)]$. 
Here we follow the proof of  \cite[Theorem~10.50]{Duch21} and leverage the fact that the region $(0,2\mu^2\mfo(a)]\times\T^n$ has Lebesgue measure of order $\mu^2 $ which will result in a good factor allowing one to integrate the flow equation for $\zeta^a_\epmu(x,y^a)$, provided one works with some $L^1$ norms in time instead of the usual $L^\infty$ norms. 

\begin{proof}[of Theorem~\ref{coro:2}]
 To lighten notation, we only prove the bound for $\Norm{\zeta}_{P,\eta}$, the proof for $\Norm{\zeta}_{P,\eta,c}$ being totally similar, and only requiring a heavier notation. Moreover, we only prove the statement for $\mfl^a=0$. Again, the generalisation to other polynomial weights is straightforward and would just make the notation heavier.

Recall the two collections of time weights $v=(v_\mu)_{\mu\in(0,1]}$, $w=(w_\mu)_{\mu\in(0,1]}$ introduced in Definition~\ref{def:weightvw}. Because we need more room, we define two other families $\hat v=(\hat v_\mu)_{\mu\in(0,1]}$, $\hat w=(\hat w_\mu)_{\mu\in(0,1]}$ by $\hat v_\mu(t)\eqdef v_\mu(t/\Gamma)$ and $\hat w_\mu(t)\eqdef w_\mu(t/\Gamma)$. These weights are defined in such a way that we have $\hat v_\mu\in\mcW^\infty_{N,\mu}$ and $\hat w_\mu\in\mcW^1_{N,\mu}\cap\mcW^\infty_{N,\mu}$ for all $N\in\N$ and that it holds 
\begin{equs}    \text{supp}\,\hat v_\mu\subset[2\Gamma\mu^2,\infty)\,,\;\text{and}\;\text{supp}\,\hat w_\mu\subset[-3\Gamma\mu^2,3\Gamma\mu^2]\,,
\end{equs}
as well as $\hat v_\mu(t)+\hat w_\mu(t)=1$ if $t\geqslant0$.

Recall that for any $a\in\mcM$, any function $\psi:\Lambda^{[a]+1}\rightarrow\R$ and any time weight $u$, we write $u \psi(x,y^a)=u(x_0)\psi(x,y^a)$. Using Lemma~\ref{lem:support}, for $x\in\Lambda$, we thus have
\begin{equs} \label{eq:zetamu}   \zeta^a_\epmu(x,y^a)=\hat v_\mu\xi_\epmu^a(x,y^a)+\hat w_\mu\zeta^a_\epmu(x,y^a)\,.
\end{equs}

Again, in this induction, we are careful about the value of $N$ in the kernels $K_{N,\mu}$, and rather than directly showing the desired result for $N=N_1^{3\Gamma+1}$, in the induction step for some $a\in\mcM$, we actually show it for $N=N_1^{2\Gamma+\mfo(a)+1}$.

We deal with the first term in \eqref{eq:zetamu} as follows: we want to control
\begin{equs}    \nnorm{\hat v_\mu\xi^a_\epmu}_{N_1^{2\Gamma+\mfo(a)+1}}\,.
\end{equs}
Here, recall that the supremum over the time $x_0$ at which is evaluated $K_\mu^{\otimes[a]+1}(\hat v_\mu\xi^a_\epmu)(x,y^a)$ is taken over $(-\infty,1]$. Since the kernel of $K_\mu$ is supported on $\R_{\geqslant0}\times\T^n$, we have the freedom to add a smooth compacted function $u$ supported on $(-2\Gamma\mu^2-1,2]$ and which is equal to one on $(-2\Gamma\mu^2,1]$. $\hat v_\mu$ lies in $\mcW^\infty_{N,\mu}$. We can therefore use \eqref{eq:1052a} and the fact that $N_1\geqslant N_0$ to obtain the bound
\begin{equs}    \nnorm{\hat v_\mu\xi^a_\epmu}_{N_1^{2\Gamma+\mfo(a)+1}}=\nnorm{u\hat v_\mu\xi^a_\epmu}_{N_1^{2\Gamma+\mfo(a)+1}}\lesssim\nnorm{u\xi^a_\epmu}_{N_0^{2\Gamma+\mfo(a)+1}}\lesssim\nnorm{u\xi^a_\epmu}_{N_0^{2\Gamma+1}}\lesssim\mu^{|a|-\eta}\,.
\end{equs}
Let us turn to the second term in \eqref{eq:zetamu}. The proof is by induction on the order of $a$, and relies on the following flow equation:
\begin{equs}
    \d_\mu\big(\hat w_\mu\zeta^a_\epmu\big)(x,y^a)&=\hat w_\mu\d_\mu\zeta^a_\epmu(x,y^a)+(\d_\mu \hat w_\mu)\zeta^a_\epmu(x,y^a)\\
    &=\hat w_\mu\d_\mu\zeta^a_\epmu(x,y^a)-(\d_\mu \hat v_\mu)\xi^a_\epmu(x,y^a)\,.\label{eq:dmuwmuzeta}
\end{equs}
While we use \eqref{eq:boundXi} to control the second term in the RHS of \eqref{eq:dmuwmuzeta}, we use the flow equation \eqref{eq:flowzeta} to handle the first one. Indeed, taking the weight $\hat w_\mu$ into account, the flow equation \eqref{eq:flowzeta} rewrites
\begin{equs}
     \hat w_\mu  \d_\mu &\zeta^a_\epmu =-\sum_{\substack{(\sigma,b,c,\bfd)\in\text{\scriptsize{$\mathrm{Ind}(a)$}}}}      \B_\mu(\hat w_\mu\zeta^b_\epmu,\zeta^c_\epmu)     
    \,.\label{eq:flowwmuzetamu}
\end{equs}
Recall the notation $
        K_{N,\mu}^{\otimes[a]+1}\lambda^a_\epmu(x,y^a)=\big(K_{N,\mu}\otimes\dots\otimes K_{N,\mu}\big)*\lambda^a_\epmu(x,y^a)$. For $N\in\N_{\geqslant1}$, we define a new norm $\nnorm{\bigcdot}_{1,N}$ inspired by $\nnorm{\bigcdot}_{N}$ by setting
    \begin{equs}        \nnorm{\lambda^a_{\eps,\mu}}_{1,N}\eqdef \norm{K^{\otimes[a]+1}_{N,\mu}\lambda^a_{\eps,\mu}}_{L^{\infty,1}_{x}L^1_{y^a}(\Lambda^{[a]+1})}\equiv \sup_{\ttx \in\T^n}\int_{(-\infty,1]\times \Lambda^{[a]}} |K^{\otimes[a]+1}_{N,\mu} \lambda^a_{\eps,\mu}(x,y^a)|\rmd x_0\rmd y^a\,.
    \end{equs}    
The flow equation \eqref{eq:flowwmuzetamu} and \eqref{eq:propB} imply that
\begin{equs}    \nnorm{\hat w_\mu\d_\mu\zeta^a_\epmu}_{1,N_0^{2\Gamma+\mfo(a)+1}} &\lesssim 
\sum_{\substack{(\sigma,b,c,\bfd)\in\text{\scriptsize{$\mathrm{Ind}(a)$}}}}   
    \norm{\mcP^{2N_0^{2\Gamma+\mfo(a)+1}}_\mu\partial_{\ttx}^{k_0+1-k_1}\dot G_\mu}_{\mcL^{\infty,\infty}}\nnorm{\hat w_\mu\zeta^b_\epmu}_{1,N_0^{2\Gamma+\mfo(a)+1}}\nnorm{\zeta^c_\epmu}_{N_0^{2\Gamma+\mfo(a)+1}}
    \,.
\end{equs}
Here, we can use the induction hypothesis and \eqref{eq:1052b} to control 
\begin{equs}
    \nnorm{\hat w_\mu\zeta^b_\epmu}_{1,N_0^{2\Gamma+\mfo(a)+1}}\lesssim\mu^2
    \nnorm{\zeta^b_\epmu}_{N_0^{2\Gamma+\mfo(a)+1}}\lesssim
    \mu^2
    \nnorm{\zeta^b_\epmu}_{N_1^{2\Gamma+\mfo(b)}}
    \lesssim\mu^{|b|-\eta+2}\,,
\end{equs}
and
\begin{equs}   
    \nnorm{\zeta^c_\epmu}_{N_0^{2\Gamma+\mfo(a)+1}}\lesssim
        \nnorm{\zeta^c_\epmu}_{N_1^{2\Gamma+\mfo(c)+1}}
    \lesssim\mu^{|c|-\eta}\,.
\end{equs}
Here, we required that $N_0^{2\Gamma+\mfo(d)+2}\geqslant N_1^{2\Gamma+\mfo(d)+1}$ for $d=b,c$: taking $N_1\leqslant N_0^{(3\Gamma+1)/(3\Gamma)}$ suffices.
We therefore end up with 
\begin{equs}      \nnorm{\hat w_\mu\d_\mu\zeta^a_\epmu}_{1,N_0^{2\Gamma+\mfo(a)+1}}&\lesssim \sum_{\substack{(\sigma,b,c,\bfd)\in\text{\scriptsize{$\mathrm{Ind}(a)$}}}}  
   \mu^{1-(k_0+1-k_1)+|b|-\eta+2+|c|-\eta}
    \lesssim      \sum_{\substack{(\sigma,b,c,\bfd)\in\text{\scriptsize{$\mathrm{Ind}(a)$}}}}  
   \mu^{|a|-2\eta+1}
 \lesssim\mu^{|a|-2\eta+1}
    \,.
\end{equs}
Observe that $|a|-2\eta+1\geqslant-1+\alpha-2\eta$ so that the singularity at $\mu=0$ is now integrable. Recalling \eqref{eq:dmuwmuzeta} and \eqref{eq:Kmunu}, we have that
\begin{equs}
    \nnorm{\hat w_\mu\zeta^a_\epmu}_{1,N_0^{2\Gamma+\mfo(a)+1}}\lesssim\int_0^\mu\Big(\nnorm{\hat w_\nu\d_\nu\zeta^a_\epnu}_{1,N_0^{2\Gamma+\mfo(a)+1}}+\nnorm{\d_\nu \hat v_\nu\xi^a_\epnu}_{1,N_0^{2\Gamma+\mfo(a)+1}} \Big)\rmd\nu\,.
\end{equs}
Here, we need to observe that $\d_\nu \hat v_\nu(t)=\nu^{-1}z_\nu(t)$ with $z_\nu(t)=-2(\t v')(t/\Gamma\nu^2)$, from which we deduce that $z_\nu\in\mcW^1_{N,\mu}$ for all $N\in\N$. 
Thus, \eqref{eq:1052b} and \eqref{eq:boundXi} imply that 
\begin{equs}
    \nnorm{\d_\nu \hat v_\nu\xi^a_\epnu}_{1,N_0^{2\Gamma+\mfo(a)+1}}\lesssim\nu\nnorm{u\xi^a_\epnu}_{{N_0^{2\Gamma+\mfo(a)+1}}}\lesssim    \nu\nnorm{u\xi^a_\epnu}_{{N_0^{2\Gamma+1}}}
    \lesssim\nu^{|a|-\eta+1}\,,
\end{equs}
where as before we inserted a smooth compactly supported function $u$. By \eqref{eq:KmuLp} we finally have the desired result  
\begin{equ}    \nnorm{\hat w_\mu\zeta^a_\epmu}_{N_1^{2\Gamma+\mfo(a)+1}}\lesssim\mu^{-2}\nnorm{\hat w_\mu\zeta^a_\epmu}_{1,N_0^{2\Gamma+\mfo(a)+1}}\lesssim\mu^{-2}\int_0^\mu\nu^{|a|-2\eta+1}\rmd\nu\lesssim\mu^{|a|-2\eta}\,.
\end{equ}
Here, we enforce $N_1^{2\Gamma+1}\geqslant N_0^{2\Gamma+1}+2$ in order to take into account the loss due to the use of the Sobolev embedding.
\end{proof}

\begin{appendix}

\section{The regularizing kernels and effective Green's function}\label{app:A}
We recall here some very important properties of the operators $K_{N,\mu}$ stated in \cite{Duch21}. We do not prove these statements, the proof of which is already given therein.
\begin{lemma}
\begin{equs} \label{eq:normKmu}\norm{K_{N,\mu}}_{\mcL^{\infty,\infty}}&\leqslant1   \,,\\
\norm{\partial^{\mfl}K_{N,\mu}}_{\mcL^{\infty,\infty}}&\lesssim\mu^{-|\mfl|}\;\mathrm{for}\;\mathrm{all} \;\mfl\in\N^{n+1}\,,\;|\mfl|\leqslant N\,,
\label{eq:spacederivKmu}
\\
\label{eq:KmuLp} \norm{K_{N,\mu}}_{\mcL^{(p,q),\infty}}&\lesssim \mu^{-n/p-2/q} \;\mathrm{for}\;\mathrm{all} \;p,q\in[1,\infty] \,,\,n/p+2/q\leqslant N\,,\\\norm{\mcR_\mu\d_\mu K_{N,\mu}}_{\mcL^{\infty,\infty}}&\lesssim\mu^{-1}\,,\label{eq:derivKmu} \end{equs}
uniformly in $\mu\in(0,1]$. Moreover, for $\psi\in\mcD(\Lambda)$ and $N\geqslant1$, we have
\begin{equs}  
\norm{K_{N,\mu}\psi}_{L^\infty}&\leqslant\bigg(1\vee\Big(2\big(\nu/\mu\big)^{2}-1\Big)^{2N}\bigg)  \norm{K_{N,\nu}\psi}_{L^\infty}\,,\label{eq:Kmunu}
\\
    \norm{K_{N,\mu}(1-K_{N,\nu})\psi}_{L^\infty}&\lesssim_N \big(\nu/\mu\big)^{2}\norm{ K_{N,\nu}\psi}_{L^\infty}\;\mathrm{for}\;\nu\leqslant\mu\,,\,,\label{eq:comKmuKnu}
\end{equs}
uniformly in $\mu,\nu\in(0,1]$.
\end{lemma}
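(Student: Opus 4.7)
The plan is to exploit the explicit formula
\[
Q_\mu(x_0,\ttx) \;=\; \mu^{-2}e^{-x_0/\mu^{2}}\1_{x_0>0}\otimes(1-\mu^{2}\Delta)^{-1}(\ttx)
\]
together with the parabolic scaling $Q_\mu(x)=\mu^{-(n+2)}Q_1(x_0/\mu^{2},\ttx/\mu)$, which carries over to $K_{N,\mu}=Q_\mu^{*N}$. For \eqref{eq:normKmu}, note that $Q_\mu\geqslant 0$ with total mass one (the time part integrates to $1$ and $(1-\mu^2\Delta)^{-1}$ preserves integrals on the torus), so convolution with $Q_\mu$ is an $L^\infty$-contraction; iterating gives $\|K_{N,\mu}\|_{\mcL^{\infty,\infty}}\leqslant1$. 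For \eqref{eq:spacederivKmu}, parabolic scaling pulls a factor $\mu^{-|\mfl|}$ out of $\partial^{\mfl}K_{N,\mu}$, reducing matters to $\|\partial^{\mfl}K_{N,1}\|_{L^1}<\infty$ for $|\mfl|\leqslant N$. Spatial derivatives are free since the torus resolvent $(1-\Delta)^{-1}$ is smooth at positive distance from the diagonal; in time, each convolution with $e^{-t}\1_{t>0}$ produces a Gamma-type kernel absorbing one additional time derivative, so $N$ convolutions accommodate up to $N$ parabolic derivatives. For \eqref{eq:KmuLp}, by duality $\|K_{N,\mu}\|_{\mcL^{(p,q),\infty}}=\|K_{N,\mu}\|_{L^{p'}_{\ttx}L^{q'}_{x_{0}}}$, which by the same scaling equals $\mu^{-n/p-2/q}\|K_{N,1}\|_{L^{p'}L^{q'}}$, the latter norm being finite under the hypothesis $n/p+2/q\leqslant N$ by an identical convolution-gain argument.

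For \eqref{eq:derivKmu} I would differentiate the identity $\mcP_\mu^{N} K_{N,\mu}=\delta$ in $\mu$; since $\mcP_\mu$ and $\partial_\mu\mcP_\mu$ are both constant-coefficient space-time operators they commute, and repeated cancellation using $\mcP_\mu Q_\mu=\delta$ yields, for $N$ compatible with the order of $\mcR_\mu$,
\[
\mcR_\mu\,\partial_\mu K_{N,\mu}\;=\;-N\,(\partial_\mu\mcP_\mu)\,Q_\mu\,.
\]
A direct computation gives $\partial_\mu\mcP_\mu=2\mu(\partial_{x_0}-\Delta)-4\mu^{3}\partial_{x_0}\Delta$, and parabolic scaling on $Q_\mu$ yields $\|(\partial_{x_0}-\Delta)Q_\mu\|_{L^1}\lesssim \mu^{-2}$ and $\|\partial_{x_0}\Delta Q_\mu\|_{L^1}\lesssim \mu^{-4}$, so each summand combines with its $\mu$-prefactor to produce $\mu^{-1}$, and Young's inequality gives the stated $\mcL^{\infty,\infty}$ bound.

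The main obstacle is the two cross-scale comparisons \eqref{eq:Kmunu} and \eqref{eq:comKmuKnu}: they compare two smoothings of the same $\psi$ in $L^{\infty}$ with no direct convolution identity available, and they involve the rather precise prefactors $1\vee(2(\nu/\mu)^{2}-1)^{2}$ and $(\mu/\nu)^{2}$. The plan is to pass to the Fourier symbol $\hat Q_\mu(\tau,k)=(1+i\mu^{2}\tau)^{-1}(1+\mu^{2}|k|^{2})^{-1}$, write $K_{N,\mu}\psi=M_{\mu,\nu}*(K_{N,\nu}\psi)$ and $K_{N,\nu}(1-K_{N,\mu})\psi=\widetilde M_{\mu,\nu}*(K_{N,\mu}\psi)$, where $M_{\mu,\nu}$ and $\widetilde M_{\mu,\nu}$ are the signed space-time kernels whose Fourier multipliers are $\hat K_{N,\mu}/\hat K_{N,\nu}$ and $\hat K_{N,\nu}(1-\hat K_{N,\mu})/\hat K_{N,\mu}$ respectively. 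The tensor factorization of $\hat Q_\mu$ splits each problem into independent one-dimensional (time) and $n$-dimensional (space) problems with rational symbols, so one can compute the kernels explicitly via partial fractions. The delicate point is extracting an $L^{1}$ total-variation bound (rather than a straightforward $L^{2}$/Plancherel bound) on these kernels with the precise prefactors stated; the $(\mu/\nu)^{2}$ factor in \eqref{eq:comKmuKnu} reflects the second-order vanishing of $1-\hat K_{N,\mu}$ near the Fourier origin, while at high frequencies the ratio is controlled by $(\nu/\mu)^{2N}$ cancellations from $\hat K_{N,\nu}^{-1}$. Tracking these cancellations explicitly is the genuine technical step, but is feasible thanks to the rational form of the symbols and is carried out in detail in \cite{Duch21}.
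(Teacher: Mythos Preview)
Your arguments for \eqref{eq:normKmu}--\eqref{eq:derivKmu} are correct and in line with the references the paper cites; the only caveat is that in \eqref{eq:derivKmu} the object $(\partial_{x_0}-\Delta)Q_\mu$ carries a Dirac mass in time, so ``$L^{1}$ norm'' should be read as ``total variation of the measure'', which is harmless for the $\mcL^{\infty,\infty}$ conclusion.

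For \eqref{eq:comKmuKnu} your Fourier-multiplier route differs from the paper's and is more laborious. The paper avoids any kernel computation by using the resolvent identity
\[
(1+\nu^{2}L)^{-1}\bigl(1-(1+\mu^{2}L)^{-1}\bigr)
\;=\;(\mu/\nu)^{2}\,(1+\mu^{2}L)^{-1}\bigl(1-(1+\nu^{2}L)^{-1}\bigr),
\qquad L\in\{\partial_{x_0},-\Delta\},
\]
which swaps $\mu\leftrightarrow\nu$ at the price of exactly the factor $(\mu/\nu)^{2}$. Since each $Q_\lambda$ and each $\delta-Q_\lambda$ is an $L^\infty$-bounded convolution operator (total variation at most $2$), expanding $K_{N,\nu}(1-K_{N,\mu})$ through the tensor product and applying the identity once gives the bound immediately, with the precise prefactor for free and no partial-fraction or $L^{1}$-kernel estimate needed. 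Your approach is more general-purpose and would apply to less algebraically structured symbols, but here the explicit resolvent form of $Q_\mu$ makes the paper's trick decisively simpler; a similar direct operator computation also produces the sharp constant $(2(\nu/\mu)^{2}-1)^{2}$ in \eqref{eq:Kmunu}. Note finally that the paper attributes \eqref{eq:comKmuKnu} to \cite{Duch23}, not \cite{Duch21}.
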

\begin{proof}    
\eqref{eq:normKmu} is an immediate consequence of the definition of $K_{N,\mu}$. \eqref{eq:spacederivKmu} is \cite[Lemma~4.14 (A)]{Duch21}. \eqref{eq:KmuLp} is a minor modification of \cite[Lemma~4.14 (D)]{Duch21} (see also \cite[Lemma~10.52 (C)]{Duch21}). \eqref{eq:derivKmu} is \cite[Lemma~4.17]{Duch21}, \eqref{eq:Kmunu} is \cite[Lemma~4.10]{Duch21}, and \eqref{eq:comKmuKnu} was first observed in \cite{Duch23} in an elliptic context. The idea is that we have
\begin{equs}
    (1+\mu^2\partial_{x_0})^{-1}\big(1-  (1+\nu^2\partial_{x_0})^{-1}\big)&=\mu^2\partial_{x_0}  (1+\mu^2\partial_{x_0})^{-1}  (1+\nu^2\partial_{x_0})^{-1}\\
    &=(\nu/\mu)^2   (1+\nu^2\partial_{x_0})^{-1}\big(1-  (1+\mu^2\partial_{x_0})^{-1}\big)\,.
\end{equs}
A similar statement holds for $ (1-\mu^2\Delta)^{-1}\big(1-  (1-\nu^2\Delta)^{-1}\big)$, so that at least one factor $(\nu/\mu)^2$ is created in all the terms appearing in $K_{N,\mu}(1-K_{N,\nu})$.
\end{proof}
\begin{lemma}
Fix a smooth function $\rho:\Lambda\rightarrow\R$. Recall that for $\eps>0$ and $x\in\Lambda$, the rescaling operator is given by $\mcS_\eps\rho(x)=\eps^{-(n+2)}\rho(x_0/\eps^{2},{\tt x}/\eps)$. We have
\begin{equs}    \norm{\mcS_\eps\rho}_{L^{p,q}}&\lesssim\eps^{-n(p-1)/p-2(q-1)/q}\norm{\rho}_{L^{p,q}}\;\text{$\mathrm{for}$}\;\text{$\mathrm{all}$}\;p,q\in[1,\infty]\label{eq:scaleL1}
\end{equs}
uniformly in $\eps\in(0,1]$. Moreover, there exists $c^\star>0$ such that for all $c\in(0,c^\star]$ and $N\geqslant1$, it holds
    \begin{equs}\label{eq:KmuRhoeps}        \norm{K_{N,\mu}\partial_\eps\mcS_\eps\rho}_{L^1}\lesssim\eps^{-1+c}\mu^{-c}
    \end{equs}
uniformly in $\eps,\mu\in(0,1]$.
\end{lemma}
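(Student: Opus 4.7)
The plan is to combine two complementary bounds on $\norm{K_{N,\mu}\partial_\eps\mcS_\eps\rho}_{L^1}$: a trivial one of order $\eps^{-1}$, coming from $L^1$-boundedness, and a refined one of order $\mu^{-1}$, exploiting a hidden cancellation inside $\partial_\eps\mcS_\eps\rho$. The desired weight $\eps^{-1+c}\mu^{-c}$ then drops out by comparing the two regimes $\eps\leqslant\mu$ and $\eps>\mu$.

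First I would record the scaling identity $\partial_\eps\mcS_\eps\rho = \eps^{-1}\mcS_\eps\tilde\rho$, with
\[
\tilde\rho(y)\eqdef -(n+2)\rho(y)-2y_0(\partial_0\rho)(y)-\sum_{i=1}^n y_i(\partial_i\rho)(y),
\]
obtained by applying the chain rule to $\mcS_\eps\rho(x)=\eps^{-(n+2)}\rho(x_0/\eps^2,{\tt x}/\eps)$. Since $K_{N,\mu}$ is $L^1\to L^1$ bounded with norm $\leqslant 1$ (by \eqref{eq:normKmu} and duality) and $\norm{\mcS_\eps\tilde\rho}_{L^1}=\norm{\tilde\rho}_{L^1}$ by scale-invariance, Young's inequality already gives the naive bound $\norm{K_{N,\mu}\partial_\eps\mcS_\eps\rho}_{L^1}\lesssim \eps^{-1}$.

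The key step is the observation that $\tilde\rho$ is automatically a total derivative: a direct computation gives
\[
\tilde\rho(y) = -\partial_0\bigl(2y_0\rho(y)\bigr)-\sum_{i=1}^{n}\partial_i\bigl(y_i\rho(y)\bigr),
\]
which is really just the statement $\int\partial_\eps\mcS_\eps\rho\,\rmd x=\partial_\eps\int\rho\,\rmd y=0$. Combining this identity with the commutation rules $\mcS_\eps\partial_0 f = \eps^2\partial_0\mcS_\eps f$ and $\mcS_\eps\partial_i f = \eps\,\partial_i\mcS_\eps f$ (which follow immediately from the definition of $\mcS_\eps$) yields
\[
\partial_\eps\mcS_\eps\rho = -2\eps\,\partial_{0}\mcS_\eps\bigl(y_0\rho\bigr)-\sum_{i=1}^n\partial_{i}\mcS_\eps\bigl(y_i\rho\bigr).
\]
Transferring the derivatives onto $K_{N,\mu}$ in the convolution and applying Young with the easily verified bounds $\norm{\partial_0 K_{N,\mu}}_{L^1}\lesssim \mu^{-2}$ and $\norm{\partial_i K_{N,\mu}}_{L^1}\lesssim \mu^{-1}$ (coming from the explicit product form of $Q_\mu$: a time-derivative of the temporal factor $\mu^{-2}e^{-x_0/\mu^2}\mathbf 1_{(0,\infty)}(x_0)$ costs $\mu^{-2}$ in $L^1$, and a spatial derivative of $(1-\mu^2\Delta)^{-1}({\tt x})$ costs $\mu^{-1}$) produces the refined bound $\norm{K_{N,\mu}\partial_\eps\mcS_\eps\rho}_{L^1}\lesssim \eps\mu^{-2}+\mu^{-1}$.

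To conclude I would compare both bounds regime by regime. If $\eps\leqslant\mu$ the refined bound is $\lesssim \mu^{-1}$, and $\mu^{-1}\leqslant\eps^{-1+c}\mu^{-c}$ reduces to $\mu^{1-c}\geqslant\eps^{1-c}$, which holds for any $c\in(0,1)$. If $\eps>\mu$ the naive bound $\eps^{-1}$ already satisfies $\eps^{-1}\leqslant\eps^{-1+c}\mu^{-c}$ since this is $(\eps/\mu)^c\geqslant 1$. Together these give exactly the claimed estimate for any $c^\star$ strictly below $1$. The only non-routine ingredient is the recognition that $\tilde\rho$ is a total derivative, so I do not expect any substantive obstacle; the rest is elementary power counting.
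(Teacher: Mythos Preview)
Your argument is correct. The paper does not actually give its own proof of \eqref{eq:KmuRhoeps} but simply refers to \cite[Lemma~4.19~(B)]{Duch21}, so there is no in-paper approach to compare against; your interpolation between the trivial $\eps^{-1}$ bound and the $\mu^{-1}$ bound obtained from the divergence structure of $\partial_\eps\mcS_\eps\rho$ is the natural route and almost certainly mirrors the cited reference. One small technical point: for $N=1$ the distribution $\partial_{x_0} K_{1,\mu}$ carries a Dirac mass at $x_0=0$ and is not literally an $L^1$ function, but its total-variation norm as a finite measure is still $\lesssim\mu^{-2}$, so Young's inequality and your conclusion go through unchanged.
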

\begin{proof}
   \eqref{eq:KmuRhoeps} is \cite[Lemma~4.19 (B)]{Duch21}. 
\end{proof}
Let us now recall the bounds on the scale derivative of the effective Green's function.
\begin{lemma}\label{lem:dotG}
Fix $N,M\in\N$, $\mfm\in\N^n$ and $\mfl\in\N^{n+1}$. We have
\begin{equs}        \label{eq:heat1}
    \norm{\mcP^N_\mu\big(\mcP^N_\mu\big)^\dagger\bfX^\mfl \partial_{\tt x}^\mfm \dot G_\mu}_{\mcL^{(p,q),\infty}}&\lesssim \mu^{1-n/p-2/q+|\mfl|-|\mfm|}\;\mathrm{for}\;\mathrm{all}\;p,q\in[1,\infty]
    \end{equs}
uniformly in $\mu\in(0,1]$.
\end{lemma}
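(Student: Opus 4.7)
The plan is to establish the bound by duality combined with direct scaling estimates on the explicit kernel. First, by H\"older's inequality, the operator norm $\norm{\bigcdot}_{\mcL^{(p,q),\infty}}$ of convolution with a kernel $K$ is controlled by $\norm{K}_{L^{p',q'}}$, where $1/p'=1-1/p$ and $1/q'=1-1/q$. It therefore suffices to establish
\[
\norm{\mcP^N_\mu \bfX^\mfl \partial_\ttx^\mfm \dot G_\mu}_{L^{p',q'}} \lesssim \mu^{1+|\mfl|-|\mfm|-n/p-2/q},
\]
and since $2/q'=2-2/q$ the target exponent is equivalently $-1+|\mfl|-|\mfm|+2/q'-n/p$.

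Next, I would exploit the explicit form
\[
\dot G_\mu(x) = -\frac{2x_0}{\mu^3}\chi'\!\left(x_0/\mu^2\right)\, e^{-x_0(-\Delta)}(\ttx),
\]
obtained from $\chi_\mu(t)=\chi(t/\mu^2)$, which is supported in $x_0\in[\mu^2,2\mu^2]$. The time support contributes a factor $\mu^{2/q'}$ upon taking $L^{q'}_{x_0}$. On this support, $e^{-x_0(-\Delta)}(\ttx)$ is a spatial Gaussian at scale $\sqrt{x_0}\sim\mu$, and since $x_0\leqslant 2\mu^2\leqslant 2$ standard Poisson-summation estimates reduce $\T^n$ heat kernel bounds to Gaussian integrals on $\R^n$. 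Writing $\mfl=(\mfl_0,\mfl_\ttx)$ with $\mfl_\ttx\in\N^n$, a parabolic rescaling $\ttx\mapsto\sqrt{x_0}\tilde\ttx$ together with $-n/2+n/(2p')=-n/(2p)$ gives
\[
\norm{\ttx^{\mfl_\ttx}\partial_\ttx^\mfm (-\Delta)^k e^{-x_0(-\Delta)}}_{L^{p'}(\T^n)}\lesssim \mu^{|\mfl_\ttx|-|\mfm|-2k-n/p}
\]
for every $k\in\N$.

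Finally, I would expand
\[
\mcP^N_\mu = \sum_{j,k=0}^{N}\binom{N}{j}\binom{N}{k}\mu^{2(j+k)}\partial_{x_0}^{j}(-\Delta)^{k}
\]
and apply it term by term to $\bfX^\mfl \partial_\ttx^\mfm \dot G_\mu$. Each $\partial_{x_0}$ falling on the time factor $x_0^{\mfl_0+1}\chi'(x_0/\mu^2)/\mu^3$ or on the heat semigroup produces an extra $\mu^{-2}$, so the time piece is bounded in $L^\infty_{x_0}$ by $\mu^{-1+2\mfl_0-2j}$; the spatial piece is controlled by the previous display. Multiplying by $\mu^{2(j+k)}$, the $j$- and $k$-dependent powers cancel and the total exponent reorganizes as $-1+2\mfl_0+|\mfl_\ttx|-|\mfm|+2/q'-n/p = 1+|\mfl|-|\mfm|-n/p-2/q$, matching the claim. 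The main obstacle will be the Leibniz bookkeeping: applying $(-\Delta)^k$ to the product $\ttx^{\mfl_\ttx}\partial_\ttx^\mfm\dot G_\mu$ and $\partial_{x_0}^j$ to the product of $x_0^{\mfl_0}$, $\chi'(x_0/\mu^2)$ and the heat kernel generates many terms, but each one exchanges two powers of $\ttx$ (resp.\ one factor of $x_0$) for two spatial (resp.\ one time) derivatives, a scaling-neutral trade, so every resulting summand obeys the same bound and the finite binomial sum yields the stated estimate.
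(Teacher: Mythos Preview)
Your argument is correct. It differs from the paper's in style rather than substance: both reduce to bounding the $L^{p',q'}$ norm of the kernel, but where you expand $\mcP_\mu^N$ binomially and track each Leibniz term by hand, the paper exploits the exact parabolic scaling. Writing $\dot G_\mu = \mu\,\mcS_\mu(\tilde\chi G)$ with $\tilde\chi(s)=-2s\chi'(s)$ and using $\partial_{x_0}\mcS_\mu=\mu^{-2}\mcS_\mu\partial_{x_0}$, $\Delta\mcS_\mu=\mu^{-2}\mcS_\mu\Delta$, one obtains $\mcP_\mu^N\bfX^\mfl\partial_\ttx^\mfm\dot G_\mu=\mu^{1+|\mfl|-|\mfm|}\mcS_\mu\bigl(\mcP_1^N\,\tilde\chi\,\bfX^\mfl\partial_\ttx^\mfm G\bigr)$, a fixed function that is smooth, compactly supported in time, and rapidly decaying in space; the bound then follows in one line from the scaling of $L^{p',q'}$ norms under $\mcS_\mu$ (equation~\eqref{eq:scaleL1}). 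This packaging absorbs all of your Leibniz bookkeeping into the single observation that $\mcP_\mu$ commutes through $\mcS_\mu$ to become $\mcP_1$. Your route buys self-containedness at the price of more terms to track; the paper's buys brevity at the price of recognising the scaling commutation.
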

\begin{proof} 
\eqref{eq:heat1} is \cite[Lemma~4.24]{Duch21}. \eqref{eq:heat1} can be proven similarly, we recall the main steps. First, observe that we have $\dot G_\mu(x)=\mu^{-1}\tilde\chi(x_0/\mu^2) G(x)$ where $\tilde\chi=-2\t\chi'$.
 Moreover, by the properties of the heat kernel on the torus \cite{GrNote}, there exists a smooth function $A:\T^n\rightarrow\R$ with all derivatives bounded verifying for all $i\geqslant0$ the estimate
\begin{equs}
   | \partial^\mfm_{\ttx}A({\tt u})|\lesssim(1+{\tt u})^{-i}\,,
\end{equs}
and such that $G(x)=\sqrt{x_0}^{-d}A(\ttx/\sqrt{x_0})$. Therefore, $\dot G_\mu(x)=\mu\mcS_\mu\big(\tilde\chi G\big)$ and, with the same reasoning, $\bfX^\mfl\partial_{\ttx}^\mfm\dot G_\mu(x)=\mu^{1+|\mfl|-|\mfm|}\mcS_\mu\big(\tilde\chi\bfX^\mfl\partial_{\ttx}^\mfm G\big)$. The properties of $A$ then imply that $\tilde\chi\bfX^\mfl\partial_{\ttx}^\mfm G$ is bounded (and thus in $L^{p,q}$), so that the thesis follows using \eqref{eq:scaleL1}.
\end{proof}

\end{appendix}
\bibliographystyle{Martin}
\bibliography{refs.bib}

\end{document}